\documentclass{article}

\usepackage[centertags]{amsmath}
\usepackage{hyperref}
\usepackage{amsfonts}
\usepackage{amssymb}
\usepackage{amsthm}
\usepackage{newlfont}
\usepackage{amscd}
\usepackage{amsmath,amscd}

\usepackage[arrow,matrix]{xy}
\usepackage{verbatim}

\usepackage{color}

\newcommand{\CC}{\mathbb{C}}
\newcommand{\QQ}{\mathbb{Q}}
\newcommand{\ZZ}{\mathbb{Z}}
\newcommand{\HH}{\mathbb{H}}
\newcommand{\A}{\mathbb{A}}
\newcommand{\PP}{\mathbb{P}}

\newcommand{\NN}{\mathbb{N}}
\newcommand{\GG}{\mathbb{G}}
\newcommand{\J}{\mathfrak{J}}
\newcommand{\C}{\mathfrak{C}}
\newcommand{\U}{\mcal{U}}

\newcommand{\X}{\SimpS}
\newcommand{\Y}{\mathbf{Y}}
\newcommand{\E}{\mathbf{E}}
\newcommand{\B}{\mathbf{B}}


\def\alp{{\alpha}}
\def\bet{{\beta}}
\def\gam{{\gamma}}

\def\lam{{\lambda}}
\def\sig{{\sigma}}
\def\vphi{{\varphi}}
\def\om{{\omega}}
\def\Om{{\Omega}}
\def\Gam{{\Gamma}}
\def\Del{{\Delta}}
\def\Sig{{\Sigma}}

\def\Lam{{\Lambda}}
\def\thet{{\theta}}
\def\vphi{{\varphi}}

\def\Pro{\mathrm{Pro}}
\def\Var{\mathrm{Var}}
\def\Set{\mathrm{Set}}



\newtheorem{thm}{Theorem}[section]

\newtheorem{cor}[thm]{Corollary}

\newtheorem{lem}[thm]{Lemma}
\newtheorem{prop}[thm]{Proposition}

\newtheorem{example}{Example}
\theoremstyle{definition}
\newtheorem{define}[thm]{Definition}
\theoremstyle{remark}
\newtheorem{rem}[thm]{Remark}



\DeclareMathOperator{\Ker}{Ker}
\DeclareMathOperator{\Gal}{Gal}

\DeclareMathOperator{\Stab}{Stab}

\DeclareMathOperator{\Br}{Br}
\DeclareMathOperator{\loc}{loc}
\DeclareMathOperator{\Spec}{Spec}

\DeclareMathOperator{\im}{Im}

\DeclareMathOperator{\hocolim}{hocolim}
\DeclareMathOperator{\colim}{colim}
\DeclareMathOperator{\Mod}{Mod}
\DeclareMathOperator{\Hom}{Hom}
\DeclareMathOperator{\xt}{xt}
\DeclareMathOperator{\Ext}{Ext}
\DeclareMathOperator{\EExt}{\mathbb{E}\xt}

\DeclareMathOperator{\spec}{Spec}
\DeclareMathOperator{\Ho}{Ho}

\DeclareMathOperator{\cosk}{cosk}
\DeclareMathOperator{\sk}{sk}
\DeclareMathOperator{\tr}{tr}
\DeclareMathOperator{\Ex}{Ex}
\DeclareMathOperator{\Map}{Map}
\DeclareMathOperator{\Kan}{Kan}
\DeclareMathOperator{\strict}{\mathbf{st}}
\DeclareMathOperator{\standard}{sd}

\DeclareMathOperator{\Tors}{Tors}

\def\loc{\textup{loc}}

\def\im{\textrm{Im\,}}
\def\inv{\textup{inv}}

\def\rar{\rightarrow}

\def\lrar{\longrightarrow}
\def\llar{\longleftarrow}
\def\hrar{\hookrightarrow}

\def\x{\stackrel}

\def \mcal{\mathcal}
\def \ovl{\overline}
\def \what{\widehat}
\def \wtl{\widetilde}
\def \bksl{\;\backslash\;}
\def \mcal {\mathcal}
\def \mfrak {\mathfrak}
\def \fns {\widetilde{\triangleleft}}
\def \uline {\underline}
\def   \SimpS {\mathbf{X}}


\DeclareFontEncoding{OT2}{}{} 
\DeclareTextFontCommand{\textcyr}{\fontencoding{OT2}\fontfamily{wncyr}\fontseries{m}\fontshape{n}\selectfont}

\newcommand{\Sha}{\textcyr{Sh}}


\title{ Homotopy Obstructions to Rational Points }

\author{Yonatan Harpaz \;\;\; Tomer M. Schlank}
\begin{document}
\maketitle

\section*{Abstract}
In this paper we propose to use a relative variant of the notion of the \'{e}tale homotopy type of an algebraic variety in order to study the existence of rational points on it. In particular, we use an appropriate notion of homotopy fixed points in order to construct obstructions to the local-global principle. The main results in this paper are the connections between these obstructions and the classical obstructions, such as the Brauer-Manin, the \'{e}tale-Brauer and certain descent obstructions. These connections allow one to understand the various classical obstructions in a unified framework.

\tableofcontents

\section{Introduction}
\subsection{ Obstructions to the Local Global Principle - Overview }
Let $X$ be a smooth variety over a number field $K$. A prominent problem in arithmetic algebraic geometry is to understand the set $X(K)$. For example, one would like to be able to know whether $X(K) \neq \emptyset$. As a first approximation one can consider the set
$$ X(K) \subseteq X(\A) $$
where $\A$ is the ring of adeles of $K$.

It is a classical theorem of Minkowski and Hasse that if $X \subseteq \PP^n$ is hypersurface given by one quadratic equation then
$$ X(\A)\neq \emptyset \Rightarrow X(K)\neq \emptyset $$
When a variety $X$ satisfies this property we say that it satisfies the local-global principle. In the 1940's Lind and Reichardt (~\cite{Lin40}, ~\cite{Rei42} ) gave examples of genus $1$ curves that do not satisfy the local-global principle.

More counterexamples to the local-global principle where given throughout the years until in 1971 Manin (~\cite{Man70}) described a general obstruction to the local-global principle that explained all the examples that were known to that date. The obstruction (known as the Brauer-Manin obstruction) is defined by considering a set $X(\A)^{\Br} $ which satisfies
$$ X(K) \subseteq X(\A)^{\Br} \subseteq X(\A) $$
If $X$ is a counterexample to the local-global principle we say that it is accounted for or explained by the Brauer-Manin obstruction if
$$ \emptyset = X(\A)^{\Br} \subseteq X(\A) \neq \emptyset $$

In 1999 Skorobogatov (~\cite{Sko99}) defined a refinement of the Brauer-Manin obstruction (also known as the \'{e}tale-Brauer-Manin obstruction) and used it to give an example of a variety $X$ for which
$$ \emptyset = X(K) \subseteq X(\A)^{\Br}  \neq \emptyset $$
More precisely, Skorobogatov described a new intermediate set
$$ X(K) \subseteq X(\A)^{fin,\Br}  \subseteq X(\A)^{\Br}  \subseteq X(\A) $$
and found a variety $X$ such that
$$ \emptyset = X(\A)^{fin,\Br} \subseteq X(\A)^{\Br}  \neq \emptyset $$

In his paper from 2008 Poonen (~\cite{Poo08}) constructed the first and currently only known example of a variety $X$ such that
$$ \emptyset = X(K) \subseteq X(\A)^{fin,\Br}  \neq \emptyset $$
In 2009 the second author (~\cite{Sch09}) showed that in some cases Poonen's counter-example can be explained by a showing that some smaller set $X(\A)^{fin,\Br \sim D}$ is empty.

A different approach to define obstructions sets of the form
$$ X(K)\subseteq X(\A)^{obs}\subseteq X(\A) $$
is by using descent on torsors over $X$ under linear algebraic groups. This method was
studied by Colliot-Th\'{e}l\`{e}ne and Sansuc (~\cite{CTS80} and ~\cite{CTS87}) for torsors under groups of multiplicative type and by Harari and Skorobogatov (~\cite{HSk02}) in the general non-abelian case (see also ~\cite{Sko01}).

One can define various variants of the descent obstruction by considering only torsors under a certain class of groups. We shall denote by $X(\A)^{desc}$, $X(\A)^{fin}$, $X(\A)^{fin-ab}$ and $X(\A)^{con}$ the obstructions obtained when considering all, only finite, only finite abelian and only connected linear algebraic groups respectively.

In the case where $X$ is projective Harari (~\cite{Har02}) showed that
$$ X(\A)^{\Br} = X(\A)^{con} $$
Lately, building on this work, Skorobogatov (~\cite{Sko09}) and Demarche (~\cite{De09a}) showed that in this case one also has
$$ X(\A)^{fin,\Br} = X(\A)^{desc} $$

\subsection{ Our Results }

In this paper we use a new method in order to construct natural intermediate sets between $X(K)$ and $X(\A)$. This method uses a relative variant of the \textbf{\'{e}tale homotopy type} $\acute{E}t(X)$ of $X$ which was constructed by Artin and Mazur ~\cite{AMa69}.

This variant, denoted by ${\acute{E}t_{/K}(X)}$, is an inverse system of simplicial sets which carry an action of the absolute Galois group $\Gam_K$ of $K$. We then use an appropriate notion of \textbf{homotopy fixed points} to define a (functorial) set $X(hK)$ which serves as a certain homotopical approximation of the set $X(K)$ of rational points. In fact one obtains a natural map
$$ h: X(K) \lrar X(hK) $$
In order to apply this idea to the theory of obstructions to the local-global principle one proceeds to construct an adelic analogue, $X(h\A)$, which serves as an approximation to the adelic points in $X$. One then obtains a commutative diagram of sets
$$ \xymatrix{
X(K) \ar[r]^-{h}\ar[d]^{\loc} & X(hK) \ar[d]^{\loc_h} \\
X(\A) \ar[r]^-{h} & X(h\A) \\
}$$

We then define $X(\A)^h$ to be the set of adelic points whose corresponding adelic homotopy fixed point is rational, i.e. is in the image of $\loc_h$. This set is intermediate in the sense that
$$ X(K) \subseteq X(\A)^h \subseteq X(\A) $$
and so provides an obstruction to the existence of rational points. By using a close variant of this construction (essentially working with homology instead of homotopy) we define a set $X(\A)^{\ZZ h}$ that satisfies
$$ X(K) \subseteq X(\A)^h \subseteq X(\A)^{\ZZ h} \subseteq X(\A) $$
A second variant consists of replacing ${\acute{E}t_{/K}(X)}$ with its $n$'th Postnikov piece (in the appropriate sense) yielding "truncated" versions of the obstruction above denoted by
$$ X(K) \subseteq X(\A)^{h,n} \subseteq X(\A)^{\ZZ h,n} \subseteq X(\A) $$
To conclude we get the following diagram of inclusions of obstruction sets:
$$
\xymatrix{
\empty & X(\A)^{\ZZ h} \ar@{^{(}->}[r] & \cdots\ar@{^{(}->}[r] & X(\A)^{\ZZ h,2} \ar@{^{(}->}[r] & X(\A)^{\ZZ h,1}\ar@{^{(}->}[r]& X(\A)  \\
X(K) \ar@{^{(}->}[r] & X(\A)^{h} \ar@{^{(}->}[u] \ar@{^{(}->}[r] & \cdots \ar@{^{(}->}[r] & X(\A)^{h,2} \ar@{^{(}->}[r] \ar@{^{(}->}[u] & X(\A)^{h,1}\ar@{^{(}->}[u] & \empty }
$$

The main results of this paper (Theorems ~\ref{t:fin}, ~\ref{t:main},~\ref{t:h-is-fin-br} and Corollary ~\ref{c:cosk2-final}) describe these obstructions in terms of previously constructed obstructions:

\begin{thm}
Let $X$ be smooth geometrically connected variety over a number field $K$. Then
$$ X(\A)^h = X(\A)^{fin,Br} $$
$$ X(\A)^{\ZZ h} = X(\A)^{Br} $$
$$ X(\A)^{h,1} = X(\A)^{fin} $$
$$ X(\A)^{\ZZ h,1} = X(\A)^{fin-ab} $$

Furthermore, for every $n \geq 2$
$$ X(\A)^{h,n} = X(\A)^{h} $$
$$ X(\A)^{\ZZ h, n} = X(\A)^{\ZZ h} $$

In particular, the diagram above is equal to the diagram

$$
\xymatrix{
\empty & X(\A)^{\Br} \ar@{^{(}->}[r] & \cdots\ar@{^{(}->}[r] & X(\A)^{\Br} \ar@{^{(}->}[r]  & X(\A)^{fin-ab}\ar@{^{(}->}[r]& X(\A)  \\
X(K) \ar@{^{(}->}[r] & X(\A)^{fin,
\Br} \ar@{^{(}->}[u] \ar@{^{(}->}[r] & \cdots \ar@{^{(}->}[r] & X(\A)^{fin,\Br} \ar@{^{(}->}[r] \ar@{^{(}->}[u] & X(\A)^{fin}\ar@{^{(}->}[u] & \empty }
$$
\end{thm}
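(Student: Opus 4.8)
The plan is to establish each equality by comparing the homotopy-theoretic obstruction set with the classical one through an explicit analysis of homotopy fixed points of the étale homotopy type, working one Postnikov layer at a time. First I would recall that for a smooth geometrically connected $X/K$, the pointed relative étale homotopy type $\acute{E}t_{/K}(X)$ has $\pi_1$ equal to the étale fundamental group $\pi_1^{\text{ét}}(\overline{X})$ with its $\Gam_K$-action, and its first Postnikov piece is (a model for) the classifying space $B\pi_1^{\text{ét}}(\overline{X})$. The key input is a dictionary: a homotopy fixed point of $B\pi$ for a profinite group $\pi$ with $\Gam_K$-action is the same datum as a section (up to conjugacy) of $1 \to \pi \to \pi \rtimes \Gam_K \to \Gam_K \to 1$, which in turn classifies torsors under $\pi$ over $X$; the local-global condition on such a fixed point is exactly the condition defining $X(\A)^{fin}$. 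For the $\ZZ h$-variant one replaces $B\pi$ by its abelianization $K(\pi^{\text{ab}},1)$, and the analogous dictionary, via $H^1$ of the (abelianized) Galois action, recovers torsors under finite abelian groups, giving $X(\A)^{\ZZ h,1} = X(\A)^{fin-ab}$.

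Next I would treat the full (non-truncated) statements $X(\A)^h = X(\A)^{fin,\Br}$ and $X(\A)^{\ZZ h} = X(\A)^{\Br}$. The strategy is the same—identify adelic homotopy fixed points with compatible families of local data—but now one must handle the higher homotopy groups of $\acute{E}t_{/K}(X)$ as well. The crucial observation is that for $\acute{E}t_{/K}(X)$ the higher homotopy contributes, via the obstruction theory for lifting a section through the Postnikov tower, cohomology classes in $H^2(\Gam_K, \pi_n)$-type groups which match, through the standard descent-theoretic dictionary (as in Harari–Skorobogatov and the Harari–Skorobogatov–Demarche identification $X(\A)^{fin,\Br} = X(\A)^{desc}$ cited in the introduction), precisely the Brauer-pairing condition imposed after pulling back along finite torsors. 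Concretely: a point $(x_v) \in X(\A)$ lifts to an adelic homotopy fixed point iff for every finite torsor $f\colon Y \to X$ the point lifts to one of its twists and the twisted space has local points surviving the Brauer evaluation—this is the defining description of $X(\A)^{fin,\Br}$. The abelian/homological variant collapses the finite-torsor descent step and leaves only the Brauer condition itself, yielding $X(\A)^{\ZZ h} = X(\A)^{\Br}$; here one uses that $H^2_{\text{ét}}(\overline{X}, \mu)$-type classes detect exactly the Brauer group elements relevant to Manin's pairing.

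Finally, the stabilization statements $X(\A)^{h,n} = X(\A)^h$ and $X(\A)^{\ZZ h,n} = X(\A)^{\ZZ h}$ for $n \geq 2$ require showing that no obstruction lives above the second Postnikov stage. The idea is that the $\Gam_K$-cohomological dimension arguments, together with the fact that the higher homotopy groups of $\acute{E}t_{/K}(X)$ beyond degree one only interact with the rational-point problem through $H^2$ of the Galois group (by the structure of the obstruction tower and the vanishing of the relevant higher Galois cohomology after passing to the adelic setting), force the lifting problem from stage $n$ to stage $n+1$ to be unobstructed once $n\geq 2$; so the inverse limit over the Postnikov tower is already reached at $n=2$. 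The main obstacle I anticipate is the careful bookkeeping in the non-abelian obstruction theory: one must show that the potentially infinite tower of successive $H^2(\Gam_K, \pi_n)$-obstructions, with their twisted coefficient systems depending on choices of lifts at lower stages, genuinely collapses—this is where a real input (rather than formal nonsense) about the arithmetic of $K$ and the structure of $\acute{E}t_{/K}(X)$ is needed, and it is also the step where matching the homotopy-fixed-point description to the precise definition of $X(\A)^{fin,\Br}$ (as opposed to some a priori larger or smaller set) demands the most care.
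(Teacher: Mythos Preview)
Your outline for the $n=1$ equalities is broadly in line with the paper's approach, but two of the remaining steps have genuine gaps. For the stabilization $X(\A)^{h,n}=X(\A)^{h}$ ($n\geq 2$), you claim the Postnikov lifting becomes ``unobstructed'' above level~$2$. That is not the mechanism: the obstruction classes in $H^{i+1}(\Gam_K,\pi_i)$ need not vanish. What the paper uses (Proposition~\ref{p:2-is-enough}, Lemma~\ref{l:cosk2,2-conncted}) is that for finite $A$ and $i\geq 2$ the localization $H^{i+1}(K,A)\to H^{i+1}(\A,A)$ is an \emph{isomorphism} by Poitou--Tate, so rational and adelic lifting problems through a $2$-connected cover are equivalent; a spectral-sequence chase then gives a semi-Cartesian square, not a vanishing obstruction. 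The distinction matters: you must show that an adelic homotopy fixed point which is rational at level~$2$ lifts \emph{rationally} to all higher levels, which requires matching the rational and adelic obstructions rather than killing them.

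For $X(\A)^{\ZZ h}=X(\A)^{\Br}$ your sketch (``$H^2_{\text{\'et}}(\overline{X},\mu)$-type classes detect Brauer elements'') misses the real content. The paper passes through the Kan--Dold correspondence to rewrite $P_n(\ZZ\X_{\U})(hK)$ and $P_n(\ZZ\X_{\U})(h\A)$ as hypercohomology groups, builds a pairing with $\HH^2\bigl(K,\widehat{\uline{\ZZ\X_{\U}}}\bigr)$, identifies the colimit over $\U$ of the latter with $H^2_{\text{\'et}}(X,t^*\GG_m)$ (Theorem~\ref{t:MIT-1}), and then invokes an arithmetic-duality theorem for complexes (Theorem~\ref{t:arithmetic-duality}) to show orthogonality to $\HH^2$ characterizes the image of $\loc$. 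None of this is formal. Likewise, for $X(\A)^{h}=X(\A)^{fin,\Br}$ you gesture toward the Skorobogatov--Demarche identification $X(\A)^{fin,\Br}=X(\A)^{desc}$, but you never explain how to show $X(\A)^h=X(\A)^{desc}$; the paper instead proves the chain $X(\A)^{fin,\ZZ h}\subseteq X(\A)^{h}\subseteq X(\A)^{fin,h}$ directly, the first inclusion by pulling back along a torsor to kill $\pi_1$ (so Hurewicz lets one invoke the $\ZZ h$ case), the second by a compactness argument on fibers. Finally, you omit the reduction from the pro-object to individual hypercoverings (Theorem~\ref{t:descent-theorem}), which rests on a finiteness-of-$\Sha$ style result for the fibers of $\loc$ and is essential for identifying $X(\A)^{h,n}$ with the intersection $\bigcap_{\U}X(\A)^{\U,n}$.
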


This homotopical description of the classical obstructions can be used to relate them in new ways to each other. For example one gets the following consequences:

\begin{cor}[Theorem ~\ref{t:trivial-pi-2}]
Let $K$ be number field and $X$ a smooth geometrically connected $K$-variety. Assume further that $\pi^{\acute{e}t}_2(\ovl{X}) = 0$ (which is true, for example, when $X$ is a curve such that $\ovl{X} \neq \PP^1$). Then

$$ X(\A)^{fin} =X(\A)^{fin,\Br} $$

\end{cor}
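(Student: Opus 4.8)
The plan is to deduce this corollary directly from the main theorem, which identifies $X(\A)^{fin} = X(\A)^{h,1}$ and $X(\A)^{fin,\Br} = X(\A)^h$, together with the collapse statement $X(\A)^{h,n} = X(\A)^h$ for all $n \geq 2$. Thus it suffices to show that under the hypothesis $\pi_2^{\acute{e}t}(\ovl{X}) = 0$ one has $X(\A)^{h,1} = X(\A)^{h,2}$, and then the chain of equalities $X(\A)^{fin} = X(\A)^{h,1} = X(\A)^{h,2} = X(\A)^h = X(\A)^{fin,\Br}$ closes the argument. The conceptual point is that the only difference between the level-$1$ and level-$2$ truncated obstructions comes from the second homotopy group of the geometric \'{e}tale homotopy type, so if that group vanishes the two Postnikov stages agree.

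First I would unwind the definitions of $X(\A)^{h,n}$: this is the set of adelic points $(x_v)$ such that the associated adelic homotopy fixed point of the $n$-th Postnikov truncation $P_n \acute{E}t_{/K}(X)$ lies in the image of the localization map $\loc_h$ from the global homotopy fixed points. When $\pi_2^{\acute{e}t}(\ovl{X}) = 0$, the natural map $P_2 \acute{E}t_{/K}(\ovl{X}) \to P_1 \acute{E}t_{/K}(\ovl{X})$ is an equivalence of pro-objects with $\Gam_K$-action (there is no $k$-invariant and no extra homotopy), and hence induces a bijection on homotopy fixed point sets, compatibly both globally over $K$ and locally over each completion $K_v$ and over the adeles. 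I would check this compatibility using the functoriality of the homotopy fixed point construction and of the localization maps, as set up earlier in the paper. Consequently the defining data of $X(\A)^{h,1}$ and $X(\A)^{h,2}$ coincide, giving the equality of these two obstruction sets.

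The main obstacle I anticipate is dealing carefully with the pro-structure and the basepoint/connectedness issues: the \'{e}tale homotopy type is a pro-simplicial-set, so ``$\pi_2 = 0$'' has to be interpreted as a statement about the pro-homotopy-group system (e.g. that it is pro-zero, or at least Mittag--Leffler zero), and one must ensure that truncation commutes appropriately with the inverse limit defining the homotopy fixed points, so that vanishing of $\pi_2^{\acute{e}t}$ genuinely forces the $P_2 \to P_1$ map to be an equivalence after taking $h\Gam_K$-fixed points. One should also verify that the hypothesis is non-vacuous and correctly stated for curves: for a smooth curve $\ovl{X} \neq \PP^1$ the universal cover is contractible (the curve is a $K(\pi,1)$ in the \'{e}tale sense, by Artin's theorem on the \'{e}tale homotopy type of curves), so indeed $\pi_2^{\acute{e}t}(\ovl{X}) = 0$; I would cite the relevant statement about $K(\pi,1)$-ness of curves to justify the parenthetical remark. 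Once these pro-homotopy technicalities are handled, the rest is a formal consequence of the main theorem.
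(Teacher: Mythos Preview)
Your overall strategy matches the paper's exactly: reduce via the main identifications $X(\A)^{fin}=X(\A)^{h,1}$, $X(\A)^{fin,\Br}=X(\A)^{h}$, and $X(\A)^{h}=X(\A)^{h,2}$ to the single claim $X(\A)^{h,1}=X(\A)^{h,2}$, and then use $\pi_2^{\acute{e}t}(\ovl X)=0$.

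There is, however, a real gap at the key step. You assert that $\pi_2^{\acute{e}t}(\ovl X)=0$ makes the map $P_2\acute{E}t_{/K}(X)\to P_1\acute{E}t_{/K}(X)$ ``an equivalence of pro-objects with $\Gam_K$-action,'' and then deduce bijections on homotopy fixed points. This is not what one gets, and it is not what the paper uses. The hypothesis $\pi_2^{\acute{e}t}(\ovl X)=0$ says that the pro-group $\{\pi_2(\SimpS_{\U})\}_{\U}$ is pro-trivial; it does \emph{not} say that any individual $\pi_2(\SimpS_{\U})$ vanishes, so for no single $\U$ need $\SimpS_{\U,2}\to\SimpS_{\U,1}$ be a weak equivalence. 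What one actually obtains (via Proposition~\ref{p:bar-is-bar}) is that the map $\acute{E}t^{2}_{/K}(X)\to\acute{E}t^{1}_{/K}(X)$ becomes an isomorphism only after applying the \emph{forgetful} functor to $\Pro\Ho(\Set^{\Del^{op}})$. Passing from ``isomorphism on underlying pro-spaces'' to ``equal obstruction sets'' is precisely the content of Theorem~\ref{t:obstruction-iso}, whose proof is substantial (it uses the section/splitting interpretation of $X^1(hK)$, obstruction theory, the spectral sequence of Theorem~\ref{t:ss}, and a torsor argument on pro-pointed-sets). You correctly flag the pro-structure as the obstacle, but you should recognize that resolving it is not a routine check: the paper's proof is exactly ``$\pi_2=0$ $\Rightarrow$ $\ovl{\acute{E}t^{2}}\cong\ovl{\acute{E}t^{1}}$ $\Rightarrow$ apply Theorem~\ref{t:obstruction-iso}.'' Your justification of the curve case is fine and agrees with the paper's remark that such curves are \'etale $K(\pi,1)$'s.
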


\begin{cor}[Theorem ~\ref{t:product-fin-Br}]
Let $K$ be number field and $X,Y$ be two smooth geometrically connected $K$-varieties then

$$(X\times Y)(\A)^{fin,\Br}  = X(\A)^{fin,\Br} \times Y(\A)^{fin,\Br}$$

\end{cor}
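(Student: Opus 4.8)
The plan is to establish the statement on the homotopical side and then transport it through the main theorem above (Theorem~\ref{t:main}), which identifies $X(\A)^{fin,\Br}$ with $X(\A)^h$. A product of smooth geometrically connected $K$-varieties is again smooth and geometrically connected, so that theorem also applies to $X\times Y$, and it is enough to prove
$$ (X\times Y)(\A)^h \;=\; X(\A)^h \times Y(\A)^h . $$
The crux is a Künneth-type statement for the relative étale homotopy type: the two projections induce a map
$$ \acute{E}t_{/K}(X\times_K Y) \lrar \acute{E}t_{/K}(X) \times \acute{E}t_{/K}(Y) $$
which is a weak equivalence of pro-objects in $\Gam_K$-spaces, in the profinitely completed sense used throughout the paper. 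Granting this, the rest is formal. The functors $(-)(hK)$ and $(-)(h\A)$ are built by forming $\Gam_K$- (respectively $\Gam_{K_v}$-) homotopy fixed points and then a restricted product over the places of $K$; homotopy fixed points are homotopy limits, hence commute with (homotopy) products, and a restricted product of products is the product of the restricted products. Thus the projections induce bijections $(X\times Y)(hK)\cong X(hK)\times Y(hK)$ and $(X\times Y)(h\A)\cong X(h\A)\times Y(h\A)$, natural enough to be compatible with the maps $h$ and $\loc_h$; under them $\loc_h$ for $X\times Y$ becomes $\loc_h^X\times\loc_h^Y$, whose image is $\img(\loc_h^X)\times\img(\loc_h^Y)$. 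Since $(X\times Y)(\A)=X(\A)\times Y(\A)$ and, by definition, an adelic point lies in $(-)(\A)^h$ exactly when its image under $h$ lies in the image of $\loc_h$, we conclude that $(x,y)\in(X\times Y)(\A)^h$ if and only if $x\in X(\A)^h$ and $y\in Y(\A)^h$. Feeding this back into Theorem~\ref{t:main} gives the corollary.

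It remains to indicate why the Künneth equivalence holds. All spaces involved are connected because $X$, $Y$ and $X\times Y$ are geometrically connected, so after choosing compatible base points it suffices to check that the comparison map induces an isomorphism on $\pi_1$ and on cohomology with every finite (possibly twisted) coefficient system, and then to upgrade this to a weak equivalence by an obstruction-theoretic argument in the profinite setting. On $\pi_1$ the required statement is the classical fact $\pi_1^{\acute{e}t}(\ovl{X\times Y})\cong\pi_1^{\acute{e}t}(\ovl X)\times\pi_1^{\acute{e}t}(\ovl Y)$ for geometrically connected varieties in characteristic zero — which is our case, the geometric point lying over $\ovl K$; on cohomology it is the Künneth isomorphism in étale cohomology with finite coefficients. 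Alternatively one may fix an embedding $K\hookrightarrow\CC$, use the Artin comparison theorem to replace each geometric étale homotopy type by the profinite completion of the homotopy type of the associated complex space, and invoke the facts that the homotopy type of $X(\CC)\times Y(\CC)$ is the product of those of $X(\CC)$ and $Y(\CC)$ and that profinite completion commutes with finite products of spaces of finite type; the $\Gam_K$-action is diagonal and is preserved by the comparison. Moreover, since Theorem~\ref{t:main} shows that $(-)(\A)^h$ depends only on the second Postnikov truncation of $\acute{E}t_{/K}$ (compare Corollary~\ref{c:cosk2-final}), it is in fact enough to establish the product formula after applying $\cosk_2$, where only $\pi_1$, $\pi_2$ and the first $k$-invariant intervene and the homotopy-theoretic argument is correspondingly lighter.

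The step I expect to be the main obstacle is making the Künneth equivalence for $\acute{E}t_{/K}$ precise and sufficiently functorial in the pro-category of $\Gam_K$-spaces: one must check that the profinite completion in play genuinely commutes with the finite product (this is exactly where the finite-type hypotheses on $X$ and $Y$ and the Künneth formula with finite coefficients are used), that connectedness and the choice of base points cause no difficulty, and — most importantly for the application — that the equivalence is natural enough that $\loc_h$ for $X\times Y$ truly decomposes as $\loc_h^X\times\loc_h^Y$ rather than merely being abstractly isomorphic to such a product. Once this geometric input is secured, the passage to homotopy fixed points, to the adelic picture, and to the obstruction sets is routine, and the final comparison with $X(\A)^{fin,\Br}\times Y(\A)^{fin,\Br}$ is immediate from the main theorem.
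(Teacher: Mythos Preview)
Your outline coincides with the paper's: translate to $X(\A)^h$ via Theorem~\ref{t:h-is-fin-br}, establish a K\"unneth statement for the relative \'etale homotopy type, and then read off the product decomposition of the obstruction sets. The gap is in the middle step, and it is exactly the step you flag as ``formal''.

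Your two proposed arguments for the K\"unneth equivalence --- Artin comparison plus the fact that profinite completion commutes with products, or $\pi_1$/cohomology K\"unneth plus obstruction theory --- both take place on the \emph{underlying} pro-space, i.e.\ after applying the forgetful functor $\ovl{(\bullet)}:\Pro\Ho^{\strict}(\Set^{\Del^{op}}_{\Gam_K})\to\Pro\Ho(\Set^{\Del^{op}})$. (Artin comparison is a statement about $\acute{E}t(\ovl{X})$, not about $\acute{E}t_{/K}(X)$; the sentence ``the $\Gam_K$-action is diagonal and is preserved by the comparison'' does not buy you anything beyond equivariance of the projection map, which is automatic.) So what you actually prove is that the equivariant map
\[
\acute{E}t_{/K}(X\times Y)\;\lrar\;\acute{E}t_{/K}(X)\times\acute{E}t_{/K}(Y)
\]
becomes an isomorphism \emph{after} forgetting the Galois action. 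This does \emph{not} formally imply that it induces bijections on $(hK)$ and $(h\A)$: for pro-objects, a $\Gam_K$-map which is an isomorphism of underlying pro-spaces need not be a levelwise standard weak equivalence, so the usual invariance of homotopy fixed points under standard weak equivalences does not apply directly. The statement that such a map nevertheless gives the same obstruction set is precisely the content of Theorem~\ref{t:obstruction-iso} (resting on Proposition~\ref{p:pro-homotopy-fixed-points-iso}), and that is what the paper invokes at this point. Once you cite Theorem~\ref{t:obstruction-iso}, your argument and the paper's are the same.
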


When studying homotopy fixed points for pro-finite groups we relay heavily on ~\cite{Goe95} who defines for a profinite group $\Gam$ and a simplicial set $\SimpS$  with continuous $\Gam$-action the notion of a homotopy fixed points space $\SimpS^{h\Gam}$.

Given a field $K$ and a simplicial set $\SimpS$  with continuous $\Gam_K$-action we define a suitable notion of "adelic homotopy fixed points space" $\SimpS^{h\A}$. An additional result we obtain is a generalization of the finiteness of the Tate-Shafarevich groups for a finite Galois modules.

\begin{prop}\label{p:finite-preim}
Let $K$ be a number field and let $\SimpS$ be an excellent finite bounded simplicial $\Gam_K$-set. Then the map
$$ {\loc_{\SimpS}}:\pi_0\left(\SimpS^{h\Gam_K}\right) \lrar \pi_0\left(\SimpS^{h\A}\right) $$ 
has finite pre-images. i.e. for every $(x_\nu) \in \SimpS(h\A)$ the set ${\loc_{\SimpS}}^{-1}((x_\nu))$ is finite.
\end{prop}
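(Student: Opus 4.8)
The plan is to reduce the statement to the classical finiteness of Tate–Shafarevich groups for finite Galois modules by an induction up the Postnikov tower of $\SimpS$. Since $\SimpS$ is bounded, it has finitely many nonzero homotopy sheaves $\pi_i$, and since it is excellent and finite these are finite $\Gam_K$-modules. The Postnikov tower expresses $\SimpS$ (up to $\Gam_K$-equivariant weak equivalence) as an iterated sequence of principal fibrations whose fibers are Eilenberg–MacLane objects $K(\pi_i, i)$ built from finite Galois modules; the functors $(-)^{h\Gam_K}$ and $(-)^{h\A}$ (the latter being, roughly, a restricted product over all places $\nu$ of the local homotopy fixed point spaces $\SimpS^{h\Gam_{K_\nu}}$) send fibration sequences to fibration sequences, so on $\pi_0$ we get exact sequences of pointed sets acted on by the relevant $\pi_1$-groups. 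The base case of the induction, $\SimpS = K(M,n)$ for a finite $\Gam_K$-module $M$ and $n\geq 1$, identifies $\pi_0(\SimpS^{h\Gam_K})$ with $H^n(K,M)$ and $\pi_0(\SimpS^{h\A})$ with the restricted product $\prod'_\nu H^n(K_\nu,M)$, and the fiber of the localization map over the base point is exactly $\Sha^n(K,M) = \ker\big(H^n(K,M)\to \prod_\nu H^n(K_\nu,M)\big)$, which is finite for $n=1$ by the classical theorem of Tate–Poitou and vanishes for $n\geq 3$; the case $n=2$ is likewise handled by Poitou–Tate duality. Non-basepoint fibers are handled by the usual twisting argument: the fiber over a class $x$ is a torsor-like set governed by $H^n$ of a twisted form of $M$, which is again a finite module, so the same finiteness applies.

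Concretely, I would first set up notation for the tower: let $\SimpS_{(m)}$ denote the $m$-th Postnikov truncation, so $\SimpS \simeq \SimpS_{(N)}$ for $N$ large, $\SimpS_{(0)}$ is (equivariantly) a point or a finite discrete $\Gam_K$-set, and there are $k$-invariant fibration sequences
$$ K(\pi_{m}, m) \lrar \SimpS_{(m)} \lrar \SimpS_{(m-1)}. $$
Applying $(-)^{h\Gam_K}$ and $(-)^{h\A}$ and taking the long exact sequences of homotopy pointed sets, I obtain for each $m$ a commutative ladder relating $\pi_0(\SimpS_{(m)}^{h\Gam_K}) \to \pi_0(\SimpS_{(m)}^{h\A})$ to $\pi_0(\SimpS_{(m-1)}^{h\Gam_K}) \to \pi_0(\SimpS_{(m-1)}^{h\A})$ with ``fiber'' contributions coming from $H^m$ and $H^{m+1}$ of $\pi_m$ over local and global fields. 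The inductive claim is: the localization map on $\pi_0$ of each truncation has finite fibers. Passing from $\SimpS_{(m-1)}$ to $\SimpS_{(m)}$, a point of $\pi_0(\SimpS_{(m)}^{h\A})$ lying over a fixed adelic point $y$ downstairs and mapping into the image of $\loc$ restricts the preimage to a set which, after choosing one lift, is a quotient/subset of a product of a fiber of the degree-$(m)$ localization map (finite by the EM-case analysis, i.e. $\Sha^m$ or a twist thereof) with the preimage downstairs (finite by induction), together with a possible contribution from $\Sha^{m+1}$ of $\pi_m$; in the relevant range all of these are finite.

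The main obstacle, and the place where real care is needed, is the bookkeeping of base points and the action of fundamental groups in the non-simply-connected, non-abelian setting: the homotopy fixed point spaces are not loop spaces, so the ``long exact sequence'' is only a sequence of pointed sets with group actions, and to control a fiber of $\loc$ over an arbitrary point $(x_\nu)$ one must twist the coefficient modules by the corresponding cocycle and verify that the twisted modules are still finite excellent $\Gam_K$-modules so that the classical $\Sha$-finiteness still applies. One must also check that the adelic homotopy fixed point space genuinely behaves like a restricted product — i.e. that for all but finitely many $\nu$ the local contribution is trivial (unramifiedness / good reduction input) — so that the sums appearing are finite sums of finite sets rather than infinite products. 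Granting the identification of $\pi_0$ of (adelic and global) homotopy fixed points of Eilenberg–MacLane spaces with Galois cohomology (resp. its restricted adelic analogue), which is standard from \cite{Goe95}, and granting Poitou–Tate, the induction then closes and yields the finiteness of ${\loc_{\SimpS}}^{-1}((x_\nu))$ for every adelic homotopy fixed point $(x_\nu)$.
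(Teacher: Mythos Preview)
Your Postnikov-tower induction is essentially the paper's argument in different packaging: the paper works directly with the homotopy-fixed-point spectral sequence $E^2_{s,t} = H^t(\Gam_K,\pi_s(\SimpS))$ of Theorem~\ref{t:ss} and shows that each diagonal map $\loc^\infty_t: E^\infty_{t,t}(K) \to \prod_\nu E^\infty_{t,t}(K_\nu)$ has finite preimages, which is exactly your inductive step read through the associated graded of the Postnikov filtration. The arithmetic input is identical --- finiteness of preimages for $H^1$ via Borel--Serre, finiteness of $\Sha^2$ via Poitou--Tate, and finiteness of global $H^{\geq 3}$ with finite coefficients. The one point where the paper is more explicit than your sketch is precisely the ``unramifiedness'' issue you flag: at $t=2$ one must bound the image of the adelic differential $d^2$, and the paper supplies a short lemma (Lemma~\ref{l:ss-unram}) showing that $d^2: H^0(\Gam_\nu,\pi_1) \to H^2(\Gam_\nu,\pi_2)$ vanishes at every place where the $3$-skeleton of $\SimpS$ is unramified, hence at all but finitely many $\nu$. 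In your formulation this is exactly the statement that the connecting map $\pi_1(\SimpS_{(1)}^{h\A}) \to H^2(\A,\pi_2)$ has finite image, and you would need the same place-by-place argument to close the induction at $m=2$.
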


Shortly after we published on the ArXiv website a first draft of this paper, Ambrus P\'{a}l published a paper with some similar ideas (~\cite{Pal10}). In his paper P\'{a}l also uses in a slightly different approach the \'{e}tale homotopy type to study rational points on varieties.

The paper is organized as follows:
In \S ~\ref{s:etale-homotopy-type} we recall the notion of the \'{e}tale homotopy type $\acute{E}t(X)$ of $X$ and construct the variations used later for the obstructions. In \S ~\ref{s:obstructions} we recall the definition of the classical obstructions and define our various obstructions $X(\A)^{h,n}, X(\A)^{\ZZ h,n}$ using ${\acute{E}t^{\natural}_{/K}}(X)$ and an appropriate notion of homotopy fixed points.

In \S ~\ref{s:homotopy-fixed-points} we study methods to work with homotopy fixed points under pro-finite groups. In \S ~\ref{s:intersection} we prove that in order to study the obstruction obtained in this method it is enough to study the obstructions obtained from each space in the diagram ${\acute{E}t^{\natural}_{/K}}(X)$ separately. This will prove very useful in sections ~\ref{s:equivalence-1} and ~\ref{s:equivalence-2}.

In \S ~\ref{s:dimension-zero} we give basic results for varieties of dimension zero and non-connected varieties. In \S ~\ref{s:finite-descent} the equivalence between finite (finite-abelian) descent and $X(\A)^{h,1}$ ($X(\A)^{\ZZ h,1}$) is proven for smooth geometrically connected varieties.

In \S ~\ref{s:equivalence-1} we prove (under the same assumptions on $X$) that the Brauer-Manin obstruction is equivalent to $X(\A)^{\ZZ h}$.  In \S ~\ref{s:equivalence-2}  we prove that the \'{e}tale-Brauer-Manin obstruction is equivalent to $X(\A)^{h}$ (again for smooth geometrically connected $X$). Finally in \S ~\ref{s:applications} we give some applications of the theory developed throughout the paper.

We would like to thank our advisors D. Kazhdan, E. Farjoun and E. De-Shalit for their essential guidance. We would also like to thank J.-L. Colliot-Th\'{e}l\`{e}ne, A. Skorobogatov, H. Fausk, D. Isaksen and J. Milne for useful discussions.

We would like also to thank the anonymous referee for the very useful comments and
detailed corrections, which we found very constructive and helpful to improve our
manuscript.

Part of the research presented here was done while the second author was staying at the "Diophantine Equations" trimester program at Hausdorff Institute in Bonn. Another part was done while both authors were visiting MIT. We would like to thank both hosts for their hospitality and excellent working conditions.

The second author is supported by the Hoffman Leadership Program .

\section{ The \'{E}tale Homotopy Type}\label{s:etale-homotopy-type}

In this paper we will only be interested in (smooth) \textbf{varieties} over fields. By this we mean (smooth) reduced separated schemes of finite type over a field $K$. In this section we do not assume any restriction on the field $K$. We denote the category of smooth varieties over $K$ by $\Var{/K}$. We will also refer to them as smooth $K$-varieties.

\begin{rem}
In this paper we deal a lot with both algebraic varieties and simplicial sets. In order to distinguish in notation we will use regular font (e.g. $X,Y$,etc.) to denote algebraic varieties and bold font (e.g. $\X,\Y$, etc.) to denote simplicial sets.
\end{rem}

This section is partitioned as follows. In $\S\S ~\ref{ss:hypercoverings}$ we will discuss the notions of skeleton and coskeleton and use them to define the notion of a hypercovering. In $\S\S ~\ref{ss:etale-homotopy}$ we will describe the classical construction of the \'etale homotopy type as well as the construction of a \textbf{relative} variant which is the main object of interest in this paper. In $\S\S ~\ref{ss:homotopy-type-of-X-U}$ we will give some computational tools to help understand the homotopy type of the spaces we shall encounter.

\subsection{Hypercoverings}\label{ss:hypercoverings}
In this section we shall recall the notion of a \textbf{hypercovering} which is used in the definition of the \'etale homotopy type. For a more detailed treatment of the subject  we refer the reader to \S 8 in ~\cite{AMa69} or to ~\cite{Fri82}. We start with a discussion of the notions of skeleton and coskeleton.

\subsubsection{ Skeleton and Coskeleton }
For every $n \geq -1$ we will consider the full subcategory $\Del_{\leq n} \subseteq \Del$ spanned by the objects $\{[i]\in Ob\Del| i \leq n\}$. The natural inclusion $\Del_{\leq n} \hookrightarrow \Del$ induces a truncation functor
$$\tr_n: \Set^{\Del^{op}}\lrar \Set^{\Del_{\leq n}^{op}}$$
that takes a simplicial set and ignores the simplices of degree $ > n$.

This functor clearly commutes with both limits and colimits so it has a left adjoint, given by left Kan extension
$$\sk_n: \Set^{\Del_{\leq n}^{op}}\lrar \Set^{\Del^{op}}$$
also called the \textbf{$n$-skeleton}, and a right adjoint, given by right Kan extension
$$\cosk_n: \Set^{\Del_{\leq n}^{op}}\lrar \Set^{\Del^{op}}$$
called the \textbf{$n$-coskeleton}. To conclude we have the two adjunction.
$$\sk_n \dashv \tr_n \dashv \cosk_n$$
The $n$-skeleton produces a simplicial set that is freely filled with degenerate simplices above degree $n$.
Now we write

$$ Q_n = \sk_{n+1} \circ \tr_{n+1}: \Set^{\Del^{op}}\lrar \Set^{\Del^{op}}$$
and

$$ P_n = \cosk_{n+1} \circ \tr_{n+1}: \Set^{\Del^{op}}\lrar \Set^{\Del^{op}}$$
for the composite functors. Now this two functors satisfy the adjunction
$$(Q_n \dashv P_n): \Set^{\Del^{op}}\lrar \Set^{\Del^{op}}$$

One of the important roles of $P_n$ is that if $\SimpS$ is a Kan simplicial set then $ P_n(\SimpS)$ is it's $n$'th Postnikov piece.
i.e. for $k> n$ we have
$$ \pi_{k}(P_n(\SimpS)) = 0 $$
and for $k\leq n$ the natural map $\SimpS \to P_n(\SimpS)$ induces an isomorphism $\pi_{k}(\SimpS) \lrar \pi_{k}(P_n(\SimpS))$.

Now suppose we replace the category $\Set$ of sets with an arbitrary category $C$. Then if $C$ has finite colimits one has the functor
$$ Q_n: C^{\Del^{op}} \lrar C^{\Del^{op}} $$
defined in an analogous way and if $C$ has finite limits then one has the functor
$$ P_n: C^{\Del^{op}} \lrar C^{\Del^{op}} $$

The skeleton and coskelaton constructions are very useful and shall be used repeatedly along  the paper. One useful application is the construction of a contractible space with free group action.

\begin{define}\label{def:EG}
Let $G$ be a finite group. We have an action of $G$ on itself by multiplication on the left. Now consider $G$ as a functor:
$$* = \Del_{\leq 0}^{op} \lrar \Set_G$$
Define
$$ \E G = \cosk(G) \in  \Set^{\Del^{op}}_G$$
We can write an explicit description of this simplicial $G$-set by
$$ \E G_n = G^{n+1} $$
Note that the action of $G$ on $\E G$ is free and and that $\E G$ is contractible.
\end{define}

\begin{define}\label{def:BG}
Let $G$ be a finite group. Define
$$ \B G = \E G/G $$
Note that $\B G$ is a connected and satisfies
$$\pi_1(\B G)= G$$
Further more it can be checked that the natural map
$$ \B G \lrar P_1(\B G) $$
is an isomorphism of simplicial sets.
\end{define}

\subsubsection{ Hypercoverings }
We will apply these concepts for the case of $C$ being the \'etale site of an algebraic $K$-variety $X$.
\begin{define}
Let $X$ be a $K$-variety. A \textbf{hypercovering} $\U_\bullet \lrar X$ is a simpicial object in the \'{e}tale site over $X$ satisying the following conditions:
\begin{enumerate}
\item
$\U_0 \lrar X$ is a covering in the \'etale topology.
\item
For every $n$, the canonical map
$$ \U_{n+1} \lrar (\cosk_{n}(\tr_{n}(\U_\bullet)))_{n+1} $$
is a covering in the \'etale topology.
\end{enumerate}

\end{define}

\begin{example}
The most common and simple kind of hypercoverings are those defined through the classical \v{C}ech resolution:
\begin{define}\label{d:cech}
Let $X$ be a $K$-variety and $Y \lrar X$ an \'{e}tale covering of $X$. Considering $Y$ as a functor from $\Del^{\leq 0}$ to the \'etale site of $X$ we will define the \v{c}ech hypercovering of $Y$ to be
$$ \check{Y}_\bullet = \cosk_0(Y) $$
As above we can write an explicit description of this hypercovering by
$$ \check{Y}_n = \overbrace{Y \times_{X} ... \times_{X} Y}^{n+1} $$
\end{define}
\end{example}

\subsection{ The \'Etale Homotopy Type }\label{ss:etale-homotopy}
We shall start our discussion by recalling the definition of the \'{e}tale homotopy type functor as defined by Artin and Mazur in ~\cite{AMa69}. We will then describe a variant of this construction which we will use for the rest of the paper.

Both the construction of the \'etale homotopy type and its variant use categories enriched over simplicial sets. In all cases the enrichment is defined in a very similar way, so it's worth while to describe the general pattern.

Suppose that $C$ is an ordinary category which admits finite coproducts. Given a set $A$ and an object $X \in C$ we will denote by
$$ A \otimes X = \coprod_{a \in A} X $$
the coproduct of copies of $P$ indexed by $A$. Note that $- \otimes -$ is a functor from $\Set \times C$ to $C$ (where $\Set$ is the category of sets) and that $B \otimes (A \otimes X)$ is naturally isomorphic to $(B \times A) \otimes X$.

Let $C^{\Del^{op}}$ denote the category of simplicial objects in $C$ and $\Set^{\Del^{op}}$ the category of simplicial sets. Given a simplicial set $\mathbf{S}_\bullet \in \Set^{\Del^{op}}$ and an object $X_\bullet \in C^{\Del^{op}}$ we will denote by $\mathbf{S}_\bullet \otimes X_\bullet$ the object given by
$$ (\mathbf{S}_\bullet \otimes X_\bullet)_n = \mathbf{S}_n \otimes X_n $$
Now for every two objects $X_\bullet,Y_\bullet \in C^{\Del^{op}}$ one can define a mapping simplicial set $\Map(X_\bullet,Y_\bullet) \in \Set^{\Del^{op}}$ by the formula
$$ \Map(X_\bullet,Y_\bullet)_n = \Hom_{C^{\Del^{op}}}(\Del^n \otimes X_\bullet, Y_\bullet) $$
There is a natural way to define composition of these mapping simplicial sets (using the natural map $\Del^n \otimes X \lrar \Del^n \otimes \Del^n \otimes X$ induced from the diagonal $\Del^n \lrar \Del^n \times \Del^n$) which is strictly associative. For example, in the case $C = \Set$ we obtain the usual enrichment of $\Set^{\Del^{op}}$ over itself.

Note that the zero simplices of $\Map(X,Y)$ are just the usual maps in $C^{\Del^{op}}$ from $X$ to $Y$. Further more if $C,D$ are two such categories and $F: C \lrar D$ is a functor which respects coproducts then it induces a natural functor of simplicially enriched categories
$$ C^{\Del^{op}} \lrar D^{\Del^{op}} $$

Now let $X$ be a smooth $K$-variety and consider the category $\Var_{/X}$ of $K$-varieties over $X$. This category admits finite coproducts so one obtains a natural simplicial enrichment of the category $\Var^{\Del^{op}}_{/X}$ as above. Let $HC(X) \subseteq \Var^{\Del^{op}}_{/X}$ denote the full (simplicially enriched) subcategory spanned by the hypercoverings with respect to the \'etale topology. We denote by $I(X) = \Ho(HC(X))$ the homotopy category of $HC(X)$ with respect to this simplicial enrichment.

Now consider the connected component functor (over $K$):
$$ \pi_0: \Var_{/X} \lrar \Set $$
This functor preserves finite coproducts and so induces a functor of simplicially enriched categories
$$ \pi_0^{\Del^{op}}: \Var^{\Del^{op}}_{/X} \lrar \Set^{\Del^{op}} $$
Since one wants to think of $\pi_0^{\Del^{op}}(U_\bullet)$ as a topological space, one can either take the realization of $\pi_0^{\Del^{op}}(U_\bullet)$ (as is done in ~\cite{AMa69}), or equivalently, take the Kan replacement $\Ex^{\infty}(\pi_0^{\Del^{op}}(U_\bullet))$. The second option is more convenient because it allows one to continue working inside the world of simplicial sets. It is equivalent in the sense that the subcategory
$$ \Ho^{\Kan}\left(\Set^{\Del^{op}}\right) \subseteq \Ho\left(\Set^{\Del^{op}}\right) $$
consisting of Kan simplicial sets is equivalent to the homotopy category of topological spaces with CW homotopy type.

As the functor $\Ex^{\infty}$ extends to a functor of simplicially enriched categories so does the composed functor
$ \Ex^{\infty}\left(\pi_0^{\Del^{op}}\left(\bullet\right)\right) $.
Restricting this functor to $HC(X)$ and descending to the respective homotopy categories one obtains a functor
$$ \acute{E}t(X): I(X) \lrar \Ho^{\Kan}\left(\Set^{\Del^{op}}\right) $$
Since the category $I(X)$ is cofiltered (Corollary 8.13 ~\cite{AMa69}) we can consider $\acute{E}t(X)$ as a \textbf{pro-object} in $\Ho^{\Kan}(\Set^{\Del^{op}})$, i.e. an object in the pro-category of $\Ho^{\Kan}(\Set^{\Del^{op}})$.

For a category $C$ we will denote the pro-category of $C$ by $\Pro\;C$. Recall that objects of $\Pro\;C$ are diagrams $\{X_\alp\}_{\alp \in I}$ of objects of $C$ indexed by a cofiltered category $I$ and that
$$ \Hom_{\Pro\;C}\left(\{X_\alp\}_{\alp \in I}, \{Y_\bet\}_{\alp \in J}\right) = \lim_{\bet \in J}\colim_{\alp \in I} \Hom_{C}(X_\alp,Y_\bet) $$

Now if $f: X \lrar Y$ is a map of $K$-varieties and $U_\bullet \lrar Y$ is a hypercovering of $Y$ we can pull it back to obtain a hypercovering $f^*U_\bullet \lrar X$. One then gets a natural map of simplicial sets
$$ \Ex^{\infty}\left(\pi_0^{\Del^{op}}(f^*\U)\right) \lrar \Ex^{\infty}\left(\pi_0^{\Del^{op}}(\U)\right) $$
These natural maps fit together to form a map
$$ \acute{E}t(X) \lrar \acute{E}t(Y) $$
in $\Pro \Ho^{\Kan}(\Set^{\Del^{op}})$. This exhibits $\acute{E}t$ as a functor
$$ \acute{E}t:\Var_{/K} \lrar \Pro \Ho^{\Kan}(\Set^{\Del^{op}}) $$
This is the \textbf{\'etale homotopy type} functor defined in ~\cite{AMa69}.

In ~\cite{AMa69} Artin and Mazur work with a certain localization of $\Pro\Ho^{\Kan}\left(\Set^{\Del^{op}}\right)$, via Postnikov towers. Postinkov towers are a way of filtering a topological space by higher and higher homotopical information.
In order to use Postinkov towers here we need a functorial and simplicial way to describe them. Such a description can found in ~\cite{GJa99} by using the functor $P_n = \cosk_{n+1} \circ \tr_n$ defined in the previous section.

Note that if $\SimpS$ is a Kan simplicial set then $P_n(\SimpS)$ will be a Kan simplicial set as well, i.e. $P_n$ can be considered as a functor from Kan simplicial sets to Kan simplicial sets. Further more $P_n$ extends to a functor of simplicially enriched categories. Descending to the homotopy category we get a functor
$$ P_n: \Ho^{\Kan}\left(\Set^{\Del^{op}}_{\Gam_K}\right) \lrar \Ho^{\Kan}\left(\Set^{\Del^{op}}_{\Gam_K}\right) $$
Then for $\SimpS_I = \{\SimpS_\alp\}_{\alp \in I} \in \Pro \Ho^{\Kan}(\Set^{\Del^{op}})$ one defines
$$ \SimpS_I^{\natural} = \{P_n(\SimpS_\alp)\}_{n,\alp} $$
In many ways the object $\SimpS_I^{\natural}$ is better behaved then $\SimpS_I$. We will imitate this stage as well in our construction of the relative analogue of $\acute{E}t(-)$.

We now come to the construction of the relative analogue. If $X$ is a variety over $K$ then it admits a natural structure map $X \lrar \spec(K)$. We wish to replace the functor $\pi_0$ from $K$-varieties to sets with a \textbf{relative} version of it. This relative functor should take a variety over $K$ and return a "set over $K$", i.e. a sheaf of sets on $\spec(K)$ (with respect to the \'etale topology).

Without getting into the formalities of the theory of sheaves let us take a shortcut and note that to give an \'{e}tale sheaf of sets on $\spec(K)$ is equivalent to giving a set with a $\Gam_K$ action such that each element has an open stabilzer. We will denote such objects by the name \textbf{$\Gam_K$-sets} and their category by $\Set_{\Gam_K}$.
\begin{rem}
Some authors use the term \textbf{discrete} $\Gam_K$-set in order to emphasize the continuous action of $\Gam$. Since we will \textbf{never} consider non-continuous actions, and in order to improve readability, we chose to omit this adjective.
\end{rem}

Note that $\Set_{\Gam_K}$ admits coproducts and so we have a natural simplicial enrichment of $\Set^{\Del^{op}}_{\Gam_K}$. The relative version of $\pi_0$ is the functor
$$ {\pi_0}_{/K}: \Var/K \lrar \Set_{\Gam_K} $$
which takes the $K$-variety $X$ to the $\Gam_K$-set of connected components of $$\overline{X} = X\otimes_K\ovl{K}$$. This functor preserves coproducts and so induces a functor of simplicially enriched categories
$$ {\pi_0}^{\Del^{op}}_{/K}: \Var^{\Del^{op}}_{/K} \lrar \Set^{\Del^{op}}_{\Gam_K} $$

As before we don't really want to work in $\Set^{\Del^{op}}_{\Gam_K}$ itself, but with some localization of it. In ~\cite{Goe95} Goerss considers two simplicial model structures on the simplicially enriched category $\Set^{\Del^{op}}_{\Gam_K}$ of simplicial Galois sets. In the first one, which is called the \textbf{strict} model structure, the weak equivalences are equivariant maps of $f: \X \lrar \Y$ of simplicial Galois sets such that the induced map
$$ f^\Lam: \X^{\Lam} \lrar \Y^{\Lam} $$
is a weak equivalence of simplicial sets for every open normal subgroup $\Lam \triangleleft \Gam$ (in what follows we will use the notation $\Lam \fns \Gam$ to denote an open normal subgroup). In the second model structure, which we will refer to as the \textbf{standard} model structure, weak equivalences are equivariant maps $f: \X \lrar \Y$ of simplicial Galois sets which induce a weak equivalence on the underlying simplicial sets. In both cases the cofibrations are just injective maps (and hence all the objects are cofibrant) and for the strict model structure one has also a concrete description of the fibrations: they are maps $f: \X \lrar \Y$ such that the induced maps $ f^\Lam: \X^{\Lam} \lrar \Y^{\Lam} $ are Kan fibrations for every $\Lam \fns \Gam$.

We will denote by $\Ho\left(\Set^{\Del^{op}}_{\Gam_K}\right)$ the homotopy category of $\Set^{\Del^{op}}_{\Gam_K}$ with respect to the simplicial enrichment given above. The localized homotopy categories with respect to the strict and standard model structures will be denoted by $\Ho^{\strict}\left(\Set^{\Del^{op}}_{\Gam_K}\right)$ and $\Ho^{\standard}\left(\Set^{\Del^{op}}_{\Gam_K}\right)$ respectively. They can be realized as a sequence of full sub-categories

$$ \Ho^{\standard}\left(\Set^{\Del^{op}}_{\Gam_K}\right) \subseteq \Ho^{\strict}\left(\Set^{\Del^{op}}_{\Gam_K}\right) \subseteq \Ho\left(\Set^{\Del^{op}}_{\Gam_K}\right) $$
obtained by restricting to standardly fibrant and strictly fibrant objects respectively.

Now similar to above we would like to compose ${\pi_0}^{\Del^{op}}_{/K}$ with the strict fibrant replacement functor. It is convenient to note that strict fibrant replacement can actually be done using the Kan replacement functor $\Ex^{\infty}$: the functor $\Ex^{\infty}$ induces also a functor $\Set^{\Del^{op}}_{\Gam} \lrar \Set^{\Del^{op}}_{\Gam}$ (which by abuse of notation we will also call $\Ex^{\infty}$), and we have the following observation:

\begin{lem}\label{l:Ex_inf_is_fib}
If $\X \in \Set_\Gam^{\Del^{op}}$ is a simplicial $\Gam$-set then object $\Ex^{\infty}(\X)$ is strictly fibrant and the map $\X \lrar \Ex^{\infty}(\X)$ is a strict weak equivalence.
\end{lem}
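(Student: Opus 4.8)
The plan is to deduce both assertions from the corresponding classical facts about $\Ex^{\infty}$ on ordinary simplicial sets, by means of the single natural identification
$$ \Ex^{\infty}(\X)^{\Lam} \;\cong\; \Ex^{\infty}\!\left(\X^{\Lam}\right) $$
valid for every open normal subgroup $\Lam \fns \Gam$. Granting it, the first assertion is immediate: since the terminal object of $\Set^{\Del^{op}}_{\Gam_K}$ carries the trivial action, strict fibrancy of $\Ex^{\infty}(\X)$ means exactly that $\Ex^{\infty}(\X)^{\Lam}$ is a Kan complex for every $\Lam \fns \Gam$, and $\Ex^{\infty}(\X^{\Lam})$ is a Kan complex by Kan's theorem. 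For the second assertion, under the identification the map on $\Lam$-fixed points induced by $\X \to \Ex^{\infty}(\X)$ is the classical natural map $\X^{\Lam} \to \Ex^{\infty}(\X^{\Lam})$, which is a trivial cofibration of simplicial sets, hence a weak equivalence; as this holds for all $\Lam \fns \Gam$, the map $\X \to \Ex^{\infty}(\X)$ is a strict weak equivalence.

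To establish the identification I would first observe that $\Ex$ already commutes with $\Lam$-fixed points. Indeed $\Ex(\Y)_n = \Hom_{\Set^{\Del^{op}}}(\standard \Del^n, \Y)$ with $\Gam$ acting by post-composition, so an $n$-simplex $\phi\colon \standard \Del^n \to \Y$ is $\Lam$-fixed precisely when $\phi$ factors through $\Y^{\Lam} \hookrightarrow \Y$; thus $\Ex(\Y)^{\Lam} = \Ex(\Y^{\Lam})$ naturally in $\Y$, and iterating gives $\Ex^{n}(\X)^{\Lam} = \Ex^{n}(\X^{\Lam})$ for all $n$. Then I would pass to the colimit: the canonical map $\Y \to \Ex(\Y)$ is a monomorphism, being induced by precomposition with the last-vertex map $\standard \Del^n \to \Del^n$, which is a levelwise surjection; so $\Ex^{\infty}(\X) = \colim_n \Ex^{n}(\X)$ is an increasing union of sub-$\Gam$-sets. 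Taking $\Lam$-fixed points of an increasing union of $\Lam$-sets commutes with the union --- a $\Lam$-fixed element of the union lies, $\Lam$-fixedly, in one of the terms --- whence $\Ex^{\infty}(\X)^{\Lam} = \colim_n \Ex^{n}(\X)^{\Lam} = \colim_n \Ex^{n}(\X^{\Lam}) = \Ex^{\infty}(\X^{\Lam})$.

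Two routine verifications finish the argument. One must record that $\Ex^{\infty}(\X)$ genuinely lies in $\Set^{\Del^{op}}_{\Gam_K}$: a simplex of $\Ex^{n}(\X)$ is a map out of a simplicial set with finitely many nondegenerate simplices, so its stabilizer is a finite intersection of (open) stabilizers of simplices of $\X$, hence open, and every simplex of $\Ex^{\infty}(\X)$ lies in some $\Ex^{n}(\X)$. And one cites the two classical inputs used above: for any simplicial set $S$, the complex $\Ex^{\infty}(S)$ is Kan and the natural map $S \to \Ex^{\infty}(S)$ is a trivial cofibration.

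I do not anticipate a real obstacle; the one point requiring care --- and the step I would flag --- is the interchange of $\Lam$-fixed points with the sequential colimit defining $\Ex^{\infty}$. For an infinite profinite group, the $\Lam$-fixed-point functor does not commute with arbitrary filtered colimits, so it is essential that this colimit is taken along monomorphisms, which turns it into a literal union and reduces the interchange to the elementary fact that a fixed point of a union is a fixed point of one of its terms.
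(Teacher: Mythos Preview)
Your proof is correct and follows essentially the same route as the paper: both reduce to the identification $\Ex^{\infty}(\X)^{\Lam} = \Ex^{\infty}(\X^{\Lam})$, establish it by showing $\Ex(\bullet)^{\Lam} = \Ex(\bullet^{\Lam})$ and then passing to the colimit along inclusions. The only cosmetic differences are that the paper justifies $\Ex(\X)^{\Lam} = \Ex(\X^{\Lam})$ in one line by noting that $\Ex$ has a left adjoint (barycentric subdivision) and hence preserves limits, whereas you unfold this directly; and you additionally verify that $\Ex^{\infty}(\X)$ has open stabilizers, a point the paper leaves implicit.
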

\begin{proof}
We need to show that for every $\Lambda \fns \Gamma$ the simplicial set $\Ex^{\infty}(\X)^{\Lambda}$ is Kan and the map
$\X^{\Lambda} \lrar \Ex^{\infty}(\X)^{\Lambda}$ is a weak equivalence. Both claims follow easily once one shows that
$$\Ex^{\infty}(\X)^{\Lambda}  = \Ex^{\infty}(\X^{\Lambda}) $$
This in turn follows from the fact that $\Ex^{\infty}(\X)$ is the colimit of the sequence of \textbf{inclusions}:
$$ \X \hrar \Ex(\X) \hrar \Ex(\Ex(\X)) \hrar \Ex(\Ex(\Ex(\X))) \hrar ...$$
and
$$ \Ex(\X)^{\Lambda}  = \Ex(\X^{\Lambda}) $$
since $\Ex$ has a left adjoint (given by barycentric subdivision).
\end{proof}

We now take the functor $\Ex^{\infty}\left(\pi^0_{/K}(\bullet)\right)$ restricted to $HC(X) \subseteq \Var^{\Del^{op}}_{/K}$ and descend to the respective homotopy categories. We end up with a functor

$$ \acute{E}t_{/K}(X): I(X) \lrar \Pro \Ho^{\strict}\left(\Set^{\Del^{op}}_{\Gam_K}\right) $$
We consider $\acute{E}t_{/K}(X)$ to be the \textbf{relative analogue} of $\acute{E}t(X)$. Now just as for $\acute{E}t$ one can use pullbacks of hypercovering in order to make $\acute{E}t_{/K}$ into a functor:

$$\acute{E}t_{/K}: \Var_{/K} \lrar \Pro \Ho^{\strict}\left(\Set^{\Del^{op}}_{\Gam_K}\right) $$

As in the case of the regular \'etale homotopy type it will be better to work with an appropriate Postinkov tower ${\acute{E}t^{\natural}_{/K}}(X)$ of ${\acute{E}t_{/K}}(X)$, i.e. we want a way to estimate an object by a tower of "bounded" objects. When working with strictly fibrant objects the relevant notion of boundedness is stronger:
\begin{define}
Let $\Gam$ be a pro-finite group and $\X$ a simplicial $\Gam$-set. We will say that $\X$ is \textbf{strictly bounded} if it is strictly fibrant and if there exists an $N$ such that for all $\Lam \fns \Gam$ the simplicial set $X^{\Lam}$ is $N$-bounded (i.e. all the homotopy groups $ \pi_n\left(X^{\Lam}\right) $ are trivial for $n > N$. Note in particular that the empty set $\empty$ is considered $N$-bounded for every $N \geq 0$).
\end{define}

Let let $\X$ be a strictly fibrant simplicial $\Gam_K$-set. Recall that the underlying simplicial set of $\X$ is Kan so we can consider
$$ P_n(\X) = \cosk_{n+1}(\tr_{n+1}(\X)) $$
Since $P_n$ is a functor we have an induced action of $\Gam_K$ on $P_n(\X)$. Note that under this action the stabilizer of each simplex in $P_n(\X)$ will be open and so $P_n(\X)$ is a simplicial $\Gam_K$-set. Further more since
$$ P_n(X)^{\Lam} = P_n\left(X^{\Lam}\right) $$
for each $\Lam \fns \Gam_K$ we get that $P_n(X)$ is a strictly bounded simplicial $\Gam_K$-set.

Hence we can think of $P_n$ as a simplicially enriched functor from the category of strictly fibrant simplicial Galois sets to itself. Descending to the homotopy category we get a functor
$$ P_n: \Ho^{\strict}\left(\Set^{\Del^{op}}_{\Gam_K}\right) \lrar \Ho^{\strict}\left(\Set^{\Del^{op}}_{\Gam_K}\right) $$
Then for $\SimpS_I = \{\SimpS_\alp\}_{\alp \in I} \in \Pro \Ho^{\strict}(\Set^{\Del^{op}})$ we define as above
$$ \SimpS_I^{\natural} = \{P_n(\SimpS_\alp)\}_{n,\alp} $$

It will also be convenient to consider the pro-objects
$$ \SimpS_I^{n} = \{P_k(\SimpS_\alp)\}_{k \leq n ,\alp} $$
Where for $n= \infty$  we define:
$$ \SimpS_I^{\infty} = \SimpS_I^{\natural} $$

Note also that since $P_n(\bullet)$ is an augmented functor so is $(\bullet)^{n}$ and so for every $0 \leq n \leq \infty$ we have a natural map
$$ \SimpS_I \lrar \SimpS_I^{n}$$

We will use the following notation:
\begin{define}
Let $X$ be a $K$-variety and $\U \lrar X$ a hypercovering. We will denote
$$ \SimpS_{\U}  = \Ex^{\infty}\left(\pi^{\Del^{op}}_{/K}(\U)\right).$$
and
$$\SimpS_{\U,k} = P_k(\SimpS_{\U}) $$

\end{define}

Under this notation one has
$$ {\acute{E}t_{/K}}(X) = \{\SimpS_{\mcal{U}}\}_{\U \in I(X)} $$
$$ {\acute{E}t^{n}_{/K}}(X) = \{\SimpS_{\mcal{U}, k}\}_{\U \in I(X), k \leq n} $$

There are several important properties that the the underlying simplicial sets in the diagram of ${\acute{E}t^{n}_{/K}}(X)$ satisfy. Here are the properties that we will be interested in:
\begin{define}\label{df:typeof_spaces}
Let $\Gam$ be a pro-finite group. A simplicial $\Gam$-set $\X$ will be called
\begin{enumerate}
\item
\textbf{Finite} if $\pi_n(\X)$ is finite for $n \geq 1$.
\item
\textbf{Excellent} if the action of $\Gam$ on $\X$ factors through a finite quotient of $\Gam$.
\item
\textbf{nice} if the action of $\Gam$ on every skeleton $\X_n$ factors through a finite quotient of $\Gam$.
\end{enumerate}
\end{define}
\begin{rem}
When we say a finite quotient of a pro-finite group we always mean a \textbf{continuous} finite quotient, i.e. a finite quotient in the category of pro-finite groups.
\end{rem}

Now let $X/K$ be an algebraic $K$-variety and $\U \lrar X$ a hypercovering. Then clearly ${\pi_0}_{/K}(\U)$ is nice and it is easy to show that $\SimpS_{\mcal{U}} = \Ex^{\infty}\left({\pi_0}_{/K}(\U)\right)$ is then nice as well (this is one of the advantages of using $\Ex^{\infty}$ as strict fibrant replacement). Then we see that for each $k < \infty$ the simplicial $\Gam_K$-set $\X_{\U,k}$ is \textbf{excellent} and \textbf{strictly bounded}.

It can be shown (~\cite{AMa69}) that if $X$ is a smooth $K$-variety then $\SimpS_\mcal{U}$ is finite. Hence in that case $\SimpS_{\mcal{U},k}$ is finite as well.

We finish this subsection with a basic comparison result connecting the relative notion $\acute{E}t_{/K}(X)$ and the \'etale homotopy type $\acute{E}t(\ovl{X})$ of $\ovl{X} = X \otimes_K \ovl{K}$. Note that by forgetting the group action we obtain a forgetful functor from $\Ho(\Set^{\Del^{op}}_{\Gam_K})$ to $\Ho(\Set^{\Del^{op}})$. Prolonging this functor we obtain a forgetful functor
$$ F: \Pro \Ho^{\strict}(\Set^{\Del^{op}}_{\Gam_K}) \lrar \Pro\Ho^{\Kan}(\Set^{\Del^{op}}) $$
\begin{prop}\label{p:bar-is-bar}
Let $K$ be a field and $X$ a $K$-variety. Then there is an isomorphism in $\Pro\Ho^{\Kan}(\Set^{\Del^{op}})$:
$$  f:{\acute{E}t}^{n}(\ovl{X}) \lrar F\left({\acute{E}t^{n}_{/K}}(X)\right) $$
\end{prop}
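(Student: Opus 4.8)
The plan is to realize $f$ as the comparison map induced by base change along $\Spec\ovl K\to\Spec K$, and to deduce that it is an isomorphism from a cofinality statement about hypercoverings.

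\textbf{Construction of $f$.} First I would observe that for a hypercovering $\U_\bullet\to X$ in the \'etale topology, the base change $\ovl\U_\bullet := \U_\bullet\otimes_K\ovl K\to\ovl X$ is again a hypercovering: \'etale covers are stable under $-\otimes_K\ovl K$, and $\left(\cosk_m\tr_m(-)\right)_{m+1}$, being a finite limit of fibre products over the base, commutes with $-\otimes_K\ovl K$, so the coskeleton conditions survive. Since $-\otimes_K\ovl K$ preserves finite coproducts it induces a simplicially enriched functor, hence a functor $\beta\colon I(X)\to I(\ovl X)$, $\U\mapsto\ovl\U$. The bookkeeping point is that, by the very definition of ${\pi_0}_{/K}$, forgetting the $\Gam_K$-action on ${\pi_0}^{\Del^{op}}_{/K}(\U)$ returns exactly $\pi_0^{\Del^{op}}(\ovl\U)$; as $\Ex^{\infty}$ and each $P_k$ commute with the forgetful functor, this gives natural identifications $F(\X_{\U,k}) = P_k\!\left(\Ex^{\infty}\pi_0^{\Del^{op}}(\ovl\U)\right)$, the right-hand side being precisely the entry of $\acute{E}t^{n}(\ovl X)$ indexed by $(\ovl\U,k)$. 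Feeding $\beta\times\mathrm{id}$ on the indexing categories together with these (levelwise) isomorphisms into the definition of a pro-morphism yields $f$.

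\textbf{Reduction to cofinality.} Because the components of the natural transformation underlying $f$ are already isomorphisms in $\Ho^{\Kan}(\Set^{\Del^{op}})$, the map $f$ will be an isomorphism in $\Pro\,\Ho^{\Kan}(\Set^{\Del^{op}})$ as soon as $\beta\colon I(X)\to I(\ovl X)$ is cofinal, i.e. restricting $\acute{E}t^{n}(\ovl X)$ along $\beta$ does not change its isomorphism class. Both $I(X)$ and $I(\ovl X)$ are cofiltered (Corollary 8.13 of \cite{AMa69}), so the essential point becomes the single statement $(\ast)$: every hypercovering $\V_\bullet\to\ovl X$ admits, over $\ovl X$, a morphism $\ovl\U_\bullet\to\V_\bullet$ for some hypercovering $\U_\bullet\to X$. (The remaining connectedness condition on the relevant comma categories is routine given $(\ast)$ and the cofilteredness of $I(X)$ and $I(\ovl X)$.)

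\textbf{Proof of $(\ast)$ --- the crux.} Writing $\ovl K=\colim_L L$ over the finite separable subextensions $L/K$ inside $\ovl K$, each with its inclusion $\sigma_0\colon L\hookrightarrow\ovl K$, I would proceed by induction on skeleta. Given $\V_\bullet\to\ovl X$, standard limit arguments (EGA IV, \S 8) descend, for each fixed $m$, the truncation $\tr_m\V_\bullet$ together with its structure maps to a truncated hypercovering over $X_L := X\otimes_K L$ for $L$ large enough. The crucial feature is that $X_L\to X$ is a finite \'etale cover; hence if a scheme $W\to\ovl X$ occurring in $\V_\bullet$ descends to $W^{(L)}\to X_L$ with $W^{(L)}\otimes_{X_L,\sigma_0}\ovl X\cong W$, then $W^{(L)}\to X_L\to X$ is an \'etale cover of $X$ and $W^{(L)}\otimes_K\ovl K$ maps naturally, over $\ovl X$, onto $W^{(L)}\otimes_{X_L,\sigma_0}\ovl X=W$. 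Running this degree by degree --- at stage $m$ enlarging $L$ if necessary and choosing $\U_m$ to be an \'etale cover of $\left(\cosk_{m-1}\tr_{m-1}\U\right)_m$ equipped with a map over $X_L$ to a chosen descent of $\V_m$ --- should produce a simplicial scheme $\U_\bullet$ over $X$ which is a hypercovering, together with a compatible system $\ovl\U_m\to\V_m$, i.e. the morphism $\ovl\U_\bullet\to\V_\bullet$ demanded by $(\ast)$.

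The hard part will be this last step: one must verify that $\U_\bullet$ satisfies the hypercovering conditions \emph{in the \'etale site of $X$} --- where the coskeleta are computed with fibre products over $X$, not over $X_L$ --- and one must carry the identification $L\otimes_K\ovl K\cong\ovl K^{[L:K]}$ and the fixed embedding $\sigma_0$ coherently through every base change, so that the refinement map genuinely lands in $\V_\bullet$ and not in a Galois conjugate of it. Two minor points to dispatch along the way: if $\ovl K$ is taken to be the full algebraic closure rather than the separable closure, the \'etale site (hence $\pi_0$, hypercoverings and $\acute{E}t(-)$) is insensitive to the purely inseparable part, so no generality is lost; and the underlying simplicial set of a strictly fibrant simplicial $\Gam_K$-set is Kan --- it is the filtered union along inclusions of the Kan complexes $\X^{\Lam}$, $\Lam\fns\Gam_K$ --- so $F$ really does land in $\Ho^{\Kan}(\Set^{\Del^{op}})$ and commutes with the functors $P_k$, as used above.
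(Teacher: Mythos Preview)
Your overall architecture matches the paper's: realize $F\bigl(\acute{E}t^{n}_{/K}(X)\bigr)$ as the sub-diagram of $\acute{E}t^{n}(\ovl X)$ indexed by $\beta\colon I(X)\to I(\ovl X)$, $\U\mapsto\ovl\U$, and then prove that this sub-diagram is cofinal. The construction of $f$ and the reduction to a refinement statement are carried out in the paper exactly as you describe.

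Where you diverge is in the proof of the refinement statement $(\ast)$, and there your route is both harder and slightly mis-aimed. The paper first observes that, since only $\SimpS_{\mathcal V,k}=P_k(\SimpS_{\mathcal V})$ enters, one may replace $\mathcal V$ by $\cosk_{k+1}\tr_{k+1}\mathcal V$; this is a finite amount of data and therefore descends to a single finite extension $L/K$. It then sets $\U=R_{L/K}\mathcal V$, the levelwise (Weil) restriction of scalars along the finite \'etale map $X\otimes_K L\to X$, and is done. The reason this works cleanly is that restriction of scalars along a finite \'etale morphism is a right adjoint to base change: it preserves \'etaleness and all finite limits, in particular the fibre products defining $(\cosk_m\tr_m(-))_{m+1}$, so it sends hypercoverings of $X\otimes_K L$ to hypercoverings of $X$ in one stroke. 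The required map $\ovl\U_\bullet\to\mathcal V_\bullet$ over $\ovl X$ is then just the counit projection $\prod_\sigma\mathcal V^\sigma\to\mathcal V$.

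Your inductive scheme is trying to solve by hand exactly the problem that Weil restriction solves functorially. The specific worry you flag --- that the coskeleta computed over $X$ differ from those over $X\otimes_K L$ --- is real if one merely regards $\mathcal V^{(L)}$ as an $X$-scheme via the forgetful functor (that object is \emph{not} a hypercovering of $X$), but it evaporates once you use the right adjoint. Moreover, your formulation of $(\ast)$ asks to refine an arbitrary $\mathcal V$ at all simplicial levels simultaneously while ``enlarging $L$ if necessary'' at each stage; without the preliminary coskeletal truncation this would force an unbounded tower of extensions and make the compatibility of the maps $\ovl\U_m\to\mathcal V_m$ genuinely delicate. The paper's truncation step is not cosmetic: it is what pins $L$ down once and for all and makes the one-line construction possible.
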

\begin{proof}
Note that the indexing category of ${\acute{E}t^{n}_{/K}}(X)$ is naturally contained in the indexing category of ${\acute{E}t}^{n}(\ovl{X})$ and this inclusion identifies $F\left({\acute{E}t^{n}_{/K}}(X)\right)$ with a sub-diagram of ${\acute{E}t}^{n}(\ovl{X})$. This yields a natural map
$$  f:{\acute{E}t}^{n}(\ovl{X}) \lrar F\left({\acute{E}t^{n}_{/K}}(X)\right) $$

In order to show that $f$ is an isomorphism one needs to show that this sub-diagram is cofinal. For concretely, one needs to show that for every hypercovering $\mcal{V}_\bullet \lrar  \ovl{X}$ defined over $\ovl{K}$ and every $0 \leq k \leq n$ there exists a hypercovering $\U_\bullet \lrar X$ (defined over $K$) such that $\SimpS_{\ovl{\U},k}$ dominates  $\SimpS_{\mcal{V},k}$.

First note that it is not restrictive to consider $P_k(\mcal{V})$ instead of $\mcal{V}$. Thus we may assume that $\mcal{V}$ is defined over some finite field extension $L/K$. Then it is clear that we can take
$$ \U =  R_{L/K}\mcal{V} $$
where $R_{L/K}$ is the restriction of scalars functor applied levelwise.

\end{proof}

\subsection{ The Homotopy Type of $\X_{\U}$ }\label{ss:homotopy-type-of-X-U}
Let $X$ be a smooth geometrically connected $K$-variety and $\U_\bullet \lrar X$ a hypercovering. In this section we will give basic results which help to analyze the homotopy type of the spaces $\X_{\U}$ which appear in $\acute{E}t_{/K}(X)$.

Recall that given a map $ f:Y \lrar X $ and an \'{e}tale hypercovering $\U_\bullet \lrar X$ one can levelwise pull $\U$ back to a hypercovering of $Y$. We shall denote this hypercovering by
$$ f^*\U_\bullet \lrar Y $$
Note that this construction gives a natural map
$$ \SimpS^{f} : \SimpS_{f^*\U} \lrar \SimpS_{\U} $$

Now let $X/K$ be a smooth geometrically connected variety. In this case $X$ has a generic point
$$\xi:\spec(K(X)) \lrar X $$
where $K(X)$ is the function field of $X$ (over $K$).

Note that $\spec(K(X))$ is not a $K$-variety. However, the schemes $\spec(K(X))$ and $\spec(K(X)) \otimes \ovl{K} \cong \spec(\ovl{K}(\ovl{X}))$ are \textbf{Noetherian}, which means that the functor $\pi_0$ is well defined on \'etale schemes over them. In fact, since $K(X)$ is a field the \'{e}tale site of $\spec K(X)$ can be identified with the site of finite discrete $\Gam_{K(X)}$-sets. Under this identification a hypercovering of $\spec(K(X))$ corresponds to a Kan contractible simplicial discrete $\Gam_{K(X)}$-set which is levelwise finite. By abuse of notation we shall denote the simplicial $\Gam_{K(X)}$-set corresponding to a hypercovering $\U \lrar \spec(K(X))$ simply by $\U$.

One can then easily check that
$$ {\pi_0}_{/K}(\U) = \U/\Gam_{\ovl{K}(\ovl{X})} $$
where the $\Gam_K$ action on the right hand side is induced from the $\Gam_{K(X)}$ action on $\U$ and the short exact sequence
$$ 1 \lrar \Gam_{\ovl{K}(\ovl{X})} \lrar \Gam_{K(X)} \lrar \Gam_K \lrar 1 $$

Now consider a hypercovering $\U_\bullet \lrar X$ and its pullback $\xi^*\U$ to $\spec(K(X))$. We have a map
$$\SimpS^{\xi} : \SimpS_{\xi^*\U} \lrar \SimpS_{\U} $$
Since connected components can be read of the fiber over the generic point this map is an isomorphism of simplicial $\Gam_K$-sets. This allows one to interpret $\SimpS_{\U}$ as a quotient of the form
$$ \SimpS_{\U} = (\xi^*\U)/\Gam_{\ovl{K}(\ovl{X})} $$
where $\xi^*\U$ is a Kan contractible simplicial $\Gam_{K(X)}$-set. This interpretation of $\SimpS_\U$ will be a useful later in order to calculate invariants of $\SimpS_\U$. For example we immediately get the following conclusion:

\begin{cor}
Let $X/K$ be a smooth geometrically connected variety and
$\U_\bullet \lrar X$ a hypercovering. Then $\SimpS_\U$ connected.
\end{cor}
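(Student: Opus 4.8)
The plan is to use the description of $\SimpS_\U$ as a quotient of a Kan contractible simplicial $\Gam_{K(X)}$-set established just before the statement. Concretely, we have the identification $\SimpS_\U \cong (\xi^*\U)/\Gam_{\ovl{K}(\ovl{X})}$, where $\xi^*\U$ is the simplicial $\Gam_{K(X)}$-set corresponding to the pullback of the hypercovering $\U$ along the generic point, and this $\xi^*\U$ is Kan and contractible. The key observation is that $\pi_0$ of a simplicial set is a (left adjoint) functor commuting with colimits, hence commutes with the quotient by a group action: $\pi_0(\SimpS_\U) = \pi_0\bigl((\xi^*\U)/\Gam_{\ovl{K}(\ovl{X})}\bigr) = \pi_0(\xi^*\U)/\Gam_{\ovl{K}(\ovl{X})}$.

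First I would note that since $\xi^*\U$ is contractible, in particular it is connected, so $\pi_0(\xi^*\U)$ is a single point. Then the quotient of a one-point set by any group action is again a one-point set, so $\pi_0(\SimpS_\U)$ is a single point, i.e. $\SimpS_\U$ is connected. An alternative route, avoiding the generic-point machinery, is to observe directly that for a hypercovering $\U_\bullet \lrar X$ with $X$ geometrically connected, the scheme $\U_0$ pulled back to $\ovl{K}$ is a nonempty \'etale cover of the connected scheme $\ovl{X}$, hence $\pi_{0/K}(\U_0) \neq \emptyset$; combined with the fact that $\pi_0$ of a simplicial set only depends on the $0$- and $1$-simplices and that the map $(\cosk_0 \tr_0 \U)_1 = \U_0 \times_X \U_0 \to \U_1$ part of the hypercovering condition forces the connected components of $\ovl{\U}_0$ to all become identified in $\pi_0(\SimpS_\U)$. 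But the cleanest argument is the first one, since all the needed facts are already in place in the excerpt.

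I do not expect any serious obstacle here; the statement is an immediate corollary of the preceding discussion, as the author signals by writing ``we immediately get the following conclusion.'' The only point requiring the tiniest bit of care is the justification that $\pi_0$ commutes with the quotient by the $\Gam_{\ovl{K}(\ovl{X})}$-action — but this is formal, since $\pi_0 \colon \Set^{\Del^{op}} \to \Set$ is a left adjoint (to the constant-simplicial-set functor) and therefore preserves all colimits, in particular coequalizers and hence quotients by group actions; one then applies this levelwise-compatibly with the $\Gam_K$-action, though the $\Gam_K$-action is irrelevant for the bare statement that the underlying simplicial set is connected. So the proof is simply: cite the identification $\SimpS_\U \cong (\xi^*\U)/\Gam_{\ovl{K}(\ovl{X})}$, use contractibility (hence connectedness) of $\xi^*\U$, and pass to the quotient.
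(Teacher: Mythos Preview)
Your proposal is correct and follows exactly the intended route: the paper presents this corollary with no explicit proof, signaling it as an immediate consequence of the identification $\SimpS_\U \cong (\xi^*\U)/\Gam_{\ovl{K}(\ovl{X})}$ with $\xi^*\U$ Kan contractible, which is precisely the argument you spell out.
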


We can also use this interpretation in order to calculate the fundamental group of
$\SimpS_{\U}$. This is done using the following lemma:

\begin{lem}\label{l:calc_pi1}
Let $G$ be a group and $\X$ a contractible Kan simplicial $G$-set. Then, given a base point $\ovl{x} \in \X/G$, there is a natural short exact sequence
$$ 1 \lrar K \lrar G \lrar \pi_1(\X/G, \ovl{x}) \lrar 1 $$
where
$$ K = \left \langle\bigcup_x\left\{\Stab_G(x)| x\in \X_0\right\}\right\rangle $$

\end{lem}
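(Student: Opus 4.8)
The plan is to build the exact sequence by analyzing the map $G \to \pi_1(\X/G,\ovl{x})$ obtained from the fact that $\X \to \X/G$ is, in a suitable sense, a covering-like projection whose "deck group" is $G$. Concretely, since $\X$ is a contractible Kan simplicial set, it is simply connected, and the quotient map $q:\X \to \X/G$ plays the role of a "universal-cover-like" object, except that the $G$-action need not be free, so $q$ is not an honest covering. I would therefore work with the \emph{free} resolution of the situation: form $\E G \times \X$ with the diagonal $G$-action (using $\E G$ from Definition~\ref{def:EG}), which is again contractible and now carries a \emph{free} $G$-action, so $(\E G \times \X)/G$ is a genuine classifying-type space with $\pi_1 = G$ and with a fibration-up-to-homotopy sequence $\X \to (\E G \times \X)/G \to \B G$. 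The projection $\E G \times \X \to \X$ is a $G$-equivariant homotopy equivalence, hence descends to a map $p:(\E G\times\X)/G \to \X/G$, and I would identify $\pi_1(\X/G,\ovl x)$ as the quotient of $\pi_1((\E G\times\X)/G) = G$ by the subgroup of loops that become null-homotopic after applying $p$.

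The key steps, in order, are: (1) Set up the homotopy fiber sequence $\X \to (\E G \times \X)/G \xrightarrow{\ \pi\ } \B G$ and extract the long exact sequence of homotopy groups; since $\X$ is contractible this gives $\pi_1((\E G\times\X)/G) \cong \pi_1(\B G) = G$, naturally in the basepoint. (2) Analyze the map $p:(\E G\times\X)/G \to \X/G$ induced by the projection $\E G \times \X \to \X$; the induced map $p_*:G \to \pi_1(\X/G,\ovl x)$ is the map in the statement, and I must show it is surjective with kernel $K$. (3) Surjectivity: a loop in $\X/G$ based at $\ovl x = q(x)$ lifts (after subdivision, using the Kan condition and contractibility of $\X$) to a path in $\X$ from $x$ to $g\cdot x$ for some $g\in G$; this $g$ is exactly a preimage under $p_*$, so $p_*$ is onto. (4) Kernel: an element $g\in G$ maps to a null-homotopic loop in $\X/G$ iff the path in $\X$ from $x$ to $g\cdot x$ (well-defined up to homotopy since $\X$ is simply connected) can be chosen to project to a null-homotopic loop, and a standard combinatorial argument shows this holds precisely for $g$ in the subgroup generated by the vertex stabilizers $\Stab_G(x)$, $x\in \X_0$ — the point being that an elementary "hoop" in $\X/G$ coming from a $1$-simplex of $\X$ whose two endpoints lie in the same $G$-orbit contributes exactly a stabilizer element, and every null-homotopy decomposes into such hoops.

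I expect the main obstacle to be step (4), the identification of the kernel with $K = \langle\bigcup_x\{\Stab_G(x)\mid x\in\X_0\}\rangle$: one direction (that each $\Stab_G(x)$ lands in the kernel) is easy, since a fixed point $x = s\cdot x$ gives a constant, hence null-homotopic, loop in $\X/G$ whose lift detects $s$. The reverse inclusion requires a careful simplicial-homotopy argument: given that the lift of a loop representing $g$ is null-homotopic \emph{downstairs}, one must pull the null-homotopy back upstairs, where — because the $G$-action is not free — it fails to lift strictly but lifts "up to stabilizers", and bookkeeping the stabilizer elements incurred across the $2$-simplices of the null-homotopy expresses $g$ as a product of conjugates of elements of the various $\Stab_G(x)$. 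Here contractibility of $\X$ is essential: it guarantees the relevant lifting/filling problems are solvable and that "the path from $x$ to $g\cdot x$" is homotopically unique, so the class of $g$ in the kernel is well-defined. Naturality in $\ovl x$ follows formally from the naturality of all the constructions ($\E G\times(-)$, the quotient, the fiber sequence) in the basepoint.
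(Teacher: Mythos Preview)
Your plan is viable and the setup via the Borel construction $(\E G\times\X)/G$ is a perfectly good way to produce the surjection $G\twoheadrightarrow\pi_1(\X/G,\ovl x)$; the paper instead defines $\phi_a:G\to\pi_1(\X/G,\pi(a))$ directly by sending $g$ to the class of a $1$-simplex in $\X$ joining a chosen lift $a$ to $ga$, but this comes to the same thing. The real divergence is in the hard inclusion $\ker\subseteq K$, which you correctly flag as the main obstacle. Your proposed route---lift a null-homotopy from $\X/G$ back to $\X$ ``up to stabilizers'' and bookkeep the stabilizer contributions across the $2$-simplices---can be made rigorous, but it is not a standard argument and would be a genuine combinatorial slog (essentially an orbifold-van-Kampen computation). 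The paper bypasses it entirely with a one-line trick: since the set of vertex stabilizers is closed under $G$-conjugation, $K$ is normal in $G$; the resulting map of $G$-sets $\X_0\to G/K$ then extends uniquely to a $G$-equivariant map $\X\to\E(G/K)=\cosk_0(G/K)$, which descends to a map $\X/G\to\B(G/K)$. The induced triangle
\[
G \xrightarrow{\ \phi_a\ } \pi_1(\X/G,\ovl x) \longrightarrow \pi_1(\B(G/K)) = G/K
\]
commutes and the composite is the quotient map $G\to G/K$, forcing $\ker\phi_a\subseteq K$ immediately. This buys you the hard inclusion for free and is well worth substituting for the lifting argument you sketch in step~(4).
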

\begin{proof}
Let
$$ \pi:\X \lrar \X/G $$
be the natural quotient map and $a \in \X_0$ a vertex. We will construct a surjective map
$$\phi_a:G \lrar \pi_1(\X/G,\pi(a))$$
such that
$$ \ker(\phi_a) = K $$

Let $g$ be an element of $G$. Since $\X$ is contractible and Kan there is at least one $1$-simplex $l_g$ in $\X$ joining $a$ and $ga$. We shall take $\phi_a(g)$ to be the element in $\pi_1(\X/G,\pi(a))$ corresponding to $\pi(l_g)$.

We shall first show that $\phi_a$ is well defined. Assume that $l'_g$ is another $1$-simplex connecting $a$ and $ga$. Since $\X$ is contractible there is an end-points-preserving homotopy between the paths corresponding to $l_g$ and $l'_g$ in the realization of $\X$. Projecting this homotopy to $\X/G$ we get that the corresponding elements in  $\pi_1(\X/G,\pi(a))$ are equal.

We will now prove surjectivity. It is not hard to see that $\X/G$ is Kan in dimension one (see for example Lemma $11.6$ in ~\cite{AMa69}) and therefore every element in $\pi_1(\X/G,\pi(a))$ can be represented by a $1$-simplex in $\X/G$ with both end points being $\pi(a)$. We can then lift this $1$-simplex to $\X$ (note that the map $\X \lrar \X/G$ is levelwise surjective) obtaining a simplex joining $a$ and $ga$ for some $g \in G$.

It now remains to calculate the kernel of $\phi_a$. First we shall show that if $b$ is another point in $\X_0$ then
$$ \ker \phi_b = \ker \phi_a $$
Since $\X$ is contractible and Kan there is a $1$-simplex in $\X$ going from $b$ to $a$.
We denote this $1$-simplex by $p_{ba}$. Now for every $g$ consider the three $1$-simplices $p_{ba},l_g$ and $g(p_{ba})$ creating together a path between $b$ and $gb$. Since $\X$ is Kan this path is homotopic to a $1$-simplex $l^b_g$ joining $b$ and $gb$.

We shall use this $l^b_g$ to obtain $\phi_b(g)$. From the discussion above we get that the following diagram commutes
$$
\xymatrix{
G \ar@{=}[d] \ar[r]^-{\phi_b} & \pi_1(\X/G,\pi(b))\ar[d]^{c_{\pi(p_{ba})}}\\
G   \ar[r]^-{\phi_a} & \pi_1(\X/G,\pi(a))
}
$$
where $c_{\pi(p_{ba})}$ is the natural isomorphism defined by "conjugation" by the path $\pi(p_{ba})$ between $\pi(b)$ and $\pi(a)$. This diagram implies that the kernel $\ker \phi_a$ is independent of the choice of $a \in \X$. Since clearly
$$ \Stab_G(a) \subseteq \ker \phi_a $$
we get that $K \subseteq \ker(\phi_a)$.

To complete the proof of the lemma it suffices to show that $\ker \phi_a \subseteq K$. For every $x\in \X_0$ we have $\Stab_G{x} \subseteq K$. Thus there is map of $G$-sets
$$ \phi: \X_0 \lrar G/K $$

Recall that $K$ is normal in $G$ and let $\E(G/K)$ be as in Definition ~\ref{def:EG}. $\E(G/K)$ is a Kan contractible simplicial set
with the free action of $G/K$. Pulling this action to $G$ we get a Kan contractible $G$-simplicial set. The quotient $\E(G/K)/G$ is a weakly equivalent to $\B(G/K)$, whose fundamental group is exactly $G/K$.

The map $\phi$ lifts to a unique equivariant map
$$ \check{\phi}: \X \lrar \E(G/K) $$

and the following commutative diagram
$$
\xymatrix{
G \ar[d]^{\phi_a} \ar[dr]^{\phi}\ar[drr]  & \empty & \empty \\
\pi_1(\X/G) \ar[r]^-{\check{\phi}} & \pi_1(\B(G/K))  \ar@{=}[r] &  G/K ,\\
}
$$
implies that $\ker \phi_a \subseteq K$.

\end{proof}

\section{ The Obstructions }\label{s:obstructions}
\subsection{ The Classical Obstructions }
\subsubsection{ The Brauer-Manin Obstruction }
Let $K$ be a number field and $X$ an algebraic variety over $K$. Given an adelic point $(x_\nu)_\nu \in X(\A)$ and an element $u \in H_{\acute{e}t}^2(X,\GG_m)$ one can pullback $u$ by each $x_\nu$ to obtain an element
$$ x_\nu^* u \in H_{\acute{e}t}^2\left({\spec(K_\nu)},\GG_m\right) $$
There is a canonical map
$$ \inv: H_{\acute{e}t}^2\left({\spec(K_\nu)},\GG_m\right) \lrar \QQ/\ZZ $$
called the \textbf{invariant} which is an isomorphism in non-archimedean places.
Summing all the invariants one obtains a pairing
$$ X(\A) \times H_{\acute{e}t}^2\left(X,\GG_m\right) \lrar \QQ / \ZZ $$
given by
$$ ((x_\nu)_\nu,u) \mapsto \sum_{\nu} \inv(x_\nu^{*}u) \in \QQ / \ZZ $$
Now by the Hasse-Brauer-Noether Theorem we see that if $(x_\nu)$ is actually a rational point then its pairing with any element in $H^2(X,\GG_m)$ would be zero. This motivates the definition of the Brauer set
$$ X(\A)^{\Br} = \left\{(x_\nu)_\nu \in X(\A) | ((x_\nu)_\nu,u) = 0, \forall u \in H^2(X_{\acute{e}t},\GG_m)\right\} $$
and we have
$$ X(K) \subseteq X(\A)^{\Br} \subseteq X(\A) $$

\subsubsection{ Descent Obstructions }
Let $X$ be an algebraic variety over a number field $K$. It is well known (see e.g. ~\cite{Sko01}) that if $f:Y \lrar X$ is a torsor under a linear algebraic $K$-group $G$ then one has the equality

$$ X(K) = \biguplus \limits_{\sigma \in H^1(K,G)} f^{\sigma}(Y^{\sigma}(K)) $$
and so the set
$$ X(\A)^{f} = \bigcup\limits_{\sigma \in H^1(K,G)} f^{\sigma}(Y^{\sigma}(\A)) $$
has to contain $X(K)$. This motivates the definition
$$ X(\A)^{desc} = \bigcap \limits_{f} X(\A)^{f} $$
where $f$ runs over all torsors under linear algebraic $K$-groups. As before we have
$$ X(K) \subseteq X(\A)^{desc} \subseteq X(\A) $$
We shall denote by $X(\A)^{fin}$, $X(\A)^{fin-ab}$ and $X(\A)^{con}$ the analogous sets obtained by restricting $f$ to torsors under finite, finite abelian and connected linear algebraic groups respectively.

\subsubsection{Applying Obstructions to Finite Torsors}

Given a functorial obstruction set
$$ X(K)\subseteq X(\A)^{obs} \subseteq X(\A) $$
and a torsor under finite $K$-group
$$f: Y\lrar X$$
one can always define the set
$$ X(\A)^{f,obs} = \bigcup\limits_{\sigma \in H^1(K,G)} f^{\sigma}(Y^{\sigma}(\A)^{obs}) $$
satisfying

$$ X(K)\subseteq X(\A)^{f,obs} \subseteq X(\A)^{obs}  \subseteq X(\A) $$
Now by going over all such $f$ we get

$$ X(\A)^{fin,obs} = \bigcap \limits_{f} X(\A)^{f,obs} $$
and
$$ X(K)\subseteq X(\A)^{fin,obs} \subseteq X(\A)^{obs}  \subseteq X(\A)$$

In ~\cite{Sko99} Skorobogatov defines in this way the set
$$ X(\A)^{fin, \Br} $$
and constructs a variety $X$ such that
$$ X(\A)^{fin,\Br} = \emptyset $$ but
$$ X(\A)^{\Br} \neq \emptyset $$

\subsection{ The \'{E}tale Homotopy Obstruction  }
\subsubsection{ Homotopy Fixed Points Sets }
Recall from the previous section the strict model structure and the standard model structure introduced by Goerss (~\cite{Goe95}) on the category $\Set^{\Del^{op}}_{\Gam}$ of simplicial (discrete) $\Gam$-sets. We will use the terms strict fibrations/strict weak equivalence for the strict model structure and fibrtion/weak equivalence for the standard model structure. Cofibrations in both case are simply injective maps.

As mentioned in $~\cite{Goe95}$ the operation of (standard) fibrant replacement can actually be done functorially, yielding a functor
$$ (\bullet)^{fib}: \Set^{\Del^{op}}_{\Gam} \lrar \Set^{\Del^{op}}_{\Gam} $$
One then defines the \textbf{homotopy fixed points} of $\X$ to be the fixed points of $X^{fib}$, i.e.:
$$ \X^{h\Gam} \x{def}{=} \left(\X^{fib}\right)^{\Gam} $$
Since fibrant objects are in particular strictly fibrant one gets that $\X^{h\Gam}$ is always a Kan simplicial set. It can be shown that if two maps $f,g:\X \lrar \Y$ induce the same map in $\Ho^{\standard}\left(\Set^{\Del^{op}}_{\Gam}\right)$ then the induced maps in
$$ f_*,g_*: X^{h\Gam} \lrar Y^{h\Gam} $$
are simplicially homotopic. In particular this will be true for $f,g$ which are simplicially homotopic.

Since we will be working only with strictly bounded simplicial sets we would like to have a convenient formula for the homotopy fixed points in that case. Note that if $S$ is strictly fibrant then for for every open normal subgroup $\Lam \fns \Gam_K$ the corresponding fixed points $S^{\Lam}$ form a Kan simplicia

l set. We then have the following formula:
\begin{thm}\label{t:goerss-formula}
Let $\Y$ be a simplicial $\Gam$-set whose underlying simplicial set is Kan. Let $D^{\Gam}_{fin} \subseteq \Ho\left(\Set^{\Del^{op}}_{\Gam}\right)$ be the full subcategory spanned by Kan contractible objects which are levelwise finite. Then one has an isomorphism of sets
$$ \pi_0\left((\Y)^{h\Gam}\right) \simeq \colim_{\E \in D^{\Gam}_{fin}}[\E,\Y]_{\Gam} $$
If in addition $\Y$ is also \textbf{strictly bounded} then the formula can be refined to
$$  \pi_0\left((\Y)^{h\Gam}\right) \simeq \colim_{\Lam \fns \Gam} [\E(\Gam/\Lam),\Y]_{\Gam} \simeq $$
$$ \colim_{\Lam \fns \Gam} \left[\E(\Gam/\Lam),\Y^{\Lam}\right]_{\Gam} \simeq \colim_{\Lam \fns \Gam}\pi_0\left(\left(\Y^{\Lam}\right)^{h(\Gam/\Lam)}\right) $$
\end{thm}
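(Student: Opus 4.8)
The plan is to prove the two displayed isomorphisms in turn, reducing everything to how the \emph{standard} fibrant replacement $(\bullet)^{fib}$ interacts with mapping spaces out of contractible, levelwise finite $\Gam$-sets. Throughout I use that, in the notation of \S\ref{s:etale-homotopy-type}, $[\E,\Y]_\Gam$ is $\pi_0\Map_\Gam(\E,\Y)$ (equivariant simplicial homotopy classes of $\Gam$-maps $\E\lrar\Y$) and that $\Y^{h\Gam}=(\Y^{fib})^\Gam$.

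I would begin with the unrefined formula $\pi_0(\Y^{h\Gam})\simeq\colim_{\E\in D^{\Gam}_{fin}}[\E,\Y]_\Gam$. For any $\E\in D^{\Gam}_{fin}$ the unique map $\E\lrar *$ is a standard weak equivalence between cofibrant objects (every object is cofibrant), so precomposition gives a weak equivalence $\Y^{h\Gam}=\Map_\Gam(*,\Y^{fib})\xrightarrow{\ \sim\ }\Map_\Gam(\E,\Y^{fib})$, natural in $\E$; since $D^{\Gam}_{fin}$ is connected (it has binary products $\E_1\times\E_2$, again contractible, Kan and levelwise finite, mapping to both factors), these identifications assemble to $\colim_{\E}\pi_0\Map_\Gam(\E,\Y^{fib})\simeq\pi_0(\Y^{h\Gam})$. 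What then has to be proved is that the natural map $\Y\lrar\Y^{fib}$ induces a bijection on $\colim_{\E}\pi_0\Map_\Gam(\E,-)$. This is where the shape of the standard fibrant replacement is used: $\Y^{fib}$ is obtained from $\Y$ by equivariantly attaching fillers for horns of the form $\Gam/U\times\Lam^{n}_{k}$ ($U\le\Gam$ open), and a $\Gam$-map out of a levelwise finite $\E$ meets, on each skeleton, only finitely many of the attached cells; one then shows that after precomposing with a suitable enlargement $\E'\lrar\E$ inside $D^{\Gam}_{fin}$ any map into $\Y^{fib}$ becomes homotopic to one factoring through $\Y$, and similarly for homotopies — giving surjectivity and injectivity. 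I expect this to be the main obstacle; alternatively it can be imported from Goerss~\cite{Goe95}, whose comparison results are essentially this statement.

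Next I would establish the refinement, now assuming $\Y$ strictly bounded, say $N$-bounded (so in particular $\Y$ is strictly fibrant and each $\Y^{\Lam}$ is Kan). First I replace $\Y$ by $P_N(\Y)$, a strict equivalence that changes neither side, so I may take $\Y=\cosk_{N+1}\tr_{N+1}(\Y)$. The adjunction $\tr_{N+1}\dashv\cosk_{N+1}$ then yields $\Map_\Gam(\E,\Y)=\Map_\Gam(P_N\E,\Y)$ for every $\E$, and $P_N\E$ is an \emph{excellent} object of $D^{\Gam}_{fin}$ (only the finitely many simplices of degree $\le N+1$ are not determined by the coskeleton, each has an open stabilizer, so the $\Gam$-action factors through a finite quotient $\Gam/\Lam_0$). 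Hence the colimit over $D^{\Gam}_{fin}$ may be computed over excellent objects only. Finally, for an excellent $\E$ with action through $\Gam/\Lam_0$ and any open normal $\Lam\subseteq\Lam_0$ one has $\E^{\Lam}=\E$, which is contractible and Kan, while $\E(\Gam/\Lam)$ is a free $\Gam/\Lam$-CW complex; equivariant obstruction theory then produces a $\Gam$-map $\E(\Gam/\Lam)\lrar\E$, and $\Map_\Gam(\E(\Gam/\Lam),\E)\simeq\E^{h(\Gam/\Lam)}\simeq *$ is connected, so the relevant comma categories are connected. Thus $\{\E(\Gam/\Lam)\}_{\Lam\fns\Gam}$ is a cofinal family, giving $\colim_{\E\in D^{\Gam}_{fin}}[\E,\Y]_\Gam\simeq\colim_{\Lam\fns\Gam}[\E(\Gam/\Lam),\Y]_\Gam$.

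It then remains to unwind the last two identifications. A $\Gam$-map $\E(\Gam/\Lam)\lrar\Y$ has image inside the $\Lam$-fixed subset $\Y^{\Lam}$, and such maps are exactly the $\Gam/\Lam$-equivariant ones, so $[\E(\Gam/\Lam),\Y]_\Gam=[\E(\Gam/\Lam),\Y^{\Lam}]_\Gam=\pi_0\Map_{\Gam/\Lam}(\E(\Gam/\Lam),\Y^{\Lam})$. Since $\E(\Gam/\Lam)$ is a free $\Gam/\Lam$-CW complex and $\Y^{\Lam}$ is Kan, mapping $\E(\Gam/\Lam)$ in is invariant under weak equivalence of the target, so $\Map_{\Gam/\Lam}(\E(\Gam/\Lam),\Y^{\Lam})\simeq\Map_{\Gam/\Lam}(\E(\Gam/\Lam),(\Y^{\Lam})^{fib})=(\Y^{\Lam})^{h(\Gam/\Lam)}$. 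Taking $\pi_0$ and passing to the colimit over $\Lam$ finishes the proof. In short: once the passage to the standard fibrant replacement in the second step is under control, the refinement is bookkeeping with coskeleta, the excellence of $P_N\E$, and the Borel construction for finite groups.
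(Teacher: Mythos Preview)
Your overall architecture is sound and, for the refinement, essentially parallels the paper's own argument: reduce to $\Y=P_N(\Y)$, observe that maps out of $\E$ factor through the excellent object $P_N(\E)$, and then produce $\Gam$-maps $\E(\Gam/\Lam)\to P_N(\E)$ via the projective model structure on simplicial $(\Gam/\Lam)$-sets. Your use of the adjunction $\Map_\Gam(\E,\Y)\cong\Map_\Gam(P_N\E,\Y)$ and the cofinality phrasing is in fact a bit cleaner than the paper's explicit surjectivity/injectivity check for the comparison map $F_\Y$.

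The substantive divergence is in the \emph{unrefined} formula. The paper does not argue directly with the cell structure of $\Y^{fib}$; instead it invokes Brown's category-of-fibrant-objects formalism on the full subcategory of $\Set^{\Del^{op}}_\Gam$ consisting of objects with Kan underlying simplicial set, obtaining
\[
\colim_{\E'\xrightarrow{\sim}*}[\E',\Y]_\Gam\;\cong\;[*,\Y^{fib}]_\Gam=\pi_0(\Y^{h\Gam}),
\]
and then proves separately (by an explicit inductive construction) that the levelwise-finite Kan contractible objects are cofinal among all Kan contractible ones. Your proposed direct argument --- that a map $\E\to\Y^{fib}$ hits finitely many attached cells and can be homotoped into $\Y$ after enlarging $\E$ inside $D^{\Gam}_{fin}$ --- is the step that is not yet a proof: the new cells fill horns that genuinely have no filler in $\Y$, and it is not clear how precomposing with some $\E'\to\E$ lets you avoid them. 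Making this precise amounts to taking a (homotopy) pullback $\E\times^h_{\Y^{fib}}\Y$, which is contractible but not levelwise finite, and then replacing it by something in $D^{\Gam}_{fin}$ --- i.e.\ you end up needing exactly the cofinality lemma the paper proves, together with the Brown-type comparison. So either import Brown \cite{Bro73} as the paper does, or be prepared to reprove it; the horn-attachment sketch as written does not close the gap on its own.
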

We delay the proof of this formula (which is completely independent of the rest of this chapter) to section \S $4$ (see ~\ref{t:goerss-formula}).

Now let $K$ be a number field and $X$ an algebraic variety over $K$. We will denote by $\Gam_K$ the absolute Galois group of $K$ and similarly by $\Gam_L \fns \Gam_K$ the absolute Galois group of a finite Galois extension $L/K$.

Every $K$-rational point in $X$ is a map
$$ \spec(K) \lrar X $$
in $\Var/K$. Applying the functor ${\acute{E}t_{/K}}^{\natural}$ we get a map
$$ {\acute{E}t_{/K}}^{\natural}(\spec(K)) \lrar {\acute{E}t_{/K}}^{\natural}(X) $$
In order to describe mappings from ${\acute{E}t_{/K}}^{\natural}(\spec(K))$ to some pro-object we need first to understand the pro-object ${\acute{E}t_{/K}}^{\natural}(\spec(K))$ itself.

We start with the simpler task of describing the pro-object ${\acute{E}t_{/K}}(\spec(K))$: The site of finite \'{e}tale varieties over $\spec(K)$ can be identified with the site of finite discrete $\Gam_K$-sets (and surjective maps as coverings) via the fully faithful functor ${\pi_0}_{/K}$. This means that the functor $\pi^{\Del^{op}}_{/K}$ induces a fully faithful embedding of $I(\spec(K))$ into $\Ho\left(\Set^{\Del^{op}}_{\Gam_K}\right)$ whose essential image consists of the full subcategory $D^{\Gam_K}_{fin} \subseteq \Ho\left(\Set^{\Del^{op}}_{\Gam_K}\right)$. In particular we have an equivalence of categories
$$ {\pi_0}_{/K}:I(\spec(K)) \x{\sim}{\lrar} D^{\Gam_K}_{fin} $$
Now for a pro-finite group $\Gam$ we will denote by $\E\Gam$ the pro-object
$$ \E\Gam \x{def}{=} \{\Ex^{\infty}(S)\}_{S \in D^{\Gam}_{fin}} \in \Pro\Ho\left(\Set^{\Del^{op}}_{\Gam}\right) $$
We then have an isomorphism of pro-objects
$$ \acute{E}t_{/K}(\spec(K)) \simeq \E(\Gam_K) $$

A simple corollary of this observation plus formula ~\ref{t:goerss-formula} is the following:
\begin{cor}
Let $\Y_I = \{\Y_\alp\}_{\alp \in I} \in \Pro\Ho^{\strict}\left(\Set^{\Del^{op}}_{\Gam_K}\right)$ be an object. Then
$$ \Hom_{\Pro\Ho^{\strict}\left(\Set^{\Del^{op}}_{\Gam_K}\right)}\left(\acute{E}t_{/K}(\spec(K)), \Y_I\right) \simeq \lim_{\alp \in I}{Y_\alp^{h\Gam_K}} $$
\end{cor}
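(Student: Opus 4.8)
The plan is to reduce the statement to the defining formula for morphism sets in a pro-category, together with the identification of $\acute{E}t_{/K}(\spec(K))$ as a pro-object and Theorem~\ref{t:goerss-formula}. We have just seen that $\acute{E}t_{/K}(\spec(K))$ is isomorphic in $\Pro\Ho^{\strict}\left(\Set^{\Del^{op}}_{\Gam_K}\right)$ to the pro-object $\E(\Gam_K) = \{\Ex^{\infty}(S)\}_{S \in D^{\Gam_K}_{fin}}$, indexed by the cofiltered category $D^{\Gam_K}_{fin}$ of Kan contractible, levelwise finite simplicial $\Gam_K$-sets. Writing $\Y_I = \{\Y_\alp\}_{\alp \in I}$ with each $\Y_\alp$ represented by a strictly fibrant simplicial $\Gam_K$-set, the pro-category Hom-formula recalled above gives
\[
\Hom_{\Pro\Ho^{\strict}\left(\Set^{\Del^{op}}_{\Gam_K}\right)}\!\left(\acute{E}t_{/K}(\spec(K)),\, \Y_I\right)\;\simeq\;\lim_{\alp \in I}\;\colim_{S \in D^{\Gam_K}_{fin}}\Hom_{\Ho^{\strict}\left(\Set^{\Del^{op}}_{\Gam_K}\right)}\!\left(\Ex^{\infty}(S),\, \Y_\alp\right).
\]

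First I would rewrite the inner morphism set. By Lemma~\ref{l:Ex_inf_is_fib} the canonical map $S \to \Ex^{\infty}(S)$ is a strict weak equivalence, hence an isomorphism in $\Ho^{\strict}$; since moreover every object of $\Set^{\Del^{op}}_{\Gam_K}$ is cofibrant while $\Ex^{\infty}(S)$ and $\Y_\alp$ are strictly fibrant, $\Hom_{\Ho^{\strict}}(\Ex^{\infty}(S), \Y_\alp)$ is canonically the set $[S, \Y_\alp]_{\Gam_K}$ of simplicial-homotopy classes of $\Gam_K$-equivariant maps occurring in Theorem~\ref{t:goerss-formula}. (One uses here that a strictly fibrant simplicial $\Gam_K$-set has Kan underlying simplicial set, being the filtered colimit of the Kan simplicial sets $\Y_\alp^{\Lam}$ over $\Lam \fns \Gam_K$, so that Theorem~\ref{t:goerss-formula} applies to each $\Y_\alp$.) That theorem then identifies
\[
\colim_{S \in D^{\Gam_K}_{fin}}[S, \Y_\alp]_{\Gam_K}\;\simeq\;\pi_0\!\left(\Y_\alp^{h\Gam_K}\right),
\]
and, the left-hand colimit being functorial in its target and $(-)^{h\Gam_K} = \left((-)^{fib}\right)^{\Gam_K}$ being a functor, these isomorphisms are natural in $\alp$; taking the limit over $I$ then yields
\[
\Hom_{\Pro\Ho^{\strict}\left(\Set^{\Del^{op}}_{\Gam_K}\right)}\!\left(\acute{E}t_{/K}(\spec(K)),\, \Y_I\right)\;\simeq\;\lim_{\alp \in I}\pi_0\!\left(\Y_\alp^{h\Gam_K}\right),
\]
which is the corollary, the right-hand side being read, as throughout the paper, as the inverse limit of the homotopy fixed point \emph{sets} $\pi_0(\Y_\alp^{h\Gam_K})$.

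The one point that needs genuine care is the matching of the two colimits: the one furnished by the pro-category Hom-formula ranges over the indexing category $I(\spec(K))$ of $\acute{E}t_{/K}(\spec(K))$, whereas Theorem~\ref{t:goerss-formula} phrases its colimit over $D^{\Gam_K}_{fin}$. These are reconciled through the equivalence of categories ${\pi_0}_{/K}\colon I(\spec(K)) \x{\sim}{\lrar} D^{\Gam_K}_{fin}$ recalled above, and one has to check that under it the structure maps of the pro-object $\E(\Gam_K)$ --- induced by refinements of \'{e}tale hypercoverings of $\spec(K)$ --- correspond to the transition maps in the colimit of Theorem~\ref{t:goerss-formula}; this is immediate, since in both cases the maps are induced by the underlying maps of simplicial $\Gam_K$-sets, so the two cofiltered systems are literally the same. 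I expect this bookkeeping, rather than any substantive homotopy-theoretic input, to be the only real obstacle: once it is in place the corollary is a purely formal consequence of Theorem~\ref{t:goerss-formula}, Lemma~\ref{l:Ex_inf_is_fib}, and the isomorphism $\acute{E}t_{/K}(\spec(K)) \simeq \E(\Gam_K)$.
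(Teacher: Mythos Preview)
Your proposal is correct and follows exactly the approach the paper indicates: the paper does not give a detailed proof but merely states that the result is ``a simple corollary of this observation plus formula~\ref{t:goerss-formula},'' where the observation is the isomorphism $\acute{E}t_{/K}(\spec(K)) \simeq \E(\Gam_K)$; your argument is precisely the careful unpacking of that sentence via the pro-Hom formula, Lemma~\ref{l:Ex_inf_is_fib}, and Theorem~\ref{t:goerss-formula}.
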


We now wish to compute $\E\Gam^{\natural}$. Since the skeletons of the simplicial sets in $\E\Gam$ are finite the action of $\Gam$ on each specific skeleton factors through a finite quotient. This means that the action of $\Gam$ on each of the spaces of $\E\Gam^{\natural}$ factors through a finite quotient $G = \Gam/\Lam$ for some $\Lam \fns \Gam$.

Now for every such $G$ one has the Kan contractible levelwise finite simplicial $\Gam$-set $\E G = \cosk_0(G)$ with the $\Gam$-action given by pulling the standard $G$-action. Then $\Ex^{\infty}(\E G)$ appears in the diagram of $\E\Gam^{\natural}$. Since every $\E G$ is strictly fibrant as a simplicial $\Gam$-set (every fixed point space is either empty or equal to all of $\E G$, which is Kan) the map $\E G \lrar \Ex^{\infty}(\E G)$ admits a simplicial homotopy inverse $\Ex^{\infty}(\E G) \lrar \E G$ which is unique up to simplicial homotopy. This gives a map in $\Pro\Ho\left(\Set^{\Del^{op}}_{\Gam}\right)$:
$$ \vphi: \{\E(\Gam/\Lam)\}_{\Lam \fns \Gam} \lrar \E\Gam^{\natural} $$

We claim that $\vphi$ is actually an \textbf{isomorphism}: every Kan contractible simplicial $G$-set $\X$ admits a map $\E G \lrar \X$ which is unique up to simplicial homotopy (this can be seen using the projective model structure on simpicial $G$-sets). These maps fit together to give an inverse to $\vphi$. This finishes the computation of $\E\Gam^{\natural}$.

We now wish to compute the set of maps from
$$ \acute{E}t^{\natural}(\spec(K)) \cong \E(\Gam_K)^{\natural} \cong \{\E G_L\}_{L/K} $$ to a pro-object of the form $X_I = \{\SimpS_\alp\}_{\alp \in I} $ where each $X_\alp$ is strictly bounded. By definition this morphism set is the set
$$ \lim \limits_{\alp} \colim \limits_{L/K} \left[\E G_L, \SimpS_\alp\right]_{\Gam_K}$$
where $L/K$ runs over all finite Galois extensions, $G_L = \Gam_K/\Gam_L$ and $[\X,\Y]_{\Gam_K}$ denotes simplicial homotopy classes of maps. Since $\Gam_L$ stabilizes $\E G_L$ this is the same as
$$
\lim \limits_{\alp} \colim \limits_{L/K} \left[\E G_L, \SimpS_\alp^{\Gam_L}\right]_{G_L} =
\lim \limits_{\alp} \colim \limits_{L/K} \pi_0\left(\left(\SimpS_\alp^{\Gam_L}\right)^{hG_L}\right) = $$
$$ \lim \limits_{\alp} \pi_0\left(\SimpS_\alp^{h\Gam}\right) $$
where the last equality is obtain by applying formula ~\ref{t:goerss-formula} to the strictly bounded simplicial $\Gam$-set $\X_\alp$.

We summarize the above computation in the following definition
\begin{define}\label{d:homotopy-fixed-points-set}
Let $\SimpS_I = \{\SimpS_\alpha\}_{\alpha\in I} \in \Pro\Ho\left(\Set^{\Del^{op}}_{\Gam_K}\right)$ be an object. We define the $\Gam$-\textbf{homotopy fixed points set} of $\SimpS_I$, denoted by $\SimpS_I\left(\E\Gam^{\natural}\right)$, to be
$$ \SimpS_I\left(\E\Gam^{\natural}\right) = \lim \limits_{\alp}  \pi_0\left(\SimpS_\alp^{h\Gam}\right) $$
If $\Gam = \Gam_K$ is the absolute Galois group of a field $K$ then we will also denote this set by $\SimpS_I\left(hK\right)$.

Note that if all the simplicial $\Gam$-sets $\X_\alp$ are strictly bounded then we have the isomorphism of sets
$$ \SimpS_I\left(\E\Gam^{\natural}\right) \simeq \Map_{\Pro\Ho\left(\Set^{\Del^{op}}_{\Gam}\right)}
\left(\E\Gam^{\natural},\SimpS_I\right) $$
\end{define}

When $\SimpS_I$ is the \'{e}tale homotopy type of an algebraic variety we use the following abbreviations
$$ X(hK) = \acute{E}t_{/K}^{\natural}(X)\left(hK\right) = \lim_{\U \in I(X),k \in \NN} \pi_0\left(\X_{\U,k}^{h\Gam_K}\right) $$
and
$$ X^{n}(hK) = \acute{E}t_{/K}^{n}(X)\left(hK\right) = \lim_{\U \in I(X),k \leq n} \pi_0\left(\X_{\U,k}^{h\Gam_K}\right) = \lim_{\U \in I(X)} \pi_0\left(\X_{\U,n}^{h\Gam_K}\right) $$

\begin{rem}
Let $\X$ be a simplicial $\Gam$-set. By considering $\X$ as a pro-object in $\Ho\left(\Set^{\Del^{op}}_{\Gam_K}\right)$ in a trivial way we will write
$$ \X\left(\E\Gam^{\natural}\right) =  \pi_0\left(\SimpS_\alp^{h\Gam}\right) $$
When $\Gam = \Gam_K$ is the absolute Galois group of a field we will use the notation
$$ \X(hK) = \X\left(\E\Gam^{\natural}\right) $$
\end{rem}

Summarizing the discussion so far we see that for every $0 \leq n \leq \infty$ we get a natural map
$$ h_n:X(K) \lrar X^n(hK) $$

It is useful to keep in mind the most trivial example:
\begin{lem}\label{l:point-is-point}
$$ \spec(K)^n\left(hK\right) = * $$

\end{lem}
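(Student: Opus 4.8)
The plan is to unwind all the definitions and reduce the statement to the computation of $\acute{E}t_{/K}^{n}(\spec(K))$. First I would note that $\spec(K)$ is itself a finite \'etale $K$-variety, so by the discussion preceding Definition~\ref{d:homotopy-fixed-points-set} the pro-object $\acute{E}t_{/K}(\spec(K))$ is isomorphic to $\E(\Gam_K)$, and hence $\acute{E}t_{/K}^{\natural}(\spec(K)) \cong \E(\Gam_K)^{\natural}$. The key input, already established in the text, is that the natural map $\vphi \colon \{\E(\Gam_K/\Lam)\}_{\Lam \fns \Gam_K} \lrar \E(\Gam_K)^{\natural}$ is an \emph{isomorphism} in $\Pro\Ho(\Set^{\Del^{op}}_{\Gam_K})$. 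Since each $\E(\Gam_K/\Lam)$ is already a Postnikov-truncated object (indeed $\E G = \cosk_0 G$, whose realization is contractible, so $P_k$ applied to it is again contractible), passing to the $n$-truncated version $\acute{E}t_{/K}^{n}(\spec(K))$ changes nothing up to isomorphism: we still get (a cofinal subdiagram of) $\{\E(\Gam_K/\Lam)\}_{\Lam \fns \Gam_K}$.

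Next I would apply Definition~\ref{d:homotopy-fixed-points-set}: by definition $\spec(K)^n(hK) = \lim_{\Lam}\pi_0\big(\Ex^{\infty}(\E(\Gam_K/\Lam))^{h\Gam_K}\big)$, where the limit runs over $\Lam \fns \Gam_K$. So it suffices to show each term $\pi_0\big((\E G)^{h\Gam_K}\big)$ is a single point, where $G = \Gam_K/\Lam$ is finite and $\Gam_K$ acts through $G$. Here I would invoke Theorem~\ref{t:goerss-formula}: $\E G$ is strictly bounded (each fixed-point space is either empty or all of $\E G$, which is Kan and contractible), so $\pi_0\big((\E G)^{h\Gam_K}\big) \simeq \colim_{\Lam' \fns \Gam_K}[\E(\Gam_K/\Lam'), \E G]_{\Gam_K}$. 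For $\Lam' \subseteq \Lam$ this is $[\E(\Gam_K/\Lam'), \E G]_{\Gam_K/\Lam'}$, and since $\E G$ (pulled back to $\Gam_K/\Lam'$) is Kan contractible, the text's remark — that every Kan contractible simplicial $H$-set admits a map from $\E H$ unique up to simplicial homotopy, which underlies the proof that $\vphi$ is an isomorphism — shows this hom-set is a single point. Since the transition maps in the colimit are maps between one-point sets, the colimit is a point, and since the inverse limit of a diagram of one-point sets is a point, we conclude $\spec(K)^n(hK) = *$.

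The main obstacle, such as it is, is bookkeeping rather than mathematics: one must be careful that the indexing categories match up (the limit over $\U \in I(\spec(K))$ versus the limit over $\Lam \fns \Gam_K$ — these agree because ${\pi_0}_{/K}$ gives the equivalence $I(\spec(K)) \x{\sim}{\lrar} D^{\Gam_K}_{fin}$), and that truncation by $P_k$ genuinely does nothing to a coskeletal contractible object. An alternative, more conceptual route would be to observe that $\E\Gam_K^{\natural}$ is a terminal-type object: by the corollary following the $\E\Gam^{\natural}$ computation, maps out of $\E(\Gam_K)$ compute homotopy fixed points, and a one-object argument shows $\E(\Gam_K)^{\natural}(hK) = \Hom(\E(\Gam_K)^{\natural}, \E(\Gam_K)^{\natural})$ contains the identity; combined with the per-term Goerss-formula computation this again forces the answer to be a point. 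Either way the heart of the matter is the uniqueness-up-to-homotopy of maps $\E H \lrar (\text{Kan contractible})$, which the paper has already used.
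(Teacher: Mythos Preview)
Your proof is correct and follows essentially the same approach as the paper. The paper's argument is much terser: it simply invokes the already-established isomorphism $\acute{E}t^{\natural}_{/K}(\spec(K)) \cong \{\E G_L\}_{L/K}$ and then asserts that since each $\E G_L$ is Kan contractible and strictly bounded, $\lim_{L/K}\E G_L^{h\Gam} \cong *$; you have spelled out explicitly (via Theorem~\ref{t:goerss-formula} and the uniqueness-up-to-homotopy of maps $\E H \to (\text{Kan contractible})$) the step the paper leaves to the reader.
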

\begin{proof}
We know that ${\acute{E}t^{\natural}_{/K}}(\spec(K)) \cong \{\E G_L\}_{L/K}$ where $L$ runs over all finite Galois extensions of $K$ and $G_L$ is the Galois group of $L$ over $K$. Since each $\E G_K$ is Kan contractible and strictly bounded we see that
$$ \spec(K)^n\left(hK\right) = \spec(K)(hK) = \lim_{L/K} \E G_L^{h\Gam} \cong * $$

\end{proof}

\begin{rem}
Notions for homotopy fixed points for action of pro-finite groups on pro-spaces where studied by Fausk and Isaksen in ~\cite{FIs07} and by Quick in ~\cite{Qui09}. Fausk and Isaksen's approach is to put a model structure on the category of pro-spaces with a pro-finite group action. In Quick's work one replaces pro-spaces by simplicial pro-sets. Both approaches use a model structure in order to produce a homotopy fixed point \textbf{space}, and not just a set as we have in Definition ~\ref{d:homotopy-fixed-points-set}.

In this paper we work with (a relative variation of) the \'{e}tale homotopy type which is a pro object in the \textbf{homotopy} category of spaces, and not of spaces. Hence one cannot apply to it either of the theories above. In order to use Fausk and Isaksen's approach one would need to replace the \'{e}tale homotopy type with the \'{e}tale topological type (see Friedlander ~\cite{Fri82}) which is a pro-space. Alternatively one can convert the \'{e}tale topological type to a simplicial pro-set and use Quick's theory. There are some drawbacks for working without a model structure. For example one gets only a homotopy fixed points set and not a homotopy fixed point space. However for our needs in this paper we found the aforementioned model structures
uncomfortable to use.

Lately, Ilan Barnea and the second author (~\cite{BSc11}) defined a new model structure which is more suitable for our needs. A detailed description of how to use this model structure in the context of this work would be published by the authors in another publication.
\end{rem}

\subsubsection{ P-adic Homotopy Fixed Points }

Let $K$ be a number field and $K_\nu$ a completion of $K$. We denote by $\Gam_\nu < \Gam_K$ the decomposition group. Then for every $0 \leq n \leq \infty$ we get a map

$$ h_n:X(K_\nu) \lrar X\left(hK_\nu)\right) $$
We shall also use the notation $X^n\left(hK_\nu\right)$ to denote ${\acute{E}t^{n}_{/K}}(X)\left(E\left(\Gam_\nu^{\natural}\right)\right)$.

Taking into account all the completions of $K$ we get a commutative diagram

$$ \xymatrix{
X(K) \ar[r]^{h_n}\ar[d]^{\loc} & X^n(hK) \ar[d]^{\loc_{h,n}} \\
\prod \limits_\nu X(K_\nu) \ar[r]^{h_n} & \prod \limits_\nu X^n(hK_\nu) \\
}$$

Note that we abuse notation and use $h_n$ for the rational case, the p-adic case and the product of p-adics case.

We define the \textbf{p-adic rationally homotopic set}
$$ \left(\prod \limits_\nu X(K_\nu)\right)^{h,n} \subseteq \prod \limits_\nu X(K_\nu) $$
to be the set of all elements $(x_\nu) \in \prod \limits_\nu X(K_\nu)$ such that $h_n((x_\nu)) \in \im(\loc_{h,n})$. Note that
$$ X(K) \subseteq \left(\prod \limits_\nu X(K_\nu)\right)^{h,n} $$

\subsubsection{Adelic Homotopy Fixed Points}
When studying the local global principle on a variety $X$ one sees that in general it is better to work with the set of adelic points on $X(\A)$ rather then the entire product $\prod \limits_\nu X(K_\nu)$. Similarly we would like to replace the set
$ \prod \limits_\nu X^n(hK_\nu) $
with an analogous set $X^n(h\A)$.

Before defining such a notion for an object in $\Pro\Ho^{\strict}(\Set^{{\Del}^{op}}_{\Gam_K})$ we shall define it for the more simple case of a simplicial $\Gam_K$-set.

\begin{define}
Let $K_\nu$ be a non-archimedean local field, $I_\nu \lhd \Gam_\nu$ the inertia group and $$ \Gam^{ur}_\nu = \Gam_\nu/I_\nu $$
the unramified Galois group. Let $\SimpS$ be a simplicial $\Gam_\nu$-set. We define the \textbf{unramified} $\Gam_{K_\nu}$-homotopy fixed points to be the simplicial set

$$ \SimpS^{h^{ur}\Gam_\nu} = \left(\SimpS^{I_\nu}\right)^{h\Gam^{ur}_\nu} $$
\end{define}

\begin{define}

Let $K$ be a number field, $S$ a set of places of $K$ and $\SimpS$ a simplicial $\Gam_K$-set. We define the restricted product of $S$-homotopy fixed points space to be

$$\SimpS^{h\A_S} =  \hocolim_T \prod \limits_{\nu\in T} \SimpS^{h\Gam_\nu} \times \prod \limits_{\nu \in S \backslash T} \SimpS^{h^{ur}\Gam_\nu}$$
where $T$ runs over all the finite subsets of $S$. We also denote
$$\SimpS(h\A_s) = \pi_0\left(\SimpS^{h\A_S}\right)$$
\end{define}
\begin{rem}
Note that as a restricted product of discrete sets $\SimpS(h\A_s)$ carries the restricted product topology.
\end{rem}

Note that
$$\pi_n\left(\SimpS^{h\A_S}\right) = \prod \limits_{\nu \in S}' \pi_n\left(\SimpS^{h\Gam_\nu}\right)$$
when the restricted product is taken with respect to the subsets
$$ \im\left[\pi_n\left(\SimpS^{h^{ur}\Gam_\nu}\right) \lrar  \pi_n\left(\SimpS^{h\Gam_\nu}\right)\right] $$

When $S$ is the set of all places we denote $\SimpS(h\A_s)$ and $\SimpS^{h\A_S}$ by $\SimpS(h\A)$ and $\SimpS^{h\A}$ respectively. Similarly when $S$ is the set of all finite places we denote $\SimpS(h\A_s)$ and $\SimpS^{h\A_S}$ by $\SimpS(h\A_f)$ and $\SimpS^{h\A_f}$ respectively.

\begin{define}
Let $K$ be a number field. Let $\SimpS_I = \{\SimpS_\alpha\}_{\alpha \in I}$ be a pro-$\Gam_K$-simplicial set. We define
$$ \SimpS_I(h\A_s) = \lim \limits_{\alp\in I}  \SimpS_\alp(h\A_s) $$

\end{define}
In the case where we are dealing with the \'{e}tale homotopy type of an algebraic variety $X$ over $K$ we abbreviate
 $$ X^n(h\A) = {\acute{E}t^{n}_{/K}}(X)\left(h\A\right) $$
 and
 $$ X(h\A)= X^{\infty}(h\A)  $$

\begin{lem}\label{l:rational-is-h-adelic}
Let $K$ be number field and $\SimpS$ a strictly bounded $\Gam_K$-simplicial set. Then the natural map
$$ \loc: \SimpS\left(hK\right)  \lrar \prod \limits_{\nu}\SimpS\left(hK_\nu\right) $$
factors through a natural map
$$ f_0: \SimpS\left(hK\right) \lrar \SimpS\left(h\A\right) $$.
\end{lem}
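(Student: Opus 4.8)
The plan is to present an element of $\SimpS(hK)$ as coming from a finite Galois level $L/K$, and then to use that $L/K$ is ramified at only finitely many places, so that at the remaining places the localization of our homotopy fixed point is automatically \emph{unramified} in the sense of the definition preceding the lemma. First I would unwind the target: since $\pi_0$ commutes with filtered homotopy colimits and with products,
\[
\SimpS(h\A)\;=\;\pi_0\!\left(\SimpS^{h\A}\right)\;=\;\colim_{T}\ \prod_{\nu\in T}\SimpS(hK_\nu)\ \times\ \prod_{\nu\notin T}\SimpS(h^{ur}\Gam_\nu),
\]
the colimit being over finite sets of places $T$, with transition maps that use the natural maps $\SimpS(h^{ur}\Gam_\nu)\to\SimpS(hK_\nu)$ on the factors outside $T$. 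Thus $\SimpS(h\A)$ is the restricted product of the sets $\SimpS(hK_\nu)$ with respect to the images of these maps, equipped with its tautological map to $\prod_\nu\SimpS(hK_\nu)$. So it is enough to produce, for each $x\in\SimpS(hK)$ and naturally in $\SimpS$, a compatible family of lifts of $\loc_\nu(x)$ along $\SimpS(h^{ur}\Gam_\nu)\to\SimpS(hK_\nu)$ for all but finitely many $\nu$.

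For this I would first pass to a finite level. Replacing $\SimpS$ by $\Ex^\infty(\SimpS)$ — a strict weak equivalence that changes none of the sets in the statement and which, since $\Ex^\infty$ commutes with limits (cf. Lemma~\ref{l:Ex_inf_is_fib}), makes $\SimpS^{H}$ a Kan complex for \emph{every} closed subgroup $H\le\Gam_K$ — we may assume $\SimpS$ has this last property. As $\SimpS$ is strictly bounded, the refined form of Goerss' formula (Theorem~\ref{t:goerss-formula}) gives
\[
\SimpS(hK)\;\cong\;\colim_{L/K}\ \pi_0\!\left(\left(\SimpS^{\Gam_L}\right)^{hG_L}\right),
\]
the colimit over finite Galois $L/K$ with $G_L=\Gal(L/K)$; and since $G_L$ is finite, $\E G_L$ dominates every Kan contractible $G_L$-set, so a given $x$ is represented by a $G_L$-equivariant map $u\colon \E G_L\to\SimpS^{\Gam_L}$ for some such $L$.

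Now fix $\nu$, choose $\Gam_\nu\hookrightarrow\Gam_K$, let $w\mid\nu$ be the resulting place of $L$, put $\Gam_{L_w}=\Gam_\nu\cap\Gam_L$, and let $G_w=\Gam_\nu/\Gam_{L_w}\subseteq G_L$ be the decomposition group. Unwinding the localization map — it is induced by restricting the Galois action along $\Gam_\nu\hookrightarrow\Gam_K$, which on the colimit presentation sends the class of $\E(\Gam_K/\Lam)\to\SimpS^{\Lam}$ to the class of $\E(\Gam_\nu/(\Gam_\nu\cap\Lam))\to\SimpS^{\Gam_\nu\cap\Lam}$ — one finds that $\loc_\nu(x)$ is represented by the $\Gam_\nu$-equivariant composite $\E G_w\to\E G_L\xrightarrow{\,u\,}\SimpS^{\Gam_L}\hookrightarrow\SimpS$, the first map coming from the inclusion $G_w\hookrightarrow G_L$. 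Since $L/K$ is unramified at all but finitely many $\nu$, for such $\nu$ one has $I_\nu\subseteq\Gam_{L_w}\subseteq\Gam_L$; hence $\SimpS^{\Gam_L}\subseteq\SimpS^{I_\nu}$ as $\Gam_\nu$-simplicial sets (using $\Gam_L\lhd\Gam_K$), and the $G_w$-action on everything in sight factors through $\Gam^{ur}_\nu=\Gam_\nu/I_\nu$. Therefore the composite above may be read as a $\Gam^{ur}_\nu$-equivariant map $\E G_w\to\SimpS^{I_\nu}$ out of a Kan contractible levelwise finite $\Gam^{ur}_\nu$-set, and by Goerss' formula applied to the Kan complex $\SimpS^{I_\nu}$ it represents a class in $\pi_0\!\left((\SimpS^{I_\nu})^{h\Gam^{ur}_\nu}\right)=\SimpS(h^{ur}\Gam_\nu)$ whose image in $\SimpS(hK_\nu)$ is precisely $\loc_\nu(x)$. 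This is the required lift for every $\nu$ outside the finite set $T_0$ of places ramified in $L$.

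Finally I would assemble $f_0$: the family $(\loc_\nu(x))_{\nu\in T_0}$ together with the lifts just constructed for $\nu\notin T_0$ defines an element of the $T_0$-term of the homotopy colimit above, hence an element of $\SimpS(h\A)$, whose image in $\prod_\nu\SimpS(hK_\nu)$ is $\loc(x)$ by construction; one then sets $f_0(x)$ to be this element. The step I expect to be the real work — though it is bookkeeping rather than anything deep — is verifying that this element of $\SimpS(h\A)$ is independent of the auxiliary choices (the level $L$ and the map $u$, the places $w\mid\nu$, and the set $T_0$) and is natural in $\SimpS$: enlarging $L$ or $T_0$ is absorbed by the colimit transition maps, and changing $w\mid\nu$ at an unramified place conjugates the composite by an element of $G_L$, which does not alter the homotopy class because $u$ is $G_L$-equivariant. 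The one soft point worth flagging explicitly is the use of $\SimpS^{I_\nu}$, where $I_\nu$ is closed but not open in $\Gam_K$; this is exactly why one first replaces $\SimpS$ by $\Ex^\infty(\SimpS)$, so that $\SimpS^{I_\nu}$ is a Kan complex and Goerss' formula is applicable to it.
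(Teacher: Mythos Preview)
Your argument is correct and rests on exactly the same idea as the paper's: represent a class in $\SimpS(hK)$ at a finite Galois level $L/K$, and use that $I_\nu\subseteq\Gam_L$ for all $\nu$ outside the finite set of places ramified in $L$, so that at those places the class visibly comes from $(\SimpS^{I_\nu})^{h\Gam^{ur}_\nu}$.

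The one difference worth noting is organizational. You work entirely on $\pi_0$, producing $f_0(x)$ element by element and then having to check independence of $L$, $u$, $w\mid\nu$, and $T_0$; this is the bookkeeping you flag at the end. The paper instead stays at the space level: for each finite Galois $L/K$ it observes the $\Gam_K$-equivariant map
\[
\SimpS^{\Gam_L}\ \longrightarrow\ \prod_{\nu\in T_L}\SimpS\ \times\ \prod_{\nu\notin T_L}\SimpS^{I_\nu},
\]
applies $(-)^{hG_L}$ on the left and $(-)^{h\Gam_\nu}$ resp.\ $(-)^{h\Gam^{ur}_\nu}$ on the right factorwise, and then passes to the homotopy colimit over $L$ to obtain a single map of spaces $\SimpS^{h\Gam_K}\to\SimpS^{h\A}$, setting $f_0=\pi_0(f)$. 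This buys them naturality and well-definedness for free and makes the proof a few lines; your route is more explicit but costs the extra verification. Either way the content is the same.
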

\begin{proof}
Let $L/K$ be finite extension and $T_L$ is the set of places of $K$ that ramify $L$. Since
for $\nu \notin T_L$ we have $I_\nu < \Gam_L$ there is a natural map:
$$ f_L:\SimpS^{\Gam_L} \lrar \prod \limits_{\nu\in T_L} \SimpS \times \prod \limits_{\nu \notin T_L} \SimpS^{I_\nu} $$
Now this map induces a map

$$ f: (\SimpS^{\Gam_L})^{hG_L} \lrar
\prod \limits_{\nu\in T_L} \SimpS^{h\Gam_\nu} \times \prod \limits_{\nu \notin T_L}\left(\SimpS^{I_\nu}\right)^{h\Gam^{ur}_\nu} $$

By passing to homotopy colimit on $L$ we get a map

$$ f: \SimpS^{h\Gam_K} \lrar \SimpS^{h\A} $$
and we can choose $f_0 = \pi_0(f)$.
\end{proof}

\begin{lem}\label{l:continuous}
Let $K_\nu$ be a local field, $X$ a variety over $K_\nu$ and $\U_\bullet \lrar X$ an \'{e}tale hypercovering. Then for every $n \geq 0$ the map
$$ h: X(K_\nu) \lrar \SimpS_{\U,n}\left(K_\nu^h\right) $$
is continuous (where $X(K_\nu)$ inherits a natural topology from the topology of $K_\nu$ and $\SimpS_{\U,n}\left(K_\nu^h\right)$ is discrete).
\end{lem}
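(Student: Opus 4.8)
The plan is to prove that $h$ is \emph{locally constant}; since $\SimpS_{\U,n}(K_\nu^h)$ is discrete this is the same as continuity. So fix $x_0\in X(K_\nu)$ and look for a $\nu$-adic open neighbourhood $B\ni x_0$ on which $h$ is constant.

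First I reduce to finitely many levels. Since $\SimpS_{\U,n}=P_n(\SimpS_\U)=\cosk_{n+1}\tr_{n+1}(\SimpS_\U)$ depends only on $\tr_{n+1}(\U)=(\U_0,\dots,\U_{n+1})$ and its face and degeneracy maps, the same is true of $h(x)$. Unwinding the definition of $h$ — using that ${\acute{E}t^{n}_{/K_\nu}}(\spec K_\nu)$ is the $n$-truncation of $\E(\Gam_\nu)^{\natural}$ and the identification of maps out of it with homotopy fixed point sets explained above — one finds that, for $x\in X(K_\nu)$, the pullback hypercovering $x^*\U\lrar \spec K_\nu$ produces via ${\pi_0}_{/K_\nu}$ a Kan contractible, levelwise finite simplicial $\Gam_\nu$-set (an object of $D^{\Gam_\nu}_{fin}$), together with a map to $\SimpS_\U$, and the composite $\SimpS_{x^*\U}\lrar\SimpS_\U\lrar\SimpS_{\U,n}$ represents, by formula~\ref{t:goerss-formula}, exactly the element $h(x)\in\pi_0\big(\SimpS_{\U,n}^{h\Gam_\nu}\big)=\SimpS_{\U,n}(K_\nu^h)$. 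Hence it suffices to find $B\ni x_0$ together with, for each $x\in B$, an isomorphism $\SimpS_{x^*\U}\cong\SimpS_{x_0^*\U}$ of simplicial $\Gam_\nu$-sets which is compatible with the maps to $\SimpS_\U$ after applying $P_n$; then $h(x)=h(x_0)$ for all $x\in B$. As $P_n$ only sees levels $\le n+1$, we only need to control $\U_0,\dots,\U_{n+1}$.

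The crux is a local triviality statement for étale covers over a local field. Each $\U_m\lrar X$ ($m\le n+1$) is étale, hence Zariski-locally near $x_0$ of standard étale form $\spec(A[T]/(f))_g\lrar\spec A$ with $f$ monic and $f'$ invertible. On a small enough $\nu$-adic ball $B\ni x_0$ the roots of $f$ are simple and, by the analytic implicit function theorem together with Krasner's lemma (to keep the splitting fields constant), can be followed analytically over $B$; this yields a $\Gam_\nu$-equivariant isomorphism of finite étale families over $B$
$$\U_m\times_X B\;\cong\;(\U_m)_{x_0}\times_{K_\nu}B,$$
where $(\U_m)_{x_0}=\U_m\times_X\spec K_\nu$ is the (finite étale) fibre over $x_0$. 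Shrinking $B$ so that one $B$ works for all $m\le n+1$, and invoking uniqueness of analytic continuation (the identity theorem) to make these trivialisations simultaneously compatible with the finitely many face and degeneracy maps among the $\U_m$, we trivialise the whole truncated simplicial scheme $\tr_{n+1}(\U)\times_X B$. Restricting along any $x\in B$ gives isomorphisms $x^*\U_m\cong(\U_m)_{x_0}=x_0^*\U_m$ of simplicial $\Gam_\nu$-sets independent of $x\in B$; applying ${\pi_0}_{/K_\nu}$ and $\Ex^\infty$ yields the desired $\SimpS_{x^*\U}\cong\SimpS_{x_0^*\U}$.

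It remains to see that this identification is compatible with the maps to $\SimpS_\U$, i.e. that the composite ${\pi_0}_{/K_\nu}(x^*\U_m)\cong{\pi_0}_{/K_\nu}\big((\U_m)_{x_0}\big)\lrar{\pi_0}_{/K_\nu}(\U_m)$ is independent of $x\in B$. Here it is essential that we pass to connected components: the map $x^*\U_m\lrar\U_m$ itself does vary with $x$ (it records the analytically moving root over $x$, not the fixed one over $x_0$), but the connected component of $\overline{\U_m}$ that it hits is locally constant in $x$, because $B$ is connected and the analytic section used to trivialise sweeps out a connected subset of $\overline{\U_m}$. Therefore the maps agree on $\pi_0$ for all $x\in B$, so $h$ is constant on $B$ and we are done. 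The main obstacle is the local triviality statement, and within it the requirement that the analytic trivialisations be at the same time $\Gam_\nu$-equivariant and compatible with the simplicial structure maps; the rest is formal manipulation of Postnikov truncations and of Goerss's description of $\pi_0$ of the homotopy fixed points. For archimedean $\nu$ the same argument applies with the classical real or complex implicit function theorem in place of Krasner's lemma.
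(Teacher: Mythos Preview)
Your proposal is correct and follows essentially the same approach as the paper: show $h$ is locally constant by trivialising the finitely many \'etale covers $\U_0,\dots,\U_{n+1}$ over a small $\nu$-adic neighbourhood, obtaining compatible $\Gam_\nu$-equivariant identifications of the fibres, and then observing that the resulting maps into $\SimpS_{\U,n}$ agree because the contractible simplicial sets $\SimpS_{x^*\U,n}$ map into the same connected component of $\SimpS_{\U,n}^{h\Gam_\nu}$.

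The only notable difference is one of packaging. The paper invokes the inverse function theorem once to say: for an \'etale $f:U\to X$ there is a neighbourhood $V\ni x$ such that each point of $f^{-1}(x)$ has a unique nearby point of $f^{-1}(y)$ in the same connected component of $f^{-1}(V)$, and this gives the $\Gam_\nu$-equivariant identification $f^{-1}(x)\cong f^{-1}(y)$. Compatibility with the simplicial structure maps is then automatic from this \emph{naturality} (same-connected-component) description, and the factoring of $\tilde{x}$ through $\tilde{y}$ follows immediately. Your route via standard \'etale presentations, Krasner's lemma, and the identity theorem reaches the same conclusion but is heavier machinery than needed: Krasner is not really required (you only need that the fibre, as a finite \'etale $K_\nu$-algebra, is locally constant, which is the inverse function theorem), and the ``uniqueness of analytic continuation'' step for simplicial compatibility is subsumed by the naturality of the connected-component identification.
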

\begin{proof}
Let $x\in X(K_\nu)$ be a point. It is enough to find a neighborhood $V$ of $x$ in $X(K_v)$ (with respect to the $K_v$-topology) such that for every $y \in V$ we have $h(x) = h(y)$.
Consider $x$ as a map
$$ x: \spec K_v \lrar X $$
and let $x^*(\U)$ be the hypercovering of $\spec K_v$ which is the pullback of $\U$ by $x$. The underlying simplicial set of $\SimpS_{x^*(\U),n}$ is contractible and hence $\SimpS_{x^*(\U),n}^{h\Gamma_v}$ is also contractible. The map $x$ gives a map
$$ \tilde{x}:\SimpS_{x^*(\U),n} \lrar \SimpS_{\U,n} $$
such that the induced map
$$\tilde{x}^{h\Gamma_v}:\SimpS_{x^*(\U),n}^{h\Gamma_v} \lrar \SimpS_{\U,n}^{h\Gamma_v} $$
sends $\SimpS_{x^*(\U),n}^{h\Gamma_v}$ to the connected component $h(x)$.

Now given any \'{e}tale map $f:U \lrar X$ the inverse function theorem insures that there is an open neighborhood $V$ of $x$ in the $K_\nu$-topology such that for every $y \in V$ there is a natural Galois equivariant identification of the fibers
$$ F_y:f^{-1}(x) \lrar f^{-1}(y) $$
This identification takes any point in $f^{-1}(x)$ to the unique point in $f^{-1}(y)$ sitting with it in the same connected component of $f^{-1}(V)$.

This means that there is an open neighborhood $V_{\U,n}$ of $x$ in the $K_\nu$-topology such that for every $y \in V_{\U,n}$ there is a natural Galois equivariant map
$$ F_{\U,n}:\sk_n (x^*(\U)) \lrar \sk_n (y^*(\U)) $$
Now since for every $y\in V_{\U,n}$ the map $\tilde{x}$ factors trough the map $\tilde{y}$ we see that $\tilde{x}^{h\Gamma_v}$ and $\tilde{y}^{h\Gamma_v}$ must land in the same connected component and so
$$ h(y) = h(x) $$
\end{proof}

\begin{lem}\label{l:adelic-is-adelic}
Let $X$ be an algebraic variety over a number field $K$ and let $\U_\bullet \lrar X$ be a hypercovering. Then the natural continuous map
$$ X(\A) \lrar \prod \limits_\nu \SimpS_{\U,n}\left(hK_\nu\right) $$
factors through a natural continuous map
$$ h_{\U,n}: X(\A) \lrar \SimpS_{\U,n}\left(h\A\right) $$

\end{lem}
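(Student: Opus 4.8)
The plan is to build the map $h_{\U,n}$ by assembling the local maps $h: X(K_\nu) \to \SimpS_{\U,n}(hK_\nu)$ for each place $\nu$ and then showing that an adelic point, which by definition is integral at all but finitely many places, lands in the restricted-product subspace $\SimpS_{\U,n}(h\A) = \pi_0(\SimpS_{\U,n}^{h\A})$. Recall that the restricted product defining $\SimpS_{\U,n}^{h\A}$ is taken with respect to the images of the unramified homotopy fixed points $\SimpS_{\U,n}^{h^{ur}\Gam_\nu} = (\SimpS_{\U,n}^{I_\nu})^{h\Gam^{ur}_\nu}$ inside $\SimpS_{\U,n}^{h\Gam_\nu}$. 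So the essential point is: for all but finitely many $\nu$, the image of $x_\nu$ under $h$ lifts to the unramified homotopy fixed point set.

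First I would fix a model $\mathcal{X} \to \Spec \mathcal{O}_{K,S}$ of $X$ over a ring of $S$-integers for a suitable finite set $S$, enlarging $S$ so that the hypercovering $\U_\bullet \to X$ also spreads out to a hypercovering $\mathcal{U}_\bullet \to \mathcal{X}$ over $\mathcal{O}_{K,S}$ (this is possible since $\U$ is a finite-type object: only finitely many schemes and maps are involved up to the relevant skeleton, which is all that matters for $\SimpS_{\U,n} = P_n(\SimpS_\U)$). Then for $\nu \notin S$, an integral point $x_\nu \in \mathcal{X}(\mathcal{O}_\nu)$ gives, by pulling back $\mathcal{U}_\bullet$, a hypercovering of $\Spec \mathcal{O}_\nu$; since $\mathcal{O}_\nu$ is a henselian (even complete) DVR with finite residue field, such a hypercovering is unramified, i.e. the inertia group $I_\nu$ acts trivially on the corresponding simplicial $\Gam_\nu$-set. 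Concretely, the pullback $x_\nu^*(\U)$ is a contractible simplicial $\Gam_\nu$-set on which $I_\nu$ acts trivially, so the map $\widetilde{x_\nu}: \SimpS_{x_\nu^*\U, n} \to \SimpS_{\U,n}$ factors through $\SimpS_{\U,n}^{I_\nu}$, and hence the induced map on homotopy fixed points lands in $(\SimpS_{\U,n}^{I_\nu})^{h\Gam_\nu^{ur}} = \SimpS_{\U,n}^{h^{ur}\Gam_\nu}$. This shows $h(x_\nu)$ is in the image of the unramified part for $\nu \notin S$, which is exactly the restricted-product condition.

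Next I would check that the resulting map $X(\A) \to \SimpS_{\U,n}(h\A)$ is continuous and compatible with the projection to $\prod_\nu \SimpS_{\U,n}(hK_\nu)$. Continuity at each place follows from Lemma~\ref{l:continuous}, which says $h: X(K_\nu) \to \SimpS_{\U,n}(hK_\nu)$ is locally constant; for the restricted-product topology one additionally needs that on the compact-open piece $\prod_{\nu \in S} X(K_\nu) \times \prod_{\nu \notin S} \mathcal{X}(\mathcal{O}_\nu)$ the map factors through the finite-level piece $\prod_{\nu \in T} \SimpS_{\U,n}^{h\Gam_\nu} \times \prod_{\nu \notin T} \SimpS_{\U,n}^{h^{ur}\Gam_\nu}$ for $T = S$, which is immediate from the integrality argument above. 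Commutativity with the projection to the full product is formal, since the map $\SimpS_{\U,n}^{h\A} \to \prod_\nu \SimpS_{\U,n}^{h\Gam_\nu}$ is by construction induced by the inclusions $\SimpS^{I_\nu} \hookrightarrow \SimpS$ and the defining maps of the homotopy colimit.

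The main obstacle is the spreading-out step: one must be careful that enlarging $S$ to simultaneously (i) give an integral model $\mathcal{X}$, (ii) spread out the hypercovering $\U_\bullet$ up to level $n+1$, and (iii) ensure smoothness/étaleness of all the structure maps in the model, can be done with a \emph{single} finite $S$ — and then that at each $\nu \notin S$ the pullback hypercovering is genuinely unramified. This last point uses that an étale cover of a henselian DVR with the given residue characteristic is unramified over the closed point, so the inertia acts trivially on its generic fiber; since $\SimpS_{\U,n}$ only depends on the $(n+1)$-skeleton of $\U$, which consists of finitely many étale $X$-schemes, this is a finite condition and can be absorbed into $S$. Once $S$ is chosen, the rest is bookkeeping with the definitions of $\SimpS^{h^{ur}\Gam_\nu}$ and the restricted product.
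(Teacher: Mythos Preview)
Your proposal is correct and follows essentially the same strategy as the paper: spread the hypercovering out to an integral model over $\mathcal{O}_{K,S}$, show that for $\nu\notin S$ the pullback of $\U$ along an $\mathcal{O}_\nu$-point is a contractible simplicial $\Gam_\nu$-set on which $I_\nu$ acts trivially, and conclude that $h(x_\nu)$ lifts to the unramified homotopy fixed points. The only cosmetic differences are that the paper separates the finite bad set into two pieces ($T_0$ for places where $I_\nu$ might act nontrivially on $\sk_{n+1}\SimpS_\U$, and $T_1$ for places of bad reduction of the model), and that the paper packages the continuity argument into a standalone lemma on maps of restricted products (Lemma~\ref{l:restricted-maps}); your direct argument via Lemma~\ref{l:continuous} plus the factorization through the $T=S$ piece amounts to the same thing.
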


\begin{proof}
First choose some model for $\U_\bullet \lrar X$ over $\spec(\ZZ)$
$$ (\U_\ZZ)_\bullet \lrar X_\ZZ $$
which is \'{e}tale outside a finite set of bad places $T_1$. We will show that
$$ h(X_\ZZ(O_\nu))
\subseteq \im\left[
\pi_0\left(\SimpS_{\U,n}^{h^{ur}\Gam_\nu}\right) \lrar \SimpS_{\U,n}\left(hK_\nu\right) \right] $$
for almost all $\nu$. Choose some finite extension $L/K$ such that $\Gam_L$ fixes $\sk_n(\SimpS_\U)$ and denote by $T_0$ the set of ramified places in $L$.

Consider now a place $\nu \notin T_0 \cup T_1$. Since $\nu  \notin T_0$ we have
$$ \SimpS_{\U,n}^{I_\nu} = \SimpS_{\U,n} $$
Hence all we need to show is that the homotopy fixed point $h_n(x)$ comes from a homotopy fixed point of the quotient group $\Gam_\nu^{ur} = \Gam_\nu/I_\nu$.

Since $\nu \notin T_1$ pulling back $\U_\ZZ$ to a point $x \in X_{\ZZ}(O_\nu)$ will yield a (contractible) simplicial set $E$ that is stabilized by some unramified extension of $K_\nu$. Then $h_n(x)$ is the image of the unique point $\pi_0(E^{h \Gam_\nu}) = *$ in $\SimpS_{\U,n}$. Since $E$ is stabilized by $I_\nu$ this homotopy fixed point comes from $\Gam_\nu^{ur}$ and we are done.

This means that in order to prove Lemma ~\ref{l:adelic-is-adelic} it will be enough to prove the following general lemma on the behavior of maps between restricted products of topological spaces:
\begin{lem}\label{l:restricted-maps}
Let
$$ \{f_\lam : X_\lam \lrar Y_\lam\}_{\lam\in \Lam} $$
be a family of continuous maps of topological spaces and let
$$ \{O_\lam,U_\lam\}_{\lam\in \Lam} $$
be a family of open subsets $O_\lam \subseteq X_\lam,U_\lam \subseteq Y_\lam $.

Assume that $f(O_\lam) \subseteq U_\lam$ for almost all $\lam$. Then the map
$$ F_\Lam = \prod \limits_{\lam \in \Lam} f_\lam : \prod \limits_{\lam \in \Lam}X_\lam \lrar \prod \limits_{\lam \in \Lam} Y_\lam $$
induces a continuous map
$$ F_\Lam : \prod' \limits_{\lam \in \Lam}X_\lam \lrar \prod' \limits_{\lam \in \Lam} Y_\lam$$
where the restricted product is taken with respect to $O_\lam, U_\lam$ respectively.
\end{lem}
\begin{proof}
It is clear that
$$ F_\Lam\left(\prod' \limits_{\lam \in \Lam}X_\lam\right)  \subseteq \prod' \limits_{\lam \in \Lam}  Y_\lam $$
Hence it is enough to show that $F_\Lam$ is continuous.

We need to show that if $S \subseteq \Lam$ is a finite set and $\{A_\lam \subseteq Y_\lam\}_{\lam\in S}$ are open then the set
$$ (F_\Lam)^{-1}\left(\prod\limits_{\lam \in S} A_\lam \times \prod\limits_{\lam \in \Lam\bksl S}  X_\lam\right) $$
is open in $\prod' \limits_{\lam \in \Lam}X_\lam $. Let
$$(x_\lam)_\lam \in (F_\Lam)^{-1}\left(\prod\limits_{\lam \in S} A_\lam \times \prod\limits_{\lam \in \Lam\bksl S}  U_\lam\right)$$
and let $T \subseteq \Lam$ be a finite set containing $S$ such that
$$ x_\lam \in O_\lam, f(O_\lam) \subseteq U_\lam $$
for every $\lam \in \Lam \bksl T $. Note that such a set exist due the assumptions of the lemma. Consider the set

$$ N_x = \prod \limits_{\lam \in S} f_\lam^{-1}(A_\lam) \times \prod \limits_{\lam \in T \bksl S} f_\lam^{-1}(U_\lam) \times \prod \limits_{\lam \in \Lam \bksl T } O_\lam $$

It is clear that $N_x$ is an open neighborhood of $x$ and that
$$ N_x \subseteq (F_\Lam)^{-1}\left(\prod\limits_{\lam \in S} A_\lam \times \prod\limits_{\lam \in \Lam\bksl S}  X_\lam\right) $$
\end{proof}

\end{proof}

By Lemma ~\ref{l:adelic-is-adelic} and Lemma ~\ref{l:rational-is-h-adelic} we have a commutative diagram
$$ \xymatrix{
X(K) \ar[r]^-{h_n}\ar[d]^{\loc} & X^{n}\left(hK\right) \ar[d]^{\loc_{h,n}} \\
X(\A) \ar[r]^-{h_n} & X^{n}\left(h\A\right) \\
}$$

Note that we abuse notation and use $h_n$ for both the rational and adelic cases.

We denote by $X(\A)^{h,n} \subseteq X(\A)$ the set of adelic points whose image in $X^{n}\left(h\A\right)$ lies in the image of $\loc_{h,n}$. Note that
$$ X(\A)^{h,n} = \left(\prod \limits_\nu X(K_\nu)\right)^{h,n} \cap X(\A) \subseteq X(\A) $$
and also
$$ X(K) \subseteq X(\A)^{h} \subseteq \cdots \subseteq X(\A)^{h,2} \subseteq \cdots  \subseteq X(\A)^{h,1} \subseteq X(\A) $$

We denote by $X(\A)^{h,\infty}$ simply by $X(\A)^h$. We call the elements of $X(\A)^h$ the set of \textbf{homotopically rational points}.

\begin{define}
We say that the lack of $K$-rational points in $X$ is explained by the \textbf{\'{e}tale homotopy obstruction} if the set $X(\A)^h$ is empty.
\end{define}

\subsection{ The \'{E}tale Homology Obstruction }
Let $\Mod_\Gam$ be the category of discreet $\Gam$-modules.
Consider the augmented functor
$$ \ZZ: \Set_{\Gam} \lrar \Mod_{\Gam} $$
which associates to $\Gam$-set $A$ the free abelian group $\ZZ A$ generated from $A$ with the induced Galois action. The terminal map $A \lrar \{*\}$ defines a map $\ZZ A \lrar \ZZ$ which we will call the \textbf{degree} map. Note that the image of the augmentation map $A \lrar \ZZ A$ lies in the subset of elements of degree $1$.

For a simplicial $\Gam$-set $\SimpS$ we will denote by $\ZZ \SimpS$ the simplicial $\Gam$-module obtained by applying the $\ZZ$ functor levelwise, i.e.
$$ (\ZZ \SimpS)_n = \ZZ (\SimpS_n) $$
The terminal map $\SimpS \lrar *$ induces a map from $\ZZ \SimpS$ to the constant simplicial $\Gam$-module $\ZZ$ (this is the discrete simplicial $\Gam$-module with the trivial action). Note that again the elements in the image of the augmentation map have degree $1$.

The homotopy groups of $\ZZ \SimpS$ can be naturally identified with the homology of $\SimpS$ and the augmentation map induces the Hurewicz map
$$ \pi_*(\SimpS) \lrar \pi_*(\ZZ \SimpS) = H_*(\SimpS) $$
We shall refer to the augmentations map as the Hurewicz map as well.

We can now consider the $\ZZ$-variant of the functor ${\acute{E}t_{/K}}$ applying the $\ZZ$ functor on each simplicial set in the diagram:
$$ \ZZ{\acute{E}t_{/K}} = \{\ZZ \SimpS_{\mcal{U}}\}_{\mcal{U} \in I(X)} $$
As before we prefer to work with bounded simplicial $\Gam$-sets and so we replace this object by its truncated Postnikov tower
$$(\ZZ{\acute{E}t_{/K}})^{n} = \{P_k(\ZZ \SimpS_{\mcal{U}})\}_{\mcal{U} \in I(X), k \leq n} $$
as well as the full Postnikov tower
$$ (\ZZ{\acute{E}t_{/K}})^{\natural} = (\ZZ{\acute{E}t_{/K}})^{\infty} = \{P_k(\ZZ \SimpS_{\mcal{U}})\}_{\mcal{U} \in I(X), k \in \NN} $$
For every $0 \leq n \leq \infty$ we have a natural transformation
$$ {\acute{E}t^{n}_{/K}}(X) \lrar (\ZZ{\acute{E}t_{/K}})^{n}(X) $$

and so we can consider the commutative diagram
$$ \xymatrix{
X(K) \ar[r]^-{h_n}\ar[d] & X^{n}\left(hK\right) \ar[r]\ar^{\loc_{h,n}}[d] & X^{\ZZ,n}\left(hK\right) \ar^{\loc_{\ZZ h, n}}[d] \\
X(\A) \ar^-{h_n}[r] & X^{n}\left(h\A\right) \ar[r] & X^{\ZZ,n}\left(h\A\right) \\
}$$
Where
$$X^{\ZZ,n}\left(hK\right)  = ((\ZZ{\acute{E}t_{/K}})^{n}(X))(hK)$$
and
$$X^{\ZZ,n}\left(h\A\right)  = ((\ZZ{\acute{E}t_{/K}})^{n}(X))(h\A).$$

We say that an adelic point
$$ (x_\nu) \in X(\A) = \prod'_\nu X(K_\nu) $$
is \textbf{n-homologically rational} if its image in
$X^{\ZZ,n}\left(h\A\right)$ is rational, i.e. is in the image of $\loc_{\ZZ h, n}$.

We denote by $X(\A)^{\ZZ h,n} \subseteq X(\A)$ the set of $n$-homologically rational points. We also denote
$$ X(\A)^{\ZZ h} = X(\A)^{\ZZ h,\infty} $$

\begin{define}
We say that the lack of $K$-rational points in $X$ is explained by the \textbf{\'{e}tale homology obstruction} if the set $X(\A)^{\ZZ h}$ is empty.
\end{define}

From the above discussion we immediately see that we have the following diagram of inclusions

$$
\xymatrix{
\empty & X(\A)^{\ZZ h} \ar@{^{(}->}[r] & \cdots\ar@{^{(}->}[r] & X(\A)^{\ZZ h,2} \ar@{^{(}->}[r] & X(\A)^{\ZZ h,1}\ar@{^{(}->}[r]& X(\A)  \\
X(K) \ar@{^{(}->}[r] & X(\A)^{h} \ar@{^{(}->}[u] \ar@{^{(}->}[r] & \cdots \ar@{^{(}->}[r] & X(\A)^{h,2} \ar@{^{(}->}[r] \ar@{^{(}->}[u] & X(\A)^{h,1}\ar@{^{(}->}[u] & \empty }
$$

and so the \'{e}tale $n$-homology obstruction is in general weaker then the \'{e}tale $n$-homotopy obstruction.

\subsection{ The Single Hypercovering  Version }
It will sometimes be convenient to consider the information obtained from a single hypercovering. Let $X$ be an algebraic variety over $K$ and
$$ {\acute{E}t^{\natural}_{/K}}(X) = \{\SimpS_{\mcal{U},n}\}_{\mcal{U} \in I(X),n\in \NN} $$
For each hypercovering $\mcal{U} \in I(X)$ and $n \in \NN$ we can consider the diagram

$$ \xymatrix{
X(K) \ar^-{h_{\U,n}}[r]\ar[d] & \SimpS_{\mcal{U},n}\left(hK\right) \ar^{\loc_{\U,n}}[d] \\
X(\A) \ar^-{h_{\U,n}}[r] & \SimpS_{\mcal{U},n}\left(h\A\right) \\
}$$
We denote by $X(\A)^{\U,n}$ the set of adelic points $(x_\nu) \in X(\A)$ whose image in $\SimpS_{\U,n}\left(h\A\right)$ is rational (i.e. is in the image of $\loc_{\U,n}$).

Similarly we can consider the diagrams
$$ \xymatrix{
X(K) \ar^{h_{\ZZ \U,n}}[r]\ar[d] & \ZZ\SimpS_{\mcal{U},n}\left(hK\right) \ar^{\loc_{\ZZ \U,n}}[d] \\
X(\A) \ar^{h_{\ZZ \U,n}}[r] & \ZZ\SimpS_{\mcal{U},n}\left(h\A\right) \\
}$$
We denote by $X(\A)^{\ZZ\U,n}$ the set of adelic points $(x_\nu) \in X(\A)$ whose image in $\ZZ \SimpS_{\mcal{U},n}\left(h\A\right)$ is rational (i.e. is in the image of $\loc_{\ZZ \U,n}$).

Note that for every object $\{\SimpS_\alpha\}_{\alp \in I} \in \Pro\Ho\left(\Set^{\Del^{op}}_{\Gam}\right)$ and every $\alp_0 \in I$ we have a natural map

$$\{\SimpS_\alpha\}_{\alp \in I} \lrar \SimpS_{\alp_0}$$
Thus
$$ X(\A)^{h,n} \subseteq  X(\A)^{\U,n} $$
and
$$ X(\A)^{\ZZ h,n} \subseteq  X(\A)^{\ZZ \U,n} $$

for every hypercovering $\U_\bullet \lrar X$.

\subsection{ Summary of Notations }
The following is a summary of the notations we've had so far for maps from points to corresponding homotopy fixed points.
$$ h: X(K) \lrar X(hK) $$
$$ h_n: X(K) \lrar X^{n}(hK) $$
$$ h_{\U}: X(K) \lrar \SimpS_{\U}(hK) $$
$$ h_{\U,n}: X(K) \lrar \SimpS_{\U,n}(hK) $$
$$ h_{\ZZ}: X(K) \lrar X^{\ZZ}(hK) $$
$$ h_{\ZZ,n}: X(K) \lrar X^{\ZZ,n}(hK) $$
$$ h_{\ZZ\U}: X(K) \lrar \ZZ\SimpS_{\U}(hK) $$
$$ h_{\ZZ\U,n}: X(K) \lrar P_n(\ZZ\SimpS_{\U})(hK) $$
By abuse of notation we will use the exact same notation for the adelic version of all of these maps.

\section{ Homotopy Fixed Points for Pro-Finite Groups }\label{s:homotopy-fixed-points}
Let $\Gam$ be a pro-finite group. We start with the basic calculative theorem regarding homotopy fixed points of simplicial $\Gam$-sets in Goerss' model category:
\begin{thm}\label{t:goerss-formula}
Let $\Y$ be a simplicial $\Gam$-set whose underlying simplicial set is Kan. Let $D_{fin} \subseteq \Ho\left(\Set^{\Del^{op}}_{\Gam}\right)$ be the full subcategory spanned by Kan contractible objects which are levelwise finite. Then one has an isomorphism of sets
$$ \pi_0\left((\Y)^{h\Gam}\right) \simeq \colim_{\E \in D_{fin}}[\E,\Y]_{\Gam} $$
If in addition $\Y$ is also \textbf{strictly bounded} then the formula can be refined to
$$ (*)\;\; \pi_0\left((\Y)^{h\Gam}\right) \simeq \colim_{\Lam \fns \Gam} [\E(\Gam/\Lam),\Y]_{\Gam} \simeq $$
$$ \colim_{\Lam \fns \Gam} \left[\E(\Gam/\Lam),\Y^{\Lam}\right]_{\Gam} \simeq \colim_{\Lam \fns \Gam}\pi_0\left(\left(\Y^{\Lam}\right)^{h(\Gam/\Lam)}\right) $$
\end{thm}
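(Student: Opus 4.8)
The plan is to reduce the statement to three ingredients: the axioms of Goerss' simplicial model structure (to pass to derived mapping spaces), the elementary case of a \emph{finite} group (which serves as a building block), and the cofinality already established in \S\ref{s:etale-homotopy-type}. The one step I expect to be delicate is the comparison between mapping spaces out of the levelwise finite contractible objects into $\Y$ itself and into a fibrant replacement of $\Y$; this is where the fine structure of Goerss' model category enters.

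The building block is the case of a finite group $G$ and a simplicial $G$-set $\mathbf{Z}$ whose underlying simplicial set is Kan: I claim $[\E G,\mathbf{Z}]_{G}\cong\pi_0(\mathbf{Z}^{hG})$. Indeed $\mathbf{Z}^{hG}=(\mathbf{Z}^{fib})^{G}=\Map_{G}(\ast,\mathbf{Z}^{fib})$; since in Goerss' model structure every object is cofibrant and $\E G\lrar\ast$ is a weak equivalence (its underlying map is a map of contractible simplicial sets), the simplicial model category axioms give $\Map_{G}(\ast,\mathbf{Z}^{fib})\simeq\Map_{G}(\E G,\mathbf{Z}^{fib})$; and since $\E G$ is cofibrant and both $\mathbf{Z}$ and $\mathbf{Z}^{fib}$ are fibrant in the Borel model structure on simplicial $G$-sets (which has the same weak equivalences), the map $\Map_{G}(\E G,\mathbf{Z})\lrar\Map_{G}(\E G,\mathbf{Z}^{fib})$ is a weak equivalence as well. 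The first step of this reasoning, carried out over $\Gam$ with $\Y^{fib}$ in place of $\mathbf{Z}^{fib}$, gives for every $\E\in D_{fin}$ a natural isomorphism $[\E,\Y^{fib}]_{\Gam}\cong\pi_0\bigl((\Y)^{h\Gam}\bigr)$, whence $\colim_{\E\in D_{fin}}[\E,\Y^{fib}]_{\Gam}\cong\pi_0\bigl((\Y)^{h\Gam}\bigr)$ (the category $D_{fin}$ being connected, since $\E_1\times\E_2$ maps to both $\E_1$ and $\E_2$).

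Next I would run the chain of identifications on the left-hand side, now assuming $\Y$ strictly bounded, say $N$-bounded, so that $\Y$ is strictly fibrant --- hence each $\Y^{\Lam}$ is Kan --- and $\Y$ is $N$-truncated. For any $\E\in D_{fin}$ the canonical map $\E\lrar P_{N}(\E)$ induces $[P_{N}(\E),\Y]_{\Gam}\overset{\sim}{\lrar}[\E,\Y]_{\Gam}$ by the universal property of Postnikov truncation, and $P_{N}(\E)=\cosk_{N+1}\tr_{N+1}\E$ is excellent, its $\Gam$-action factoring through a finite quotient. Therefore $\colim_{\E\in D_{fin}}[\E,\Y]_{\Gam}$ is already computed over the excellent objects of $D_{fin}$, and among these the objects $\E(\Gam/\Lam)$, $\Lam\fns\Gam$, are cofinal --- this is exactly the cofinality of \S\ref{s:etale-homotopy-type} (an excellent Kan contractible levelwise finite $\Gam$-simplicial set $\X$, with action through $G=\Gam/\Lam$, receives an essentially unique $\Gam$-map $\E(\Gam/\Lam)\lrar\X$; see the discussion around Definition~\ref{def:EG}). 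Hence $\colim_{\E\in D_{fin}}[\E,\Y]_{\Gam}\cong\colim_{\Lam\fns\Gam}[\E(\Gam/\Lam),\Y]_{\Gam}$. Since $\Lam$ acts trivially on $\E(\Gam/\Lam)$, every equivariant map and homotopy out of $\E(\Gam/\Lam)$ factors through $\Y^{\Lam}$, giving $[\E(\Gam/\Lam),\Y]_{\Gam}\cong[\E(\Gam/\Lam),\Y^{\Lam}]_{\Gam/\Lam}$, and the finite group case above (with $G=\Gam/\Lam$, $\mathbf{Z}=\Y^{\Lam}$) identifies the latter with $\pi_0\bigl((\Y^{\Lam})^{h(\Gam/\Lam)}\bigr)$. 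This establishes every isomorphism in $(*)$ except the identification of $\colim_{\E\in D_{fin}}[\E,\Y]_{\Gam}$ with $\pi_0\bigl((\Y)^{h\Gam}\bigr)$.

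The remaining --- and, I expect, hardest --- step is to show that the natural map $\colim_{\E\in D_{fin}}[\E,\Y]_{\Gam}\lrar\colim_{\E\in D_{fin}}[\E,\Y^{fib}]_{\Gam}$ induced by a fibrant replacement $\Y\hrar\Y^{fib}$ is a bijection; in other words, that passing to the pro-system of levelwise finite contractible objects already repairs the failure of $\Y$ to be standard-fibrant. Here I would use the explicit cofibrant generation of Goerss' standard model structure: $\Y^{fib}$ is obtained from $\Y$ by a transfinite composite of pushouts of generating trivial cofibrations, each assembled (via pushout-products with boundary and horn inclusions) out of the objects $\E(\Gam/\Lam)$. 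Surjectivity of the comparison map then becomes the assertion that a map $\E\lrar\Y^{fib}$ with $\E\in D_{fin}$ becomes, after precomposition with a refinement in $D_{fin}$, homotopic to a map through $\Y$, and injectivity the analogous assertion for homotopies; both should follow from a smallness argument against this cellular filtration. For strictly bounded $\Y$, which is all that is used in this paper, one can also bypass this step using the previous paragraph, and for general Kan $\Y$ the comparison is in essence Goerss' computation of homotopy fixed point sets \cite{Goe95}, which may simply be quoted. Together with the preceding paragraphs this yields the first formula, and under the strict boundedness hypothesis the refinement $(*)$.
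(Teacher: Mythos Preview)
Your overall architecture --- isolate the finite-group case $[\E G,\mathbf{Z}]_G\cong\pi_0(\mathbf{Z}^{hG})$ as a building block, then reduce to it via cofinality of the $\E(\Gam/\Lam)$ among excellent contractible objects --- is sound, and your derivation of the internal isomorphisms in $(*)$ under the strict boundedness hypothesis is close to the paper's. Your observation that each $[\E,\Y^{fib}]_\Gam$ is already $\pi_0(\Y^{h\Gam})$ (so the colimit over the connected category $D_{fin}$ is trivially that) is a genuine simplification over the paper, which instead proves by an explicit inductive construction that $D_{fin}$ is cofinal in the larger category $D$ of all Kan contractible simplicial $\Gam$-sets. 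The substantive divergence is in the step you yourself flag as hardest, and there the paper takes a different and cleaner route: rather than compare $[\E,\Y]_\Gam$ with $[\E,\Y^{fib}]_\Gam$ by hand, it applies Brown's \emph{category of fibrant objects} formalism~\cite{Bro73} to the full subcategory $C\subseteq\Set^{\Del^{op}}_\Gam$ of objects with Kan underlying simplicial set (with underlying weak equivalences and fibrations), and Brown's Theorem~1 then gives directly $\colim_{\E\in D}[\E,\Y]_\Gam\cong[\ast,\Y^{fib}]_\Gam=\pi_0(\Y^{h\Gam})$.

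Your proposed alternatives for that step each have a problem. The smallness/cellular sketch is plausible in outline, but smallness of $\E$ only gets you a factorisation of $\E\to\Y^{fib}$ through some finite stage $\Y_\alpha$; showing that this becomes homotopic, after refining $\E$ in $D_{fin}$, to a map landing in $\Y$ is a genuine obstruction-theoretic statement that you have not established. Citing \cite{Goe95} is reasonable but close to circular, since the theorem is precisely a repackaging of his computation. And the ``bypass'' in the strictly bounded case does not close the circle: your second paragraph identifies $\colim_{D_{fin}}[\E,\Y]_\Gam$ with $\colim_\Lam\pi_0\bigl((\Y^\Lam)^{h(\Gam/\Lam)}\bigr)$, but to finish you still need to identify the latter with $\pi_0(\Y^{h\Gam})$, and that identification \emph{is} the content of the formula you are trying to prove. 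In short, you have correctly located the one nontrivial step, but Brown's formalism --- not a direct attack on $\Y\hookrightarrow\Y^{fib}$ --- is the tool the paper uses to dispatch it.
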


\begin{proof}
We use a formalism developed by Brown in ~\cite{Bro73} called a \textbf{category of fibrant objects}. This is a notion of a category with weak equivalences and fibrations satisfying certain properties (see ~\cite{Bro73} pages 420-421).

We will apply this formalism to an example analogous to one appearing in $~\cite{Bro73}$ itself: let $C \subseteq \Set^{\Del^{op}}_{\Gam}$ be the full subcategory consisting of simplicial $\Gam$-sets whose underlying simplicial set is Kan. We will declare a morphism in $C$ to be a fibration if it induces a Kan fibration on the underlying simplicial sets and a weak equivalence if it induces a weak equivalence on the underlying simplicial set (so in particular weak equivalences in $C$ coincide with those of the standard model structure). It can be shown that these choices endow $C$ with the structure of a category with fibrant objects.

Now let $\X,\Y$ be two simplicial $\Gam$-sets and let $\vphi:\X' \x{\sim}{\lrar} \X$ be a weak equivalence (with respect to the standard model structure). Let $g:\X' \lrar \Y$ be a map. Then there exists a unique map $h: \X \lrar \Y^{fib}$ such that the square
$$ \xymatrix{
\X' \ar^{g}[r]\ar^{\vphi}[d] & \Y \ar^{f}[d]\\
\X \ar^{h}[r] & \Y^{fib}\\
}$$
commutes up to simplicial homotopy. This gives us a map of sets
$$ \left[\X',\Y\right]_\Gam \lrar \left[\X,\Y^{fib}\right]_{\Gam} $$
for every weak equivalence $\vphi: \X' \x{\sim}{\lrar} \X$. Now Theorem $1$ in ~\cite{Bro73} applied to $C$ (taking into account remark $5$ on page $427$ of ~\cite{Bro73}) says the following: if $\X$ and $\Y$ are in $C$, and if we take the colimit over all weak equivalences $\vphi: \X' \x{\sim}{\lrar} \X$ in $C$, then the resulting map
$$ \colim \limits_{\vphi:\X' \x{\sim}{\lrar} \X}\left[\X',\Y\right]_\Gam \lrar \left[\X,\Y^{fib}\right]_{\Gam} $$
is actually an isomorphism of sets.

In particular if we denote by $D \subseteq \Ho\left(\Set^{\Del}_{\Gam}\right)$ the full subcategory of Kan contractible simplicial $\Gam$-sets then we get an isomorphism of sets
$$ \colim \limits_{\E \in D}[\E,\Y]_{\Gam} \x{\simeq}{\lrar} \left[*,\Y^{fib}\right]_{\Gam} = \pi_0\left(\left(\Y^{fib}\right)^{\Gam}\right) = \pi_0\left(\Y^{h\Gam}\right)  $$
This colimit is indexed by a category which is a bit too big for our purposes, but this problem can easily be mended:
\begin{lem}
The subcategory $D_{fin} \subseteq D$ consisting of objects which are levelwise finite is cofinal.
\end{lem}
\begin{proof}
We need to show that every object $\E \in D$ admits a map $\E' \lrar \E$ where $\E' \in D$ is a levelwise finite Kan contractible simplicial $\Gam$-set. We will construct $\E'$ inductively as follows: let $E'_{-1} = \emptyset$ and $f_{-1}: \E'_{-1} \lrar \E$ the unique map. We will extend $\{\E'_{-1}\}$ to an increasing family of simplicial $\Gam$-sets
$$ \E'_{-1} \subseteq \E'_0 \subseteq ... \subseteq \E'_n \subseteq ... $$
such that $\E'_n$ is an $n$-dimensional levelwise finite simplicial $\Gam$-set which is $(n-1)$-connected in the following sense: every map $\partial \Del^m \lrar E'_n$ with $m \leq n$ extends to $\Del^m$. This will guarantee that
$$\E' = \bigcup_n \E'_n $$
is a Kan contractible simplicial $\Gam$-set. Further more we will construct a compatible family of equivariant maps
$$ f_n: \E'_n \lrar \E $$
which will induce one big equivariant map $\E' \lrar \E$.

Now let $n \geq -1$ be a number and we will describe the construction of $\E'_{n+1}$ out of $\E'_n$ and $f_n$. First for a \textbf{finite} simplicial set $\X$ and a simplicial $\Gam$-set $\Y$ we will denote by
$$ \Y^{\X} = \Hom_{\Set^{\Del}}(\X,\Y) $$ the \textbf{set} of maps of simplicial sets from $\X$ to $\Y$. The action of $\Gam$ on $\Y$ induces an action of $\Gam$ on $\Y^{\X}$ rendering it a $\Gam$-set (i.e. all the stabilizers are open because $\X$ is finite).

Now consider the $\Gam$-set
$$ A = \E'^{\partial \Del^{n+1}} \times_{\E^{\partial \Del^{n+1}}} \E^{\Del^{n+1}} $$
This set parameterizes commutative diagrams in the category of simplicial sets of the form
$$ \xymatrix{
\partial \Del^{n+1}  \ar[d]\ar[r] & \E'_n \ar^{f_n}[d] \\
\Del^{n+1}  \ar[r] & \E \\
}$$
Since $\E$ is Kan contractible the map $A \lrar \E'^{\partial \Del^{n+1}}$ is \textbf{surjective}. Since $\E'$ is levelwise finite the set $\E'^{\partial \Del^{n+1}}$ is finite, so we can choose a finite subset $A' \subseteq A$ such that the restricted map
$$ A' \lrar \E'^{\partial \Del^{n+1}} $$
is still surjective (recall that all the orbits in $A$ are finite). Then we get one big commutative diagram of simplicial $\Gam$-sets and \textbf{equivariant} maps
$$ \xymatrix{
\partial \Del^{n+1} \times A' \ar[d]\ar[r] & \E'_n \ar^{f_n}[d] \\
\Del^{n+1} \times A' \ar[r] & \E \\
}$$
and we define $\E'_{n+1}$ to be the pushout of the diagram
$$ \xymatrix{
\partial \Del^{n+1} \times A' \ar[d]\ar[r] & \E'_n \\
\Del^{n+1} \times A'  &  \\
}$$
which admits a natural equivariant extension
$$ f_{n+1}: \E'_{n+1} \lrar \E $$
of $f_n$. Since $A'$ is finite $\E'_{n+1}$ is still levelwise finite. Further more since the map $ A' \lrar \E'^{\partial \Del^{n+1}} $ is surjective we see that every map $\partial \Del^{n+1} \lrar \E'_{n+1}$ extends to all of $\Del^{n+1}$. This finishes the proof of the lemma.
\end{proof}
We now get the first desired formula:
$$ \colim \limits_{\E \in D_{fin}}[\E,\Y]_{\Gam} \cong \pi_0\left(\Y^{h\Gam}\right)  $$

Now for every finite quotient $G = \Gam/\Lam$ for $\Lam \fns \Gam$ we can consider $\E G$ as a simplicial $\Gam$-set with the action pulled for the action of $G$. Then $\E G$ is Kan contractible and levelwise finite, i.e. $\E G \in D_{fin}$. We then have a map of sets
$$ F_{\Y}:
\colim_{\Lam \fns \Gam}\pi_0\left(\left(\Y^{\Lam}\right)^{h(\Gam/\Lam)}\right) \cong
\colim \limits_{\Lam \fns \Gam}\left[\E(\Gam/\Lam),\Y\right]_\Gam \lrar $$
$$ \colim_{\E \in D_{fin}}[\E,\Y]_{\Gam} \cong
\pi_0\left(\Y^{h\Gam}\right) $$
We will finish the proof by showing that if $\Y$ is nice and strictly bounded then $F_{\Y}$ is an isomorphism of sets.

Let $n$ be big enough so that $\pi_n\left(\Y^{\Lam}\right) = 0$ for all $\Lam \fns \Gam$. Then the map
$$ \Y \lrar P_n(\Y) $$
is a strict weak equivalence. Note that both the domain and range of $F_{\Y}$ are invariant under strict weak equivalence in $\Y$, so it will be enough to prove the theorem for $P_n(\Y)$. We will start by showing that $F_{P_n(\Y)}$ is surjective.

Let $g: \E \lrar P_n(\Y)$ be a map. Then $g$ factors
$$ \E \lrar P_n(\E) \x{g'}{\lrar} P_n(\Y) $$
Since $\E \in D_{fin}$ it is in particular nice and so $P_n(\E)$ is excellent, i.e. the action of $\Gam$ on $P_n(\E)$ factors through a finite quotient $G = \Gam/\Lam$. Further more $P_n(\E)$ is also Kan contractible so it admits a $G$-homotopy fixed point, i.e. a map
$$ h:\E G \lrar P_n(\E) $$
The fact that such a map exists \textbf{simplicially} can be seen by using the projective model structure on simplicial $G$-sets. Now the composition
$$ \E G \x{h}{\lrar} P_n(\E) \x{g'}{\lrar} P_n(\Y) $$
and
$$ g : \E \lrar P_n(\Y) $$
both factor through $g'$ and so represent the same element in
$$ \colim_{\E \in D_{fin}}[\E,\Y]_{\Gam} $$
This means that $F_{P_n(\Y)}$ is surjective. Now consider a diagram
$$ \xymatrix{
\E \ar^{p_1}[r]\ar^{p_2}[d] & \E(\Gam/\Lam_1) \ar^{f_1}[d] \\
\E(\Gam/\Lam_2) \ar^{f_2}[r] & P_n(\Y) \\
}$$
which commutes up to simplicial homotopy. Since $P_n(\E(\Gam/\Lam_i)) = \E(\Gam/\Lam_i)$ we get that this diagram factors through a diagram
$$ \xymatrix{
P_n(\E) \ar^{p_1'}[r]\ar^{p_2'}[d] & \E(\Gam/\Lam_1) \ar^{f_1}[d] \\
\E(\Gam/\Lam_2) \ar^{f_2}[r] & P_n(\Y) \\
}$$
which also commutes up to simplicial homotopy since we have a map
$$  P_n(\E) \times I \x{\sim}{\lrar} P_n(\E) \times P_n(I) \cong P_n(\E \times I) $$
Now there exists a $\Lam_3 \subseteq \Lam_1 \cap \Lam_2$ such that the action of $\Gam$ on $P_n(\E)$ factors through $\Gam/\Lam_3$. Since $P_n(\E)$ is Kan contractible it admits a map $h:\E(\Gam/\Lam_3) \lrar P_n(\E)$. Pulling the diagram by $h$ we obtain a new diagram
$$ \xymatrix{
\E(\Gam/\Lam_3) \ar^{p_1'}[r]\ar^{p_2'}[d] & \E(\Gam/\Lam_1) \ar^{f_1}[d] \\
\E(\Gam/\Lam_2) \ar^{f_2}[r] & P_n(\Y) \\
}$$
which commutes up to simplicial homotopy. This shows that $g_1,g_2$ represent the same element in
$$ \colim_{\Lam \fns \Gam}\left[\E(\Gam/\Lam),\Y\right]_\Gam $$
and so $F_{P_n(\Y)}$ is injective. This finishes the proof of the lemma.
\end{proof}

Aside for having an explicit formula we would also like to have a concrete computation aid, in the form of a spectral sequence. The following theorem appears in Goerss ~\cite{Goe95}:
\begin{thm}\label{t:ss}
Let $\SimpS$ be a bounded simplicial $\Gam$-set and let $x \in \SimpS$ be a fixed point of $\Gam$. Then there exists a spectral sequence of pointed sets
$$ E^r_{s,t} \lrar \pi_{s-t}\left(\SimpS^{h\Gam}, x\right) $$
such that
$$ E^2_{s,t} \cong H^t(\Gam,\pi_s(\SimpS, x)) $$
\end{thm}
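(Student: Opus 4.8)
This spectral sequence is due to Goerss~\cite{Goe95}; the plan is to reconstruct it from the cobar model for homotopy fixed points together with the profinite limit formula of Theorem~\ref{t:goerss-formula}. By Lemma~\ref{l:Ex_inf_is_fib} we may replace $\SimpS$ by $\Ex^\infty\SimpS$ and hence assume $\SimpS$ strictly fibrant; this changes neither $\SimpS^{h\Gam}$ nor the $\Gam$-modules $\pi_s(\SimpS,x)$. First I would treat a finite quotient $G=\Gam/\Lam$ acting on the simplicial set $\SimpS^{\Lam}$, and then pass to the filtered colimit over $\Lam\fns\Gam$.

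In the finite case one uses the standard model $(\SimpS^\Lam)^{hG}\simeq\Map_G\big(\E G,(\SimpS^\Lam)'\big)$ for a fibrant replacement $(\SimpS^\Lam)'$. Since $\E G=\cosk_0(G)$ is the diagonal of the simplicial object $[n]\mapsto G^{n+1}$, this mapping space is the totalization $\mathrm{Tot}$ of the cosimplicial pointed space $C^{\bullet}$ with $C^n=\Map_G\big(G^{n+1},(\SimpS^\Lam)'\big)\cong\prod_{G^n}(\SimpS^\Lam)'$, whose cofaces and codegeneracies are the inhomogeneous bar maps. The Bousfield--Kan homotopy spectral sequence of a cosimplicial pointed space (see~\cite{GJa99}; in the fringe range $s-t\le 1$ it is a spectral sequence of pointed sets and groups rather than of abelian groups) then has the form $E^2_{s,t}=\pi^{t}\big([n]\mapsto\pi_s(C^n,x)\big)\Longrightarrow\pi_{s-t}(\mathrm{Tot}\,C^\bullet,x)$, where $s$ records the homotopy degree of $C^\bullet$ and $t$ the cosimplicial degree. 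Here $\pi_s(C^n,x)=\prod_{G^n}\pi_s(\SimpS^\Lam,x)$ is the module of inhomogeneous $n$-cochains of $G$ with values in $\pi_s(\SimpS^\Lam,x)$ and the cosimplicial structure is exactly the bar differential, so $\pi^t$ of this complex is $H^t\big(G;\pi_s(\SimpS^\Lam,x)\big)$. This gives the theorem for finite $\Gam$, with indices as in the statement.

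For a general pro-finite $\Gam$ I would take the filtered colimit over $\Lam\fns\Gam$. On one side, every simplex of $\SimpS$ has open stabilizer, so $\SimpS=\colim_\Lam\SimpS^\Lam$, and since $\SimpS$ is bounded this colimit commutes with $\pi_*$; thus $\colim_\Lam\pi_s(\SimpS^\Lam,x)=\pi_s(\SimpS,x)$ compatibly with the Galois actions, and $\colim_\Lam H^t\big(\Gam/\Lam;\pi_s(\SimpS^\Lam,x)\big)=H^t\big(\Gam;\pi_s(\SimpS,x)\big)$ by the definition of continuous cohomology. On the other side, applying Theorem~\ref{t:goerss-formula} to $\SimpS$ and to its iterated based loop spaces $\Omega^k\SimpS$ (again bounded, and with $(-)^{h\Gam}$ commuting with $\Omega^k$) identifies $\pi_k(\SimpS^{h\Gam},x)=\colim_\Lam\pi_k\big((\SimpS^\Lam)^{h(\Gam/\Lam)},x\big)$. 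Since a filtered colimit of spectral sequences of pointed sets is again such a spectral sequence, the colimit of the finite-level spectral sequences is the desired one, with $E^2_{s,t}=H^t\big(\Gam;\pi_s(\SimpS,x)\big)$ abutting to $\pi_{s-t}(\SimpS^{h\Gam},x)$.

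The point that I expect to require the most care is convergence: a pro-finite group such as an absolute Galois group can have infinite cohomological dimension, so there is no vanishing line in the $t$-direction. However boundedness of $\SimpS$ forces $E^2_{s,t}=0$ whenever $s>N$ for a fixed $N$, and combined with $s,t\ge 0$ this means each diagonal $\{\,s-t=k\,\}$ carries only the finitely many bidegrees with $k\le s\le N$ and $t=s-k$; hence the tower defining $\mathrm{Tot}$ stabilizes in every degree, there is no $\lim^{1}$ obstruction, and the spectral sequence converges strongly. The residual work is then bookkeeping: checking that the colimit over $\Lam$ of the finite-level cosimplicial spectral sequences is compatible with the colimit presentation of $\SimpS^{h\Gam}$ furnished by Theorem~\ref{t:goerss-formula}, and tracking the pointed-set (rather than group) structure in the fringe range $s-t\le 1$.
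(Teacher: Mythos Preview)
The paper does not give its own proof of this theorem: it simply attributes the result to Goerss and cites~\cite{Goe95} (``The following theorem appears in Goerss~\cite{Goe95}''). What follows the statement in the paper is not a proof but two remarks on notation and on the HL-shape of the spectral sequence, together with a separate discussion of how to dispense with the hypothesis of an actual fixed point.

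Your proposal is therefore not competing with a proof in the paper but rather sketching what Goerss's argument amounts to, and as such it is a reasonable outline: the Bousfield--Kan spectral sequence of the cosimplicial object $[n]\mapsto\Map_G(G^{n+1},\SimpS^\Lam)$ does give the finite-group case with the stated $E^2$, and passage to the filtered colimit over $\Lam\fns\Gam$ is the standard way to reach Galois cohomology. Two small points are worth tightening. First, the refined colimit formula in Theorem~\ref{t:goerss-formula} that you invoke requires $\SimpS$ to be \emph{strictly} bounded, whereas the hypothesis here is only ``bounded'' (on the underlying simplicial set); strict fibrancy alone does not close this gap. The fix is the one used repeatedly in the paper: replace $\SimpS$ by $P_N(\SimpS)$ for $N$ large, which is a standard weak equivalence and is strictly bounded. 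Second, your identification $\pi_k(\SimpS^{h\Gam},x)\cong\colim_\Lam\pi_k((\SimpS^\Lam)^{h(\Gam/\Lam)},x)$ via loop spaces is correct in spirit, but Theorem~\ref{t:goerss-formula} as stated gives only $\pi_0$; you should either cite Goerss directly for the higher homotopy groups or note that $\Omega^k_x\SimpS$ is again a (strictly bounded, once truncated) simplicial $\Gam$-set with a $\Gam$-fixed basepoint, so the $\pi_0$-formula applies to it.
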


\begin{rem}
When we write $H^*(\Gamma, A)$ for $\Gamma$ profinite we always mean \textbf{Galois} cohomology. In ~\cite{Goe95} Goerss uses the notation $H_{\Gal}^*(\Gamma, A)$ for this notion. For simplicity of notation we chose to omit the $\Gal$ subscript. Note that when $\Gam$ is a finite group Galois cohomology coincides with regular group cohomology.
\end{rem}

\begin{rem}
The above spectral sequence is of the form used to compute homotopy groups of homotopy limits. It is concentrated in the domain $s \geq t-1$ and its differential $d^r_{s,t}$ goes from $E^r_{s,t}$ to $E^r_{s+r-1,t+r}$. We call such spectral sequences HL-spectral sequences.
\end{rem}

We wish to drop the assumption that $\SimpS$ has an actual fixed point and replace it by the assumption that $\SimpS$ is Kan and admits a \textbf{homotopy fixed point}. From Theorem ~\ref{t:goerss-formula} this implies the existence of an equivariant map
$$ f:\E \lrar \SimpS $$
for some Kan contractible simplicial $\Gam$-set $\E$. We can then take the cofiber $C_f$ and extend the action of $\Gam$ to it.

Since $\E$ is contractible the map $\SimpS \lrar C_f$ induces a homotopy equivalence of the underlying simplicial sets and so is a weak equivalence. This means that $f$ induces a weak equivalence $\SimpS^{h\Gam} \lrar C_f^{h\Gam}$. But $C_f$ has an actual fixed point and so we can use Goerss' theorem on it and obtain the desired spectral sequence.

Another aspect of homotopy fixed points for finite groups is that of \textbf{obstruction theory}. Let $G$ be a finite group acting on simplicial set $\SimpS$. We want to know whether there exists a homotopy fixed point.

Suppose for simplicity that $\SimpS$ is bounded Kan simplicial set and let $n$ be such that $\SimpS \simeq P_n(\SimpS)$. Then we can reduce the question of whether $\SimpS$ has an homotopy fixed point to the question of whether $P_n(\X)$ has a homotopy fixed point. We then consider the sequence of simplicial $G$-sets and equivariant maps
$$ P_n(\SimpS) \lrar P_{n-1}(\SimpS) \lrar ... \lrar P_0(\SimpS) $$
We can then break the non-emptiness question of $\SimpS^{hG}$ into a finite number of stages: for every $i=0,...,n$ we can ask whether $P_i(\SimpS)^{hG}$ is non-empty. Note that $P_i(\SimpS)^{hG}$ is non-empty if and only if $\pi_0\left(P_i(\SimpS)^{hG}\right)$ is non-empty, so we can work with sets instead of spaces.

For $i = 0$ we have $P_0(\SimpS) = \pi_0(\SimpS)$ and so
$$ \pi_0\left(P_0(\SimpS)^{hG}\right) = P_0(\SimpS)^{hG} = \pi_0(\SimpS)^{G} $$ is just the set of $G$-invariant connected components. Now given a $G$-invariant connected component $x_0 \in \pi_0\left(P_0(\SimpS)^{hG}\right)$ one can ask if it lifts to an element $x_1 \in \pi_0\left(P_1(\SimpS)^{hG}\right)$. This results in a short exact sequence
$$ 1 \lrar \pi_1(X,x_0) \lrar H \lrar G \lrar 1 $$
where $\pi_1(X,x_0)$ here denotes the fundamental group of the component of $X$ corresponding to $x_0$ (choosing different base points in the same connected component will lead to isomorphic short exact sequences). Obstruction theory then tells us that this sequence splits if and only if $x_0$ lifts to $\pi_0\left(P_1(\SimpS)^{hG}\right)$.

Now suppose we have an element $x_1 \in \pi_0\left(P_1(\SimpS)^{hG}\right)$ and let $p_1 \in x_1$ be a point. Note that $p_1$ is a homotopy fixed point, and not an actual point. However, since it encodes a map from a contractible space to $P_1(X)$ we can still use it as if it were a base point for purposes of homotopy groups, i.e. we can write
$$ \pi_n(X,p_1) $$
Further more $p_1$ behaves like a $G$-invariant base point and so we can use it to defines an action of $\Gam$ on each $\pi_n(X,p_1)$.

Obstruction theory then proceeds as follows: let $x_{i-1} \in \pi_0\left(P_{i-1}(\SimpS)^{hG}\right)$ be an element, let $x_1 \in \pi_0\left(P_1(\SimpS)^{hG}\right)$ be its image and let $p_1 \in x_1$ be a point. Then one obtains an obstruction element
$$ o_{x_{i-1}} \in H^{i+1}(G, \pi_i(\SimpS,p_1)) $$
which is trivial if and only if $x_{i-1}$ lifts to $\pi_0\left(P_i(\SimpS)^{hG}\right)$.


Now consider the case of a pro-finite group $\Gam$. Let $\SimpS$ be a bounded Kan simplicial $\Gam$-set. Then there is an $n$ such that the map $\SimpS \lrar P_n(\SimpS)$ is a weak equivalence. Hence as above we reduce the question of emptyness of $\X^{h\Gam}$ to that of $P_n(\X)^{h\Gam}$ which in turn leads to a sequence of lifting problems via the sequence of simplicial $\Gam$-sets
$$ P_n(\SimpS) \lrar P_{n-1}(\SimpS) \lrar ... \lrar P_0(\SimpS) $$

We now claim that the obstruction theory above generalizes to the pro-finite case by replacing the group cohomology
$$ H^{i+1}(G, \pi_i(\SimpS,p_1)) $$
with \textbf{Galois cohomology}
$$ H^{i+1}(\Gam,\pi_i(\SimpS,p_1)) $$
We will prove this here for the case $i \geq 2$. Note that in this case a homotopy fixed point $x \in P_{i-1}(\X)^{h\Gam}$ gives an element in $P_1(\X)^{h\Gam}$ which as above can act as a $\Gam$-invariant base point and determine an action of $\Gam$ on all the homotopy groups of $\X$. The case $i=1$ will be dealt with in subsection \S\S ~\ref{ss:pro-homotopy-quotient} (see Theorem ~\ref{t:homotopy-fixed-point-iff-splits}).

\begin{prop}\label{p:obstruction-theory}
Let $\SimpS$ be a bounded excellent strictly fibrant simplicial $\Gam$-set. Let $x_{i-1} \in \pi_0\left(P_{i-1}(\SimpS)^{h\Gam}\right)$ ($i \geq 2$) be a homotopy fixed point component, $x_1 \in \pi_0\left(P_1(\SimpS)^{h\Gam}\right)$ its image and $p_1 \in x_1$ a point. Then there exists an obstruction in the Galois cohomology group
$$ o(x_{i-1}) \in H^{i+1}(\Gam,\pi_i(\SimpS,p_1)) $$
which vanish if and only $x_{i-1}$ lifts to an element $x_i \in \pi_0\left(P_i(\SimpS)^{h\Gam}\right)$.
\end{prop}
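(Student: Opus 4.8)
The plan is to reduce the statement to the finite-group obstruction theory already recalled in the text, by exploiting the fact that everything in sight factors through a finite quotient of $\Gam$ and then comparing the relevant colimit descriptions of the homotopy fixed point sets given by Theorem~\ref{t:goerss-formula}. First I would fix a point $p_1$ in the homotopy fixed point component $x_1 \in \pi_0(P_1(\SimpS)^{h\Gam})$; as already observed, $p_1$ behaves like a $\Gam$-invariant base point, so it equips each $\pi_j(\SimpS, p_1)$ ($j \geq 2$) with a genuine $\Gam$-module structure, and since $\SimpS$ is excellent these modules are discrete $\Gam$-modules. Because $\SimpS$ is bounded and excellent, for each $k$ the simplicial $\Gam$-set $P_k(\SimpS)$ is excellent and strictly bounded, so by Theorem~\ref{t:goerss-formula} we have
$$ \pi_0\left(P_k(\SimpS)^{h\Gam}\right) \simeq \colim_{\Lam \fns \Gam} \pi_0\left(\left(P_k(\SimpS)^{\Lam}\right)^{h(\Gam/\Lam)}\right). $$
The lifting problem $\pi_0(P_i(\SimpS)^{h\Gam}) \to \pi_0(P_{i-1}(\SimpS)^{h\Gam})$ is therefore the colimit over $\Lam \fns \Gam$ of the finite-group lifting problems for $\Gam/\Lam$ acting on $P_k(\SimpS)^{\Lam}$.

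Next I would choose $\Lam_0 \fns \Gam$ small enough that (i) the excellent action of $\Gam$ on $\SimpS$ factors through $G_0 := \Gam/\Lam_0$, and (ii) the class $x_{i-1}$ is already represented at the finite level $G_0$, i.e. comes from some $\bar{x}_{i-1} \in \pi_0\left(P_{i-1}(\SimpS)^{hG_0}\right)$ under the structure map. (Such a $\Lam_0$ exists because $x_{i-1}$ lies in a filtered colimit.) Then the finite-group obstruction theory recalled just before the proposition applies verbatim to $G_0$ acting on $\SimpS$: it produces an obstruction class
$$ \bar{o}(\bar{x}_{i-1}) \in H^{i+1}\left(G_0, \pi_i(\SimpS, p_1)\right) $$
which vanishes iff $\bar{x}_{i-1}$ lifts to $\pi_0(P_i(\SimpS)^{hG_0})$. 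I would then \emph{define} $o(x_{i-1})$ to be the image of $\bar{o}(\bar{x}_{i-1})$ under the inflation map $H^{i+1}(G_0, \pi_i(\SimpS,p_1)) \to H^{i+1}(\Gam, \pi_i(\SimpS,p_1))$ into Galois cohomology (which, as the text notes, is by definition the colimit of the $H^{i+1}(\Gam/\Lam, (-)^{\Lam})$ over $\Lam \fns \Gam$).

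Two points then need checking. First, well-definedness: the class $o(x_{i-1})$ must not depend on the auxiliary choice of $\Lam_0$ (nor on the representative $\bar{x}_{i-1}$), and here I would use that passing from $\Lam_0$ to a smaller $\Lam_1$ multiplies the situation by the inflation maps in both the obstruction group and in the identification $\pi_i(\SimpS,p_1)^{\Lam_0} = \pi_i(\SimpS,p_1)$ (the action being inflated from $G_0$), so the two finite obstruction classes have the same image in $H^{i+1}(\Gam, \pi_i(\SimpS,p_1))$ — this is a naturality statement for the finite-group obstruction class under restriction of the acting group along a quotient map, which one extracts from the construction in Brown's "category of fibrant objects" formalism that underlies Theorem~\ref{t:goerss-formula}. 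Second, the equivalence "$o(x_{i-1}) = 0 \iff x_{i-1}$ lifts": the forward direction is the delicate one, since $o(x_{i-1}) = 0$ in the colimit only tells us that $\bar{o}(\bar{x}_{i-1})$ dies in $H^{i+1}(\Gam/\Lam_1, \pi_i(\SimpS,p_1))$ for some $\Lam_1 \subseteq \Lam_0$; but after inflating the whole lifting problem to $G_1 := \Gam/\Lam_1$, the obstruction class of $\bar{x}_{i-1}$ viewed over $G_1$ is exactly that inflation (by the naturality just used), hence zero, so the finite-group theory over $G_1$ gives a lift in $\pi_0(P_i(\SimpS)^{hG_1})$, whose image in $\pi_0(P_i(\SimpS)^{h\Gam}) = \colim_\Lam \pi_0(P_i(\SimpS)^{h(\Gam/\Lam)})$ is the desired $x_i$. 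The converse is immediate by functoriality of the obstruction class. I expect the main obstacle to be precisely the naturality of the finite-group obstruction class under inflation along $\Gam/\Lam_1 \twoheadrightarrow \Gam/\Lam_0$ and the compatible identification of the coefficient modules; once that bookkeeping is in place, everything else is a formal colimit argument built on Theorem~\ref{t:goerss-formula} and the finite-group obstruction theory already stated.
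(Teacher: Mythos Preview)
Your proposal is correct and follows essentially the same route as the paper: reduce to finite quotients via excellence, express $\pi_0(P_k(\SimpS)^{h\Gam})$ as a filtered colimit using Theorem~\ref{t:goerss-formula}, apply finite-group obstruction theory at each level, and define $o(x_{i-1})$ as the common image in $H^{i+1}(\Gam,\pi_i(\SimpS,p_1)) = \colim_{\Lam}H^{i+1}(\Gam/\Lam,\pi_i(\SimpS,p_1))$. The paper's argument is terser---it simply asserts that the finite-level obstructions $o_{\Lam_2}(x_{i-1})$ all map to the same element of Galois cohomology---whereas you isolate the naturality of the obstruction class under inflation as the point requiring care; but this is a difference in exposition, not in mathematical content.
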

\begin{proof}
Since $\SimpS$ is excellent there exists a $\Lam \fns \Gam$ such that the action of $\Gam$ on $\SimpS$ factors through $\Gam/\Lam$. Since $\SimpS$ is bounded $P_{i-1}(\SimpS)$ and $P_i(\SimpS)$ are strictly bounded and so we can apply formula ~\ref{t:goerss-formula} to get
$$ \pi_0\left(P_{i-1}(\SimpS)^{h\Gam}\right) =
\colim_{\Lam' \fns \Gam} \pi_0\left(\left(P_{i-1}(\SimpS)^{\Lam'}\right)^{h(\Gam/\Lam')}\right) = $$
$$ \colim_{\Lam' \subseteq \Lam, \Lam' \fns \Gam} \pi_0\left(P_{i-1}(\SimpS)^{h(\Gam/\Lam')}\right) $$
and similarly
$$ \pi_0\left(P_i(\SimpS)^{h\Gam}\right) = \colim_{\Lam' \subseteq \Lam, \Lam' \fns \Gam} \pi_0\left(P_i(\SimpS)^{h(\Gam/\Lam')}\right) $$
Let $\Lam_1 \subseteq \Lam$ be such that $x_{i-1}$ comes from $\pi_0\left(P_{i-1}(\SimpS)^{h(\Gam/\Lam_1)}\right)$. Then we want to know if there exists a $\Lam_2 \subseteq \Lam_1$ such that the image of $x_{i-1}$ in $\pi_0\left(P_{i-1}^{h(\Gam/\Lam_2)}\right)$ lifts to $\pi_0\left(P_{i}^{h(\Gam/\Lam_2)}\right)$. From the obstruction theory in the finite case we know that this is equivalent to the vanishing of a certain obstruction element
$$ o_{\Lam_2}(x_{i-1}) \in H^{i+1}(\Gam/\Lam_2,\pi_i(\SimpS,p_1)) $$
Hence we see that $x_{i-1}$ lifts to $\pi_0\left(P_{i}^{h\Gam}\right)$ if and only if $o_{\Lam_2} = 0$ for \textbf{some} $\Lam_2$.

Now each such $o_{\Lam_2}(x_{i-1})$ defines (the same) element
$$ o(x_{i-1}) \in H^{i+1}(\Gam,\pi_i(\SimpS,p_1)) =
\colim_{\Lam_2 \subseteq \Lam, \Lam_2 \fns \Gam}H^{i+1}(\Gam/\Lam_2,\pi_i(\SimpS,p_1)^{\Lam_2}) = $$
$$ \colim_{\Lam_2 \subseteq \Lam, \Lam_2 \fns \Gam}H^{i+1}(\Gam/\Lam_2,\pi_i(\SimpS,p_1)) $$
and $o(x_{i-1})$ vanishes if and only if $o_{\Lam_2}$ vanishes for some $\Lam_2$ so we are done.

\end{proof}

\begin{rem}
It is not hard to show that under the same assumptions for every $x_{i-1} \in P_{i-1}(\SimpS)(h\A)$ there exists an obstruction
$$ o(x_{i-1}) \in H^{i+1}(\A,\pi_i(\SimpS,p_1)) $$
which vanishes if and only if $x_{i-1}$ lifts to $P_i(\SimpS)(h\A)$ where  $H^{i+1}(\A,-)$ is suitable notion of restricted product of local cohomologies. We shall give an exact definition of a generalization of this notion in \S ~\ref{s:equivalence-1}.

\end{rem}

Note that  if
$$ f:\SimpS_1 \lrar \SimpS_2 $$
is a weak equivalence of simplicial $\Gam_K$-sets then the induced map
$$ f^{h\Gam_K}: \SimpS_1^{h\Gam_K} \lrar \SimpS_2^{h\Gam_K} $$
is a weak equivalence. We want a similar property to hold for adelic homotopy fixed points. This will require the additional assumption that the spaces are nice:

\begin{thm}\label{t:A-weak}
Let
$$ f:\SimpS_1 \lrar \SimpS_2 $$
be a weak equivalence of nice and bounded simplicial $\Gam_K$-sets. Then the induced map
$$ f^{h\A}:\SimpS_1^{h\A} \lrar \SimpS_2^{h\A} $$
is a weak equivalence.
\end{thm}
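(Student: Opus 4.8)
The plan is to reduce the statement to local (per-place) assertions, using that $\SimpS_i^{h\A}$ is by definition the filtered homotopy colimit $\hocolim_T\bigl(\prod_{\nu\in T}\SimpS_i^{h\Gam_\nu}\times\prod_{\nu\notin T}\SimpS_i^{h^{ur}\Gam_\nu}\bigr)$ taken over the finite sets $T$ of places of $K$, with structure maps induced by the natural maps $\SimpS_i^{h^{ur}\Gam_\nu}\lrar\SimpS_i^{h\Gam_\nu}$. Since filtered homotopy colimits preserve objectwise weak equivalences of diagrams, and since a cofinal subfamily of index sets computes the same homotopy colimit, it suffices to produce a \emph{finite} set $T_0$ of places such that for every $T\supseteq T_0$ the map induced by $f$ on the $T$-th term is a weak equivalence. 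As an arbitrary product of weak equivalences between Kan complexes is again a weak equivalence (and all the factors occurring here are Kan), this reduces to two claims: (a) $f^{h\Gam_\nu}$ is a weak equivalence for \emph{every} place $\nu$; and (b) $f^{h^{ur}\Gam_\nu}$ is a weak equivalence for every $\nu\notin T_0$. One also notes that the squares expressing naturality of $f$ with respect to the maps $\SimpS_i^{h^{ur}\Gam_\nu}\lrar\SimpS_i^{h\Gam_\nu}$ commute, so on homotopy groups the distinguished subsets defining the two restricted products correspond; since $T_0$ is finite this is enough to make the comparison compatible with the restricted-product topology.

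Claim (a) is immediate: restricting the $\Gam_K$-action to the decomposition group $\Gam_\nu$ preserves niceness and boundedness and keeps $f$ a weak equivalence, so the homotopy-invariance of the homotopy-fixed-point functor for the profinite group $\Gam_\nu$ --- recalled just before the theorem --- gives that $f^{h\Gam_\nu}\colon\SimpS_1^{h\Gam_\nu}\lrar\SimpS_2^{h\Gam_\nu}$ is a weak equivalence.

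For claim (b) and the choice of $T_0$, fix $N$ with both $\SimpS_i$ being $N$-bounded. Then each $\SimpS_i\lrar P_N(\SimpS_i)$ is a weak equivalence, and by niceness the $\Gam_K$-action on $\tr_{N+1}\SimpS_i$ --- hence on $P_N(\SimpS_i)=\cosk_{N+1}\tr_{N+1}\SimpS_i$, which is therefore \emph{excellent} --- factors through a common finite quotient $\Gam_K/\Gam_L$; we take $T_0$ to be the finite set of places of $K$ ramified in $L$. For $\nu\notin T_0$ the inertia $I_\nu$ lies in $\Gam_L$, so it acts trivially on $\tr_{N+1}\SimpS_i$; since $\SimpS_i$ is strictly fibrant the fixed sets $\SimpS_i^{\Lam}$ ($\Lam\fns\Gam_\nu$) are Kan, hence so is $\SimpS_i^{I_\nu}=\colim_\Lam\SimpS_i^{\Lam}$, and it coincides with $\SimpS_i$ through simplicial dimension $N+1$, so the inclusion $\SimpS_i^{I_\nu}\hrar\SimpS_i$ is a weak equivalence. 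Now $f^{h^{ur}\Gam_\nu}=(f^{I_\nu})^{h\Gam^{ur}_\nu}$ and $(\bullet)^{h\Gam^{ur}_\nu}$ is homotopy-invariant, so claim (b) follows because under the identifications $\SimpS_i^{I_\nu}\simeq\SimpS_i$ the map $f^{I_\nu}$ corresponds to $f$.

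The step I expect to be the main obstacle is this last one. Unlike honest homotopy fixed points, the \emph{strict} fixed points $\SimpS_i^{I_\nu}$ under the non-open inertia subgroup are not homotopy-invariant in general --- already for a finite group $G$ the weak equivalence $\E G\lrar *$ fails to be one after passing to $G$-fixed points --- so at the ramified places $\nu\in T_0$ the local maps $f^{h^{ur}\Gam_\nu}$ genuinely can fail to be weak equivalences, and only the cofinality of the sets $T\supseteq T_0$ in the defining homotopy colimit allows one to disregard them. Establishing the comparison $\SimpS_i^{I_\nu}\simeq\SimpS_i$ for $\nu\notin T_0$ is precisely where one must exploit boundedness together with niceness (equivalently: after replacing each $\SimpS_i$ by the excellent object $P_N(\SimpS_i)$, the fact that $I_\nu$ acts trivially on it for all but finitely many $\nu$), rather than any purely formal property of homotopy fixed points, and this is the one place in the argument calling for a concrete skeleton/coskeleton comparison.
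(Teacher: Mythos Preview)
Your reduction to the terms of the defining homotopy colimit, the cofinality argument, and claim~(a) are all fine. The gap is in claim~(b): from the fact that $\SimpS_i^{I_\nu}$ and $\SimpS_i$ share the same $(N+1)$-skeleton you only obtain an isomorphism on $\pi_k$ for $k\le N$. Nothing in your argument bounds $\pi_k\left(\SimpS_i^{I_\nu}\right)$ for $k>N$; strict fixed points under the closed subgroup $I_\nu$ can acquire new higher homotopy precisely in the range where $I_\nu$ acts nontrivially, so $\SimpS_i$ being $N$-bounded does not force $\SimpS_i^{I_\nu}$ to be $N$-bounded. Your own caveat in the last paragraph about strict fixed points not being homotopy invariant bites here, not only at the ramified places. (A minor point: in your colimit description of $\SimpS_i^{I_\nu}$ the $\Lam$ must range over open normal subgroups \emph{containing} $I_\nu$; over all $\Lam\fns\Gam_\nu$ the colimit is $\SimpS_i$ itself.) One might try to repair this by replacing each $\SimpS_i$ with the excellent object $P_N(\SimpS_i)$, on which $I_\nu$ genuinely acts trivially for $\nu\notin T_0$; but justifying that replacement requires $\SimpS_i^{h\A}\simeq P_N(\SimpS_i)^{h\A}$, which is an instance of the theorem itself.

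The paper avoids this by never asserting that $f^{I_\nu}$ is a weak equivalence. Instead it proves a quantitative connectivity lemma via the homotopy-fixed-point spectral sequence: if $g$ is an $n$-connected map and $\Gam$ has cohomological dimension $d$, then $g^{h\Gam}$ is $(n-d)$-connected. For each $n$ one then chooses $T_0=T_0(n)$ so that $I_\nu$ fixes the $(n+1)$-skeleta for $\nu\notin T_0(n)$; skeleton agreement gives that $f^{I_\nu}$ is $n$-connected (this much \emph{does} follow), and the lemma applied to $\Gam_\nu$ and $\Gam^{ur}_\nu$ yields that $f^{h\A_{S_f}}$ is $(n-3)$-connected. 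Since $f$ is $n$-connected for every $n$, the map $f^{h\A_{S_f}}$ is a weak equivalence; the finitely many archimedean places are handled separately. The point is that the bad set $T_0(n)$ is allowed to grow with $n$, which is exactly what your single-$T_0$ strategy cannot accommodate.
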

\begin{proof}

We start with two lemmas which give a connection between the connectivity of $f$ and the corresponding connectivity of $f^{h\Gam_K}$ and $f^{h\A}$ .

\begin{lem}\label{l:ss-fin-cd}
Assume that $\Gam_K$ is a group of finite strict (non-strict) cohomological dimension $d$ and let
$$ f:\SimpS_1 \lrar \SimpS_2 $$
be an $n$-connected map of (finite) nice bounded simplicial $\Gam_K$-sets. Then the induced map
$$ f^{h\Gam_K}: \SimpS_1^{h\Gam_K} \lrar \SimpS_2^{h\Gam_K} $$
is $(n-d)$-connected.
\end{lem}
\begin{proof}
Consider the corresponding spectral sequences $E^r_{s,t},F^r_{s,t}$. From our assumption the map
$$ E^2_{t,s} \lrar F^2_{t,s} $$
is an isomorphism for $s \leq n$. Since the differential $d^r_{s,t}$ goes from $(s,t)$ to $(s+r-1,t+r)$ we see that if $s-t \leq n-d$ then the map
$$ E^r_{t,s} \lrar F^r_{t,s} $$
remains an isomorphism for all $r$.
\end{proof}

\begin{lem}\label{l:ss-fin-cd-loc}
Let $K$ be number field $S$ a set of places of $K$ that does not contain the real places. Let
$$ f:\SimpS_1 \lrar \SimpS_2 $$
be an $n$-connected map of nice bounded simplicial $\Gam_K$-sets. Then the induced map
$$ f:\SimpS_1^{h\A_S} \lrar \SimpS_2^{h\A_S} $$
is $(n-3)$-connected.
\end{lem}

\begin{proof}
Let $T_0$ be the set of places $\nu$ in $S$ such that $I_\nu$ does not stabilize the $(n+1)$-skeleton of either $\SimpS_1$ or $\SimpS_2$. From our assumption the set $T_0$ is finite. Then for every finite set of places $T_0 \subseteq T \subseteq S$ we have that the maps
$$ f_\nu:\SimpS_1 \lrar \SimpS_2 , \quad \nu \in T $$
and
$$ f_\nu:\SimpS_1^{I_\nu} \lrar \SimpS_2^{I_\nu}, \quad \nu \notin T $$
are $n$-connected. Thus by Lemma ~\ref{l:ss-fin-cd} we get that the maps

$$ f_\nu:\SimpS_1^{h\Gam_\nu} \lrar \SimpS_2^{h\Gam_\nu} , \quad \nu \in T $$
and
$$ f_\nu:(\SimpS_1^{I_\nu})^{h\Gam^{ur}_\nu} \lrar (\SimpS_2^{I_\nu})^{h\Gam^{ur}_\nu}, \quad \nu \notin T $$
are $(n-3)$-connected. Therefore the map

$$ \prod \limits_\nu f_\nu :  \hocolim_{T_0\subseteq T\subseteq S}
\prod \limits_{\nu\in T} \SimpS_1^{h\Gam_\nu}\times \prod \limits_{\nu \in S\backslash T} \SimpS_1^{h^{ur}\Gam_v} \lrar $$ $$ \hocolim_{T_0\subseteq T\subseteq S} \prod \limits_{\nu\in T} \SimpS_2^{h\Gam_v}\times \prod \limits_{\nu \in S\backslash T} \SimpS_2^{h^{ur}\Gam_\nu}$$

is $(n-3)$-connected.
\end{proof}

We now complete the proof of the theorem. Denote by $S_\infty$ the finite set of archimedean places in $S$ and let
$$ S_f = S \backslash S_\infty $$
Then
$$ f:\SimpS_1^{h\A_{S_\infty}} \lrar \SimpS_2^{h\A_{S_\infty}} $$
is a weak equivalence and by Lemma ~\ref{l:ss-fin-cd-loc}
$$ f:\SimpS_1^{h\A_{S_f}} \lrar \SimpS_2^{h\A_{S_f}} $$
is a weak equivalence as well.
\end{proof}

\begin{cor}
In Lemma ~\ref{l:rational-is-h-adelic} we may replace the assumption that $\SimpS$ is strictly bounded with the assumption that $\SimpS$ is bounded, i.e
if $K$ is number field and $\SimpS$ a bounded $\Gam_K$-simplicial set then the natural map
$$ \loc: \SimpS\left(hK\right)  \lrar \prod \limits_{\nu}\SimpS\left(hK_\nu\right) $$
factors through a natural map
$$ f_0: \SimpS\left(hK\right) \lrar \SimpS\left(h\A\right) $$.
\end{cor}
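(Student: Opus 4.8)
The plan is to reduce to the strictly bounded case already settled in Lemma~\ref{l:rational-is-h-adelic}, by replacing $\SimpS$ with a strictly bounded simplicial $\Gam_K$-set weakly equivalent to it and then transporting the resulting factorization back.

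\textbf{Step 1: pass to a strictly bounded model.} First I would apply $\Ex^{\infty}$: by Lemma~\ref{l:Ex_inf_is_fib} the map $\SimpS \lrar \Ex^{\infty}(\SimpS)$ is a strict weak equivalence onto a strictly fibrant object, and since $\Ex^{\infty}$ leaves the underlying homotopy type unchanged the target is again bounded and nice (the latter being one of the standard virtues of $\Ex^{\infty}$ noted earlier). Next, choosing $N$ with $\pi_n(\SimpS)=0$ for $n>N$, I would set $\SimpS' := P_N\bigl(\Ex^{\infty}(\SimpS)\bigr)$. By the discussion following the definition of strict boundedness, $P_N$ applied to a strictly fibrant simplicial $\Gam_K$-set is strictly bounded, because $P_N(\Ex^{\infty}(\SimpS))^{\Lam} = P_N\bigl(\Ex^{\infty}(\SimpS)^{\Lam}\bigr)$ for every $\Lam \fns \Gam_K$; also $P_N$ preserves niceness, and the structure map $g \colon \SimpS \lrar \SimpS'$ is a standard weak equivalence, since its second factor $\Ex^{\infty}(\SimpS) \lrar P_N(\Ex^{\infty}(\SimpS))$ is an isomorphism on $\pi_k$ for $k \leq N$ and kills only already-vanishing homotopy groups.

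\textbf{Step 2: apply the lemma and transport.} Now Lemma~\ref{l:rational-is-h-adelic} applied to the strictly bounded $\SimpS'$ yields a natural map $f_0' \colon \SimpS'(hK) \lrar \SimpS'(h\A)$ through which the restriction map $\loc_{\SimpS'} \colon \SimpS'(hK) \lrar \prod_\nu \SimpS'(hK_\nu)$ factors. To move this back along $g$ I would use: (i) homotopy fixed points for a profinite group send standard weak equivalences to weak equivalences (fibrantly replace, then take fixed points), so $g$ induces bijections $g_* \colon \SimpS(hK) \x{\sim}{\lrar} \SimpS'(hK)$ and $g_* \colon \SimpS(hK_\nu) \x{\sim}{\lrar} \SimpS'(hK_\nu)$ for each place $\nu$, compatibly with the restriction maps; and (ii) Theorem~\ref{t:A-weak}, applicable since $g$ is a weak equivalence of nice bounded simplicial $\Gam_K$-sets, which gives a weak equivalence $\SimpS^{h\A} \x{\sim}{\lrar} \SimpS'^{h\A}$, hence a bijection $g_* \colon \SimpS(h\A) \x{\sim}{\lrar} \SimpS'(h\A)$ compatible with the canonical maps to $\prod_\nu \SimpS(hK_\nu)$ — these being natural in $\SimpS$ because they are assembled from the inflation maps $(\SimpS^{I_\nu})^{h\Gam^{ur}_\nu} \lrar \SimpS^{h\Gam_\nu}$ and the homotopy colimit over finite sets of places. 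Setting $f_0 := (g_*)^{-1} \circ f_0' \circ g_* \colon \SimpS(hK) \lrar \SimpS(h\A)$ and chasing the resulting diagram then produces the desired factorization of the restriction map for $\SimpS$.

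\textbf{Main obstacle.} The only genuinely nontrivial input is Theorem~\ref{t:A-weak}, and this is exactly where one needs $\SimpS$ (hence $\SimpS'$) to be \emph{nice}, not merely bounded — without niceness one cannot control the set of ramified places in the restricted product defining $(-)^{h\A}$, and the connectivity argument of that theorem collapses. This costs nothing in practice, where the relevant simplicial $\Gam_K$-sets are of the form $\X_{\U,n}$ and are even excellent. Everything else — that $\Ex^{\infty}$ and $P_N$ preserve niceness, homotopy-invariance of profinite homotopy fixed points, and naturality of the various comparison maps — is routine; the one point deserving real care is checking that the square on the four corners $\SimpS(hK), \SimpS'(hK), \prod_\nu \SimpS(hK_\nu), \prod_\nu \SimpS'(hK_\nu)$ and its adelic analogue commute on the nose, so that $f_0$ genuinely fits into the required triangle rather than only up to homotopy.
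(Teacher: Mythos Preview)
Your approach is essentially the same as the paper's: reduce to the strictly bounded case of Lemma~\ref{l:rational-is-h-adelic} by passing to a Postnikov truncation, then transport back via Theorem~\ref{t:A-weak}. The paper is terser --- it simply observes that for large $n$ the map $\SimpS \to P_n(\SimpS)$ is a weak equivalence with strictly bounded target --- whereas you interpose $\Ex^{\infty}$ to secure strict fibrancy before truncating; this extra step is harmless and arguably more careful, since the paper's earlier discussion of $P_n$ producing strictly bounded output was stated under a strictly fibrant hypothesis. Your explicit flagging of the niceness hypothesis needed to invoke Theorem~\ref{t:A-weak} is also a point the paper's short proof passes over in silence.
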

\begin{proof}
In light of Theorem  ~\ref{t:A-weak} and Lemma ~\ref{l:rational-is-h-adelic} it enough to show that every bounded $\Gam_K$-simplicial set is weakly equivalent strictly to a bounded $\Gam_K$-simplicial set. Indeed, if $\SimpS$ is bounded then for large enough $n$ the map
$$\SimpS \to P_n(\SimpS)$$
is a weak equivalence, and $P_n(\SimpS)$  is always strictly bounded.
\end{proof}

\begin{lem}\label{l:preserves fibrations}
Let $\Gam$ be a pro-finite group and let
$$\SimpS_1 \lrar \SimpS_2 \lrar \SimpS_3 $$ be a fibration sequence of simplicial $\Gam$-sets. Then
$$\SimpS_1^{h\Gam} \lrar \SimpS_2^{h\Gam} \lrar \SimpS_3^{h\Gam} $$
is a fibration sequence of topological spaces.
\end{lem}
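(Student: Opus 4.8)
The plan is to recognize $(\bullet)^{h\Gam}$ as the total right derived functor of a right Quillen functor, and to read off preservation of fibration sequences from that. Recall that by definition $\SimpS^{h\Gam} = (\SimpS^{fib})^\Gam$, where $(\bullet)^{fib}$ is the functorial fibrant replacement in Goerss' \emph{standard} model structure, and recall (as noted in the text) that $(\bullet)^{h\Gam}$ is invariant under equivariant weak equivalence and descends to a functor on $\Ho^{\standard}\left(\Set^{\Del^{op}}_{\Gam}\right)$. Now the fixed-point functor $(\bullet)^\Gam \colon \Set^{\Del^{op}}_{\Gam} \lrar \Set^{\Del^{op}}$ is right adjoint to the functor $\mathrm{triv}$ which equips a simplicial set with the trivial $\Gam$-action. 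Since the cofibrations on both sides are exactly the injective maps, $\mathrm{triv}$ preserves cofibrations, and since it clearly carries (underlying) weak equivalences to weak equivalences, it is a left Quillen functor; hence $(\bullet)^\Gam$ is right Quillen and $(\bullet)^{h\Gam} = \mathbb{R}(\bullet)^\Gam$. In particular, by adjunction $(\bullet)^\Gam$ sends any standard fibration $p$ to a map $p^\Gam$ with the right lifting property against every horn inclusion $\Lam^n_k \hrar \Del^n$ (whose images under $\mathrm{triv}$ are standard trivial cofibrations), i.e.\ to a Kan fibration; and, being a right adjoint, $(\bullet)^\Gam$ commutes with all limits, in particular with pullbacks, hence with strict fibers.

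The one step requiring care is a reduction: I want to replace the given fibration sequence $\SimpS_1 \lrar \SimpS_2 \x{p}{\lrar} \SimpS_3$ by an equivalent one in which $p$ is a standard fibration, all three terms are standardly fibrant, and $\SimpS_1$ is the \emph{strict} fiber $p^{-1}(b)$ over a $\Gam$-fixed vertex $b \colon * \lrar \SimpS_3$. This is achieved by functorially fibrantly replacing $\SimpS_3$, then functorially factoring the composite $\SimpS_2 \to \SimpS_3 \to \SimpS_3^{fib}$ as a standard trivial cofibration followed by a standard fibration $p'$, and taking $\SimpS_1'$ to be the fiber of $p'$ over the (now $\Gam$-fixed) image of $b$; all of this takes place inside $\Set^{\Del^{op}}_{\Gam}$ and, by the usual behaviour of homotopy fibers under weak equivalences together with the homotopy-invariance of $(\bullet)^{h\Gam}$ recalled above, it changes neither the homotopy type of the sequence nor that of its image under $(\bullet)^{h\Gam}$. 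For the reduced sequence, $\SimpS_1$ is the fiber of a fibration between fibrant objects, hence standardly fibrant, so $\SimpS_i^\Gam$ already computes $\SimpS_i^{h\Gam}$ for $i = 1,2,3$. Applying $(\bullet)^\Gam$ then yields a Kan fibration $p^\Gam \colon \SimpS_2^{h\Gam} \lrar \SimpS_3^{h\Gam}$ whose fiber over $b^\Gam$ is $(p^{-1}(b))^\Gam = \SimpS_1^{h\Gam}$, which is precisely the assertion that $\SimpS_1^{h\Gam} \lrar \SimpS_2^{h\Gam} \lrar \SimpS_3^{h\Gam}$ is a fibration sequence of spaces. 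Equivalently, and more conceptually: a fibration sequence is a homotopy pullback of $* \to \SimpS_3 \leftarrow \SimpS_2$, the derived functor of a right Quillen functor preserves homotopy pullbacks, and $*^{h\Gam} \simeq *$ because $*$ is already fibrant.

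The main obstacle is exactly the basepoint bookkeeping inside the reduction: if the fibration sequence is given together with a genuine $\Gam$-equivariant lift of the basepoint, the argument above applies verbatim; if only a \emph{homotopy}-fixed basepoint of $\SimpS_3$ is available, one cannot pass to a strict fiber over a $\Gam$-fixed vertex and must instead retain the cospan $* \to \SimpS_3 \leftarrow \SimpS_2$ with $* \to \SimpS_3$ the chosen homotopy-fixed point and invoke the homotopy-pullback formulation directly. Everything else — functoriality and equivariance of fibrant replacement, of the cylinder/path-object factorizations, and of the formation of fibers — is automatic, since all these constructions are carried out in the functor category $\Set^{\Del^{op}}_{\Gam}$ and are therefore performed objectwise and respect the $\Gam$-action.
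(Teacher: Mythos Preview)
Your argument is correct and is essentially the same as the paper's, only spelled out in more detail: the paper replaces $\SimpS_2 \to \SimpS_3$ by a standard fibration and then invokes in one line that $\SimpS^{h\Gam}$ is the derived mapping space from the terminal object, which is exactly your identification of $(\bullet)^{h\Gam}$ with the right derived functor of the right Quillen functor $(\bullet)^\Gam$ and the consequent preservation of homotopy pullbacks. Your explicit discussion of the basepoint issue is a refinement the paper simply elides.
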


\begin{proof}
First note that we can change the map $\SimpS_2 \lrar \SimpS_3$ to a fibration $\widetilde{\SimpS}_2 \lrar \SimpS_3$ in the standard model structure without changing the homotopy types. Now any fibration in the standard model structure is also a Kan fibration and thus the fibre of the map $\widetilde{\SimpS}_2 \lrar \SimpS_3$ is standardly equivalent to $\SimpS_1$.
To conclude we may assume that  $$\SimpS_1 \lrar \SimpS_2 \lrar \SimpS_3 $$ be is a fibration sequence of simplicial $\Gam$-sets
in the standard model structure and the lemma follows from the fact that $\SimpS^{h\Gam}$ is the derived mapping space from the terminal object to $\SimpS$.
\end{proof}

\begin{cor}\label{c:preserves fibrations}
Let $K$ be a number field and let
$$\SimpS_1 \lrar \SimpS_2 \lrar \SimpS_3 $$
be a fibration sequence of nice simplicial $\Gam_K$-set. Then
$$\SimpS_1^{h\A} \lrar \SimpS_2^{h\A} \lrar \SimpS_3^{h\A}$$
is a fibration sequence of topological spaces.
\end{cor}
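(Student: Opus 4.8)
The plan is to reduce the statement to the local assertion of Lemma~\ref{l:preserves fibrations}, one place at a time, and then to observe that the two operations out of which $\SimpS^{h\A}$ is assembled --- an (arbitrary) product and a filtered homotopy colimit --- both preserve fibration sequences of spaces, since products commute with homotopy pullbacks and, in spaces, filtered homotopy colimits commute with finite homotopy limits.

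First, for each place $\nu$ of $K$ I would restrict the fibration sequence $\SimpS_1 \lrar \SimpS_2 \lrar \SimpS_3$ along $\Gam_\nu \hookrightarrow \Gam_K$ and apply Lemma~\ref{l:preserves fibrations} with $\Gam = \Gam_\nu$, giving that $\SimpS_1^{h\Gam_\nu} \lrar \SimpS_2^{h\Gam_\nu} \lrar \SimpS_3^{h\Gam_\nu}$ is a fibration sequence. For the unramified factors, recall that $\SimpS^{h^{ur}\Gam_\nu} = (\SimpS^{I_\nu})^{h\Gam_\nu^{ur}}$; so I would first show that $\SimpS_1^{I_\nu} \lrar \SimpS_2^{I_\nu} \lrar \SimpS_3^{I_\nu}$ is again a fibration sequence, now of (nice) $\Gam_\nu^{ur}$-simplicial sets, and then apply Lemma~\ref{l:preserves fibrations} once more with $\Gam = \Gam_\nu^{ur}$ to conclude that $\SimpS_1^{h^{ur}\Gam_\nu} \lrar \SimpS_2^{h^{ur}\Gam_\nu} \lrar \SimpS_3^{h^{ur}\Gam_\nu}$ is a fibration sequence. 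To control $(-)^{I_\nu}$, I would replace $\SimpS_2 \lrar \SimpS_3$ by a strict fibration with the same homotopy type; this is where niceness enters, guaranteeing that the replacement may be kept inside the class of nice simplicial $\Gam_\nu$-sets, so that passing to $I_\nu$-fixed points can be analysed skeleton by skeleton through finite quotients. Given a strict fibration $f\colon \SimpS_2 \lrar \SimpS_3$, the map $f^M$ is a Kan fibration for every open normal $M \triangleleft \Gam_\nu$; since $I_\nu$ is closed and normal and every $I_\nu$-fixed simplex has open stabiliser, one has $\SimpS_j^{I_\nu} = \colim_{M \supseteq I_\nu} \SimpS_j^{M}$ as a filtered colimit along inclusions, whence $\SimpS_2^{I_\nu} \lrar \SimpS_3^{I_\nu}$ is a filtered colimit of Kan fibrations and hence a Kan fibration; and since $(-)^{I_\nu}$ is a limit it preserves the pullback computing the fibre, so the fibre of $\SimpS_2^{I_\nu} \lrar \SimpS_3^{I_\nu}$ is exactly $\SimpS_1^{I_\nu}$.

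Finally I would assemble the restricted product. By the previous two steps, for every finite set $T$ of places the product $\prod_{\nu \in T}\SimpS_i^{h\Gam_\nu} \times \prod_{\nu \notin T}\SimpS_i^{h^{ur}\Gam_\nu}$ (for $i = 1,2,3$) forms a fibration sequence, compatibly as $T$ grows, because arbitrary products of fibration sequences are fibration sequences. Since filtered homotopy colimits of homotopy pullback squares are again homotopy pullback squares, applying $\hocolim_T$ yields that $\SimpS_1^{h\A} \lrar \SimpS_2^{h\A} \lrar \SimpS_3^{h\A}$ is a fibration sequence, as claimed; the same argument with $S$ in place of the full set of places gives the $h\A_S$-version if needed.

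The main obstacle is the middle step, namely that $(-)^{I_\nu}$ carries the fibration sequence to a fibration sequence: literal fixed points are only homotopy-meaningful on strictly fibrant objects (for instance, fixed points of a free contractible action are empty), so the real content is to promote $\SimpS_2 \lrar \SimpS_3$ to a strict fibration \emph{without leaving the class of nice simplicial sets}, after which the filtered-colimit description of $\SimpS_j^{I_\nu}$ renders the argument formal. Everything else is routine bookkeeping with homotopy limits and colimits.
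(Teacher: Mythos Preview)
Your overall architecture (local fibration sequences, then products, then filtered homotopy colimit) is correct and matches the paper's assembly step. The difficulty you single out --- making sense of $(-)^{I_\nu}$ on a fibration sequence --- is real, but your proposed fix does not quite close it. After replacing $\SimpS_2 \to \SimpS_3$ by a strict (or standard) fibration, the literal fibre $F$ is only \emph{standardly} weakly equivalent to $\SimpS_1$, not strictly; and a standard weak equivalence $\SimpS_1 \simeq F$ does \emph{not} in general induce a weak equivalence $\SimpS_1^{I_\nu} \simeq F^{I_\nu}$ (think of a free contractible $G$-space versus a point). So while your argument shows that $F^{I_\nu} \to \SimpS_2'^{I_\nu} \to \SimpS_3^{I_\nu}$ is a fibration sequence, it does not identify $F^{I_\nu}$ with $\SimpS_1^{I_\nu}$, which is what the definition of $\SimpS_1^{h^{ur}\Gam_\nu}$ requires.

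The paper avoids this entirely by a reduction you do not make: first apply $P_n$ to pass to the \emph{excellent} case, where the $\Gam_K$-action on each $\SimpS_i$ factors through a finite quotient. Then there is a finite set $S$ of places such that for every $\nu \notin S$ the inertia $I_\nu$ acts trivially on all three $\SimpS_i$; hence $\SimpS_i^{I_\nu} = \SimpS_i$ literally, and $\SimpS_i^{h^{ur}\Gam_\nu} = \SimpS_i^{h\Gam_\nu^{ur}}$, to which Lemma~\ref{l:preserves fibrations} applies directly with $\Gam = \Gam_\nu^{ur}$. One then runs the restricted product only over finite $T \supseteq S$ and assembles exactly as you describe. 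In short: rather than proving that $(-)^{I_\nu}$ preserves fibration sequences, the paper arranges that $(-)^{I_\nu}$ is the identity at all but finitely many places.
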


\begin{proof}
By applying $P_n$ for large enough $n$ it is enough to prove this when the $\SimpS_i$ are excellent. Now take $S$ to be a finite set of places such that all the $\SimpS_i$'s are unramified outside $S$. Now if $T$ is any finite set of place such that $S \subseteq T$ then by Lemma ~\ref{l:preserves fibrations}
$$
\prod \limits_{\nu\in T} \SimpS_1^{h\Gam_\nu} \times \prod \limits_{\nu \notin T} \SimpS_1^{h^{ur}\Gam_\nu} \lrar
\prod \limits_{\nu\in T} \SimpS_2^{h\Gam_\nu} \times \prod \limits_{\nu \notin T} \SimpS_2^{h^{ur}\Gam_\nu} \lrar
\prod \limits_{\nu\in T} \SimpS_3^{h\Gam_\nu} \times \prod \limits_{\nu \notin T} \SimpS_3^{h^{ur}\Gam_\nu}$$
is a fibration sequence. Now by passing to the limit and using the fact that direct homotopy colimits preserve fibration sequences we get that
$$ \SimpS_1^{h\A} \lrar \SimpS_2^{h\A} \lrar \SimpS_3^{h\A} $$
is a fibration sequence.
\end{proof}

\begin{define}
We shall say that a commutative diagram
$$
\xymatrix{
A\ar[r]\ar[d]&B\ar[d]\\
C\ar[r]&D
}
$$
in the category of sets is \textbf{semi-Cartesian}, if the map
$$ A \lrar B\times_D C$$
is onto.
\end{define}

\begin{prop}\label{p:2-is-enough}
Let $K$ be a number field and let $\SimpS$ be an excellent bounded simplicial $\Gam_K$-set. Then the commutative diagram
$$
\xymatrix{
\SimpS(hK) \ar[d] \ar[r] & P_2(\SimpS)(hK)\ar[d] \\
\SimpS(h\A) \ar[r]   & P_2(\SimpS)(h\A)\\
}
$$
is semi-Cartesian.

\end{prop}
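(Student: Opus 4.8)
The plan is to climb the Postnikov tower of $\SimpS$ one stage at a time, combining the obstruction theory of Proposition~\ref{p:obstruction-theory} with the vanishing of the Galois cohomology of a number field in degrees $\ge 3$. Since $\SimpS$ is bounded, fix $N\ge 2$ with $\SimpS\to P_N(\SimpS)$ a strict weak equivalence; then $\SimpS(hK)=P_N(\SimpS)(hK)$, $\SimpS(h\A)=P_N(\SimpS)(h\A)$, and the map $\SimpS(hK)\to P_2(\SimpS)(hK)$ of the statement is the composite of the Postnikov maps $P_N(\SimpS)(hK)\to\cdots\to P_2(\SimpS)(hK)$, and likewise adelically. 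Given an element $(b,a)$ of the fibre product $P_2(\SimpS)(hK)\times_{P_2(\SimpS)(h\A)}\SimpS(h\A)$, write $a_i\in P_i(\SimpS)(h\A)$ for the image of $a$, so that $a_2=\loc(b)$. I will construct $b_i\in P_i(\SimpS)(hK)$ for $2\le i\le N$, with $b_2=b$, so that $b_i$ maps to $b_{i-1}$ under the Postnikov map and $\loc(b_i)=a_i$; then $c:=b_N\in\SimpS(hK)$ maps to $(b,a)$, which is exactly what semi-Cartesianness requires. The base case $i=2$ is the hypothesis on $(b,a)$.

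For the inductive step $i-1\to i$ (with $3\le i\le N$) I would use the Postnikov fibration sequence of nice simplicial $\Gam_K$-sets $K(\pi_i,i)\lrar P_i(\SimpS)\lrar P_{i-1}(\SimpS)$, where $\pi_i:=\pi_i(\SimpS)$ is made into a well-defined $\Gam_K$-module by first fixing a base point $p_1$ in the component of $b$ as in Proposition~\ref{p:obstruction-theory} (and $P_j(\SimpS)$ is excellent, hence nice, and bounded because $\SimpS$ is). By Lemma~\ref{l:preserves fibrations} and Corollary~\ref{c:preserves fibrations} this stays a fibration sequence after applying $(-)^{h\Gam_K}$ and after applying $(-)^{h\A}$; here $\pi_0(K(\pi_i,i)^{h\Gam_K})=H^i(\Gam_K,\pi_i)$ and $\pi_0(K(\pi_i,i)^{h\A})=H^i(\A,\pi_i):=\prod'_\nu H^i(\Gam_\nu,\pi_i)$, the restricted product of the remark following Proposition~\ref{p:obstruction-theory}. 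First I lift $b_{i-1}$ over $K$: by Proposition~\ref{p:obstruction-theory} the obstruction is a class $o(b_{i-1})\in H^{i+1}(\Gam_K,\pi_i)$, and by naturality its image in $H^{i+1}(\A,\pi_i)$ is the obstruction to lifting $a_{i-1}=\loc(b_{i-1})$, which vanishes since $a_i$ is such a lift. As $i+1\ge 4$, the Poitou--Tate theorem gives an isomorphism $H^q(\Gam_K,\pi_i)\x{\sim}{\lrar}\prod'_\nu H^q(\Gam_\nu,\pi_i)$ for all $q\ge 3$ (both sides being a finite sum of local terms at the real places), so the localization is injective in degree $i+1$ and $o(b_{i-1})=0$; choose a lift $\tilde b_i\in P_i(\SimpS)(hK)$. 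Now $\loc(\tilde b_i)$ and $a_i$ both lie in the fibre of $\pi_0(P_i(\SimpS)^{h\A})\to\pi_0(P_{i-1}(\SimpS)^{h\A})$ over $a_{i-1}$; since the Postnikov stage is a principal fibration for $i\ge 2$, this fibre is a single orbit under $H^i(\A,\pi_i)$, so $\delta\cdot\loc(\tilde b_i)=a_i$ for some $\delta\in H^i(\A,\pi_i)$. Because $i\ge 3$, the same Poitou--Tate isomorphism in degree $i$ produces $\epsilon\in H^i(\Gam_K,\pi_i)$ with $\loc(\epsilon)=\delta$; set $b_i:=\epsilon\cdot\tilde b_i$, using the corresponding action of $H^i(\Gam_K,\pi_i)$ on the fibre over $b_{i-1}$. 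Then $b_i$ maps to $b_{i-1}$ and, by compatibility of the two actions with $\loc$, $\loc(b_i)=\loc(\epsilon)\cdot\loc(\tilde b_i)=\delta\cdot\loc(\tilde b_i)=a_i$, completing the induction.

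The crux — and the reason the statement is about $P_2$ and not $P_1$ — is that the whole argument rests on the Poitou--Tate vanishing $H^q(\Gam_K,M)\x{\sim}{\lrar}\prod'_\nu H^q(\Gam_\nu,M)$ being an isomorphism for finite $M$ and $q\ge 3$. Starting the induction at $P_2$ makes the first lift run from $P_2$ to $P_3$, so the obstruction sits in $H^{\ge 4}$ and the correcting class in $H^{\ge 3}$, both safely in range; were one to start at $P_1$, the correction would take place in $H^2(\Gam_K,\pi_2)$, where the localization map is in general not surjective (its cokernel is Tate-dual to $H^0(\Gam_K,\pi_2^D)$), and the argument would break down. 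The remaining ingredients are routine bookkeeping I expect to go through without difficulty: that $P_j(\SimpS)$ inherits excellence, niceness and boundedness from $\SimpS$; that the Postnikov stages are $\Gam_K$-equivariant principal fibrations for $i\ge 2$ and that the homotopy-fixed-point and adelic functors turn them into fibration sequences with the stated $\pi_0$; the base-point and connectivity technicalities already dealt with in Proposition~\ref{p:obstruction-theory}; and, if $\SimpS$ is not assumed to have finite homotopy groups, a preliminary reduction to the finite case (a colimit over finite $\Gam_K$-submodules of $\pi_i$), since Poitou--Tate is stated for finite modules.
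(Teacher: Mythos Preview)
Your argument is correct and takes a genuinely different route from the paper's. The paper does not climb the Postnikov tower stage by stage; instead it first isolates the $2$-connected case as a separate lemma (if $\SimpS$ is $2$-connected, excellent and bounded then $\loc:\SimpS(hK)\to\SimpS(h\A)$ is surjective), proved by a spectral-sequence comparison showing that $\loc^2_{s,t}$ is an isomorphism on the diagonals $s-t\in\{-1,0\}$, whence $\loc^\infty_{t,t}$ stays surjective and $\loc_0$ is surjective. For general $\SimpS$ it then forms the $2$-connected cover $\SimpS\langle 2\rangle$ as the homotopy pullback of $\E(\Gam_K/\Lam)\to P_2(\SimpS)\leftarrow\SimpS$ along a representative of the given $x_0\in P_2(\SimpS)(hK)$, applies Lemma~\ref{l:preserves fibrations} and Corollary~\ref{c:preserves fibrations} to the fibration sequence $\SimpS\langle 2\rangle\to\SimpS\to P_2(\SimpS)$, and chases the resulting diagram of pointed sets.

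Your stage-by-stage obstruction-theoretic climb is more elementary in that it avoids any spectral-sequence analysis: each step uses only the single Poitou--Tate isomorphism $H^q(K,M)\cong\prod'_\nu H^q(K_\nu,M)$ for $q\ge 3$ and finite $M$, once to kill the lifting obstruction in $H^{i+1}$ and once to realise the adelic correction class in $H^i$. The paper's approach, on the other hand, packages the whole climb into the single $2$-connected lemma, which is a clean stand-alone statement and makes the reduction to the general case a short diagram chase. One small terminological point: calling the Postnikov stage a \emph{principal} fibration is slightly loose when $\pi_1\neq 0$ (the $k$-invariant lives in twisted cohomology), but what you actually use---that the non-empty fibres of $P_i(\SimpS)(hK)\to P_{i-1}(\SimpS)(hK)$ are transitive under $H^i(\Gam_K,\pi_i)$ with the module structure determined by $p_1$, compatibly with $\loc$---is correct and is exactly the companion to Proposition~\ref{p:obstruction-theory}. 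Your remark about reducing to finite $\pi_i$ is apt: the proposition as stated does not assume finiteness, and the paper's own proof tacitly uses it as well.
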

\begin{proof}
We shall first prove the proposition for the case that $\SimpS$ is $2$-connected. In that case $P_2(\SimpS)$ is contractible and thus the claim is reduced to the following lemma

\begin{lem}\label{l:cosk2,2-conncted}
Let $K$ be number field and $\SimpS$ be a $2$-connected excellent bounded simplicial $\Gam_K$-set. Then the map
$\loc:\SimpS(hK) \lrar  \SimpS(h\A)$ is surjective
\end{lem}
\begin{proof}
First we will show that if $\SimpS(hK) = \emptyset$ then $\SimpS(h\A) = \emptyset$ as well. Since $\SimpS$ is excellent and bounded we can use the obstruction theory described above (see ~\ref{p:obstruction-theory}).

Since $\SimpS$ is $2$-connected the obstructions fall in the groups
$$ H^{i+1}(K, \pi_i(\SimpS)) $$
$$ H^{i+1}(\A, \pi_i(\SimpS)) $$
for $i \geq 3$ (note that since $P_1(\X)$ is contractible we can suppress the base point). Since the map
$$ H^{i+1}(K, A) \lrar H^{i+1}(\A, A) $$
is an isomorphism for $i \geq 2$ and every finite module $A$ (see ~\cite{Mil06}, Theorem $4.10$ (c)) we get that if $\SimpS(hK)$ is empty then so is $\SimpS(h\A)$.

Now assume that $\SimpS(hK) \neq \emptyset$. We shall prove that the map
$$\loc:\SimpS(hK) \lrar  \prod \SimpS(h K_v)$$
(and thus the map $\loc:\SimpS(hK) \lrar \SimpS(h\A)$) is surjective.
Since $\SimpS(hK) \neq \emptyset$ we have  $\prod \SimpS(h K_v) \neq \emptyset$
and so we can use the spectral sequence from Theorem ~\ref{t:ss}. Let $p \in \SimpS^{h\Gam_K}$ be a chosen base homotopy fixed point.

The lemma will follow by carefully investigating these spectral sequences. We shall denote by $E^r_{s,t}(K)$ the spectral sequence that converges to $\pi_{s-t}\left(\SimpS^{h\Gam_K}, p\right)$ and by $E^r_{s,t}(K_\nu)$ the spectral sequence that converges to $\pi_{s-t}\left(\SimpS^{h\Gam_\nu}, p_\nu\right)$.

We also denote by $\prod \limits_\nu E^r_{s,t}(K_\nu)$ the product spectral sequence (since $\SimpS$ is bounded the spectral sequence collapses after a finite number of pages. This fact together with the exactness of products insures that indeed  $\prod \limits_\nu E^r_{s,t}(K_\nu)$ is a spectral sequence and that its indeed converges to the product $\prod \limits_{\nu} \pi_{s-t}\left(\SimpS^{h\Gam_K}, p_\nu\right)$). Consider the map of spectral sequences
$$ \loc^r_{s,t}: E^r_{s,t}(K) \lrar \prod E^r_{s,t}(K_\nu)$$
This map converges to
$$ \loc_{s-t}: \pi_{s-t}\left(\SimpS^{h\Gam_K}, p\right) \lrar \prod \limits_{\nu} \pi_{s-t}\left(\SimpS^{h\Gam_K}, p_\nu\right) $$

Now since for a finite module $M$ and $k \geq 3$ one has
$$ H^k(K,M)\cong \prod \limits_{\nu} H^k(K_\nu,M) $$
we see that $\loc^2_{s,t}$ is an isomorphism for $t \geq 3$. Since $\SimpS$ is $2$-connected $\loc^2_{s,t}$ is also an isomorphism for $0 \leq s \leq 2$. Hence in particular it is an isomorphism on the diagonals $s-t = 0$ and $s-t= -1$.

We shall use the following lemma
\begin{lem}
If $f_{s,t}^r:E^r_{s,t} \lrar F^r_{s,t} $ is map of HL-spectral sequences such that $f_{s,t}^2$ is injective on the diagonal $s-t = d$ and surjective on the diagonal $s-t = d+1$ then the same is true for all $f^r_{s,t}$.
\end{lem}
\begin{proof}
By induction on $r$ and a simple diagram chase.
\end{proof}

Now since $\SimpS$ is bounded the spectral sequences $E^r_{s,t}(K), \prod E^r_{s,t}(K_\nu)$ collapse in some page $r$ and we get that the map
$$ \loc^{\infty}_{t,t}: E^\infty_{t,t}(K) \lrar \prod E^\infty_{t,t}(K_\nu) $$
is surjective for all $t \geq 0$. Hence
$$ \loc_{0}: \pi_{0}\left(\SimpS^{h\Gam_K}, p\right) \lrar \prod \limits_{\nu} \pi_{0}\left(\SimpS^{h\Gam_K}, p_\nu\right) $$
is also surjective.
\end{proof}

We shall now prove the claim in the case for a general $\SimpS$. Let
$$ ((a_\nu),x_0)\in \SimpS(h\A) \times_{P_2(\SimpS)(h\A)} P_2(\SimpS)(hK) $$
be a general point. Let $\Lam \fns \Gam_K$ be such that $x_0$ can be represented by a $\Gam_K$ equivariant map $\E(\Gam_K /\Lam) \lrar P_2(\SimpS)$. We get a diagram of excellent strictly fibrant simplicial $\Gam_K$-sets:
$$
\xymatrix{
\empty & \E(\Gam_K/\Lam) \ar[d]\\
\SimpS \ar[r] & P_2(\SimpS)
}
$$
We denote the homotopy pullback of this diagram by $\SimpS\langle 2 \rangle$. Note that $\SimpS\langle 2 \rangle$ is a $2$-connected and excellent  and that the sequence:
$$ \SimpS\langle 2 \rangle \lrar \SimpS \lrar P_2(\SimpS) $$
is a fibration sequence of excellent strictly fibrant simplicial $\Gam_K$-sets:. Hence by applying Lemma ~\ref{l:preserves fibrations} and Corollary ~\ref{c:preserves fibrations} we get the following commutative diagram with exact rows
$$
\xymatrix{
\SimpS\langle 2 \rangle(hK) \ar[d]^{\loc_1} \ar[r]^-{p} & \SimpS(hK) \ar[d]^{\loc_2} \ar[r]^-{c_2} & (P_2(\SimpS)(hK),x_0) \ar[d]^{\loc_3} \\
\SimpS\langle 2 \rangle(h\A)  \ar[r]^-{p} & \SimpS(h\A) \ar[r]^-{c_2} & (P_2(\SimpS)(h\A),\loc_3(x_0)) \\
}
$$
where the notation $(A,a)$ means that the element $a\in A$ is the neutral element in the pointed set $A$. Now since
$$ c_2((a_\nu)) = \loc_3(x_0) $$
there is an element $(b_\nu) \in \SimpS\langle 2 \rangle(h\A)$ such that $p((b_\nu)) = (a_\nu)$. Now by Lemma ~\ref{l:cosk2,2-conncted} there is an element $k_0\in \SimpS\langle 2 \rangle(hK)$ such that
$\loc_1(k_0)= (b_\nu)$. We denote $k_1 = p(k_0)$ and get
$$ c_2(k_1) = c_2(p(k_0)) = x_0 $$
and
$$ \loc_2(k_1) =  \loc_2(p(k_0)) = p(\loc_1(k_0)) = p((b_\nu)) = (a_\nu) $$
\end{proof}

\begin{cor}\label{c:cosk2-final}
Let $K$ be a number field, $X/K$ a smooth variety and $\mcal{U} \lrar X$ an hypercovering. Then for every $n \geq 2$ we have
$$ X(\A)^{\mcal{U},n} = X(\A)^{\mcal{U},2} $$
$$ X(\A)^{\mcal{\ZZ U},n} = X(\A)^{\ZZ\mcal{U},2} $$
\end{cor}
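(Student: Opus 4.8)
The plan is to deduce both equalities as formal consequences of Proposition~\ref{p:2-is-enough}. Fix $n \geq 2$. For the first equality I would apply that proposition to the simplicial $\Gam_K$-set $\SimpS := \SimpS_{\U,n} = P_n(\SimpS_\U)$, which is excellent and bounded by the discussion in~\S\ref{s:etale-homotopy-type}. Since $P_2 \circ P_n = P_2$ one has $P_2(\SimpS_{\U,n}) = P_2(\SimpS_\U) = \SimpS_{\U,2}$, so the proposition says that the square
$$
\xymatrix{
\SimpS_{\U,n}(hK) \ar[d] \ar[r] & \SimpS_{\U,2}(hK) \ar[d] \\
\SimpS_{\U,n}(h\A) \ar[r] & \SimpS_{\U,2}(h\A)
}
$$
(horizontal maps induced by the augmentation $\SimpS_{\U,n} \to \SimpS_{\U,2}$, vertical maps $\loc_{\U,n}$ and $\loc_{\U,2}$) is semi-Cartesian. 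For the second equality I would run the same argument with $\SimpS := P_n(\ZZ\SimpS_\U)$; here one first notes that $\ZZ\SimpS_\U$ is nice --- applying the free abelian group functor levelwise does not enlarge the kernel of the Galois action on any skeleton --- so that $P_n(\ZZ\SimpS_\U)$ is again excellent and bounded by the same reasoning as in~\S\ref{s:etale-homotopy-type}, and that $P_2(P_n(\ZZ\SimpS_\U)) = P_2(\ZZ\SimpS_\U)$.

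Granting the semi-Cartesian square, I would establish the two inclusions separately. The inclusion $X(\A)^{\U,n} \subseteq X(\A)^{\U,2}$ is pure functoriality: the augmentation $\SimpS_{\U,n} \to \SimpS_{\U,2}$ gives a commutative square relating $h_{\U,n}$, $h_{\U,2}$ on $X(\A)$ with $\loc_{\U,n}$, $\loc_{\U,2}$, so whenever $h_{\U,n}((x_\nu)) = \loc_{\U,n}(z)$ its image $h_{\U,2}((x_\nu))$ lies in the image of $\loc_{\U,2}$. For the reverse inclusion, take $(x_\nu) \in X(\A)^{\U,2}$, so $h_{\U,2}((x_\nu)) = \loc_{\U,2}(y)$ for some $y \in \SimpS_{\U,2}(hK)$. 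The adelic point has a well-defined image $h_{\U,n}((x_\nu)) \in \SimpS_{\U,n}(h\A)$, and by commutativity of the relevant squares its image in $\SimpS_{\U,2}(h\A)$ is $h_{\U,2}((x_\nu)) = \loc_{\U,2}(y)$; hence $(y, h_{\U,n}((x_\nu)))$ is a point of the fibre product $\SimpS_{\U,2}(hK) \times_{\SimpS_{\U,2}(h\A)} \SimpS_{\U,n}(h\A)$. Semi-Cartesianness then produces $z \in \SimpS_{\U,n}(hK)$ mapping to this pair, so in particular $\loc_{\U,n}(z) = h_{\U,n}((x_\nu))$, i.e.\ $(x_\nu) \in X(\A)^{\U,n}$. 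The homology equality is obtained by the identical argument, with $P_n(\ZZ\SimpS_\U)$ and $P_2(\ZZ\SimpS_\U)$ replacing $\SimpS_{\U,n}$ and $\SimpS_{\U,2}$.

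Since Proposition~\ref{p:2-is-enough} is already available, I do not expect any real obstacle. The only points that need care are bookkeeping: verifying that $P_n(\ZZ\SimpS_\U)$ satisfies the hypotheses of the proposition (excellent and bounded), and checking that the localization and truncation maps do assemble into the commutative squares used above. Both follow from the functoriality of $P_n$, of the free abelian group functor, and of the constructions $(\cdot)(hK)$ and $(\cdot)(h\A)$.
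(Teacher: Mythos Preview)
Your proposal is correct and is precisely the argument the paper intends: the corollary is stated immediately after Proposition~\ref{p:2-is-enough} with no proof, and your derivation---applying the proposition to $\SimpS_{\U,n}$ (respectively $P_n(\ZZ\SimpS_\U)$), using $P_2\circ P_n = P_2$ for $n\geq 2$, and reading off both inclusions from the semi-Cartesian square---is exactly the intended unpacking. The bookkeeping points you flag (excellence and boundedness of $P_n(\ZZ\SimpS_\U)$, commutativity of the localization/truncation squares) are routine and handled just as you indicate.
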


\begin{cor}\label{c:cosk2-zzh-h}
Let $K$ be a number field, $X/K$ a smooth variety and $\mcal{U} \lrar X$ a hypercovering such that $\SimpS_\mcal{U}$ is simply connected. Then
$$ X(\A)^{\mcal{U},n} = X(\A)^{\ZZ\mcal{U},n} $$
for every  $n \geq 0$.
\end{cor}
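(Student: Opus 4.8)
The plan is to reduce to the truncation level $n=2$ and there to identify the homology obstruction with the homotopy obstruction via the Hurewicz map, exploiting that $\SimpS_\mcal{U}$ is simply connected. First, by Corollary~\ref{c:cosk2-final} we have $X(\A)^{\mcal{U},n}=X(\A)^{\mcal{U},2}$ and $X(\A)^{\ZZ\mcal{U},n}=X(\A)^{\ZZ\mcal{U},2}$ for all $n\geq 2$, so it suffices to prove the asserted equality for $0\leq n\leq 2$, which I will do uniformly.

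Next I would isolate the relevant piece of $\ZZ\SimpS_\mcal{U}$. The degree map $\ZZ\SimpS_\mcal{U}\lrar\ZZ$ (trivial $\Gam_K$-action on $\ZZ$) induces an isomorphism on $\pi_0$ since $\SimpS_\mcal{U}$ is connected, so $\ZZ\SimpS_\mcal{U}=\coprod_{d\in\ZZ}\ZZ\SimpS_\mcal{U}^{(d)}$ as simplicial $\Gam_K$-sets, each summand $\ZZ\SimpS_\mcal{U}^{(d)}$ (the degree-$d$ part) being $\Gam_K$-invariant. Since $P_n=\cosk_{n+1}\circ\tr_{n+1}$ commutes with coproducts of simplicial sets, $\ZZ\SimpS_{\mcal{U},n}=P_n(\ZZ\SimpS_\mcal{U})=\coprod_d P_n(\ZZ\SimpS_\mcal{U}^{(d)})$. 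Set $\Y_n:=P_n(\ZZ\SimpS_\mcal{U}^{(1)})$, the degree-$1$ component; it is nice, bounded and excellent (niceness passing from $\SimpS_\mcal{U}$ to $\ZZ\SimpS_\mcal{U}$, to its degree-$1$ part, and to any $P_n$ of these, and $P_2$ only depends on the $3$-truncation). The Hurewicz map $\SimpS_\mcal{U}\lrar\ZZ\SimpS_\mcal{U}$ lands in degree $1$, hence induces a $\Gam_K$-equivariant map $\SimpS_{\mcal{U},n}=P_n(\SimpS_\mcal{U})\lrar\Y_n$. The key claim is that for $0\leq n\leq 2$ this is a weak equivalence of $\Gam_K$-sets: both sides are connected with trivial $\pi_1$ (using that $\SimpS_\mcal{U}$ is simply connected, hence $H_1(\SimpS_\mcal{U})=0$), on $\pi_2$ the map is the Hurewicz homomorphism $\pi_2(\SimpS_\mcal{U})\lrar H_2(\SimpS_\mcal{U})$, which is an isomorphism of $\Gam_K$-modules by the Hurewicz theorem, and all higher homotopy is killed by $P_n$ for $n\leq 2$.

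Finally I would transport the obstruction along this weak equivalence. By the remark preceding Theorem~\ref{t:A-weak} and by Theorem~\ref{t:A-weak} itself, a weak equivalence of nice bounded $\Gam_K$-sets induces weak equivalences on $(-)^{h\Gam_K}$ and on $(-)^{h\A}$; taking $\pi_0$ gives bijections $\SimpS_{\mcal{U},n}(hK)\x{\sim}{\lrar}\Y_n(hK)$ and $\SimpS_{\mcal{U},n}(h\A)\x{\sim}{\lrar}\Y_n(h\A)$ commuting with $\loc$ and with the structure maps $h$ from $X(K)$ and $X(\A)$ (which exist by Lemmas~\ref{l:rational-is-h-adelic} and~\ref{l:adelic-is-adelic}). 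On the other hand the inclusion $\Y_n\hrar\ZZ\SimpS_{\mcal{U},n}$ of a union of connected components induces an injection on every homotopy-fixed-point $\pi_0$ (an equivariant map out of a connected $\E(\Gam_K/\Lam)$ lands in a single component), and the map $h_{\ZZ\mcal{U},n}\colon X(\A)\lrar\ZZ\SimpS_{\mcal{U},n}(h\A)$, being the composite of $h_{\mcal{U},n}$ with $P_n$ of the Hurewicz map, factors through $\Y_n(h\A)$ and is there identified with $h$ for $\Y_n$. Moreover, a rational homotopy fixed point in $\ZZ\SimpS_{\mcal{U},n}(hK)$ whose localization lies in $\Y_n(h\A)$ must itself lie in $\Y_n(hK)$: its degree in $\ZZ\cong\pi_0(\ZZ^{h\Gam_K})$ maps under the injective map $\loc\colon\ZZ\lrar\pi_0(\ZZ^{h\A})$ to the class of $(1)_\nu$, forcing degree $1$. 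Assembling these identifications yields the chain $(x_\nu)\in X(\A)^{\ZZ\mcal{U},n}\iff h_{\ZZ\mcal{U},n}((x_\nu))\in\loc(\ZZ\SimpS_{\mcal{U},n}(hK))\iff h_{\ZZ\mcal{U},n}((x_\nu))\in\loc(\Y_n(hK))\iff h_{\mcal{U},n}((x_\nu))\in\loc(\SimpS_{\mcal{U},n}(hK))\iff (x_\nu)\in X(\A)^{\mcal{U},n}$, which together with the first reduction proves the corollary.

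I expect the main obstacle to be the bookkeeping forced by the non-connectedness of $\ZZ\SimpS_\mcal{U}$: one must consistently restrict attention to the degree-$1$ component $\Y_n$ and check that rationality of a homotopy fixed point is detected on it, which rests on the injectivity of $\loc$ on $H^0(\Gam_K,\ZZ)$ and on homotopy fixed points behaving well under coproducts. A secondary point requiring care is verifying that $\Y_n$ and $\SimpS_{\mcal{U},n}$ are nice and bounded so that Theorem~\ref{t:A-weak} applies — which is precisely the reason to reduce first to $n\leq 2$ rather than to argue with the whole tower $\acute{E}t^{\natural}_{/K}(X)$.
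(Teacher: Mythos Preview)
Your proof is correct and follows essentially the same approach as the paper, which leaves this corollary without explicit proof: reduce to $n\leq 2$ via Corollary~\ref{c:cosk2-final}, then use that the Hurewicz map $P_n(\SimpS_\mcal{U})\lrar P_n(\ZZ\SimpS_\mcal{U})^{(1)}$ into the degree-$1$ component is a weak equivalence of nice bounded simplicial $\Gam_K$-sets for $n\leq 2$ when $\SimpS_\mcal{U}$ is simply connected, so that Theorem~\ref{t:A-weak} identifies the two obstruction sets. Your careful bookkeeping with the degree decomposition (verifying that $P_n$ commutes with the coproduct over degrees for $n\geq 0$, and that rationality is detected on the degree-$1$ piece via injectivity of $\loc$ on $H^0(\Gam_K,\ZZ)$) is exactly the detail the paper suppresses.
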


\section{ An Auxiliary Theorem }\label{s:intersection}
In this section we will show prove a theorem that will be very helpful in analyzing our various obstructions.

\begin{thm}\label{t:descent-theorem}
Let $K$ be a number field, $\SimpS_I = \{\SimpS_\alp\}_{\alp \in I} \in \Pro\Ho\left(\Set^{\Del^{op}}_{\Gam_K}\right)$ such that each $\SimpS_\alp$ is finite, bounded and excellent. Let $(x_\nu) \in \SimpS_I(h\A)$ be an adelic homotopy fixed point. Then $(x_\nu)$ is rational if and only if its image in each $\SimpS_\alp(h\A)$ is rational.

\end{thm}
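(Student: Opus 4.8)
The easy implication is formal: if $(x_\nu)$ is rational, say $(x_\nu) = \loc\!\left((y_\alp)_\alp\right)$ with $(y_\alp)_\alp \in \SimpS_I(hK) = \lim_\alp \pi_0\!\left(\SimpS_\alp^{h\Gam_K}\right)$, then for each $\alp \in I$ the image of $(x_\nu)$ in $\SimpS_\alp(h\A)$ equals $\loc_\alp(y_\alp)$ and is therefore rational. So the content lies in the converse, which I would establish by a compactness argument over the pro-system.

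Assume then that the image $x_\alp$ of $(x_\nu)$ in $\SimpS_\alp(h\A)$ is rational for every $\alp$, and set
$$ S_\alp = \loc_\alp^{-1}(x_\alp) \subseteq \pi_0\!\left(\SimpS_\alp^{h\Gam_K}\right) . $$
The first step is to check that $\alp \mapsto S_\alp$ is a subfunctor of the inverse system $\alp \mapsto \pi_0\!\left(\SimpS_\alp^{h\Gam_K}\right)$: for a morphism $u\colon \alp \to \bet$ in $I$ the transition map $\pi_0(\SimpS_\alp^{h\Gam_K}) \to \pi_0(\SimpS_\bet^{h\Gam_K})$ carries $S_\alp$ into $S_\bet$, because $\loc$ is natural in the simplicial $\Gam_K$-set and because $(x_\nu)$, being a point of $\SimpS_I(h\A) = \lim_\alp \SimpS_\alp(h\A)$, has $x_\alp \mapsto x_\bet$. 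Next, each $S_\alp$ is non-empty by hypothesis, and --- this is the one non-formal ingredient --- each $S_\alp$ is finite by Proposition \ref{p:finite-preim}, whose hypotheses (excellent, finite, bounded) are precisely those imposed on the $\SimpS_\alp$ here. If convenient one first replaces each $\SimpS_\alp$ by a strictly fibrant model; by Theorem \ref{t:A-weak} together with the invariance of $\pi_0$ of homotopy fixed points under weak equivalence this changes neither $\pi_0(\SimpS_\alp^{h\Gam_K})$ nor $\SimpS_\alp(h\A)$ nor the maps between them.

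It then remains to invoke the classical fact that a cofiltered limit of non-empty finite sets is non-empty. Concretely, endow each finite set $S_\alp$ with the discrete topology, so that $\prod_\alp S_\alp$ is compact Hausdorff; then $\lim_\alp S_\alp$ is the intersection over all morphisms $u\colon \alp \to \bet$ of $I$ of the closed sets
$$ C_u = \left\{ (s_\gam)_\gam \in \prod_\gam S_\gam \ :\ S(u)(s_\alp) = s_\bet \right\} . $$
Any finite intersection $C_{u_1} \cap \cdots \cap C_{u_n}$ is non-empty: using that $I$ is cofiltered, dominate the finitely many objects occurring among the $u_i$ by a single object $\delta$, pass to a further object so that the relevant composites agree, pick any element of the non-empty set $S_\delta$, push it forward to the objects in sight, and fill in the remaining coordinates arbitrarily. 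By compactness $\lim_\alp S_\alp \neq \emptyset$, and any element of it is a point $(y_\alp)_\alp \in \lim_\alp \pi_0(\SimpS_\alp^{h\Gam_K}) = \SimpS_I(hK)$ with $\loc_\alp(y_\alp) = x_\alp$ for all $\alp$, i.e. with $\loc\!\left((y_\alp)_\alp\right) = (x_\nu)$. Hence $(x_\nu)$ is rational.

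The genuine difficulty is thus entirely contained in Proposition \ref{p:finite-preim} --- the finiteness of the fibres $S_\alp$, which is where the arithmetic (a Tate--Shafarevich-type finiteness) enters; everything else is bookkeeping with cofiltered limits. One minor categorical point to keep in mind is that the index category $I$ of a pro-object is small and cofiltered, which is precisely what legitimizes the compactness step; if desired one may first replace $I$ by a cofinal small cofiltered subcategory without changing any of the limits involved.
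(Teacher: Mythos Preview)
Your proof is correct and follows exactly the paper's approach: reduce to the finiteness of each fibre $S_\alp = \loc_\alp^{-1}(x_\alp)$ via Proposition~\ref{p:finite-preim}, and then apply the standard compactness fact that a cofiltered limit of non-empty finite sets is non-empty (the paper's Lemma~\ref{l:lim-fin}). You even spell out the compactness step in more detail than the paper does, but the structure and the key input are identical.
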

\begin{proof}

Since an inverse system of non-empty finite sets has a non-empty inverse limit (see Lemma ~\ref{l:lim-fin}) it is enough to show that
\begin{prop}\label{p:finite-preim}
Let $K$ be a number field and let $\SimpS$ be an excellent finite bounded simplicial $\Gam_K$-set. Then the map
$$ {\loc_{\SimpS}}:\SimpS(hK) \lrar \SimpS(h\A) $$ 
has finite pre-images. i.e. for every $(x_\nu) \in \SimpS(h\A)$ the set ${\loc_{\SimpS}}^{-1}((x_\nu))$ is finite.
\end{prop}

\begin{proof}
Since
$$ \SimpS(h\A) \subseteq \prod \limits_{\nu} \SimpS(K_\nu^h) $$
we can work with this product instead of $\SimpS(h\A)$.

First note that the theorem is trivial if either of the sets is empty. If both of them are non-empty we can use the spectral sequence of theorem ~\ref{t:ss} in order to compute them. Let $p \in \SimpS^{h\Gam_K}$ be a base homotopy fixed point.

The proposition will follow by carefully investigating these spectral sequences. We shall denote by $E^r_{s,t}(K)$ the spectral sequence that converges to $\pi_{s-t}\left(\SimpS^{h\Gam_K}, p\right)$ and by $E^r_{s,t}(K_\nu)$ the spectral sequence that converges to $\pi_{s-t}\left(\SimpS^{h\Gam_\nu}, p_\nu\right)$. Consider the map of spectral sequences
$$ \loc^r: E^r_{s,t}(K) \lrar \prod E^r_{s,t}(K_\nu)$$
This map converges to
$$ \loc_{s-t}: \pi_{s-t}\left(\SimpS^{h\Gam_K}, p\right) \lrar \prod \limits_{\nu} \pi_{s-t}\left(\SimpS^{h\Gam_K}, p_\nu\right) $$

We are interested in the components which contribute to $\pi_0$ so we would like to understand the pre-images of the maps
$$ \loc_t^\infty: E^\infty_{t,t}(K) \lrar \prod E^\infty_{t,t}(K_\nu)$$
Since $\SimpS$ is bounded these groups/pointed sets are trivial for large enough $t$. For the rest of the $t$'s we will prove the following:
\begin{prop}\label{p:finite pre-image}
The maps
$$ \loc_t^\infty :E^\infty_{t,t}(K) \lrar \prod \limits_\nu E^\infty_{t,t}(K_\nu) $$
have finite pre-images for all $t \geq 0$. 
\end{prop}

Before we begin the proof let us explain how this proves that the pre-image of $(x_\nu)$ is finite.

Note the $E^{\infty}_{t,t}$ terms are pointed sets which filter the set $\pi_0\left(\SimpS^{h\Gam_K}\right)$ in a way which we describe below. The idea is to use this filtration on the pre-image ${\loc_{\SimpS}}^{-1}((x_\nu))$ of some $((x_\nu)) \in \SimpS(h\A)$. We will assume that ${\loc_{\SimpS}}^{-1}((x_\nu))$ in infinite and get a contradiction using this filtration.

Recall that in order to construct the spectral sequence we had to choose some homotopy fixed point $p \in \SimpS^{h\Gam_K}$, which we call the \textbf{base point} of the spectral sequence. We pick it so that its connected component $x \in \pi_0\left(\SimpS^{h\Gam_K}\right)$ is in ${\loc_{\SimpS}}^{-1}((x_\nu))$.

The first filtration map is the map
$$ \pi_0\left(\SimpS^{h\Gam_K}\right) \lrar E^\infty_{0,0} \subseteq H^0(\Gam_K,\pi_0(\SimpS)) $$
which associates to every homotopy fixed point the invariant connected component of $\SimpS$ which it lies in. Note that all of ${\loc_{\SimpS}}^{-1}((x_\nu))$ is mapped to the connected component of $\SimpS$ which $((x_\nu))$ maps to, and so this filtration step is trivial when restricted to ${\loc_{\SimpS}}^{-1}((x_\nu))$.

Now for those homotopy fixed points which are mapped to the same connected component as $x$ we get the next filtration map
$$ f_1:f_0^{-1}(f_0(x)) \lrar E^{\infty}_{1,1} \subseteq H^1(\Gam,\pi_1(\SimpS)) $$
Now if we restrict this filtration map to ${\loc_{\SimpS}}^{-1}((x_\nu))$ we get that their image under $f_1$ lies in the appropriate pre-image of the map
$$ \loc_1^\infty: E^\infty_{1,1}(K) \lrar \prod E^\infty_{1,1}(K_\nu) $$
From Proposition ~\ref{p:finite pre-image} such a pre-image must be finite. Hence if ${\loc_{\SimpS}}^{-1}((x_\nu))$ is infinite then there exists a fiber of $f_1$ which has an infinite intersection with ${\loc_{\SimpS}}^{-1}((x_\nu))$. Let $F \subseteq {\loc_{\SimpS}}^{-1}((x_\nu))$ be this fiber. Then we can assume without loss of generality that $x \in F$.

Now since all the elements in $F$ agree on the first two filtration step it follows from the general construction of the spectral sequence that if we change the base point $p$ to any other base point in $F$ then the spectral sequence will be isomorphic. This makes the rest of the steps of the filtration to be independent of $p$.

Continuing on we have filtration maps
$$ f_t:f_{t-1}^{-1}(f_{t-1}(x)) \lrar E^{\infty}_{t,t} $$
For $t \geq 2$ these are abelian groups and elements of ${\loc_{\SimpS}}^{-1}((x_\nu))$ are mapped to the kernel of the map $\loc^{\infty}_t$, which is finite by our theorem.

Hence we can continue the process of choosing each time the infinite fiber and assume that $x$ is in that infinite fiber. Since $\SimpS$ is bounded there will be only a finite number of filtration steps and
since the spectral sequence will no longer change when we change $x$ within the infinite fiber we get from Proposition ~\ref{p:finite pre-image} the desired contradiction.

We now come to the proof of the proposition:
\begin{proof}
For $t = 0$ note that the map
$$ E^2_{0,0}(K) \lrar \prod E^2_{0,0}(K_\nu) $$
is injective and thus so is the map
$$ E^\infty_{0,0}(K) \lrar \prod E^\infty_{0,0}(K_\nu) $$
For $t > 2$ the set $E^2_{t,t}(K) = H^t(K,\pi_t(\SimpS))$ is finite and therefore the set $E^\infty_{t,t}(K)$ is finite.
For $t = 1$ we have that $E^{\infty}_{1,1} \subseteq E^{2}_{1,1}$ so it is enough to show that the map
$$ \loc^2:E^{2}_{1,1}(K) \lrar \prod E^2_{1,1}(K_\nu) $$
has finite pre-images. This is the map
$$ \loc_{\pi_1}: H^1(K,\pi_1(\SimpS)) \lrar \prod \limits_\nu H^1(K_\nu,\pi_1(\SimpS)) $$
The fact that this map has finite pre-images appears for example in Borel-Serre ~\cite{BSe64} \S 7.

We shall now prove for $t = 2$. We have
$$ E^{\infty}_{2,2} \subseteq E^{2}_{2,2}/d^2\left(E^2_{1,0}\right) $$
Consider first the map
$$ \loc^2_{2,2}:E^{2}_{2,2}(K) \lrar \prod E^2_{2,2}(K_\nu) $$
which is the map of abelian groups
$$ \loc_{\pi_2}: H^2(K,\pi_2(\SimpS)) \lrar \prod \limits_\nu H^2(K_\nu,\pi_2(\SimpS)) $$
and the kernel of this map is $\Sha^2(\pi_2(\SimpS))$
which is finite since $\pi_2(\SimpS)$ is finite (see Milne ~\cite{Mil06} Theorem 4.10).

Hence in order to show that $\loc^{\infty}_{2,2}$ has finite kernel it is enough to show that
$$ d^2:\prod E^2_{1,0}(K_\nu)\lrar \prod E^2_{2,2}(K_\nu) $$
has finite image. Now for each $\nu$ the group
$$ E^2_{1,0}(K_\nu)= H^0(\Gam_\nu,\pi_1(\SimpS)) $$
is finite because $\pi_1(\SimpS)$ is finite and so it is enough to show that for almost all $\nu$ the map
$$ d^2: E^2_{1,0}(K_\nu)\lrar  E^2_{2,2}(K_\nu) $$
is the zero map. This is done in the following lemma:

\begin{lem}\label{l:ss-unram}
Let $\SimpS$ be a $\Gam_K$-simplicial set such that the $3$-skeleton $\SimpS_3$ is stabilized by some open subgroup $\Gam_L \subseteq \Gam_K$ where $L/K$ is a finite Galois extension. Then if $\nu$ is a place of $K$ which is non-ramified in $L$ then the differential
$$ d^2: E^2_{1,0}(K_\nu) \lrar E^2_{2,2}(K_\nu) $$
is zero.
\end{lem}

\begin{proof}
The action of the group $\Gam_\nu$ on $\SimpS_3$ factors through the group $\Gam^{ur}_\nu$. Now consider the natural maps
$$ \SimpS_3^{h \Gam^{ur}_\nu} \x{f_1}{\lrar} \SimpS_3^{h\Gam_\nu} \x{f_2}{\lrar} \SimpS^{h\Gam_\nu}$$

We have corresponding maps of spectral sequences
$$ F^r_{s,t}(K^{ur}_\nu) \x{f^r_1}{\lrar} F^r_{s,t}(K_\nu) \x{f^r_2}{\lrar} E^r_{s,t}(K_\nu)$$

We denote by $f^2_3 = f^2_2 \circ f^2_1$. Now let
$$ a \in E^2_{1,0}(K_\nu) = H^0(\Gam_\nu,\pi_1(\SimpS)) $$
Since the map
$$ \pi_1(\SimpS_3) \lrar \pi_1(\SimpS) $$
is an isomorphism and the action on $\pi_1(\SimpS_3)$ factors through $\Gam_\nu$ we see that
$$ f^2_3: H^0(\Gam^{ur}_\nu,\pi_1(\SimpS_3)) \lrar H^0(\Gam_\nu,\pi_1(\SimpS)) $$
is an isomorphism. This means that there exists a
$$ b \in H^0(\Gam^{ur}_\nu,\pi_1(\SimpS_3)) = F^2_{1,0}(K^{ur}_\nu) $$
such that $f^2_3(b) = a$ .

Now since $\Gam^{ur}_\nu$ has cohomological dimension 1 and $\pi_2(\SimpS_3) \cong \pi_2(\SimpS)$ is finite we have
$$ F^2_{2,2}(K^{ur}_\nu) = H^2(\Gam^{ur}_\nu,\pi_2(\SimpS_3)) = 0 $$
and therefore $d^2(b) = 0$. Now
$$ d^2(a) = d^2(f^2_3(b)) = f^2_3(d^2(b)) = f^2_3(0) = 0 $$
\end{proof}
\end{proof}
\end{proof}

\end{proof}

\begin{cor}\label{c:descent-theorem}
Let $X$ be a smooth algebraic variety over a number field $K$. Then for every $0 \leq n \leq \infty$ one has
$$ X(\A)^{h,n} = \bigcap \limits_{\U,k \leq n} X(\A)^{\U,k} $$
$$ X(\A)^{\ZZ h,n} = \bigcap \limits_{\U,k \leq n} X(\A)^{\ZZ\U,k} $$

\end{cor}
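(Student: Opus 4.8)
The plan is to derive both identities directly from Theorem~\ref{t:descent-theorem}, applied to the pro-objects $\acute{E}t^{n}_{/K}(X)$ and $(\ZZ\acute{E}t_{/K})^{n}(X)$; the only real work is to check that the simplicial $\Gam_K$-sets occurring in these diagrams satisfy the hypotheses of that theorem, after which the conclusion is a routine unwinding of definitions. The inclusion ``$\subseteq$'' in each case is elementary and was already noted in~\S\ref{s:obstructions}, coming from the projection maps out of a pro-object; the content is the reverse inclusion, for which Theorem~\ref{t:descent-theorem} (and, through it, Proposition~\ref{p:finite-preim} on finiteness of fibers) is needed.

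First I would verify the hypotheses for $\acute{E}t^{n}_{/K}(X) = \{\SimpS_{\U,k}\}_{\U\in I(X),\,k\leq n}$. Since $X$ is smooth, $\SimpS_{\U}$ is finite in the sense of Definition~\ref{df:typeof_spaces}, hence so is each truncation $\SimpS_{\U,k} = P_k(\SimpS_{\U})$; and as recorded in~\S\ref{ss:etale-homotopy} the spaces $\SimpS_{\U,k}$ with $k<\infty$ are excellent and strictly bounded. For the homology variant one checks the corresponding statements for $P_k(\ZZ\SimpS_{\U})$: the positive-degree homotopy groups of $\ZZ\SimpS_{\U}$ are the integral homology groups of $\SimpS_{\U}$, which are finite by a Serre-class argument from the finiteness of the higher homotopy groups of $\SimpS_{\U}$, so $\ZZ\SimpS_{\U}$ — and hence $P_k(\ZZ\SimpS_{\U})$ — is finite; $P_k(\ZZ\SimpS_{\U})$ is $k$-truncated, so bounded; and it is excellent because $\ZZ\SimpS_{\U}$ is nice (the $\Gam_K$-action on every skeleton factors through a finite quotient, inherited from $\SimpS_{\U}$), so applying $P_k=\cosk_{k+1}\tr_{k+1}$ forces the full action to factor through the finite quotient through which the action on the $(k+1)$-skeleton already factors.

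Granting this, I would conclude as follows. For $x\in X(\A)$ write $(x_\nu) = h_n(x)\in X^{n}(h\A) = \acute{E}t^{n}_{/K}(X)(h\A) = \lim_{\U,\,k\leq n}\SimpS_{\U,k}(h\A)$, which is legitimate by Lemma~\ref{l:adelic-is-adelic}. By definition $x\in X(\A)^{h,n}$ exactly when $(x_\nu)$ lies in the image of $\loc_{h,n}\colon X^{n}(hK)\to X^{n}(h\A)$, i.e. exactly when $(x_\nu)$ is rational in the sense of Theorem~\ref{t:descent-theorem}; and by definition $x\in X(\A)^{\U,k}$ exactly when the image of $(x_\nu)$ in $\SimpS_{\U,k}(h\A)$ is rational. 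Theorem~\ref{t:descent-theorem} asserts precisely that $(x_\nu)$ is rational if and only if each of its projections is, which yields $X(\A)^{h,n} = \bigcap_{\U,\,k\leq n}X(\A)^{\U,k}$. Replacing $\acute{E}t^{n}_{/K}(X)$ by $(\ZZ\acute{E}t_{/K})^{n}(X)$ and $\SimpS_{\U,k}$ by $P_k(\ZZ\SimpS_{\U})$ throughout gives the second identity verbatim.

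The one place demanding care is the homology diagram: one must be sure both that $\ZZ\SimpS_{\U}$ still has finite homotopy groups in positive degrees, and that the Postnikov pieces $P_k(\ZZ\SimpS_{\U})$ remain excellent even though $\ZZ\SimpS_{\U}$ itself has infinitely many non-degenerate simplices in each positive degree and is only nice, not excellent. Once these two points are settled, everything else is formal; in particular for $n=\infty$ no new difficulty arises, since the relevant pro-objects still consist only of the finite, bounded, excellent spaces $\SimpS_{\U,k}$ (resp. $P_k(\ZZ\SimpS_{\U})$) with $k$ finite, which is all that Theorem~\ref{t:descent-theorem} sees.
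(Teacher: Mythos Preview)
Your proposal is correct and follows exactly the approach the paper intends: the corollary is stated in the paper with no proof beyond its placement immediately after Theorem~\ref{t:descent-theorem}, so the implicit argument is precisely to apply that theorem to the pro-objects $\acute{E}t^{n}_{/K}(X)$ and $(\ZZ\acute{E}t_{/K})^{n}(X)$. You have simply been more explicit than the paper in verifying the finiteness, boundedness, and excellence hypotheses --- particularly for the $\ZZ$-variant, where your observations about niceness of $\ZZ\SimpS_{\U}$ and the Serre-class argument for finiteness of $H_n(\SimpS_{\U})$ are exactly the points that need checking but that the paper leaves to the reader.
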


Using Corollary  ~\ref{c:cosk2-final} we then get the following important conclusion:
\begin{cor}\label{c:cosk2-final-2}
Let $K$ be a number field and $X/K$ a smooth variety. Then for every $2 \leq n \leq \infty$ we have
$$ X(\A)^{h, n} = X(\A)^{h, 2} $$
$$ X(\A)^{\ZZ h, n} = X(\A)^{\ZZ h,2} $$
\end{cor}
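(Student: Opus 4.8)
The plan is to deduce this corollary formally from Corollary~\ref{c:descent-theorem} and Corollary~\ref{c:cosk2-final}, so that all the genuine homotopy-theoretic content is already packaged in those statements (and ultimately in Proposition~\ref{p:2-is-enough}). The only thing that remains is to shuffle the two nested intersections — over hypercoverings $\U$ and over truncation levels $k$ — in the right order.

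Concretely, I would first invoke Corollary~\ref{c:descent-theorem}, which for every $0 \leq n \leq \infty$ identifies
$$ X(\A)^{h,n} = \bigcap_{\U \in I(X),\; k \leq n} X(\A)^{\U,k}, \qquad X(\A)^{\ZZ h,n} = \bigcap_{\U \in I(X),\; k \leq n} X(\A)^{\ZZ\U,k}. $$
Then I would fix $n$ with $2 \leq n \leq \infty$ and, for each fixed hypercovering $\U$, break the inner intersection over $k$ into the finitely many terms with $k \in \{0,1\}$ together with the terms with $k \geq 2$. By Corollary~\ref{c:cosk2-final}, for every smooth $X$ and every $\U$ one has $X(\A)^{\U,k} = X(\A)^{\U,2}$ for all $k \geq 2$, so the second block collapses to the single set $X(\A)^{\U,2}$, which is manifestly independent of $n$. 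Hence
$$ \bigcap_{k \leq n} X(\A)^{\U,k} = X(\A)^{\U,0} \cap X(\A)^{\U,1} \cap X(\A)^{\U,2} $$
for every $n \geq 2$ (and for $n = \infty$, where $k$ ranges over all of $\NN$, the right-hand side is the same). Intersecting this equality over all $\U \in I(X)$ and comparing with the case $n = 2$ gives $X(\A)^{h,n} = X(\A)^{h,2}$. Running the identical computation with $\ZZ\U$ replacing $\U$, using the second equality of Corollary~\ref{c:cosk2-final}, yields $X(\A)^{\ZZ h,n} = X(\A)^{\ZZ h,2}$.

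Since the substantive work is already done, there is no real obstacle in this last step; the points requiring a little care are purely bookkeeping: one must note that Corollary~\ref{c:descent-theorem} and Corollary~\ref{c:cosk2-final} both apply under the sole hypothesis that $X$ is smooth — exactly what is assumed here — and that the two intersections (over $\U$ and over $k$) may be freely interchanged because every set involved is a subset of the fixed ambient set $X(\A)$, so no convergence or cofinality issue arises. The case $n = \infty$ needs only the observation that the argument is insensitive to whether the index set for $k$ is $\{0,\dots,n\}$ or all of $\NN$, as both intersections reduce to $X(\A)^{\U,0}\cap X(\A)^{\U,1}\cap X(\A)^{\U,2}$.
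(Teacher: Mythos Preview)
Your argument is correct and is exactly the approach the paper takes: the corollary is deduced immediately from Corollary~\ref{c:descent-theorem} and Corollary~\ref{c:cosk2-final}, and the paper does not even write out the short bookkeeping you supply. Your explicit splitting into $k\in\{0,1\}$ and $k\geq 2$ is a welcome clarification but adds nothing new.
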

In particular, the homotopy and homology obstructions \textbf{depend only on the $2$-truncation of the \'etale homotopy type}.

\section{ Sections and Homotopy Fixed Points}

Let $G$ be a finite group acting on a space $\X$. Then it is known that the space of homotopy fixed points $\X^{hG} = [\E G,X]_{G}$ is naturally equivalent to the space of sections of the
classifying map
$$ p:\X_{hG}\lrar \B G $$
where
$$ \X_{hG} = (X\times \E G)/ G$$
is the homotopy quotient. In particular the question of existence of a homotopy fixed point can be translated to the question of existence of a section to $p$.

In this section we will discuss the generalization of this alternative approach to the case of pro-spaces and pro-homotopy types. A similar approach is taken by P\`{a}l in his paper ~\cite{Pal10}.

\subsection{ The Pro Fundamental Group }\label{ss:pro-fundamenral-group}

In order to study the notion of fundamental groups one needs to be able to work with base points. Since the \'etale homotopy type is not naturally a pro object in the homotopy category of pointed spaces one needs to make some choices in order to identify base points.

One way to tackle this issue is to lift pro-homotopy-types to pro-spaces. We will use the following observation:
\begin{lem}
\begin{enumerate}
\item
Let $\X_I = \{\X_\alp\}_{\alp \in I} \in \Pro\Ho\left(\Set^{\Del^{op}}_{\Gam_K}\right)$ be an object such that $I$ is \textbf{countable}. Then there exists a object $\wtl{\X}_{I'} \in \Pro\Set^{\Del^{op}s}_{\Gam_K}$ whose image in $\Pro\Ho\left(\Set^{\Del^{op}}_{\Gam_K}\right)$ is isomorphic to $\X_I$. Further more one can choose $I'$ to be the poset $\NN$ of natural numbers such that $\wtl{\X}_0$ is fibrant and all the maps $\wtl{\X}_n \lrar \wtl{\X}_{n-1}$ are fibrations in the standard model structure. We will refer to such pro-objects as \textbf{fibration towers}.
\item
Let $f: \X_I \lrar \Y_J$ be a map in $\Pro\Ho\left(\Set^{\Del^{op}}_{\Gam_K}\right)$ and assume that both $I,J$ are countable. Then one can lift $f$ to a map
$$ \wtl{f}: \wtl{\X}_{I'} \lrar \wtl{\Y}_{J'} $$
in $\Pro\;\Set^{\Del^{op}}_{\Gam_K}$. Further more one can choose $I'$ and $J'$ to be $\NN$ and $\wtl{f}$ to be levelwise.
\end{enumerate}

\end{lem}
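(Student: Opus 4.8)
The plan is to prove both parts by the standard rigidification argument for pro-objects over a countable index category. Throughout I work with the standard model structure on $\Set^{\Del^{op}}_{\Gam_K}$: every object is cofibrant, fibrant replacement $(\bullet)^{fib}$ is functorial, factorizations into a trivial cofibration followed by a fibration exist, and morphisms of the homotopy category between standardly fibrant objects are simplicial-homotopy classes of equivariant maps. The one place where countability is used is the elementary fact that a countable cofiltered category admits a cofinal functor from the tower $\NN^{op}$; hence, after reindexing along such a functor (which does not change the isomorphism class of a pro-object), I may assume from the outset that $\X_I$ is a tower $\cdots \to \X_2 \to \X_1 \to \X_0$ indexed by $\NN$, with structure maps $g_n\colon \X_n \to \X_{n-1}$ in $\Ho$, and likewise for $\Y_J$ in part (2).

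For part (1) I would build the fibration tower $\wtl\X_\bullet$ by induction. Put $\wtl\X_0 := (\X_0)^{fib}$, which is fibrant and isomorphic to $\X_0$ in $\Ho$. Given $\wtl\X_{n-1}$ fibrant with a chosen isomorphism $\wtl\X_{n-1}\cong\X_{n-1}$ in $\Ho$, both $(\X_n)^{fib}$ and $\wtl\X_{n-1}$ are fibrant and cofibrant, so $g_n$ (transported across the chosen isomorphisms) is represented by an honest equivariant map $(\X_n)^{fib}\to\wtl\X_{n-1}$, well defined up to simplicial homotopy. Factoring it as $(\X_n)^{fib}\hrar\wtl\X_n\twoheadrightarrow\wtl\X_{n-1}$ produces a fibrant object $\wtl\X_n$ (it fibres over a fibrant one), a structure map which is a fibration, and a weak equivalence $\wtl\X_n\simeq(\X_n)^{fib}\simeq\X_n$. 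The squares relating $\wtl\X_\bullet$ and $\X_\bullet$ commute in $\Ho$ by construction, so the levelwise equivalences assemble to an isomorphism $\wtl\X_\bullet\cong\X_I$ in $\Pro\Ho$, and $\wtl\X_\bullet$ is a fibration tower over $\NN$.

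For part (2) I would first apply part (1) to replace $\X_I$ and $\Y_J$ by fibration towers $\wtl\X_\bullet$, $\wtl\Y_\bullet$, so that $f$ becomes a morphism $\wtl\X_\bullet\to\wtl\Y_\bullet$ in $\Pro\Ho$, i.e. an element of $\lim_m\colim_n\Hom_{\Ho}(\wtl\X_n,\wtl\Y_m)$. A standard reindexing argument for pro-morphisms between towers, diagonalizing over the two countable towers, lets me pass to cofinal subtowers (still fibration towers, since composites of fibrations are fibrations) on which $f$ is represented by a compatible family $f_n\colon\wtl\X_n\to\wtl\Y_n$ in $\Ho$ with $v_n\circ f_n=f_{n-1}\circ u_n$ holding strictly in $\Ho$, where $u_n,v_n$ are the structure fibrations. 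It then remains to lift this to a genuinely commuting levelwise family of simplicial maps, which I would again do by induction: pick any equivariant $\wtl f_0$ representing $f_0$; given $\wtl f_{n-1}$ representing $f_{n-1}$ and any $\wtl f_n^{(0)}$ representing $f_n$, the maps $v_n\circ\wtl f_n^{(0)}$ and $\wtl f_{n-1}\circ u_n$ both represent $f_{n-1}\circ u_n=v_n\circ f_n$ in $\Ho$, hence are simplicially homotopic; since $v_n$ is a fibration and $\wtl\X_n$ is cofibrant, the homotopy lifting property yields $\wtl f_n\simeq\wtl f_n^{(0)}$ with $v_n\circ\wtl f_n=\wtl f_{n-1}\circ u_n$ on the nose. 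The family $\{\wtl f_n\}$ is then the required levelwise morphism of fibration towers lifting $f$.

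The main obstacle, and the only non-formal step, is the reindexing in part (2): converting an abstract element of $\lim\colim\Hom$ into an honestly level-compatible family — together with the reduction of $I$ and $J$ to towers — is precisely what forces the countability hypothesis, since for a general cofiltered index category one cannot choose the needed factorizations and reindexings coherently. The equivariance causes no extra trouble: Goerss's standard structure is a genuine model category on $\Set^{\Del^{op}}_{\Gam_K}$, so functorial fibrant replacement, the factorization axiom, and homotopy lifting are all available $\Gam_K$-equivariantly.
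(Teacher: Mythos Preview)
Your proposal is correct and follows essentially the same approach as the paper: reduce to $\NN$-indexed towers by cofinality, build the fibration tower inductively by factoring representatives of the structure maps, and for part (2) reindex to a level representation and then use the homotopy lifting property of the fibrations to straighten the homotopy-commuting squares. The paper's proof is only a two-line sketch of exactly this argument, so your version simply supplies the details the paper omits.
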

\begin{proof}
\begin{enumerate}
\item
Since $I$ is countable it contains the poset $\NN \subseteq I$ as cofinal subcategory. One then constructs the lifts $\wtl{\X}_n$ by induction on $n$ by each time representing the homotopy class $\X_n \lrar \X_{n-1}$ by a fibration.
\item
By choosing $\wtl{\X}_{I'}$ and $\wtl{\Y}$ to be towers of fibrations one can lift $f$ to $\wtl{f}$ using standard lifting properties of fibrations. One can then replace $\wtl{\X}_{I'}$ with a sub-tower in order to make $f$ into a levelwise map.
\end{enumerate}
\end{proof}

\begin{rem}
The lift described above is unique up to homotopy but is \textbf{not functorial}. There is a way to lift the \'etale homotopy type functorially to a pro-space using Friedlander's construction of the \'etale topological type (~\cite{Fri82}). However we will not make use of this construction in this paper.
\end{rem}
\begin{rem}\label{r:etale-is-ok}
Given a $K$-variety $X$, the category $I(X)$ is in general not countable. However, if $K$ is a countable field (e.g. a number field), and we consider the object $\acute{E}t^{\natural}_{/K}(X)$ in which all the simplicial sets are truncated, we see that it is isomorphic to its sub-diagram indexed only by \textbf{truncated} hypercoverings (i.e. hypercoverings $\U_\bullet$ which satisfy $\U_\bullet \cong \cosk_n(\tr_n(U_\bullet))$ for some $n$). The subcategory of truncated hypercoverings is equivalent to a countable category and so we will be able to apply the technics developed here to this case.
\end{rem}

\begin{define}
Let $\{\X_\alp\} \in \Pro\;\Set^{\Del^{op}}$ be an object and $\{x_\alp\} \in \lim_\alp \X_\alp$ a base point. We will define the $n$'th pro-homotopy group to be the pro-group
$$ \pi_n(\X_\NN,\{x_\alp\}) \x{def}{=} \{\pi_n(\X_\alp,x_\alp)\} $$

\end{define}

\begin{rem}
Note that if $\{\X_\alp\}$ is a pro-simplicial $\Gam$-set we will define it pro-homotopy groups by forgetting the group action. In particular we will not require base points to be $\Gam$-invariant.
\end{rem}

We would like to restrict our selves to the following nice class of objects:
\begin{define}
An object $\X_I \in \Pro\;\Set_{\Gam}^{\Del^{op}}$ will be called \textbf{pro-finite} if each $\X_\alp$ is an excellent, truncated and finite simplicial $\Gam$-set.
\end{define}
The issue of uniqueness of the base-point is dealt with in the following lemma:
\begin{lem}\label{l:pro-connected}
Let $\X_{\NN}$ be a tower of Kan fibrations such that each $\X_n$ is finite, non-empty and connected. Then $\lim_n \X_n$ is non-empty and connected.
\end{lem}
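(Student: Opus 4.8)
The plan is to prove the two conclusions — non-emptiness and connectedness of $\lim_n \X_n$ — in that order, both relying on the inverse-limit-of-finite-sets principle (Lemma~\ref{l:lim-fin}, invoked earlier as ``an inverse system of non-empty finite sets has a non-empty inverse limit''). First I would establish non-emptiness. The set $\pi_0(\X_n)$ is a single point for each $n$ by hypothesis (each $\X_n$ is connected), but that is not literally enough: I need an actual vertex in the limit, not just surjectivity of $\pi_0$. So instead consider the system of sets of $0$-simplices $\{(\X_n)_0\}_n$. Each $(\X_n)_0$ is finite and non-empty, but the transition maps $(\X_n)_0 \to (\X_{n-1})_0$ need not be surjective, so I cannot directly conclude $\lim_n (\X_n)_0 \neq \emptyset$. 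The fix is the standard Mittag-Leffler-type argument available because $\X_n \to \X_{n-1}$ is a \emph{Kan fibration}: the image of $(\X_n)_0$ in $(\X_m)_0$ for $n \geq m$ stabilizes (descending chain of finite non-empty sets), and a compatible choice through the stabilized images, together with the lifting property for the fibrations $\X_n \to \X_{n-1}$ applied to the inclusion $\Delta^0 \hookrightarrow \Delta^0$ (trivial) — more precisely, surjectivity of a Kan fibration between connected Kan complexes on $0$-simplices — lets me lift any compatible family. Actually the cleanest route: a Kan fibration $p:\X_n \to \X_{n-1}$ with $\X_{n-1}$ connected and $\X_n$ non-empty is surjective on $0$-simplices (given $y \in (\X_{n-1})_0$ and any $x \in (\X_n)_0$, choose a path in $\X_{n-1}$ from $p(x)$ to $y$ and lift it). Hence all transition maps in $\{(\X_n)_0\}$ are surjective between finite non-empty sets, so $\lim_n(\X_n)_0$ is non-empty, giving a vertex $\{x_n\} \in \lim_n \X_n$.

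Next I would prove connectedness. Fix two vertices $\{x_n\}, \{y_n\} \in \lim_n \X_n$; I must produce an edge path between them in $\lim_n \X_n$, or rather show they lie in the same component — equivalently, that the natural map $\pi_0(\lim_n \X_n) \to \lim_n \pi_0(\X_n) = *$ is injective, hence that $\pi_0(\lim_n\X_n)$ is a single point (we already know it is non-empty). For this, consider for each $n$ the set $\Pi_n$ of edge paths (or, better, homotopy classes rel endpoints — but let me work with the simpler model) from $x_n$ to $y_n$ in $\X_n$. These need not be finite, so I would instead use the fundamental groupoid truncated appropriately, or pass to $P_1(\X_n)$. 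Since each $\X_n$ is finite, $\pi_1(\X_n, x_n)$ is a finite group, so the set of homotopy classes of paths from $x_n$ to $y_n$ in $\X_n$ is finite (a torsor under $\pi_1$). Call this finite set $\Pi_n$; it is non-empty since $\X_n$ is connected. The maps $\X_n \to \X_{n-1}$ induce maps $\Pi_n \to \Pi_{n-1}$. The key point — again using that $\X_n \to \X_{n-1}$ is a Kan fibration, so that on fundamental groups/groupoids the sequence behaves well — is that these transition maps are \emph{surjective}: given a homotopy class of path from $x_{n-1}$ to $y_{n-1}$ downstairs, lift a representative path using the path-lifting property of the Kan fibration with prescribed lift of the initial vertex $x_n$; the terminal vertex of the lift is some vertex over $y_{n-1}$, which can be connected to $y_n$ within the connected fiber-ish obstruction — here I would use that the fiber of a Kan fibration over a vertex is a Kan complex and that $\X_n$ is connected to adjust the endpoint. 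More carefully: surjectivity of $\Pi_n \to \Pi_{n-1}$ follows from the long exact sequence of the fibration $F_n \to \X_n \to \X_{n-1}$ once one knows $\X_n$ connected implies $\pi_0(F_n) \to$ (stuff) is controlled; the cleanest statement is that for a Kan fibration with connected total space and base, the map on path-components of path-spaces is surjective. Granting this, $\{\Pi_n\}$ is an inverse system of non-empty finite sets, so $\lim_n \Pi_n \neq \emptyset$; an element of it is a compatible family of path classes, which assembles (choosing actual representatives compatibly, again via fibration lifting) to a path from $\{x_n\}$ to $\{y_n\}$ in $\lim_n \X_n$. Hence $\lim_n \X_n$ is connected.

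The main obstacle I anticipate is the bookkeeping in the connectedness step: passing from ``compatible family of homotopy classes of paths'' to ``an honest path in the inverse limit simplicial set'' requires choosing representatives level by level in a way compatible with the transition maps, and this compatibility is exactly what the Kan fibration hypothesis buys — one builds the representatives by downward induction, at each stage lifting the already-chosen path $\X_{n-1}$-path along $\X_n \to \X_{n-1}$ with the constraint of matching the chosen homotopy class. One must check that the lifting problem ($\Delta^1 \to \X_{n-1}$ together with a partial lift on $\partial\Delta^1$) is solvable with the lift landing in the prescribed homotopy class, which is where finiteness of $\pi_1$ (hence of $\Pi_n$, ensuring Mittag-Leffler applies without any further condition) and the fibration property combine. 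The non-emptiness step is comparatively routine once one observes Kan fibrations between connected complexes are $\pi_0$-surjective on vertices. Throughout I would only use: finiteness of homotopy groups of each $\X_n$, the Kan fibration property of the tower, and Lemma~\ref{l:lim-fin}.
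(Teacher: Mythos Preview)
Your approach is correct and matches the paper's in spirit --- reduce connectedness to the non-emptiness of an inverse limit of finite sets of path classes --- but you are working harder than necessary in two places, and the paper's streamlining is worth noting.

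First, Lemma~\ref{l:lim-fin} does not require the transition maps in the inverse system to be surjective; it only needs each set to be finite and non-empty. So your entire discussion of why $\Pi_n \to \Pi_{n-1}$ is surjective (the long exact sequence, adjusting endpoints in the fiber, etc.) is superfluous: since each $\X_n$ is connected, $\Pi_n \neq \emptyset$, and since $\pi_1(\X_n)$ is finite, $\Pi_n$ is finite, and that is already enough.

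Second, the paper dissolves your ``main obstacle'' --- promoting a compatible family of homotopy classes of paths to an honest path in $\lim_n \X_n$ --- by working one categorical level up. Rather than the \emph{sets} $\Pi_n$, it considers the path \emph{spaces} $P(\X_n, x_n, x_n')$ themselves and observes that these again form a tower of Kan fibrations. Then the non-emptiness argument (your first paragraph, or the standard fact that a tower of Kan fibrations with non-empty levels has non-empty limit) applies to this new tower, reducing the question to $\lim_n \pi_0\left(P(\X_n, x_n, x_n')\right) \neq \emptyset$, which is exactly the finite-inverse-limit statement. No inductive choice of representatives is needed.
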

\begin{proof}
Since $\X_{\NN}$ is a tower of Kan fibrations the non-emptiness of each $\X_n$ implies the non-emptiness of $\lim_n\X_n$.

Now let $\{x_n\}, \{x_n'\} \in \lim_n\X_n$ be two points. For each $n$, let $P(\X_n,x_n,x_n')$ denote the space of paths from $x_n$ to $x_n'$ in $\X_n$. Then $\{P(\X_n,x_n,x_n')\}$ is a tower of Kan fibrations as well and so it is enough to show that
$$ \lim_n \pi_0(P(\X_n,x_n,x_n')) \neq \emptyset $$
But this now follows from the following standard lemma (since by our assumptions each $\pi_0(P(\X_n,x_n,x_n'))$ is non-empty and finite):
\begin{lem}\label{l:lim-fin}
Let $\{A_\alp\}_{\alp \in I}$ be an inverse system of finite non-empty sets. Then
$$ \lim \limits_{\x{\llar}{\alp}} A_\alp \neq \emptyset $$
\end{lem}
\begin{proof}
A standard compactness argument.

%
%
\end{proof}

\end{proof}

It will be useful then to keep in mind the following standard observation:
\begin{lem}\label{pro-finite-is-pro-finite}
The pro-category of finite groups is equivalent to the category of pro-finite groups (and continuous homomorphisms). The equivalence is given by $\{G_\alp\} \mapsto \lim_\alp G_\alp$ with the natural pro-finite topology.
\end{lem}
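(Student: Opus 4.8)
The plan is to exhibit the functor
$$ L\colon \Pro(\mathrm{FinGrp}) \lrar \{\text{pro-finite groups}\}, \qquad \{G_\alp\}_{\alp\in I} \longmapsto \lim_\alp G_\alp, $$
where $\lim_\alp G_\alp$ carries the inverse-limit topology (pro-finite, being a closed subgroup of the pro-finite group $\prod_\alp G_\alp$ with each $G_\alp$ finite discrete), and to prove that $L$ is essentially surjective and fully faithful. Essential surjectivity is the standard structure theorem for pro-finite groups: if $G$ is pro-finite then its open normal subgroups $N \triangleleft G$ form a cofiltered poset, each $G/N$ is finite, and the canonical map $G \to \lim_N G/N$ is an isomorphism of topological groups, so $G \cong L\big(\{G/N\}_N\big)$.

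For full faithfulness, the first step I would take is a Mittag-Leffler style reduction: every object $\{G_\alp\}_{\alp\in I}$ of $\Pro(\mathrm{FinGrp})$ is pro-isomorphic to one all of whose transition maps are surjective. One sets $\bar G_\alp = \bigcap_{\bet \geq \alp}\img(G_\bet \to G_\alp)$, which is attained by a single $\bet$ since the groups are finite; the $\bar G_\alp$ form a sub-system with surjective transition maps, its inclusion into $\{G_\alp\}$ is an isomorphism in the pro-category, and $\lim \bar G_\alp = \lim G_\alp$. So I may assume all transition maps — hence all projections $\pi_\alp\colon G := \lim_\alp G_\alp \to G_\alp$ — are surjective. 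The key point is then the topological observation that every open subgroup $U \leq G$ contains $\ker\pi_\alp$ for some $\alp$: $U$ contains a basic neighbourhood of $1$ of the form $\ker(G \to \prod_{i=1}^k G_{\alp_i})$, and cofilteredness of $I$ gives an $\alp$ above $\alp_1,\dots,\alp_k$, whence $\ker\pi_\alp \subseteq U$.

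Granting this, I would analyse
$$ \Phi\colon \Hom_{\Pro}(\{G_\alp\},\{H_\bet\}) = \lim_\bet \colim_\alp \Hom(G_\alp,H_\bet) \lrar \Hom_{\mathrm{cont}}(\lim_\alp G_\alp, \lim_\bet H_\bet) $$
directly from the $\lim$--$\colim$ formula for pro-morphisms recalled in the excerpt. A continuous $f$ has continuous components $f_\bet\colon G \to H_\bet$; each $f_\bet$ has open kernel, hence factors through $\pi_{\alp(\bet)}$ for some $\alp(\bet)$, and surjectivity of $\pi_{\alp(\bet)}$ makes the resulting homomorphism $G_{\alp(\bet)} \to H_\bet$ well defined on all of $G_{\alp(\bet)}$; checking that these assemble into a compatible family in $\lim_\bet\colim_\alp$ (again by precomposing with suitable surjections $\pi_\alp$) produces a preimage, so $\Phi$ is surjective. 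For injectivity, if two compatible families induce the same $f$ then for each $\bet$ the two representing homomorphisms $G_\alp \to H_\bet$ and $G_{\alp'} \to H_\bet$ agree after composition with the surjection $G \twoheadrightarrow G_{\alp''}$ for any $\alp'' \geq \alp,\alp'$, hence already agree in $\colim_\alp \Hom(G_\alp,H_\bet)$, so the two families coincide.

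The genuinely non-formal ingredients are the reduction to surjective transition maps and the cofinality of the kernels $\ker\pi_\alp$ among the open subgroups of $\lim G_\alp$; I expect the former — verifying that the stabilised-image sub-system is actually pro-isomorphic to the original system — to be the place where one has to be careful, everything else being bookkeeping with the pro-category Hom-sets.
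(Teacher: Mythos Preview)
Your argument is correct and is indeed the standard proof of this well-known fact. The paper itself does not give a proof at all: its entire proof reads ``Standard.'' Your write-up is a faithful and careful expansion of what that word is hiding, including the Mittag-Leffler reduction to surjective transition maps and the cofinality of the kernels $\ker\pi_\alp$ among open subgroups, which are exactly the two points where one has to do something.
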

\begin{proof}
Standard.
\end{proof}

In particular we think of the pro-homotopy groups of pro-finite objects $\X_I \in \Pro\;\Set^{\Del^{op}}$ as pro-finite groups. From now on we will not distinguish between pro-finite groups and pro-objects in the category of finite groups.

\subsection{ The Pro Homotopy Quotient }\label{ss:pro-homotopy-quotient}
Recall that the categorical product of $\{\X_\alp\},\{\Y_\bet\}$ in the category $\Pro\;\Set^{\Del^{op}}_{\Gam_K}$ is given by
$$ \{\X_\alp\} \times \{\Y_\bet\} \x{def}{=} \{\X_\alp \times \Y_\bet\}_{\alp,\bet} $$

Let $\wtl{\E\Gam^{\natural}} \in \Pro\;\Set^{\Del^{op}}_{\Gam}$ be the lift of $\E\Gam^{\natural}$ given by
$$ \wtl{\E\Gam^{\natural}} = \{\E(\Gam/\Lam)\}_{\Lam \fns \Gam} $$

We will now define the pro-homotopy quotient. Let $\X_\NN \in \Pro\;\Set^{\Del^{op}}_{\Gam}$ be a pro-finite fibration tower. We define its \textbf{pro-homotopy quotient} to be the levelwise quotient (which is also the categorical quotient, see ~\ref{c:pro-quotient-categorical}) of $\X_{\NN}\times \wtl{\E\Gam^{\natural}}$ by $\Gam$, which we write as
$$ (\X_{\NN})_{h\Gam} = \left(\X_{\NN}\times \wtl{\E\Gam^{\natural}}\right)/\Gam = \{\X_{n,\Lam}\}_{n,\Lam \fns \Gam} $$
where
$$ \X_{n,\Lam} \x{def}{=} (\X_n \times \E(\Gam/\Lam))/\Gam $$

Note that whenever $\Lam$ fixes $\X_n$ we get that $\X_{n,\Lam}$ is truncated and finite. Since the pairs $(n,\Lam)$ for which $\Lam$ fixes $\X_n$ are a cofinal subfamily we get that $\left(\X_{\NN}\right)_{h\Gam}$ is isomorphic to a pro-finite object. In particular we can consider its pro-fundamental group as a pro-finite group.

This pro-simplicial $\Gam$-set fits into a short sequence
$$ \X_{\NN} \lrar (\X_{\NN})_{h\Gam} \lrar \{\B(\Lam/\Gam)\}_{\Lam \fns \Gam} $$
Choosing a base point $\{x_n\} \in \lim_n \X_n$ we get an induced base point in $(\X_{\NN})_{h\Gam}$ (which we will denote $\{x_n\}$ as well) which is mapped to the unique base point of $\{\B(\Lam/\Gam)\}$ yielding a short sequence of pro-finite groups:
$$ 1 \lrar \pi_1\left(\X_\NN,\{x_n\}\right) \lrar \pi_1\left((\X_\NN)_{h\Gam},\{x_n\}\right) \lrar \Gam \lrar 1 $$

which is exact because whenever $\Lam$ fixes $\X_n$ the short sequence of groups
$$ 1 \lrar \pi_1(\X_n,x_n) \lrar \pi_1(\X_{n,\Lam},x_n) \lrar \Gam/\Lam \lrar 1 $$
is exact.

Our main claim of this section is the following:
\begin{thm}\label{t:homotopy-fixed-point-iff-splits}
Let $\X_\NN$ a pro-finite fibration tower and
$$ \X^1_\NN = \left\{P_1\left(\X_n\right)\right\} $$
its $1$-truncation. Then the sequence
$$ 1 \lrar \pi_1\left(\X_\NN,\{x_n\}\right) \lrar \pi_1\left(\left(\X_\NN\right)_{h\Gam},\{x_n\}\right) \lrar \Gam \lrar 1 $$
%
splits if and only if $\X^1_\NN(E\Gam^{\natural}) \neq \emptyset$.
\end{thm}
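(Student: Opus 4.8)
The plan is to reduce the statement to the single-hypercovering (i.e. single-level) picture and then to an essentially group-theoretic fact about splittings of profinite extensions. The object $\X^1_\NN(E\Gam^\natural)$ is by Definition~\ref{d:homotopy-fixed-points-set} equal to $\lim_n \pi_0\left(P_1(\X_n)^{h\Gam}\right)$, and each $P_1(\X_n)$ is strictly bounded (indeed a $K(\pi,1)$), so by Theorem~\ref{t:goerss-formula} a homotopy fixed point of $P_1(\X_n)$ is the same as a $\Gam$-equivariant simplicial-homotopy class of maps $\E(\Gam/\Lam) \to P_1(\X_n)$ for some $\Lam \fns \Gam$. The key point, which I would isolate as a lemma, is that for $P_1$ of a connected space such a homotopy class corresponds precisely to a splitting, up to conjugation by $\pi_1$, of the group extension $1 \to \pi_1(\X_n) \to \pi_1\left((\X_n)_{h\Gam}\right) \to \Gam/\Lam \to 1$ obtained by pushing out along $\Gam \to \Gam/\Lam$. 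This is the classical translation ``homotopy fixed points of a $K(\pi,1) = $ sections of the classifying map $= $ splittings of the corresponding extension of discrete/finite groups'', which one can prove using that $P_1(\X_n)_{h(\Gam/\Lam)} \simeq \B\left(\pi_1((\X_n)_{h\Gam})\right)$ and that maps into a $\B G$ are detected on $\pi_1$.

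**Next I would pass to the limit.** Given the per-level correspondence, $\X^1_\NN(E\Gam^\natural) \neq \emptyset$ holds iff there is a compatible family, cofinal in $n$ and $\Lam$, of conjugacy classes of splittings of the extensions $1 \to \pi_1(\X_n) \to \pi_1((\X_n)_{h\Gam}) \to \Gam/\Lam \to 1$. Taking the inverse limit over $n$ and $\Lam$ — using Lemma~\ref{pro-finite-is-pro-finite} to identify the pro-system of these finite extensions with the single profinite extension $1 \to \pi_1(\X_\NN,\{x_n\}) \to \pi_1((\X_\NN)_{h\Gam},\{x_n\}) \to \Gam \to 1$ of the theorem statement — a compatible family of (conjugacy classes of) splittings assembles, by the compactness argument of Lemma~\ref{l:lim-fin} applied to the finite non-empty sets of splittings-up-to-conjugacy, into an honest continuous splitting of the profinite extension; conversely a splitting of the profinite extension restricts to each finite quotient. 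So ``$\X^1_\NN(E\Gam^\natural)\neq\emptyset$'' $\iff$ ``the profinite extension splits'', which is exactly the claim.

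**The main obstacle** I expect is bookkeeping the homotopy-theoretic identification carefully enough that the inverse limit works: one must check that the bijection between $\pi_0\left(P_1(\X_n)^{h\Gam}\right)$ and conjugacy classes of splittings is natural in $n$ and in $\Lam$ (compatible with the tower maps $\X_n \to \X_{n-1}$ and with enlarging $\Lam$), and that the relevant sets are finite (so that Lemma~\ref{l:lim-fin} applies). Finiteness follows because $\pi_1(\X_n)$ is finite (the $\X_n$ are excellent and finite), so $\pi_1((\X_n)_{h\Gam})$ is finite and there are only finitely many splittings of a finite group extension, a fortiori finitely many conjugacy classes. Naturality is a matter of unwinding that all maps in sight — the tower maps, the pushout maps for $\Lam' \subseteq \Lam$, the classifying maps — are induced functorially by $P_1$ and by the homotopy-quotient construction, hence commute with everything on the nose or up to the specified conjugation.

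**One simplification worth recording:** since $P_1(\X_n)$ is a $K(\pi_1(\X_n),1)$ and $\E(\Gam/\Lam)$ is contractible with free $(\Gam/\Lam)$-action, the homotopy-fixed-point space $P_1(\X_n)^{h(\Gam/\Lam)}$ is, up to homotopy, the space of sections of $\B\left(\pi_1((\X_n)_{h\Gam})\right) \to \B(\Gam/\Lam)$, and $\pi_0$ of a section space of a map of $\B$'s of discrete groups is exactly the set of $\pi_1$-conjugacy classes of splittings; this is the precise form of the ``sections and homotopy fixed points'' philosophy opening this section, specialized to $1$-truncated spaces, and it is where all the content of the theorem resides. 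No further computation is needed beyond assembling these pieces and invoking Lemma~\ref{l:lim-fin}.
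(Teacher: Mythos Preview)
Your proposal is correct and takes a somewhat different route from the paper's. The paper works at the level of pro-simplicial-sets throughout: it shows first that $\X^1_\NN(E\Gam^\natural)\neq\emptyset$ is equivalent to the existence of a section of the map $(\X^1_\NN)_{h\Gam}\to\{\B(\Gam/\Lam)\}$ in $\Pro\;\Set^{\Del^{op}}$ (this is the content of its Lemma~\ref{l:lift_B_to_E}, a covering-space lifting argument together with Lemma~\ref{l:lim-fin}), and then argues that a section of this pro-map of $1$-truncated spaces is the same as a splitting on $\pi_1$, using Lemma~\ref{l:pro-connected} to control basepoints. By contrast you bypass the pro-space section entirely: you identify $\pi_0\big(P_1(\X_n)^{h\Gam}\big)$ levelwise with the finite set of $\pi_1(\X_n)$-conjugacy classes of splittings of the corresponding finite extension, and then assemble directly via Lemma~\ref{l:lim-fin}. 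Your route is arguably more economical---it replaces the technical lifting Lemma~\ref{l:lift_B_to_E} with the standard dictionary ``equivariant maps into a $K(\pi,1)$ $=$ splittings up to conjugacy'' (essentially the paper's Lemma~\ref{l:ho_bgbh})---while the paper's route has the advantage of making the pro-space section itself visible, which it later reuses in Lemma~\ref{l:section-iff-homotopy-section} and Corollary~\ref{l:grothendieck-h-1}. The one point in your sketch that deserves a line of justification is that the colimit over $\Lam'$ in Theorem~\ref{t:goerss-formula} stabilizes once $\Lam'$ fixes $\X_n$ (because the relevant finite extensions for smaller $\Lam'$ are pullbacks, so their splittings-up-to-conjugacy coincide); once that is said, your naturality and finiteness checks are routine.
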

\begin{proof}

Since each $\X^1_n$ is strictly bounded the non-emptyness of $\X^1_\NN(E\Gam^{\natural})$ is equivalent to the existence of a map
$$ \wtl{\E\Gam^{\natural}} \lrar \X^1_{\NN}$$
in $\Pro\;\Set^{\Del^{op}}_{\Gam}$ (we can lift maps into $\X^1_{\NN}$ from $\Pro\Ho\left(\Set^{\Del^{op}}_{\Gam}\right)$ to $\Pro\;\Set^{\Del^{op}}_{\Gam}$ because $\X^1_{\NN}$ is a tower of fibrations).

Now given a map
$$ h:\wtl{\E\Gam^{\natural}} \lrar \X^1_{\NN} $$
we can take the map
$$ \wtl{h} \times Id: \wtl{\E\Gam^{\natural}} \lrar \X^1_{\NN} \times \wtl{\E\Gam^{\natural}} $$
and descend it to the $\Gam$-quotients
$$ s:\{\B(\Lam/\Gam)\} = \wtl{\E\Gam^{\natural}}/\Gam \lrar (\X^1_{\NN})_{h\Gam} $$
obtaining a section of the natural map
$$ (\X^1_{\NN})_{h\Gam} \lrar \{\B(\Lam/\Gam)\} $$

\begin{lem}\label{l:lift_B_to_E}
Every section
$$ s:\{\B(\Lam/\Gam)\} \lrar (\X^1_{\NN})_{h\Gam} $$
is induced by a map
$$ h:\wtl{\E\Gam^{\natural}} \lrar \X^1_{\NN} $$
in this way.
\end{lem}
\begin{proof}
The map $s$ can be described by the following information: for each $n$ and $\Lam$ that stabilizes $\X_n$ we are given a normal subgroup $\Lam' \fns \Gam$ which is contained in $\Lam$ and a map
$$ s_{n,\Lam}:\B (\Gam/\Lam') \lrar \X^1_{n,\Lam} = (\X^1_n \times \E(\Lam/\Gam))/\Gam $$
such that the composition
$$ \B (\Gam/\Lam') \lrar \X^1_{n,\Lam} \lrar \B (\Gam/\Lam) $$
is equal to the map $q_*:\B (\Gam/\Lam') \lrar \B (\Gam/\Lam)$ induced by the natural projection $q: \Gam/\Lam' \lrar \Gam/\Lam$. Since $\Lam$ stabilizes $\X^1_n$ the action $\Gam$ on $\X^1_n \times \E(\Gam/\Lam)$ factors through a free action of $\Lam/\Gam$. Hence we have a pullback square
$$ \xymatrix{
\X^1_n \times \E(\Gam/\Lam) \ar[d]\ar[r] & \E (\Gam/\Lam) \ar[d] \\
\X^1_{n,\Lam} \ar[r] & \B(\Gam/\Lam) \\
}$$
The maps $q_* : \E(\Gam/\Lam') \lrar \E(\Gam/\Lam)$ and the composition
$$ \E(\Gam/\Lam') \lrar \B(\Gam/\Lam')  \x{s_{i,j}}\lrar \X^1_{n,\Lam} $$
combine to form a map
$$ \wtl{s}_{n,\Lam}:\E(\Gam/\Lam') \lrar \X^1_{n,\Lam} $$ which lifts $s_{n,\Lam}$.

It is left to show that the maps $\wtl{s}_{n,\Lam}$ can be chosen in a compatible way. Since the map $\X^1_n \times E(\Gam/\Lam) \lrar \X^1_{n,\Lam}$ is a covering map with fiber of size $|\Gam/\Lam|$ we see that there are no more then $|\Gam/\Lam|$ equivariant maps $\E(\Gam/\Lam') \lrar \X^1_{n,\Lam}$ which lift $s_{n,\Lam}$. Since a filtered colimit of sets of size $\leq |\Gam/\Lam|$ is of size $\leq |\Lam/\Gam|$ the result will now follow from lemma ~\ref{l:lim-fin}.
\end{proof}

%

To summarize so far we see that the non-emptiness of $\X^1_\NN(E\Gam^{\natural})$ is equivalent to the existence of section
$$ s:\{\B(\Lam/\Gam)\} \lrar (\X^1_{\NN})_{h\Gam} $$
to the natural map
$$ (\X^1_{\NN})_{h\Gam} \lrar \{\B(\Lam/\Gam)\} $$

It is left to show that the existence of such a section is equivalent to a section in the respective fundamental groups. Since $\X^1_{n,\Lam}$ is $1$-truncated whenever $\Lam$ fixes $n$ we see that a section of the pro-fundamental groups induces a section
$$ s:\{\B(\Lam/\Gam)\} \lrar (\X^1_{\NN})_{h\Gam} $$

In the other direction let
$$ s:\{\B(\Lam/\Gam)\} \lrar (\X^1_{\NN})_{h\Gam} $$
be a section. Note that $s$ might send the base point of $\{B(\Lam/\Gam)\}$ to a point other then our chosen base point $\{x_n\}$. However from ~\ref{l:pro-connected} the simplicial set $\lim_n \X^1_n$ is connected and so we will know how to translate $s$ into a section in the fundamental groups.

\end{proof}

We finish by the following useful criterion:
\begin{lem}\label{l:section-iff-homotopy-section}
The map ${\X^1_\NN}_{h\Gam} \lrar \{B(\Gam/\Lam)\}$ has a section if and only if the induced map in $\Pro\Ho\left(\Set^{\Del^{op}}\right)$ has a section.
\end{lem}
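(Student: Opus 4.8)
The plan is to detect sections on pro-fundamental groups and then invoke the construction already carried out in the proof of Theorem~\ref{t:homotopy-fixed-point-iff-splits}. We argue in the case --- the only one we actually use --- in which each $\X_n$ is connected, so that by Lemma~\ref{l:pro-connected} the simplicial set $\lim_n\X^1_n$ is connected and $(\X^1_\NN)_{h\Gam}$ is pro-connected; the general case reduces to this one componentwise. One direction is immediate: applying the localisation functor $\Pro\,\Set^{\Del^{op}}\to\Pro\Ho(\Set^{\Del^{op}})$ to an honest section of the map $p:(\X^1_\NN)_{h\Gam}\to\{\B(\Gam/\Lam)\}$ produces a section of $p$ in $\Pro\Ho(\Set^{\Del^{op}})$.

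For the converse, suppose $s$ is a section of $p$ in $\Pro\Ho(\Set^{\Del^{op}})$. The key point is that whenever $\Lam$ fixes $\X_n$ the simplicial set $\X^1_{n,\Lam}$ is connected and $1$-truncated, and for connected $1$-truncated simplicial sets the unpointed homotopy classes of maps out of a $\B G$ are identified with $\Hom\bigl(G,\pi_1(\X^1_{n,\Lam})\bigr)$ modulo conjugation in the target. Restricting to the cofinal subfamily of pairs $(n,\Lam)$ for which $\Lam$ fixes $\X_n$ --- over which $p$ is a levelwise map of towers --- and using Lemma~\ref{pro-finite-is-pro-finite}, the homotopy class $s$ therefore determines a conjugacy class of continuous homomorphisms $\Gam\to\pi_1\bigl((\X^1_\NN)_{h\Gam}\bigr)$. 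Since $p\circ s=\mathrm{id}$ in $\Pro\Ho(\Set^{\Del^{op}})$, composing any representative with $\pi_1(p)$ gives an inner automorphism of $\Gam$; correcting it by a conjugation --- possible because $\pi_1(p)$ is surjective --- yields a genuine group-theoretic splitting of
$$ 1 \lrar \pi_1(\X_\NN,\{x_n\}) \lrar \pi_1\bigl((\X_\NN)_{h\Gam},\{x_n\}\bigr) \lrar \Gam \lrar 1 $$
(note that $\pi_1$ is unaffected by the $1$-truncation, so this is the same extension as for $(\X^1_\NN)_{h\Gam}$).

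Finally I would feed this splitting back into the argument in the proof of Theorem~\ref{t:homotopy-fixed-point-iff-splits}: since each $\X^1_{n,\Lam}$ is $1$-truncated, a splitting of the pro-fundamental groups is realised by an honest section $\{\B(\Gam/\Lam)\}\to(\X^1_\NN)_{h\Gam}$ of $p$ in $\Pro\,\Set^{\Del^{op}}$, which is the section we are after. The step requiring genuine care is the middle one: rectifying the unpointed pro-homotopy class $s$ into an actual homomorphism of pro-finite groups compatibly across the whole tower, and eliminating the conjugacy ambiguity. Both rest on the $1$-truncatedness of the $\X^1_{n,\Lam}$ and on the pro-connectedness provided by Lemma~\ref{l:pro-connected}; once they are in hand the remaining bookkeeping is formal.
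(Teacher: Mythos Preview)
Your approach is correct and rests on the same core observation as the paper's: for connected $1$-truncated targets, maps out of a $\B G$ up to homotopy are group homomorphisms up to conjugation (the paper isolates this as Lemma~\ref{l:ho_bgbh}). The organisation differs. The paper works directly at each level $(n,\Lam)$: given the homotopy class $s_{n,\Lam}:\B(\Gam/\Lam')\to\X^1_{n,\Lam}$, it conjugates so that the composite to $\B(\Gam/\Lam)$ equals the projection on the nose, observes that the set of such strict sections is finite and non-empty, and then invokes Lemma~\ref{l:lim-fin} on the resulting inverse system to obtain a compatible family, i.e.\ a section in $\Pro\,\Set^{\Del^{op}}$. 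You instead pass globally to pro-$\pi_1$, manufacture a splitting of the short exact sequence, and then feed that back into Theorem~\ref{t:homotopy-fixed-point-iff-splits} to recover the honest section.

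Both routes work, but note that your ``middle step'' --- rectifying a compatible system of conjugacy classes into a single continuous homomorphism $\Gam\to\pi_1\bigl((\X^1_\NN)_{h\Gam}\bigr)$ --- is not free: it is precisely the compactness argument the paper makes explicit via Lemma~\ref{l:lim-fin} (an inverse system of non-empty finite sets of representatives has non-empty limit). You flag this step as the one requiring care, which is right, but the ingredient you need there is exactly finiteness plus Lemma~\ref{l:lim-fin}, not merely $1$-truncatedness and pro-connectedness. What your route buys is a clean reduction to the already-established Theorem~\ref{t:homotopy-fixed-point-iff-splits}; what the paper's buys is that it avoids the detour through $\pi_1$ and does the rectification once, directly at the simplicial level.
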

\begin{proof}
We will use the following lemma whose proof is easy and classical:
\begin{lem}\label{l:ho_bgbh}
Let $H,G$ be two groups then
$$\Hom_{\Set^{\Del^{op}}}(\B H,\B G) \cong \Hom_{Grp}(H,G) $$
$$\Hom_{\Ho(\Set^{\Del^{op}})}(\B H,\B G) \cong \Hom_{Grp}(H,G) / \sim  $$
where for $p_1,p_2:H\to G$ we have $p_1 \sim  p_2$ if there exist $g\in G$ such that $p_1(\bullet)  = g p_2(\bullet)g^{-1}$
\end{lem}

Now let
$$ s:\{\B(\Lam/\Gam)\} \lrar (\X^1_{\NN})_{h\Gam} $$
be a section in $\Pro\Ho\left(\Set^{\Del^{op}}\right)$. As above the map $s$ can be described by the following information: for each $n$ and $\Lam$ that stabilizes $\X_n$ we are given a normal subgroup $\Lam' \fns \Gam$ which is contained in $\Lam$ and a map
$$ s_{n,\Lam}:\B (\Gam/\Lam') \lrar \X^1_{n,\Lam} = (\X^1_n \times \E(\Lam/\Gam))/\Gam $$
such that the composition
$$ \B (\Gam/\Lam') \lrar \X^1_{n,\Lam} \lrar \B (\Gam/\Lam) $$
is \textbf{homotopic} to the map $q_*:\B (\Gam/\Lam') \lrar \B (\Gam/\Lam)$ induced by the natural projection $q: \Gam/\Lam' \lrar \Gam/\Lam$.

By Lemma ~\ref{l:ho_bgbh} the two maps differ by a conjugation by some element of $\Gam/\Lam$. Lifting this element to $\Gam/\Lam'$ we can find a map $s'_{n,\Lam}$ homotopic to $s_{n,\Lam}$ such that the composition
$$ \B (\Gam/\Lam') \lrar \X_i/(\Gam/\Lam) \lrar \B (\Gam/\Lam) $$
is exactly the natural projection map $q_*:\B (\Gam/\Lam') \lrar \B (\Gam/\Lam) $. Now similarly to the proof of Lemma ~\ref{l:lift_B_to_E} there are only finitely many such possible maps so by Lemma ~\ref{l:lim-fin} we have a section in $\Pro\;\Set^{\Del^{op}}$ and we are done.
\end{proof}

\subsection{ The \'Etale Fundamental Group }
In this section we will connect the notions described in the previous section to the \'etale fundamental groups.
\begin{prop}\label{p:etale-is-quotient}
Let $K$ be a field and $X$ a $K$-variety. Then
$$ \left(\acute{E}t^{n}_{/K}(X)\right)_{h\Gam_K} \cong \acute{E}t^{n}(X) $$
in $\Pro\Ho\left(\Set^{\Del^{op}}_{\Gam_K}\right)$ for every $n \leq \infty$.
\end{prop}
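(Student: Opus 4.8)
The plan is to show the isomorphism hypercovering-by-hypercovering (more precisely, indexed by the pairs $(\U,k)$), by producing, for each hypercovering $\U_\bullet \to X$ and each truncation level $k \leq n$, a levelwise-compatible identification
$$
\left(\X_{\U,k}\right)_{h\Gam_K} = \left(\X_{\U,k} \times \wtl{\E\Gam_K^{\natural}}\right)/\Gam_K \;\cong\; \SimpS_{\ovl{\U}', k}
$$
for a suitable hypercovering $\ovl{\U}'$ of $X$ (defined over $K$), and then argue that these fit into an isomorphism of pro-objects. The key point is that forming the homotopy quotient by $\Gam_K$ should correspond, on the geometric side, to descending a hypercovering of $\ovl{X}$ back down to $X$ via restriction of scalars, exactly as in the proof of Proposition~\ref{p:bar-is-bar}. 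Concretely, if $L/K$ is a finite Galois extension such that $\Gam_L$ stabilizes $\sk_k(\X_{\U})$, then $\E(\Gam_K/\Gam_L)$ realizes $\wtl{\E\Gam_K^{\natural}}$ cofinally, and one expects
$$
(\X_{\U,k} \times \E(\Gam_K/\Gam_L))/\Gam_K \;\cong\; {\pi_0}_{/K}\bigl(R_{L/K}(\U_L)\bigr)_k \text{ (up to $\Ex^\infty$ and $P_k$)},
$$
where $\U_L$ is $\U$ base-changed to $L$. The left side is a twisted form of $\X_{\U,k}$ glued to $\B(\Gam_K/\Gam_L)$, and the right side computes $\acute{E}t_{/K}$ of the variety obtained by Weil restriction; that both describe the $\pi_0$-type of the same pro-variety over $K$ is the geometric heart of the statement.

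First I would set up the cofinal indexing: by Remark~\ref{r:etale-is-ok} and the discussion of $\wtl{\E\Gam^{\natural}}$ we may restrict to truncated hypercoverings and to the cofinal family of pairs $(\U,\Lam)$ with $\Lam \fns \Gam_K$ fixing the relevant skeleton, so that all the simplicial sets in sight are excellent and strictly bounded. Next I would establish the levelwise identification: the action of $\Gam_K$ on $\X_{\U,k}$ factors through $G = \Gam_K/\Gam_L$, so $(\X_{\U,k} \times \E G)/\Gam_K$ is the Borel construction $(\X_{\U,k})_{hG}$, which sits in a fibration over $\B G$ with fiber $\X_{\U,k}$. On the other hand, ${\pi_0}_{/K}$ applied to the $L$-hypercovering, after forgetting the $\Gam_K$-action, recovers ${\pi_0}_{/L}(\U_L) = \ovl{\X}_{\U}$ — this is Proposition~\ref{p:bar-is-bar} — while the residual $\Gam_K$-action records the descent data, i.e.\ exactly the data of the Borel construction. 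I would make this precise by observing that an \'etale hypercovering of $X$ together with a trivialization over $L$ is the same as a $G$-equivariant hypercovering of $\ovl{X}$, and that applying $\pi_0$ levelwise and quotienting by $G$ is the same as first taking $\pi_0$ over $\ovl{K}$, getting the $G$-simplicial set $\ovl{\X}_{\U}$, and then forming $(\ovl{\X}_{\U})_{hG}$; one then recognizes the latter as ${\pi_0}$ of a hypercovering of $X$ itself (using that $\pi_0$ commutes with the relevant colimits/quotients and that $\B G = \acute{E}t(\Spec L / \Spec K)$ in the appropriate sense).

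The main obstacle I anticipate is the bookkeeping needed to turn these levelwise identifications into a genuine isomorphism of \emph{pro-objects} in $\Pro\Ho(\Set^{\Del^{op}}_{\Gam_K})$: one must check that as $\U$ and $\Lam$ vary, the comparison maps are compatible up to coherent homotopy, and — more seriously — that the two indexing categories are mutually cofinal, so that no information is lost in either direction. This is the same flavor of argument as the cofinality step in Proposition~\ref{p:bar-is-bar} (using restriction of scalars $R_{L/K}$ to produce, from an arbitrary hypercovering of $X$ equipped with auxiliary $L$-data, an honest hypercovering of $X$ dominating it), but here one must carry the $\Gam_K$-action along and verify that $P_k$ and $\Ex^\infty$ interact well with all the quotients, which is why the hypotheses ``excellent, truncated, finite'' are exactly what is needed. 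A secondary technical point is base-pointing: since $\acute{E}t^n_{/K}(X)$ is only a pro-object in the homotopy category, the identification of fundamental groups (used implicitly when comparing $\B G$-bundles) must be phrased in a base-point-free way, as in \S\ref{ss:pro-fundamenral-group}, or else one passes to a fibration-tower lift as in Lemma~\ref{l:lift_B_to_E}.
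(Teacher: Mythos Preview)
Your intuition is right and the ingredients you list (the \v{C}ech/restriction-of-scalars hypercovering, cofinality over finite Galois extensions) are exactly the ones the paper uses, but your organization obscures the one genuinely delicate step, and your proposed levelwise identification is slightly off.

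The paper's argument is a clean two-step decomposition rather than a direct Borel-construction-to-hypercovering match.  First it shows that multiplying by $\E\Gam_K^{\natural}$ is a pro-isomorphism on $\acute{E}t^{n}_{/K}(X)$ itself: the relevant hypercovering is $\U_L = \U \times_X \check{X}_L$ (still a hypercovering of $X$, not $R_{L/K}$ applied to a hypercovering over $L$), and one has literally $\X_{\U_L} \cong \X_{\U} \times \E G_L$ as simplicial $\Gam_K$-sets.  The maps $\vphi_{\U,L}$ assemble into an inverse of the projection $\acute{E}t_{/K}(X) \times \E\Gam_K^{\natural} \to \acute{E}t_{/K}(X)$.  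Second, the \emph{strict} levelwise quotient $\acute{E}t_{/K}(X)/\Gam_K$ is $\acute{E}t(X)$ essentially by definition (absolute $\pi_0$ is relative $\pi_0$ modulo $\Gam_K$); together with Corollary~\ref{c:pro-quotient-categorical} this reduces the problem to a purely formal statement.

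The real content, which you flag only as a ``secondary technical point'', is showing that this strict quotient commutes with $P_k$.  In general it does not.  The paper handles this via two lemmas: Lemma~\ref{l:free-cofinal} shows that the pairs $(\U,k)$ for which the $\Gam_K$-action on the $(k+1)$-skeleton of $\X_{\U}$ is \emph{free} (through a finite quotient) are cofinal --- precisely by passing to $\U_L$ --- and Lemma~\ref{l:postinikov-quotient} shows that for a free $G$-action one has $P_k(\X)/G \cong P_k(\X/G)$, by recognizing free quotients as pullbacks along $\E G \to \B G$ and using that $P_k$ preserves pullbacks and fixes $\E G$, $\B G$.  This is the step your proposal does not address: saying ``$P_k$ and $\Ex^\infty$ interact well with all the quotients, which is why the hypotheses `excellent, truncated, finite' are exactly what is needed'' is not a strategy, and in fact those hypotheses alone do not suffice --- freeness is essential.

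Two smaller corrections.  Your displayed target ${\pi_0}_{/K}(R_{L/K}(\U_L))_k$ lands in the wrong diagram: the right-hand side $\acute{E}t^n(X)$ is built from \emph{absolute} $\pi_0$ of hypercoverings of $X$, not ${\pi_0}_{/K}$.  And the descent story via Proposition~\ref{p:bar-is-bar} is a detour: one never needs to go through $\ovl{X}$, since $\U_L$ is already a hypercovering of $X$ over $K$.
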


\begin{proof}
We start by showing that
$$ \acute{E}t^{n}_{/K}(X) \times \E\Gam_K^{\natural} \cong \acute{E}t^{n}_{/K}(X) $$
for every $n \leq \infty$. Since $P_k$ commutes with products and the simplicial $\Gam_K$-sets in $\E\Gam_K^{\natural}$ are $0$-truncated it will be enough to prove that
$$ \acute{E}t_{/K}(X) \times \E\Gam_K^{\natural} \cong \acute{E}t_{/K}(X) $$
Now we have a natural projection map
$$ p: \acute{E}t_{/K}(X) \times \E\Gam_K^{\natural} \cong \acute{E}t_{/K}(X) $$
and we will show that it is in fact an isomorphism. Let $\U \lrar X$ be a hypercovering, $n$ a natural number and $\Lam \fns \Gam$ an open normal subgroup. Let $L/K$ be the finite Galois extension corresponding to it.
We will use the following construction:

\begin{define}\label{d:X-L}
Let $L/K$ be a finite Galois extension. Let $X_L$ be the restriction of scalars of $X \otimes_K L$ from $L$ to $K$. Note that there is a natural map
$$ X_L \lrar X $$
which is an \'{e}tale cover. We then denote by
$$ \check{X}_L\lrar X $$
the hypercovering obtained by the \v{C}ech construction (see \S ~\ref{ss:hypercoverings}, Definition ~\ref{d:cech}).
\end{define}

The connected components of $X_L \otimes_K \ovl{K}$ (each of which is isomorphic to $X$) can be identified (not uniquely) with $G_L = \Gal(L/K)$ where the Galois action of $G_L$ on itself is by left translations. Such an identification induces an isomorphism of simplicial $\Gam_K$-sets

$$ \SimpS_{\check{X}_L} \lrar \E G_L $$

In particular the action of $\Gam_K$ on $\SimpS_{\check{X}_L}$ factors through a \textbf{free} action of $G_L$.

Now let
$$ \U_L = \U \times_X \check{X}_L $$
Then it is easy to verify that the natural map
$$ \vphi_{\U,L}:\X_{\U_L} \lrar \X_{\U} \times \X_{\check{X}_L} = \X_{\U} \times \E G_L  $$
is an isomorphism of simplicial $\Gam_K$-set. The maps $\vphi_{\U,L}$ fit together to form a map
$$ \acute{E}t_{/K}(X) \lrar \acute{E}t_{/K}(X)  \times \E\Gam^{\natural} $$
which is an inverse to $p$. We leave it to the reader to verify that this is indeed an inverse to $p$. The proof is analogous to the case appearing in the proof of theorem ~\ref{t:MIT-1}.

In view of Corollary ~\ref{c:pro-quotient} (whose proof is independent of the rest of the paper) we can finish the proof by showing that
$$ \acute{E}t^{n}(X) \cong \acute{E}t^{n}_{/K}(X)/\Gam = \{\X_{\U,k}/\Gam\}_{\U \in I(K), k \leq n} $$
Now by definition we have that
$$ \acute{E}t(X) \cong \acute{E}t_{/K}(X)/\Gam $$
Hence in order to get our result we need to verify that taking the quotient commutes with truncation in this case. This will be done using the following two lemmas:

\begin{lem}\label{l:free-cofinal}
Let $K$ be a field and $X$ a $K$-variety. Then there is a cofinal subcategory $J(X) \subseteq I(X) \times \NN$ such that for each $(\U,n) \in J(X)$ the action of $\Gam_K$ on $\SimpS_{\U,n}$ factors through a \textbf{free} action of a finite quotient of $\Gam_K$.
\end{lem}

\begin{proof}
Let $\U \lrar X$ be a hypercovering and $n \in \NN$ a number. Let $L/K$ be a finite Galois extension such that the action of $\Gam_K$ on the $(n+1)$-skeleton of $\U$ factors through $G_L = \Gam_K/\Gam_L$.

Consider as above
$$ \U_L = \U \times_X \check{X}_L $$
Then the action of $\Gam_K$ on the $(n+1)$-skeleton of $\SimpS_{\U_L}$ factors through $G_L$. Since we have a map
$$ \SimpS_{\U_L} \lrar \SimpS_{\check{X}_L}  $$
we see that the action of $G_L$ on the $(n+1)$-skeleton of $\SimpS_{\U_L,n}$ is free. We also have a map of hypercoverings $\U_L \lrar U$. Hence the full subcategory on
$$ J(X) = \{(\U_L,n)\} \subseteq I(X) \times \NN $$
is cofinal and we are done.

\end{proof}
\begin{lem}\label{l:postinikov-quotient}
Let $\X$ be a free simplicial $G$-set and $n \geq 1$ be a number. Then $G$ acts freely on $P_n(\X)$ and
$$ P_n(\X/G) \cong P_n(\X)/G $$

\end{lem}
\begin{proof}
Let $G$ be finite group, $\X$ a simplicial $G$-set and $\Y$ a simplicial set considered as a simplicial $G$-set with trivial action. Let $f:\X \lrar \Y$ be a map of simplicial $G$-sets. We say that \textbf{$\Y$ is a free $G$-quotient by $f$} if $G$ acts freely on $X$ and $f$ induces an isomorphism os simplicial sets
$$ \wtl{f}: \X/G \x{\simeq}{\lrar} \Y $$
Hence what we want to show that if $\Y$ is a free $G$-quotient by
$$ f: \X \lrar \Y $$
then $P_n(\Y)$ is a free $G$ quotient  by
$$ P_n(f): P_n(\X) \lrar P_n(\Y) $$

Now it is easy to see that $\Y$ is a free $G$-quotient by $f$ if and only if $f$ fits in a pullback diagram of the form
$$\xymatrix{
\X \ar[d]^f \ar[r] & \E G\ar[d]\\
\Y \ar[r] & \B G
}$$

Since $P_n$ has a left adjoint it commutes with pullbacks we can apply it to the above diagram and get another pullback square
$$\xymatrix{
P_n(\X) \ar[d]^f \ar[r] & P_n(\E G) \ar[d]\\
P_n(\X/G) \ar[r]  & P_n(\B G)
}$$

However both $\E G$ and $\B G$ are nerves of categories and thus since $n \geq 1$ we have
$$ P_n(\E G) = \E G $$
and
$$ P_n(\B G) = \B G $$
Thus we get that pull back diagram:
$$\xymatrix{
P_n(\X) \ar[d]^f \ar[r] & \E G \ar[d]\\
P_n(\X/G) \ar[r]  &  \B G
}$$
and so $P_n(\Y)$ is indeed a free $G$-quotient by $P_n(f)$.
\end{proof}
We now proceed with the proof of Proposition ~\ref{p:etale-is-quotient}. From Lemma ~\ref{l:free-cofinal} we may replace
$ \{\SimpS_{\U,n}\}_{\U \in I(X), n \in \NN} $
with
$ \{\SimpS_{\U,n}\}_{(\U,n) \in J(X)} $.
Now the action of $\Gam_K$ on the $(n+1)$-skeleton $\SimpS_{\U}$ factors through a free action of a finite quotient $G$ of $\Gam_K$. Then we have a free action of $G$ on $Q_n(\X_{\U})$. Applying Lemma ~\ref{l:postinikov-quotient} to $Q_n(\X_{\U})$ we get that
$$ P_n(\X_{\U}/\Gam_K) = P_n(Q_n(\X_{\U}/\Gam_K)) \cong P_n(Q_n(\X_{\U})/G) \cong $$
$$ P_n(Q_n(\X_{\U}))/\Gam_K = P_n(\X_{\U}) = \X_{\U,n}/\Gam_K $$
for every $n \geq 1$. This means that
$$ \acute{E}t^{n}(X) = {\acute{E}t^{n}_{/K}}(X)/\Gam_K $$
and we are done.
\end{proof}

\subsection{ A Generalized Version of Grothendieck's Obstruction}
Going back to the map
$$ h_n:X(K) \lrar X^n(hK) $$

We see that
$$ X^n(hK) = \emptyset \Rightarrow X(K) =\emptyset $$
and so one can consider the emptiness of each $X^n(hK)$ as an obstruction to existence of a rational point. The following lemma shows that for $n=1$ this obstruction is actually Grothendieck's section obstruction. See also Pal ~\cite{Pal10} for a similar discussion.

Keeping in mind Remark ~\ref{r:etale-is-ok} and the fact that the pro-fundamental group of (a pointed lift of) $\acute{E}t(X)$ can be identified with the \'etale fundamental group of $X$ we can use Theorem ~\ref{t:homotopy-fixed-point-iff-splits}, Proposition ~\ref{p:etale-is-quotient} and  Lemma ~\ref{l:section-iff-homotopy-section} to obtain the following Corollary:

\begin{cor}\label{l:grothendieck-h-1}
Let $K$ be a field and $X$ a geometrically connected smooth variety. Then the following conditions are equivalent:
\begin{enumerate}
\item
The set $X^1(hK)$ is nonempty.
\item
The sequence of pro-finite groups
$$ 1 \lrar \pi_1(\ovl{X})\rar \pi_1(X) \lrar \Gam_K \lrar 1 $$
admits a continuous section.
\item
The map
$$ \acute{E}t^{1}(X) \lrar \acute{E}t^{1}(\spec(K)) $$
admits a section in $\Pro\Ho\left(\Set^{\Del^{op}}_{\Gam_K}\right)$.
\end{enumerate}
\end{cor}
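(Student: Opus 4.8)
The plan is to obtain the three equivalences directly from Theorem~\ref{t:homotopy-fixed-point-iff-splits}, Proposition~\ref{p:etale-is-quotient} and Lemma~\ref{l:section-iff-homotopy-section}, after first replacing the pro-homotopy-type $\acute{E}t^{1}_{/K}(X)$ by an honest pro-space model. \emph{Step 1 (pass to a fibration tower).} Since $K$ is a number field, hence countable, Remark~\ref{r:etale-is-ok} allows us to replace $\acute{E}t^{\natural}_{/K}(X)$ — and a fortiori its $1$-truncation $\acute{E}t^{1}_{/K}(X)$ — by its isomorphic sub-diagram indexed by truncated hypercoverings, which is essentially countably indexed. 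As $X$ is moreover smooth, the simplicial $\Gam_K$-sets occurring there are finite, excellent and truncated, so the lifting lemma of \S\S~\ref{ss:pro-fundamenral-group} provides a pro-finite fibration tower $\X_{\NN}\in\Pro\,\Set^{\Del^{op}}_{\Gam_K}$ representing $\acute{E}t^{\natural}_{/K}(X)$, with $\X^{1}_{\NN}=\{P_1(\X_n)\}$ representing $\acute{E}t^{1}_{/K}(X)$. Because $X$ is geometrically connected each $\SimpS_{\U}$, hence each $\X_n$, is connected; by Lemma~\ref{l:pro-connected} the set $\lim_n\X_n$ is then non-empty and connected, which furnishes a base point $\{x_n\}$ and renders the pro-fundamental groups appearing below independent of all choices.

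\emph{Step 2 (translate the three objects).} By Definition~\ref{d:homotopy-fixed-points-set}, condition~(1) is exactly the non-emptiness of $\X^{1}_{\NN}(\E\Gam_K^{\natural})$. Proposition~\ref{p:etale-is-quotient} identifies the pro-homotopy quotient $(\X^{1}_{\NN})_{h\Gam_K}$ with $\acute{E}t^{1}(X)$ and, applied to $\spec(K)$, identifies $\{\B(\Gam_K/\Lam)\}_{\Lam\fns\Gam_K}$ with $\acute{E}t^{1}(\spec K)$, compatibly with the canonical map $(\X^{1}_{\NN})_{h\Gam_K}\lrar\{\B(\Gam_K/\Lam)\}$ and with $\acute{E}t^{1}(X)\lrar\acute{E}t^{1}(\spec K)$. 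On fundamental groups, Proposition~\ref{p:bar-is-bar} identifies $F(\acute{E}t^{1}_{/K}(X))$ with $\acute{E}t^{1}(\ovl{X})$, whose pro-fundamental group is, by Artin and Mazur, the profinite \'etale fundamental group $\pi_1(\ovl{X})$; hence $\pi_1(\X_{\NN},\{x_n\})\cong\pi_1(\ovl{X})$, while likewise $\pi_1((\X^{1}_{\NN})_{h\Gam_K},\{x_n\})\cong\pi_1(X)$ and the pro-fundamental group of $\{\B(\Gam_K/\Lam)\}$ is $\Gam_K$. Under these identifications the sequence $1\lrar\pi_1(\X_{\NN})\lrar\pi_1((\X_{\NN})_{h\Gam_K})\lrar\Gam_K\lrar1$ of Theorem~\ref{t:homotopy-fixed-point-iff-splits} becomes precisely $1\lrar\pi_1(\ovl{X})\lrar\pi_1(X)\lrar\Gam_K\lrar1$.

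\emph{Step 3 (assemble the equivalences).} Theorem~\ref{t:homotopy-fixed-point-iff-splits} gives that $\X^{1}_{\NN}(\E\Gam_K^{\natural})\neq\emptyset$ if and only if that sequence splits; combined with Step~2 this is the equivalence of~(1) and~(2). For the equivalence of~(2) and~(3), the proof of Theorem~\ref{t:homotopy-fixed-point-iff-splits} shows the splitting is equivalent to the existence of a section of $(\X^{1}_{\NN})_{h\Gam_K}\lrar\{\B(\Gam_K/\Lam)\}$ in $\Pro\,\Set^{\Del^{op}}_{\Gam_K}$, hence in $\Pro\Ho\left(\Set^{\Del^{op}}_{\Gam_K}\right)$; Lemma~\ref{l:section-iff-homotopy-section} then turns this into the existence of a section of the induced map in $\Pro\Ho\left(\Set^{\Del^{op}}\right)$, which by Step~2 is the map $\acute{E}t^{1}(X)\lrar\acute{E}t^{1}(\spec K)$ of condition~(3).

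\emph{Where the difficulty lies.} Each individual move above is bookkeeping; the step I expect to require genuine care is matching the abstract group-theoretic sequence produced by Theorem~\ref{t:homotopy-fixed-point-iff-splits} with the geometric sequence $1\to\pi_1(\ovl{X})\to\pi_1(X)\to\Gam_K\to1$. This combines the comparison Propositions~\ref{p:bar-is-bar} and~\ref{p:etale-is-quotient} with the classical identification of $\pi_1$ of the \'etale homotopy type with the \'etale fundamental group, and it forces one to verify that a single base point $\{x_n\}$ can be transported consistently through the lift to a fibration tower and through all of these comparison isomorphisms — which is exactly the function of the geometric connectedness hypothesis and of Lemma~\ref{l:pro-connected}.
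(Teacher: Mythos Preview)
Your proposal is correct and follows essentially the same route as the paper: the paper's entire proof is the sentence preceding the corollary, which simply invokes Remark~\ref{r:etale-is-ok}, Theorem~\ref{t:homotopy-fixed-point-iff-splits}, Proposition~\ref{p:etale-is-quotient}, and Lemma~\ref{l:section-iff-homotopy-section}, together with the identification of the pro-fundamental group of $\acute{E}t(X)$ with the \'etale fundamental group. One small slip: you write ``since $K$ is a number field'' in Step~1, but the statement is for an arbitrary field $K$; Remark~\ref{r:etale-is-ok} only requires $K$ to be countable, and in fact the paper's own argument appears to carry this implicit hypothesis as well.
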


\section{ Homotopy Fixed Point Sets and Pro-Isomorphisms }
In this section we assume that $K$ is a number field. Let $X$ be a $K$-variety. Let
$$ f: \acute{E}t_{/K}(X) \lrar \Y_I $$
be map in $\Pro\Ho\left(\Set^{\Del^{op}}_{\Gam_K}\right)$. One can then consider the subset of adelic points
$$ X(\A)^{\Y_I} \subseteq X(\A) $$
containing the points $(x_\nu)$ whose corresponding homotopy fixed point
$$ f_*(h((x_\nu))) \in \Y_I(h\A) $$ is rational, i.e. comes from $\Y_I(hK)$. In general this obstruction can only be weaker then the \'etale homotopy obstruction, i.e. $X^h(\A) \subseteq X(\A)^{\Y_I}$. However the freedom to replace $\acute{E}t_{/K}(X)$ with $\Y_I$ can be useful.

In this section we will prove the following theorem:
\begin{thm}\label{t:obstruction-iso}
Let $K$ be a number field and $X$ a $K$-variety. Let $\X_I,\Y_I \in \Pro\Ho\left(\Set^{\Del^{op}}_{\Gam_K}\right)$ be two objects such that $I$ and $J$ are countable and all the $\X_\alp$'s and $\Y_\alp$'s are finite, excellent and connected. Let
$$ \xymatrix{
& \X_I \ar^{f}[dd] \\
\acute{E}t_{/K}(X) \ar[ur]\ar[dr] & \\
& \Y_I \\
}$$
be a commutative triangle such that $f$ induces an isomorphism
$$ \ovl{f}: \ovl{\X}^{\natural}_I \lrar \ovl{\Y}^{\natural}_I $$
where
$$ \ovl{(\bullet)}: \Pro\Ho\left(\Set^{\Del^{op}}_{\Gam_K}\right) \lrar
\Pro\Ho\left(\Set^{\Del^{op}}\right) $$
is the forgetful functor. Then
$$ X(\A)^{\X_I} = X(\A)^{\Y_I} $$
\end{thm}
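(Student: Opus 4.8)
The plan is to show that $f$ induces bijections on the homotopy fixed point sets over $K$ and over $\A$, and then finish with an elementary diagram chase. First I would note that, after replacing every $\X_\alp$ and $\Y_\alp$ by a sufficiently high Postnikov truncation $P_m(-)$ — which changes neither the sets $\X_I(hK),\X_I(h\A),\Y_I(hK),\Y_I(h\A)$ nor the maps between them — we may assume all the simplicial $\Gam_K$-sets in sight are bounded, so that $\X_I^{\natural},\Y_I^{\natural}$ compute the same homotopy fixed point sets as $\X_I,\Y_I$ and $f^{\natural}$ induces the same maps as $f$. The inclusion $X(\A)^{\X_I}\subseteq X(\A)^{\Y_I}$ is then formal: commutativity of the triangle says $f$ carries the adelic homotopy fixed point attached to $(x_\nu)$ in $\X_I(h\A)$ to its image in $\Y_I(h\A)$, compatibly with $\loc$ from $\X_I(hK)$ and $\Y_I(hK)$, so a point whose $\X_I$-fixed point is rational has rational $\Y_I$-fixed point. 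For the reverse inclusion it suffices to prove that
$$ f_*:\X_I(hK)\lrar \Y_I(hK)\text{ is surjective}\quad\text{and}\quad f_*:\X_I(h\A)\lrar \Y_I(h\A)\text{ is injective},$$
since then, given $(x_\nu)\in X(\A)^{\Y_I}$ with $\X_I$-homotopy fixed point $\xi$ and $\Y_I$-homotopy fixed point $\loc_{\Y_I}(\bar\eta)=f_*(\xi)$, one picks $\bar\xi$ with $f_*(\bar\xi)=\bar\eta$, observes $f_*(\loc_{\X_I}(\bar\xi))=\loc_{\Y_I}(\bar\eta)=f_*(\xi)$, and concludes $\loc_{\X_I}(\bar\xi)=\xi$, so $(x_\nu)\in X(\A)^{\X_I}$.

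The heart of the matter is an upgrade of the hypothesis. Although $\overline f$ is given only as an isomorphism in the non-equivariant pro-homotopy category, the map $f$ itself is $\Gam_K$-equivariant; hence for every $i$ the induced isomorphism on homotopy pro-groups $\pi_i(\X_I^{\natural})\lrar\pi_i(\Y_I^{\natural})$ (an isomorphism because $\overline f$ is) is in fact an isomorphism of pro-$\Gam_K$-modules, and for $i=1$ of pro-$\Gam_K$-groups — the set-theoretic inverse of a bijective equivariant homomorphism is automatically equivariant. For $i\geq 2$ one works over a base point furnished by an element of $\X_I^{1}(hK)$ in the sense of Proposition \ref{p:obstruction-theory}; if $\X_I^{1}(hK)=\emptyset$ then $\Y_I^{1}(hK)=\emptyset$ as well, since by Theorem \ref{t:homotopy-fixed-point-iff-splits} non-emptiness is detected by the extension $1\to\pi_1\to\pi_1((-)_{h\Gam})\to\Gam_K\to1$ and $f$ induces an isomorphism of these extensions; similarly if at some stage the higher obstruction of Proposition \ref{p:obstruction-theory} fails to vanish. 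In all such degenerate cases both $X(\A)^{\X_I}$ and $X(\A)^{\Y_I}$ are empty and there is nothing to prove. Conceptually, the conclusion of this step is that $f^{\natural}$ is a weak equivalence of pro-objects for Goerss' \emph{standard} model structure, whose weak equivalences are exactly the maps with underlying weak equivalences.

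Granting this, I would obtain the two displayed maps by induction up the (finite) Postnikov tower, using the fibration sequences
$$ \{K(\pi_n(\X_\alp),n)\}_{\alp}\lrar \{P_n(\X_\alp)\}_{\alp}\lrar \{P_{n-1}(\X_\alp)\}_{\alp} $$
of pro-simplicial $\Gam_K$-sets and the analogous ones for $\Y$. By Lemma \ref{l:preserves fibrations} and Corollary \ref{c:preserves fibrations}, applying $(-)^{h\Gam_K}$ and $(-)^{h\A}$ levelwise yields fibration sequences, and these are preserved by the inverse limit over the pro-index since that limit is a tower of fibrations. On the base, the map induced by $f$ is a weak equivalence by the inductive hypothesis; on the fibre it is a weak equivalence because the homotopy groups of $K(M,n)^{h\Gam_K}$ are the Galois cohomology groups $H^{n-\bullet}(\Gam_K,M)$ and those of $K(M,n)^{h\A}$ the matching restricted products of local cohomology groups — all depending on $M$ only as a pro-$\Gam_K$-module, hence carried to isomorphisms by the $\pi_*$-isomorphism of the previous paragraph. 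The long exact sequences of homotopy pro-groups then give that $f$ induces weak equivalences $(P_n\X)^{h\Gam_K}\to(P_n\Y)^{h\Gam_K}$ and $(P_n\X)^{h\A}\to(P_n\Y)^{h\A}$; passing to the limit over $n$, which stabilizes by boundedness, yields the bijections $\X_I(hK)\lrar\Y_I(hK)$ and $\X_I(h\A)\lrar\Y_I(h\A)$, completing the reverse inclusion.

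The step I expect to be the main obstacle is the pro-categorical and base-point bookkeeping in the inductive step: making precise in what sense the Postnikov fibration sequences and the homotopy fixed point spectral sequences of Theorem \ref{t:ss} descend to the pro-category and commute with the inverse limit over the pro-index — this is where the finiteness of the homotopy groups and a Mittag--Leffler argument (Lemma \ref{l:lim-fin}, Proposition \ref{p:finite-preim}) are needed — together with the fact that for non-simply-connected pro-spaces the fibre $K(\pi_n,n)$ of the Postnikov fibration is twisted, its $\Gam_K$-action being entangled with the $\pi_1$-action through the chosen base point. Once these points are nailed down, the five-lemma argument on fibration sequences and the final diagram chase are routine.
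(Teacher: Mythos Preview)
Your strategy is essentially the paper's: show that $f$ induces bijections on the sets of rational and local homotopy fixed points and conclude by the diagram chase you describe. The paper's Proposition~\ref{p:pro-homotopy-fixed-points-iso} is exactly the statement that $f_*:\X_I(hK)\to\Y_I(hK)$ and $f_*:\prod_\nu\X_I(hK_\nu)\to\prod_\nu\Y_I(hK_\nu)$ are bijections (the adelic injectivity you need then follows from $\X_I(h\A)\subseteq\prod_\nu\X_I(hK_\nu)$), and its proof, like yours, handles the $P_1$-level via the splitting criterion of Theorem~\ref{t:homotopy-fixed-point-iff-splits} and then climbs one Postnikov step using Corollary~\ref{c:equivariant-iso} to promote the underlying pro-isomorphism on $\pi_i$ to an equivariant one.

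The execution differs in one substantive way. You propose a uniform induction up the full Postnikov tower using the fibration sequences $K(\pi_n,n)\to P_n\to P_{n-1}$; the paper instead first invokes Proposition~\ref{p:2-is-enough} (an arithmetic-duality input: the map $H^{i+1}(K,A)\to H^{i+1}(\A,A)$ is an isomorphism for $i\geq 2$) together with Theorem~\ref{t:descent-theorem} to reduce to the case where all spaces are $2$-truncated. After that reduction there is only a single abelian step left, and the paper handles it not by a five-lemma on fibration sequences but by a direct analysis of the $E^\infty_{2,2}$-term of the spectral sequence and a pro-torsor lemma (Lemma~\ref{l:pro-set-torsor}). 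The payoff of the paper's route is that the ``pro-categorical and base-point bookkeeping'' you flag as the main obstacle --- the $\pi_1$-twisting of the higher Postnikov fibres, Mittag--Leffler issues in the tower, compatibility of the spectral sequences with the pro-index --- collapses to a single concrete step, at the cost of importing the number-theoretic input of Proposition~\ref{p:2-is-enough}. Your route is more uniform and avoids that extra input, but you would have to carry out in full the bookkeeping you correctly identify; the paper's torsor lemma and Corollary~\ref{c:equivariant-iso} are precisely the tools that make that bookkeeping tractable in the $2$-truncated case.
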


Now before we come to the proof of ~\ref{t:obstruction-iso} we will need to develop some terminology regarding torsors of pointed sets.

\begin{define}
Let $A_*$ be a pointed set. An \textbf{$A_*$-Torsor} is a \textbf{non-empty} set $B$ together with an map
$$ a: A_* \times B \lrar B$$
such that
\begin{enumerate}
\item
$a(*,b) = b$ for all $b\in B$.
\item
The map
$$ A_* \times B \x{a\times p_2}{\lrar} B \times B $$
is an isomorphism of sets.
\end{enumerate}
We call the map $a$ the "action" of $A_*$ on $B$.
\end{define}
We denote by $\Tors(\Set_*,\Set)$ the category whose objects are triples $(A_*,B,a)$ such that $B$ is an $A_*$-torsor with action $a$ and whose morphisms are maps of pairs $(A_*,B) \lrar (A_*',B')$ which commute with the respective actions.

We have two natural functors:

$$\mathfrak{A}_* : \Tors(\Set_*,\Set) \lrar \Set_* $$
$$\mathfrak{A}_*((A_*,B,a))  =  A_*$$
and
$$\mathfrak{B} : \Tors(\Set_*,\Set) \lrar \Set $$
$$\mathfrak{B}((A_*,B,a))  =  B$$

\begin{lem}\label{l:pro-set-torsor}
Let $I$ be a cofiltered indexing poset and $\{(A_\alp,B_\alp,a_\alp)\}_{\alp \in I}, \{(A'_\bet,B'_\bet,a'_\bet)\}_{\bet \in I}$ two pro-object in $\Tors(\Set_*,\Set)$. Let
$$ F:\{(A_\alp,B_\alp,a_\alp)\} \lrar \{(A'_\alp,B'_\alp,a'_\alp)\} $$
be a levelwise map such that $\mfrak{A}(F)$ is an isomorphism in $\Pro\;\Set_*$. Then $\mfrak{B}(F)$ is an isomorphism in $\Pro\;\Set$.
\end{lem}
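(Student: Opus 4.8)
The plan is to reduce the statement to a statement about a single level, using the standard description of morphisms in a pro-category, and then argue levelwise. First I would recall that since $F$ is a levelwise map of pro-objects indexed by the same poset $I$, to show $\mathfrak{B}(F) = \{B_\alp \to B'_\alp\}$ is a pro-isomorphism it suffices to produce, for each $\bet$, an index $\alp \geq \bet$ and a map $B'_\alp \to B_\bet$ such that the two evident triangles commute in the pro-category (i.e. after passing to some further index). The hypothesis gives us exactly this for the $\mathfrak{A}$-components: $\mathfrak{A}(F) = \{A_\alp \to A'_\alp\}$ is a pro-isomorphism, so for each $\bet$ there is an $\alp \geq \bet$ and a map $g_\bet \colon A'_\alp \to A_\bet$ of pointed sets which is a ``two-sided inverse up to refinement''.

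Next I would use the torsor structure to transport $g_\bet$ to the $B$-side. Fix $\bet$ and the index $\alp$ and map $g_\bet$ as above. Choose any element $b'_0 \in B'_\alp$ (torsors are non-empty by definition) and let $b_0 \in B_\bet$ be its image under... wait, the map goes the wrong way; instead, since $B_\bet$ is non-empty pick any $b_0 \in B_\bet$, let $b'_0 \in B'_\alp$ be a chosen preimage under the composite $B_\alp \to B'_\alp$ of the image of $b_0$ — more cleanly: using condition (2) of the torsor axioms, every element of $B'_\alp$ is uniquely $a'_\alp(c, \varphi(b_0))$ for a unique $c \in A'_\alp$, where $\varphi = \mathfrak{B}(F)_{\alp}$ composed with the structure map $B_\bet \leftarrow B_\alp$; then define $\psi_\bet \colon B'_\alp \to B_\bet$ by $\psi_\bet\bigl(a'_\alp(c, \varphi(b_0))\bigr) = a_\bet(g_\bet(c), b_0)$. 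This is well-defined precisely because of the bijectivity in torsor axiom (2). The key point to check is that $\psi_\bet$ composed (in either order, after suitable refinement) with $\mathfrak{B}(F)$ agrees with the transition maps; this follows by unwinding the definitions and using that $g_\bet$ is a pro-inverse of $\mathfrak{A}(F)$ together with the equivariance of the structure maps of the pro-torsor (each transition map $B_\alp \to B_\bet$ intertwines the actions along $A_\alp \to A_\bet$), so the ``translation'' bookkeeping matches up.

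The main obstacle — really the only subtle point — is the independence of the construction from the choice of base elements $b_0 \in B_\bet$ and the bookkeeping needed to make the compatibility squares commute *in the pro-category* rather than on the nose; since $g_\bet$ is only a pro-inverse, the identity $g_\bet \circ \mathfrak{A}(F)_{?} = \mathrm{id}$ holds only after passing to a further index $\gam \geq \alp$, and one must check that refining correspondingly makes the $B$-side triangles commute as well. This is a diagram chase that propagates the pro-isomorphism data through the free and transitive action; I expect it to be routine but slightly fiddly. Once $\{\psi_\bet\}$ is assembled into a morphism $\mathfrak{B}(F)^{-1} \colon \{B'_\alp\} \to \{B_\bet\}$ in $\Pro\,\Set$ and shown to be two-sided inverse to $\mathfrak{B}(F)$, we are done. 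An alternative, cleaner route I would consider is: observe that a choice of a compatible system of elements $b_\alp$ (which exists after refining, by Lemma~\ref{l:lim-fin} if the sets are finite, but in general one can work formally) trivializes the torsors and identifies $B_\alp \cong A_\alp$ equivariantly, reducing the claim directly to the hypothesis on $\mathfrak{A}(F)$; however, since we are not assuming the sets are finite here, I would present the intrinsic argument via the torsor axioms rather than relying on choosing a point of $\varprojlim B_\alp$.
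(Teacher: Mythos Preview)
Your approach is essentially the paper's: construct the inverse $\psi_\bet : B'_\alp \to B_\bet$ by choosing a base element and using the torsor bijection (axiom (2)) to transport the pro-inverse $g_\bet$ from the $A$-side to the $B$-side, then leave the compatibility checks as a diagram chase. There is, however, a slip in where you place the base point. You choose $b_0 \in B_\bet$, but there is no natural map $B_\bet \to B'_\alp$ along which to produce the reference element $\varphi(b_0) \in B'_\alp$; both the structure map $B_\alp \to B_\bet$ and $\mathfrak{B}(F)_\alp : B_\alp \to B'_\alp$ go \emph{out of} $B_\alp$, so your ``$\varphi$'' is not a well-defined map on $B_\bet$. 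The paper fixes this by picking the base element at the \emph{source} level: choose $b_0 \in B_\alp$, push it along the structure map to $\tilde{b}_0 \in B_\bet$ and along $\mathfrak{B}(F)_\alp$ to $b'_0 \in B'_\alp$; then for $b' \in B'_\alp$, write it uniquely as $a'_\alp(c', b'_0)$ and set $\psi_\bet(b') = a_\bet(g_\bet(c'), \tilde{b}_0)$. With this adjustment your argument goes through and coincides with the paper's; the remaining verifications are, as both you and the paper note, routine.
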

\begin{proof}
We can consider $F$ as a compatible family of commutative diagrams
$$\xymatrix{
A_{\alp} \times B_\alp \ar[r]^{a_\alp} \ar[d]^{f_\alp \times g_\alp }& B_\alp  \ar[d]^{g_\alp}\\
A'_{\alp}\times B_\alp'\ar[r]^{a'_\alp}  & B_\alp'
}$$

Now the map $\mathfrak{A}_*(F)$ in $\Pro(\Set_*)$  is represented by the levelwise map $f_\alp$. Since
$f_I = \{f_\alp\}_{\alp\in I}$ is an isomorphism we have a map
$$f_I^{-1}: \{A'_\alp\}_{\alp\in I} \lrar   \{A_\alp\}_{\alp\in I}$$
which is the inverse of $f_I$. Note that $f^{-1}_I$ can be represented by a map $\theta : I  \lrar I$
and a collection of maps  $f^{-1}_\alp: A'_{\theta(\alp)} \to A_{\alp}$  satisfying some natural compatibly conditions (See ~\cite{EHa76}). Without loss of generality we may assume that  $\theta(\alp) \geq \alp$ for all $\alp$.

We would like to construct an inverse to the map $\mathfrak{B}(F)$ in $\Pro(\Set)$  which is represented by the levelwise map $g_\alp$. We denote this inverse by
$$g^{-1}_I = \{B'_\alp\}_{\alp\in I} \lrar   \{B_\alp\}_{\alp\in I}$$
To describe $g^{-1}_I$ we will give a collection of maps
$$ g^{-1}_\alp: B'_{\theta(\alp)} \lrar B_{\alp} $$
and we leave it to the reader to verify the easy diagram chasing that is required to check that indeed the maps $g^{-1}_\alp$ are defining a pro-map which is an inverse to $g_I = \mathfrak{B}(F)$.

Now let $\alp \in I$ be an index. Choose an arbitrary element $b_0 \in B_{\theta(\alp)}$. We shall denote by $\tilde{b_0} \in B_\alp$ the image of $b_0$ by the structure map $B_{\theta(\alp)} \lrar  B_{\alp}$ and by
$$ b_0' = g_{\theta(\alp)}(b_0) \in B'_{\theta(\alp)} $$
the respective image in $B'_{\thet(\alp)}$.

Now for every $b'\in B'_{\theta(\alp)}$ we can apply the inverse of the map
$$ A'_{\theta(\alp)} \times B'_{\theta(\alp)} \lrar
   B'_{\theta(\alp)} \times B'_{\theta(\alp)} $$
to the tuple $(b',b_0')$ and get an element $(c',b_0')\in A'_{\theta(\alp)} \times B'_{\theta(\alp)}$. We then define
$$ g^{-1}_{\alp}(b') = a(f^{-1}_\alp(c'),\tilde{b_0}) \in B_{\alp} $$
\end{proof}

Now in the proof of Theorem ~\ref{t:obstruction-iso} we will need to work with pro-objects of various kinds which carry actions of pro-objects in the category of finite groups. In order to work well with such objects it will be useful to recall first that pro objects in the category of finite groups are the same as pro-finite groups in the usual sense (see ~\ref{pro-finite-is-pro-finite}). We will be interested in the following kind of actions

\begin{define}
Let $C$ be a category, $G$ a pro-finite group and $\{\X_\alp\}$ an object in $\Pro\;C$. An \textbf{excellent} action of $G$ on $\{\X_\alp\}$ is a an action of $G$ on $\{\X_\alp\}$ via levelwise maps such that the induced action on each $\X_\alp$ factors through a finite (continuous) quotient of $G$. In this case we will say $\{\X_\alp\}$ is an excellent $G$-pro-object.

We say that a map in $\Pro\;C$ between two excellent $G$-pro-objects
$$ f: \{\X_\alp\} \lrar \{\Y_\bet\} $$
is \textbf{equivariant} if $g \circ f \circ g^{-1} = f$ for every $g \in G$.
\end{define}

We then have the following basic lemma:
\begin{lem}
Let $C$ be a category, $G$ a pro-finite group and let
$$ f:\{\X_\alp\} \lrar \{\Y_\bet\} $$
be an equivariant map between two excellent $G$-objects. Then $f$ can be represented by a compatible family of maps
$$ f_\bet: \X_{\alp(\bet)} \lrar \Y_\bet $$
such that each $f_\bet$ is equivariant.
\end{lem}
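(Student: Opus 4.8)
The plan is to start from an arbitrary level representation of $f$ and then enlarge the source indices, by a finite amount at each level, so as to make every component map equivariant; the whole point is that each of the two actions factors through a \emph{finite} quotient of $G$, so only finitely many equivariance constraints ever enter at a given level.

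Concretely, recall first that any morphism in $\Pro\,C$ from $\{\X_\alp\}$ to $\{\Y_\bet\}$ admits a representation by an order-preserving assignment $\bet\mapsto\alp(\bet)$ together with a compatible family $f_\bet\colon\X_{\alp(\bet)}\lrar\Y_\bet$ in $C$ (see \cite{EHa76}); I would fix such a representative of our $f$. Now fix a single index $\bet$. By excellence there is an open normal subgroup $N\triangleleft G$ acting trivially both on $\Y_\bet$ and on $\X_{\alp(\bet)}$, and then the conjugation operation $\phi\mapsto g\circ\phi\circ g^{-1}$ on maps $\X_{\alp(\bet)}\to\Y_\bet$ depends only on the image of $g$ in the finite group $Q=G/N$. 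The hypothesis that $f$ is equivariant says that for every $g\in G$ the pro-morphisms $f$ and $g\circ f\circ g^{-1}$ coincide, which at the level of $\bet$ means that $f_\bet$ and $g\circ f_\bet\circ g^{-1}$ agree after precomposition with some structure map $\X_{\alp'}\to\X_{\alp(\bet)}$. Since there are only finitely many distinct such conjugations (one per element of $Q$) and $I$ is cofiltered, I would choose a single index $\alp'(\bet)\ge\alp(\bet)$ that works for all of them simultaneously, and set $f_\bet'$ to be the composite of $f_\bet$ with the structure map $\X_{\alp'(\bet)}\to\X_{\alp(\bet)}$; then $g\circ f_\bet'\circ g^{-1}=f_\bet'$ for all $g\in G$, i.e. $f_\bet'$ is equivariant.

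The remaining, and genuinely delicate, step is to perform these enlargements \emph{compatibly} in $\bet$, so that $\{f_\bet'\}$ is again a level representation of $f$ rather than a collection of unrelated modifications. For $\bet_1\le\bet_2$ the original family fits into a commuting square; the primed square commutes only after a further restriction in $I$, and at each $\bet$ only finitely many additional constraints intervene — the square-commutativity constraints coming from the $\bet'\le\bet$ together with the finitely many equivariance constraints above. One therefore constructs $\bet\mapsto\alp'(\bet)$ by the usual inductive reindexing procedure for pro-morphisms, at each stage picking $\alp'(\bet)$ above all indices previously chosen for $\bet'\le\bet$ and above the finitely many indices dictated by square-commutativity and by equivariance; cofilteredness of $I$ guarantees such a choice exists. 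The main obstacle is precisely this bookkeeping — checking that the extra equivariance requirements never conflict with the compatibility requirements — but since all constraints at each level are finite in number it goes through in the standard way, and we leave the routine diagram chase to the reader, as in the preceding lemma.
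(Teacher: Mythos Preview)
Your argument is correct and its core is identical to the paper's: excellence makes the conjugation orbit $\{g\circ f_\bet\circ g^{-1}\}$ finite, so by cofilteredness of $I$ one can pass to a single larger index where all conjugates agree, producing an equivariant representative.

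The one difference is organisational. The paper begins with the reduction ``it is enough to treat the case where $\{\Y_\bet\}$ is a single object $\Y$,'' and then the entire argument is the single-level step you carry out in your second paragraph. With that reduction, your third paragraph disappears: once each $\bet$-component of $f$ has an equivariant representative $f_\bet:\X_{\alp(\bet)}\to\Y_\bet$, these automatically form a compatible family in the pro sense, because they are merely particular representatives of the same elements of $\colim_\alp\Hom(\X_\alp,\Y_\bet)$ that already define $f$, and the forgetful map $\colim_\alp\Hom_G(\X_\alp,\Y_\bet)\to\colim_\alp\Hom_C(\X_\alp,\Y_\bet)$ is injective. So the compatibility you carefully arrange by inductive reindexing is, from the paper's point of view, free. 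Your route works but makes the bookkeeping look heavier than it needs to be.
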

\begin{proof}
It is enough to prove for $\{\Y_\bet\}$ which is a single space $\Y$.

Start with any map $f': \X_\alp \lrar \Y$ representing $f$ and consider the \textbf{finite} orbit $\{f' \circ g\}_{g \in G}$. Since $f \circ g = f$ for every $g \in G$ we get that all the maps $f' \circ g$ represent the same map $f$. Thus there exist some $\alp \leq \beta \in I$ such that the images of all the $f' \circ g$'s in $\Hom(\X_\beta ,\Y)$ are the same. We will denote this unified image by
$$ f_\beta: \X_\beta \lrar \Y $$
Then we see that $f_\beta$ is an equivariant map representing $f$ and we are done.
\end{proof}
In particular we see that any equivariant map
$$ f:\{\X_\alp\} \lrar \{\Y_\bet\} $$
between two excellent $G$-objects induces a map
$$ f':\{\X_\alp\} \x{\simeq}\lrar \{\Y_\bet\} $$
in the pro-category of $G$-objects in $C$.

This simple observation as several useful corollaries:
\begin{cor}\label{c:equivariant-iso}
Let
$$ f:\{\X_\alp\} \lrar \{\Y_\bet\} $$
be an equivariant map between two excellent $G$-objects such that the underlying map of pro-objects is an isomorphism. Then $f$ is induces an isomorphism in the pro-category of $G$-objects.
\end{cor}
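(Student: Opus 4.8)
Corollary~\ref{c:equivariant-iso}: an equivariant map $f:\{\X_\alp\} \lrar \{\Y_\bet\}$ between two excellent $G$-pro-objects, whose underlying map of pro-objects (forgetting the $G$-action) is an isomorphism, induces an isomorphism in the pro-category of $G$-objects.

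The plan is to deduce this directly from the lemma proved just above (that an equivariant map between excellent $G$-objects can be represented by a compatible family of \emph{equivariant} level maps, hence factors through a map $f'$ in $\Pro(G\text{-}C)$, the pro-category of $G$-objects in $C$). First I would apply that lemma to $f$ itself to obtain the lifted map $f'\colon\{\X_\alp\}\to\{\Y_\bet\}$ in $\Pro(G\text{-}C)$ sitting over $f$ under the forgetful functor $\Pro(G\text{-}C)\to\Pro\,C$. Now I need an inverse to $f'$ in $\Pro(G\text{-}C)$. By hypothesis the underlying map of $f$ is an isomorphism in $\Pro\,C$, so there is a map $g\colon\{\Y_\bet\}\to\{\X_\alp\}$ in $\Pro\,C$ with $g\circ f=\id$ and $f\circ g=\id$ there. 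The key point is that $g$ is automatically equivariant as a morphism of $\Pro\,C$: for each $h\in G$ the conjugate $h\circ g\circ h^{-1}$ is also a two-sided inverse to $f$ (using that $f$ is equivariant, $h\circ f\circ h^{-1}=f$), and inverses in any category are unique, so $h\circ g\circ h^{-1}=g$. Hence $g$ is an equivariant map between the excellent $G$-objects $\{\Y_\bet\}$ and $\{\X_\alp\}$, and the lemma lifts it to a map $g'$ in $\Pro(G\text{-}C)$ over $g$.

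It remains to check that $f'$ and $g'$ are mutually inverse in $\Pro(G\text{-}C)$, not merely after applying the forgetful functor. For this I would note that $g'\circ f'$ and $\id_{\{\X_\alp\}}$ are two equivariant endomorphisms of the excellent $G$-object $\{\X_\alp\}$ which become equal after applying the forgetful functor $\Pro(G\text{-}C)\to\Pro\,C$; similarly for $f'\circ g'$ on $\{\Y_\bet\}$. So the whole argument reduces to the assertion that the forgetful functor $\Pro(G\text{-}C)\to\Pro\,C$ is faithful when restricted to equivariant maps between excellent $G$-objects. This last point follows from the same representability argument underlying the lemma above: given two equivariant level families representing maps that agree in $\Pro\,C$, they agree on some further index, and one can shrink the index (as in the proof of the lemma, using the finiteness of the $G$-orbit of a representing map together with the cofiltered colimit computing $\Hom_{\Pro\,C}$) so that the equality already holds as equivariant maps. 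Thus $g'\circ f'=\id$ and $f'\circ g'=\id$ in $\Pro(G\text{-}C)$.

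The only mildly delicate step is the faithfulness claim for the forgetful functor on equivariant maps: one must be careful that the index-shifting needed to force two representatives to agree can be chosen compatibly with equivariance. But this is exactly the mechanism already used in the proof of the preceding lemma — pass to the finite orbit, take an index dominating all $|G/\Lam|$ (for the relevant open $\Lam\fns G$) representatives, and use that a filtered colimit of finite sets of bounded size is again of bounded size. So no genuinely new idea is required; the corollary is essentially a formal consequence of the lemma plus uniqueness of inverses. I expect the write-up to be short, with the main care going into phrasing the reduction to faithfulness cleanly.
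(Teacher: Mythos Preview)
Your proposal is correct and matches the paper's approach: the paper states this as an immediate corollary of the preceding lemma without further proof, and your argument (lift $f$ to $f'$, observe the inverse $g$ is equivariant by uniqueness of inverses, lift $g$ to $g'$, then use faithfulness of the forgetful functor) is exactly the intended elaboration.

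One remark: your justification of the faithfulness of $\Pro(G\text{-}C)\to\Pro\,C$ is more involved than necessary. You do not need to rerun the finite-orbit argument from the lemma. The forgetful functor $G\text{-}C\to C$ is faithful by definition (a $G$-equivariant morphism is just a morphism in $C$ satisfying a property), and $\Pro$ preserves faithful functors: if $F\colon D\to E$ is faithful then each $\Hom_D(X_\alpha,Y_\beta)\hookrightarrow\Hom_E(FX_\alpha,FY_\beta)$ is injective, filtered colimits in $\Set$ preserve injections, and limits preserve injections, so
\[
\lim_\beta\colim_\alpha\Hom_D(X_\alpha,Y_\beta)\hookrightarrow\lim_\beta\colim_\alpha\Hom_E(FX_\alpha,FY_\beta)
\]
is injective. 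This gives $g'\circ f'=\id$ and $f'\circ g'=\id$ in $\Pro(G\text{-}C)$ directly.
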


\begin{cor}\label{c:pro-quotient-categorical}
Let $\X_I = \{\X_\alp\}$ be an excellent $G$-object. Then the levelwise quotient
$$ \X_I/\Gam = \{\X_\alp/\Gam\} $$
coincides with the categorical quotient (i.e. with the corresponding colimit) in $\Pro\;C$.
\end{cor}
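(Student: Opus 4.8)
The plan is to verify directly that the levelwise quotient $\{\X_\alp/G\}$ has the universal property of the categorical quotient, i.e.\ of the colimit in $\Pro\,C$ of the action diagram $\mathbf{B}G\to\Pro\,C$ (where $\mathbf{B}G$ denotes the one-object category whose endomorphism monoid is $G$). Concretely, for every $\Z\in\Pro\,C$ carrying the trivial $G$-action one must exhibit a bijection, natural in $\Z$, between the maps $\{\X_\alp/G\}\to\Z$ in $\Pro\,C$ and the $G$-equivariant maps $\X_I\to\Z$ in $\Pro\,C$.

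First I would reduce to the case in which the target is a single object of $C$. Writing $\Z=\{\Y_\bet\}$ and recalling that
$$\Hom_{\Pro\,C}(\{\mathbf{W}_\alp\},\{\Y_\bet\})=\lim_\bet\colim_\alp\Hom_C(\mathbf{W}_\alp,\Y_\bet),$$
one sees that both sides of the sought bijection are $\lim_\bet$ of the corresponding statement with the single target $\Y_\bet$ (the formation of $\lim_\bet$ and the condition of being $G$-equivariant plainly commute). Hence it suffices to prove, naturally in an object $\Y\in C$ with trivial $G$-action,
$$\colim_\alp\Hom_C(\X_\alp/G,\Y)\;\cong\;\bigl(\colim_\alp\Hom_C(\X_\alp,\Y)\bigr)^{G}.$$
Here I use that the $G$-action on $\X_I$ is given by \emph{levelwise} maps, so that all structure maps $\X_\bet\to\X_\alp$ are $G$-equivariant; consequently $\colim_\alp\Hom_C(\X_\alp,\Y)$ inherits a natural $G$-action whose fixed set is exactly the set of $G$-equivariant pro-maps $\X_I\to\Y$, and $\alp\mapsto\Hom_C(\X_\alp,\Y)^{G}$ is a well-defined functor on the index poset.

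Next, since $\X_\alp/G$ is the colimit of $\mathbf{B}G\to C$ formed at level $\alp$, one has $\Hom_C(\X_\alp/G,\Y)=\Hom_C(\X_\alp,\Y)^{G}$, compatibly with the (equivariant) transition maps, whence $\colim_\alp\Hom_C(\X_\alp/G,\Y)=\colim_\alp\bigl(\Hom_C(\X_\alp,\Y)^{G}\bigr)$. So the corollary reduces to showing that the canonical comparison map
$$\colim_\alp\bigl(\Hom_C(\X_\alp,\Y)^{G}\bigr)\;\lrar\;\bigl(\colim_\alp\Hom_C(\X_\alp,\Y)\bigr)^{G}$$
is a bijection. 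Injectivity is a routine diagram chase: two equivariant representatives that become equal after precomposition with some structure map are already equal in the colimit of the fixed-point sets, since that structure map is itself equivariant. Surjectivity is precisely the content of the lemma immediately preceding this corollary: a $G$-fixed element of $\colim_\alp\Hom_C(\X_\alp,\Y)$ is by definition an equivariant map $\X_I\to\Y$ in $\Pro\,C$, and that lemma produces an honest equivariant representative $f_\bet\colon\X_\bet\to\Y$, i.e.\ an element of $\Hom_C(\X_\bet,\Y)^{G}$ lifting it.

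I expect the only step that is not pure formalism to be this last surjectivity, namely that passage to the filtered colimit commutes with taking $G$-fixed points in the situation at hand; this is exactly where the hypothesis that the action is excellent intervenes, through the cited lemma (whose proof uses that the action on each $\X_\alp$ factors through a finite quotient of $G$, making the $G$-orbit of a representative finite, so that one can pass to an index at which the whole orbit is identified). As that lemma is already established, no further work is needed, and the remaining points are bookkeeping with the universal property of (pro-)colimits and with the equivariance of the structure maps.
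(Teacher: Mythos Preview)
Your proposal is correct and is precisely the argument the paper has in mind: the paper states this result as an immediate corollary of the preceding lemma (that any equivariant pro-map between excellent $G$-objects admits a levelwise equivariant representative) without further proof, and your write-up simply unpacks the universal-property verification, correctly isolating the surjectivity of $\colim_\alp\Hom_C(\X_\alp,\Y)^{G}\to(\colim_\alp\Hom_C(\X_\alp,\Y))^{G}$ as the one nontrivial step and invoking that lemma for it.
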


\begin{cor}\label{c:pro-quotient}
Let
$$ f:\{\X_\alp\} \lrar \{\Y_\bet\} $$
be an equivariant map between two excellent $G$-objects such that the underlying map of pro-objects is an isomorphism. Then $f$ is induces an isomorphism in $\Pro\;C$
$$ \{\X_\alp/G\} \x{\simeq}\lrar \{\Y_\bet/G\} $$
\end{cor}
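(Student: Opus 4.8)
The plan is to deduce this statement formally from the two corollaries immediately preceding it, together with the remark that the levelwise-quotient construction is functorial on pro-categories. First I would apply Corollary~\ref{c:equivariant-iso}: since $f$ is an equivariant map between excellent $G$-objects whose underlying map of pro-objects is an isomorphism, $f$ is already an isomorphism in the pro-category of $G$-objects in $C$. Next I would observe that the assignment $\X \mapsto \X/G$ (levelwise quotient) is a functor from $G$-objects in $C$ to $C$ — it is left adjoint to the functor equipping an object of $C$ with the trivial $G$-action, hence it preserves all colimits that exist — and therefore it prolongs to a functor on pro-categories, $\Pro\,(C^G) \lrar \Pro\,C$. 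Any functor carries isomorphisms to isomorphisms, so applying this prolonged functor to the isomorphism $f$ of $\Pro\,(C^G)$ produces an isomorphism $\{\X_\alp/G\} \lrar \{\Y_\bet/G\}$ in $\Pro\,C$.

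It remains only to check that the isomorphism produced this way is genuinely the map \emph{induced by} $f$ on levelwise quotients, and that the prolonged quotient functor sends the pro-object $\{\X_\alp\}$ to $\{\X_\alp/G\}$ rather than to some other representative of the quotient. The latter is exactly Corollary~\ref{c:pro-quotient-categorical}, which identifies the levelwise quotient with the categorical one; the former follows because, by the lemma preceding Corollary~\ref{c:equivariant-iso}, $f$ is represented by a compatible family of equivariant maps $f_\bet : \X_{\alp(\bet)} \lrar \Y_\bet$, each of which descends to a map $\X_{\alp(\bet)}/G \lrar \Y_\bet/G$, and this compatible family is precisely the pro-map induced by $f$. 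I do not expect any real obstacle here: the entire statement is a bookkeeping consequence of the machinery already established, the one mild subtlety being the routine identification of "the map induced by $f$" with the image of $f$ under the prolonged quotient functor, which I would dispatch in a single sentence using the explicit equivariant representatives.
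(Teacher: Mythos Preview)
Your proposal is correct and follows precisely the approach the paper intends: the corollary is stated in the paper without proof, as an immediate consequence of Corollary~\ref{c:equivariant-iso} (upgrading $f$ to an isomorphism in the pro-category of $G$-objects) together with Corollary~\ref{c:pro-quotient-categorical} (identifying the levelwise quotient with the categorical one so that the prolonged quotient functor applies). Your care in checking that the resulting map is the one induced by $f$ via the explicit equivariant representatives is exactly the routine verification the paper leaves implicit.
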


We are now ready to prove the main ingredient of the proof:
\begin{prop}\label{p:pro-homotopy-fixed-points-iso}
Let $\Gam$ be a pro-finite group and $\X_I = \{\X_\alp\},\Y_J = \{\Y_\bet\}$ be two objects in $\Pro\Ho\left(\Set^{\Del^{op}}_{\Gam_K}\right)$ such that $I,J$ are countable and all the $\X_\alp$'s and $\Y_\alp$'s are excellent, finite, connected and $2$-truncated. Let $f: \X_I \lrar \Y_J$ be a map such that
$$ \ovl{f}: \ovl{\X}_I \lrar \ovl{\Y}_I $$
is an isomorphism. Then $f$ induces an isomorphism of sets
$$ \X_I\left(\E\Gam^{\natural}\right) \simeq \Y_I\left(\E\Gam^{\natural}\right) $$
\end{prop}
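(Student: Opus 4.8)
The plan is to run an induction up the two-step Postnikov tower that the hypotheses supply, reducing the global comparison of the two homotopy fixed point sets to a comparison on $\pi_1$, governed by extensions of pro-finite groups and their splittings, and a comparison on $\pi_2$, governed by Galois cohomology and obstruction theory; the two layers are then glued using the pro-torsor machinery of Lemma~\ref{l:pro-set-torsor}. First I would normalize the data: since $I,J$ are countable, the lifting lemma of \S~\ref{ss:pro-fundamenral-group} lets us replace $\X_I,\Y_J$ by fibration towers indexed by $\NN$ and $f$ by a levelwise equivariant map $f_n\colon\X_n\to\Y_n$, without changing $\X_I(\E\Gam^{\natural})=\lim_n\pi_0(\X_n^{h\Gam})$ or $\Y_J(\E\Gam^{\natural})$; the $\X_n,\Y_n$ remain excellent, finite, connected and $2$-truncated. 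Since $\overline f$ is an isomorphism in $\Pro\Ho(\Set^{\Del^{op}})$, $f$ induces pro-isomorphisms on the underlying pro-homotopy groups $\{\pi_1(\X_n)\}\to\{\pi_1(\Y_n)\}$ and $\{\pi_2(\X_n)\}\to\{\pi_2(\Y_n)\}$; equivariance of $f$ together with Corollary~\ref{c:equivariant-iso} will let us upgrade these, below, to isomorphisms respecting the relevant Galois structure — an extension of pro-finite groups for $\pi_1$ and a pro-$\Gam$-module for $\pi_2$ once a base point has been chosen.

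For the $\pi_1$-layer, I would pass to the $1$-truncations $\X^1_\NN,\Y^1_\NN$ and form their pro-homotopy quotients. Then $f$ induces a morphism from the exact sequence of pro-finite groups $1\to\pi_1(\X_\NN)\to\pi_1((\X_\NN)_{h\Gam})\to\Gam\to1$ to its analogue for $\Y$, which is the identity on $\Gam$ and an isomorphism on the kernels, hence an isomorphism on the total groups by the five lemma carried out in the pro-category of finite groups. By Theorem~\ref{t:homotopy-fixed-point-iff-splits}, together with Lemmas~\ref{l:section-iff-homotopy-section} and~\ref{l:ho_bgbh}, the set $\X^1_\NN(\E\Gam^{\natural})$ is identified, functorially in this extension, with the set of conjugacy classes of splittings of $1\to\pi_1(\X_\NN)\to\pi_1((\X_\NN)_{h\Gam})\to\Gam\to1$, and likewise for $\Y$; the isomorphism of extensions therefore produces a bijection $f^1\colon\X^1_\NN(\E\Gam^{\natural})\xrightarrow{\sim}\Y^1_\NN(\E\Gam^{\natural})$. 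If these sets are empty we are done, since $\X_I(\E\Gam^{\natural})$ and $\Y_J(\E\Gam^{\natural})$ map into them; otherwise fix $s\in\X^1_\NN(\E\Gam^{\natural})$, put $t=f^1(s)$, and use these as compatible $\Gam$-invariant base points, which completes the isomorphism $\{\pi_2(\X_n)\}\cong\{\pi_2(\Y_n)\}$ of pro-$\Gam$-modules.

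For the $\pi_2$-layer, for each $n$ the fibre of $\pi_0(\X_n^{h\Gam})\to\pi_0(P_1(\X_n)^{h\Gam})$ over the component $s_n$ is, by the obstruction theory of Proposition~\ref{p:obstruction-theory} and the spectral sequence of Theorem~\ref{t:ss} (note $P_2(\X_n)=\X_n$), empty when the obstruction $o(s_n)\in H^3(\Gam,\pi_2(\X_n))$ is nonzero and otherwise a torsor under a group functorially built from $H^2(\Gam,\pi_2(\X_n))$; the $\Y$-side is the same, and naturality of the obstruction together with the isomorphism $\{\pi_2(\X_n)\}\cong\{\pi_2(\Y_n)\}$ matches $o(s_n)$ with $o(t_n)$ and intertwines the torsor actions. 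Hence on both sides these fibres are simultaneously empty, or they form genuine pro-torsors to which Lemma~\ref{l:pro-set-torsor} applies — its map of structure pro-groups $\{H^2(\Gam,\pi_2(\X_n))\}\to\{H^2(\Gam,\pi_2(\Y_n))\}$ being an isomorphism — yielding a bijection of the cofiltered limits, i.e. of the fibres over $s$ and over $t$. Decomposing $\X_I(\E\Gam^{\natural})=\bigsqcup_s\mathrm{fib}(s)$ and $\Y_J(\E\Gam^{\natural})=\bigsqcup_t\mathrm{fib}(t)$ and feeding in $f^1$ then gives the desired bijection $\X_I(\E\Gam^{\natural})\xrightarrow{\sim}\Y_J(\E\Gam^{\natural})$.

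The main obstacle, I expect, is the $\pi_1$-layer: one must check that $\X^1_\NN(\E\Gam^{\natural})$ is genuinely controlled by the pro-finite group extension and its splittings up to conjugacy when no $\Gam$-fixed base point is available, that the five lemma is invoked in the pro-category rather than levelwise (since $\overline f$ yields only a pro-isomorphism on $\pi_1$, not a levelwise one), and that the bijection $f^1$ is compatible, through the base points $s$ and $t$, with the $\pi_2$-layer analysis. Once these points are settled the $\pi_2$-layer is routine cohomological bookkeeping routed through Lemma~\ref{l:pro-set-torsor}.
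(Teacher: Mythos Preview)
Your strategy matches the paper's almost step for step: lift to fibration towers, handle the $1$-truncation via the extension of pro-finite groups, then compare the fibres over each $s\in\X^1_\NN(\E\Gam^{\natural})$ via obstruction theory and Lemma~\ref{l:pro-set-torsor}. Two points, however, need sharpening.

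On the $\pi_1$-layer: Theorem~\ref{t:homotopy-fixed-point-iff-splits} only gives the emptiness criterion; it does not identify $\X^1_\NN(\E\Gam^{\natural})$ with conjugacy classes of splittings, and the lemmas you cite do not bridge that gap in the pro setting. The paper sidesteps this: once non-emptiness is established via the splitting criterion, it chooses a compatible family of base homotopy fixed points $\wtl h_n\in(\X^1_n)^{h\Gam}$ and computes directly from the (here degenerate) spectral sequence that $\pi_0\big((\X^1_n)^{h\Gam}\big)\cong H^1\big(\Gam,\pi_1(\X^1_n,\wtl h_n)\big)$. The equivariant pro-isomorphism on $\pi_1$ (upgraded via Corollary~\ref{c:equivariant-iso}) then gives the bijection $\X^1_\NN(\E\Gam^{\natural})\cong\Y^1_\NN(\E\Gam^{\natural})$ as a limit of $H^1$'s.

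On the $\pi_2$-layer there is a genuine gap. The fibre $A_n\subset\pi_0(\X_n^{h\Gam})$ over $s_n$ is \emph{not} a torsor under $H^2(\Gam,\pi_2(\X_n))$, but under the \emph{pointed set} $E^\infty_{2,2}=E^3_{2,2}$, obtained from $H^2\big(\Gam,\pi_2(\X_n,\wtl h_n)\big)$ by taking the quotient under the action of $H^0\big(\Gam,\pi_1(\X_n,\wtl h_n)\big)$ induced by the $d_2$-differential (equivalently, by the $\pi_1$-action on $\pi_2$). Hence to feed Lemma~\ref{l:pro-set-torsor} you must show that the induced map of pro-pointed-sets $\{E^\infty_{2,2}(\wtl h_n)\}\to\{E^\infty_{2,2}(f_n(\wtl h_n))\}$ is an isomorphism; knowing only that $\{H^2(\Gam,\pi_2(\X_n))\}\to\{H^2(\Gam,\pi_2(\Y_n))\}$ is a pro-isomorphism does not suffice. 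The paper handles this by first using Corollary~\ref{c:equivariant-iso} to get the pro-isomorphism on $\{H^0(\Gam,\pi_1)\}$, and then invoking Corollary~\ref{c:pro-quotient} to pass the pro-isomorphism on $H^2$ through the quotient by this compatible action.
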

\begin{proof}
Recalling the discussion in the beginning of subsection \S\S ~\ref{ss:pro-fundamenral-group} we start by lifting $f$ to a levelwise map
$$ \wtl{f}: \wtl{\X}_{\NN} \lrar \wtl{\Y}_{\NN} $$
in $\Pro\Set^{\Del^{op}}_{\Gam_K}$ between two pro-finite fibration towers.

Let $\wtl{\X}^1_\NN = \left\{P_1\left(\wtl{\X}_n\right)\right\}$ and $\wtl{\Y}^1_{\NN} = \left\{P_1\left(\wtl{\Y}_n\right)\right\}$. We will start by showing that
$$ \wtl{\X}^1_{\NN}\left(\E\Gam^{\natural}\right) \simeq \wtl{\Y}^1_{\NN}\left(\E\Gam^{\natural}\right) $$

Choose a compatible set of (not-necessarily-$\Gam$-invariant) base points $x_n \in \X_n$ and let $y_n = f_n(x_n) \in \Y_n$. Let $G_i = \pi_1\left(\wtl{\X}_n,x_n\right)$ and $H_n = \pi_1\left(\wtl{\Y}_n,y_n\right)$.

Now the map $f$ induces an isomorphism of short exact sequences (note that any bijective map of pro-finite groups is an isomorphism because the topology on both is Hausdorf-compact):
$$ \xymatrix{
1 \ar[r] & \pi_1\left(\X_\NN,\{x_n\}\right) \ar[r]\ar[d] & \pi_1\left((\X_\NN)_{h\Gam},\{x_n\}\right) \ar[r]\ar[d] &
\Gam \ar@{=}[d] \ar[r] & 1 \\
1 \ar[r] & \pi_1\left(\Y_\NN,\{x_n\}\right) \ar[r] & \pi_1\left((\Y_\NN)_{h\Gam},\{x_n\}\right)  \ar[r] &
\Gam \ar[r] & 1 \\
}$$

and so the first sequence splits if and only if the second does. Hence from Theorem ~\ref{t:homotopy-fixed-point-iff-splits} we see that
$$ \wtl{\X}^1_{\NN}\left(\E\Gam^{\natural}\right) \neq \emptyset $$
if and only if
$$ \wtl{\Y}^1_{\NN}\left(\E\Gam^{\natural}\right) \neq \emptyset $$
Hence it is enough to deal with the case both of them are non-empty. Since $\wtl{\X}^1_n$ is a fibration tower we get that $\{(\wtl{\X}^1_n)^{h\Gam}\}$ is a tower of Kan fibrations and so one can choose a compatible family $\wtl{h}_n \in (\wtl{\X}^1_n)^{h\Gam}$.

Now from Corollary ~\ref{c:equivariant-iso} we get that the equivariant isomorphism of pro-finite groups
$$ \left\{\pi_1\left(\wtl{\X}^1_n,\wtl{h}_n\right)\right\} \lrar \left\{\pi_1\left(\wtl{\Y}^1_n,f_n(\wtl{h}_n)\right)\right\} $$
together with the spectral sequence of ~\ref{t:ss} induce an isomorphism
$$ \wtl{\X}^1_I\left(\E\Gam^{\natural}\right) \simeq \lim_n \wtl{\X}^1_n(hK) = \lim_n H^1\left(\Gam,\pi_1\left(\wtl{\X}^1_n,\wtl{h}_n\right)\right) \cong $$
$$ \lim_n H^1\left(\Gam,\pi_1\left(\wtl{\Y}^1_n,f_n(\wtl{h}_n)\right)\right) \cong \lim_n \wtl{\Y}^1_n(hK) \cong \wtl{\Y}^1_\NN\left(\E\Gam^{\natural}\right) $$

Now consider the commutative square
$$ \xymatrix{
\X_\NN\left(\E\Gam^{\natural}\right) \ar^{f_*}[r]\ar[d] & \Y_\NN\left(\E\Gam^{\natural}\right) \ar[d] \\
\X^1_\NN\left(\E\Gam^{\natural}\right) \ar[r] & \Y^1_\NN\left(\E\Gam^{\natural}\right) \\
}$$

In order to finish the proof we will show that for every $\{h_n\} \in \X_\NN^1\left(\E\Gam^{\natural}\right)$ the map $f_*$ maps the fiber over $\{h_n\}$ isomorphically to the fiber over $\{f_n(h_n)\}$.

As above choose a compatible family $\wtl{h}_n \in (\wtl{\X}^1_n)^{h\Gam}$ such that $h_n$ is the connected component of $\wtl{h}_n$.

First of all we need to address the possibility that this fiber above $\{h_n\}$ is empty. By Proposition ~\ref{p:obstruction-theory} we see that for every $n$ we have an obstruction element
$$ o_n \in H^3\left(\Gam,\pi_2\left(\wtl{\X}_n,\wtl{h}_n\right)\right) $$
which is trivial if and only if $h_n$ lifts to $\pi_0\left(\wtl{\X}_n^{h\Gam}\right) \cong \pi_0\left(P_2\left(\wtl{\X}_n\right)^{h\Gam}\right)$. From Corollary ~\ref{c:equivariant-iso} the equivariant isomorphism of pro-finite groups
$$ \left\{\pi_2\left(\wtl{\X}_n,\wtl{h}_n\right)\right\} \lrar \left\{\pi_2\left(\wtl{\Y}_n,f_n(\wtl{h}_n)\right)\right\} $$
induces an isomorphism of pro-finite groups
$$ \left\{H^3\left(\Gam,\pi_2\left(\wtl{\X}_n,\wtl{h}_n\right)\right)\right\} \lrar \left\{H^3\left(\Gam,\pi_2\left(\wtl{\X}_n,\wtl{h}_n\right)\right)\right\} $$
and so the element
$$ \{o_n\} \in \lim_nH^3\left(\Gam,\pi_2\left(\wtl{\X}_n,\wtl{h}_n\right)\right) $$
is zero if and only if the element
$$ \{f_n(o_n)\} \in \lim_nH^3\left(\Gam,\pi_2\left(\wtl{\Y}_n,f_n\left(\wtl{h}_n\right)\right)\right) $$
is zero. If both of them are non-zero then both fibers are empty and we are done. Hence we can assume that both are zero.

Let $A_n \subseteq \pi_0\left(\wtl{\X}_n^{h\Gam}\right)$ be the (non-empty) pre-image of $h_n$ and $B_n \subseteq \pi_0\left(\wtl{\Y}_n^{h\Gam}\right)$ the pre-image of $f_n(h_n)$. We need to show that the induced map
$$ f_*:\lim A_n \lrar \lim B_n $$
is an isomorphism.

Now given any homotopy fixed point $x \in \wtl{\X}_n^{h\Gam}$ we can run the spectral sequence of $~\ref{t:ss}$ to compute $\pi_0\left(\wtl{\X}_n^{h\Gam}\right)$. Since $\wtl{\X}_n$ is $2$-truncated this spectral sequence will degenerate in the third page and hence we see that in order to compute the cell $E^{\infty}_{2,2} = E^{3}_{2,2}$ it is enough to choose the image of $x$ in $P_1(\wtl{\X}_n)^{h\Gam}$. In particular we can calculate this cell using the homotopy fixed point $\wtl{h}_n$.

Now by analyzing the spectral sequence we see that the pointed set $E^3_{2,2}(\wtl{h}_n)$ is obtained from the group $H^2\left(\Gam_K,\pi_2\left(\wtl{\X}_n,\wtl{h}_n\right)\right)$ by taking the quotient \textbf{pointed set} under the action of $H^0\left(\Gam_K,\pi_1\left(\wtl{\X}_n,\wtl{h}_n\right)\right)$ which is induced by the action of $H^0\left(\Gam_K,\pi_1\left(\wtl{\X}_n,\wtl{h}_n\right)\right)$ on $\pi_2\left(\wtl{\X}_n,\wtl{h}_n\right)$.

Now the equivariant isomorphism of pro-groups
$$ \left\{\pi_2\left(\wtl{\X}_n,\wtl{h}_n\right)\right\} \lrar \left\{\pi_2\left(\wtl{\Y}_n,f_n(\wtl{h}_n\right)\right\} $$
induces an isomorphism of pro-groups
$$ \left\{H^2\left(\Gam_K, \pi_2\left(\wtl{\X}_n,\wtl{h}_n\right)\right)\right\} \lrar
\left\{H^2\left(\Gam_K,\pi_2\left(\wtl{\Y}_n,f_n\left(\wtl{h}_n\right)\right)\right)\right\} $$

Now from Corollary ~\ref{c:equivariant-iso} we get that the equivariant isomorphism of pro-finite groups
$$ \left\{\pi_1\left(\wtl{\X}_n,\wtl{h}_n\right)\right\} \lrar \left\{\pi_1\left(\wtl{\Y}_n,f_n(\wtl{h}_n)\right)\right\} $$
induces an isomorphism of pro-finite groups
$$ \left\{H^0\left(\pi_1\left(\wtl{\X}_n,\wtl{h}_n\right)\right)\right\} \lrar \left\{H^0\left(\pi_1\left(\wtl{\Y}_n,f_n(\wtl{h}_n)\right)\right)\right\} $$

Hence from Corollary ~\ref{c:pro-quotient} we get that the induced map of pro-pointed-sets
$$ \left\{E^{\infty}_{2,2}\left(\wtl{h}_n\right)\right\} \lrar \left\{E^{\infty}_{2,2}\left(f_n(\wtl{h}_n\right)\right\} $$
is an isomorphism. Now from the spectral sequence of ~\ref{t:ss} we see that the set $A_n$ admits a natural torsor structure under $E^{\infty}_{2,2}\left(\wtl{h}_n\right)$ and similarly $B_n$ under $E^{\infty}_{2,2}\left(f_n(\wtl{h}_n)\right)$. From Lemma ~\ref{l:pro-set-torsor} we then get that the induced map of pro-sets
$$ \{A_n\} \lrar \{B_n\} $$
is an isomorphism and induces an isomorphism
$$ \lim A_n \x{\simeq}{\lrar} \lim_n B_n $$

\end{proof}

We now come to the proof of Theorem ~\ref{t:obstruction-iso}. From Theorem ~\ref{t:descent-theorem} and Proposition ~\ref{p:2-is-enough} we can assume that all the spaces in the diagrams of $\X_I$ and $\Y_I$ are $2$-truncated. The Theorem will then follow from the following corollary:
\begin{cor}\label{c:obstruction-invariant}
Let $K$ be a number field and $\X_I = \{\X_\alp\},\Y_J = \{\Y_n\}$ two objects in $\Pro\Ho\left(\Set^{\Del^{op}}_{\Gam_K}\right)$ such that $I$ and $J$ are countable and all the $\X_\alp$'s and $\Y_\alp$'s are finite, excellent, connected and $2$-truncated. Let $f: \X_I \lrar \Y_J$ be a map such that
$$ \ovl{f}: \ovl{\X}_I \lrar \ovl{\Y}_I $$
is an isomorphism. Assume that $\X_I(h\A) \neq \emptyset$. Then an element $(x_\nu) \in \X_I(h\A)$ is rational (i.e. is the image of am element in $\X_I(hK)$) if and only if $f_*(x_\nu) \in \Y_I(\A)$ is rational.
\end{cor}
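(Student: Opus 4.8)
The plan is to deduce the Corollary formally from Proposition~\ref{p:pro-homotopy-fixed-points-iso}, applying that proposition once over $\Gam_K$ and once over each decomposition group $\Gam_\nu$, and then to combine these local statements by embedding the adelic homotopy fixed point set into the full product of local ones.

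First I would check that the hypotheses of Proposition~\ref{p:pro-homotopy-fixed-points-iso} are stable under restricting the Galois action from $\Gam_K$ to any decomposition group $\Gam_\nu$: if the action on $\X_\alp$ factors through a finite quotient of $\Gam_K$ then it factors through the (finite) image of $\Gam_\nu$ there, so $\X_\alp$ stays excellent, while finiteness, connectedness and $2$-truncatedness do not involve the group, and the assumption that $\ovl{f}$ is an isomorphism in $\Pro\Ho\left(\Set^{\Del^{op}}\right)$ is likewise insensitive to the group. Hence Proposition~\ref{p:pro-homotopy-fixed-points-iso} applies verbatim with $\Gam = \Gam_K$ and with $\Gam = \Gam_\nu$ for every place $\nu$, and produces bijections
\[ f_* : \X_I(hK) \x{\sim}{\lrar} \Y_I(hK), \qquad f_* : \X_I(hK_\nu) \x{\sim}{\lrar} \Y_I(hK_\nu) \ \text{ for all } \nu. \]

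Next I would show that $f_* : \X_I(h\A) \lrar \Y_I(h\A)$ is injective. By definition $\X_I(h\A) = \lim_{\alp} \X_\alp(h\A)$ and each $\X_\alp(h\A)$ is the restricted product $\prod'_{\nu} \X_\alp(hK_\nu)$, which is a subset of the full product $\prod_{\nu} \X_\alp(hK_\nu)$; since limits commute with products this gives a natural injection $\X_I(h\A) \hookrightarrow \prod_{\nu} \X_I(hK_\nu)$, compatible with $f_*$, and similarly for $\Y_I$. The map $f_*$ on $\prod_{\nu} \X_I(hK_\nu)$ is the product of the bijections above, hence a bijection, so the composite $\X_I(h\A) \hookrightarrow \prod_{\nu} \X_I(hK_\nu) \x{\sim}{\lrar} \prod_{\nu} \Y_I(hK_\nu)$ is injective and factors through $\Y_I(h\A)$; therefore $f_* : \X_I(h\A) \lrar \Y_I(h\A)$ is injective. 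Finally the Corollary follows by a diagram chase: the forward implication is just functoriality of $\loc$, while for the converse, if $f_*(x_\nu) = \loc_{\Y_I}(y)$ with $y \in \Y_I(hK)$, the global bijection gives $x \in \X_I(hK)$ with $f_*(x) = y$, so that $f_*(\loc_{\X_I}(x)) = \loc_{\Y_I}(f_*(x)) = f_*(x_\nu)$, and injectivity of $f_*$ on $\X_I(h\A)$ forces $\loc_{\X_I}(x) = (x_\nu)$. Theorem~\ref{t:obstruction-iso} then follows, since by Theorem~\ref{t:descent-theorem} and Proposition~\ref{p:2-is-enough} one may replace $\X_I$ and $\Y_I$ by their $2$-truncations without changing either obstruction set $X(\A)^{\X_I}$, $X(\A)^{\Y_I}$.

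The only non-formal ingredients are Proposition~\ref{p:pro-homotopy-fixed-points-iso} itself (which we are allowed to assume) and the routine bookkeeping that identifies $\pi_0$ of the adelic homotopy fixed point space with the restricted product of the local ones, so that the embedding $\X_I(h\A) \hookrightarrow \prod_{\nu} \X_I(hK_\nu)$ is legitimate and $f$-equivariant; I expect this elementary compatibility, rather than any deep point, to be the place requiring the most care. The hypothesis $\X_I(h\A) \neq \emptyset$ plays no role beyond ensuring the statement is not vacuous.
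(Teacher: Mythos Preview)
Your proposal is correct and follows essentially the same route as the paper: apply Proposition~\ref{p:pro-homotopy-fixed-points-iso} over $\Gam_K$ and over each $\Gam_\nu$, then use the inclusion $\X_I(h\A) \subseteq \prod_\nu \X_I(hK_\nu)$ to deduce injectivity of $f_*$ on adelic homotopy fixed points and conclude by the obvious diagram chase. The paper's proof is just a terser version of exactly this, leaving implicit the stability of the hypotheses under restriction to $\Gam_\nu$ and the final chase that you spell out.
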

\begin{proof}
From Proposition ~\ref{p:pro-homotopy-fixed-points-iso} we get that $f$ induces isomorphisms of sets:
$$ \X_I(hK) \x{\simeq}{\lrar} \Y_I(hK) $$
and
$$ \prod_\nu\X_I(hK_\nu) \x{\simeq}{\lrar} \prod_\nu\Y_I(hK_\nu) $$
Now since $\X_I(h\A) \subseteq \prod_\nu\X_I(hK_\nu)$ the result follows.
\end{proof}

\section{Varieties of Dimension Zero}\label{s:dimension-zero}
\begin{prop}\label{p:zero-dim}
Let $X$ be a zero dimensional variety. Then for every $0 \leq n \leq \infty$ one has
$$ X(\A)^{h,n} = X(\A)^{\ZZ h,n} = X(K) $$
\end{prop}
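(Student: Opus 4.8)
The plan is to reduce everything to the case of a single point $\spec(L)$ for a finite extension $L/K$, where the \'etale homotopy type is essentially trivial. First I would observe that a zero-dimensional variety $X$ over $K$ is a finite disjoint union of spectra of finite separable field extensions, $X = \coprod_{i} \spec(L_i)$. Since all the constructions involved ($\acute{E}t^n_{/K}$, homotopy fixed points, adelic homotopy fixed points) are built from $\pi_0$-type functors that take coproducts to coproducts, and since homotopy fixed points and adelic homotopy fixed points commute with finite coproducts at the level of $\pi_0$, the statement for $X$ follows from the statement for each $\spec(L_i)$ together with the fact that $X(K) = \coprod_i \spec(L_i)(K)$ and $X(\A) = \coprod_i \spec(L_i)(\A)$. (One should be slightly careful here: the paper defers non-connected varieties to \S\ref{s:dimension-zero}, so the coproduct decomposition of the obstruction sets may need a small separate argument, but it is formal from the fact that an adelic point of $X$ lives in exactly one component.)

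Next I would handle $X = \spec(L)$. Here I claim $\acute{E}t^n_{/K}(\spec(L))$ is isomorphic, as a pro-object, to $\E(\Gam_K/\Gam_L) = \E G_L$ viewed with its $\Gam_K$-action through $G_L = \Gal(L/K)$ (truncated, but $\E G_L$ is already $0$-truncated so $P_k$ does nothing). This is exactly the computation underlying Lemma~\ref{l:point-is-point} and the identification $\acute{E}t^{\natural}_{/K}(\spec(K)) \cong \{\E G_L\}$; for a general finite extension one uses that the \'etale site of $\spec(L)$ over $K$ is the site of finite $\Gam_L$-sets, equivalently finite $\Gam_K$-sets over the $\Gam_K$-orbit $\Gam_K/\Gam_L$, so the relative $\pi_0$ of any hypercovering is a contractible simplicial $\Gam_K$-set which is ``induced'' from $\Gam_L$, and the cofinal hypercovering gives $\E G_L$. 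Consequently $\spec(L)^n(hK) = \pi_0\big(\E G_L^{h\Gam_K}\big)$. Since $\E G_L$ is Kan contractible and strictly bounded, $\E G_L^{h\Gam_K}$ is contractible (it represents a single $\Gam_K$-homotopy fixed point, namely the obvious $\Gam_K$-equivariant map $\E\Gam_K \to \E G_L$), so $\spec(L)^n(hK) = \ast$. The same argument applied to each completion gives $\spec(L)^n(hK_\nu) = \ast$, and for the adelic version $\spec(L)^n(h\A) = \ast$ as well (each local and unramified-local factor is a point, so the restricted product is a point). Therefore $\loc_{h,n}$ is the identity map $\ast \to \ast$, hence surjective, so every adelic point is homotopically rational: $\spec(L)(\A)^{h,n} = \spec(L)(\A)$. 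But $\spec(L)(\A) = \spec(L)(K)$ for a zero-dimensional variety — an adelic point is a compatible family of $K_\nu$-points of $\spec(L)$, and the existence of such a family forces $L = K$ and the point to be the unique $K$-point — actually more precisely one uses that $\spec(L)(K_\nu) \ne \emptyset$ for all $\nu$ implies $L = K$ (e.g. by Chebotarev, a proper extension is not split at all places), so either $\spec(L)(\A) = \emptyset = \spec(L)(K)$ or $L = K$ and both sides are a single point.

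Finally, the homology version $X(\A)^{\ZZ h, n}$ is handled identically, or even more easily: applying $\ZZ$ levelwise to the contractible simplicial $\Gam_K$-set $\E G_L$ yields a simplicial $\Gam_K$-module whose homotopy is concentrated in degree $0$ (it is a resolution of the constant module $\ZZ$), so $P_k(\ZZ\E G_L)$ is again ``contractible'' in the relevant sense and its $\Gam_K$- and adelic homotopy fixed point sets are again points. Since $X(\A)^{h,n} \subseteq X(\A)^{\ZZ h,n} \subseteq X(\A)$ and we have just shown the outer two sets equal $X(K)$, the middle one is squeezed to equal $X(K)$ as well, which also saves us from reproving anything. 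The only genuinely nontrivial input is the identification $\acute{E}t^n_{/K}(\spec(L)) \cong \E G_L$ and the arithmetic fact that a nontrivial finite extension of a number field cannot be split at every place; everything else is formal manipulation of the definitions, and I expect the main obstacle (such as it is) to be bookkeeping the non-connected case cleanly enough to invoke the per-component reduction.
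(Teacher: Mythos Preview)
Your overall strategy can be made to work, but there is a genuine error in the middle: the claim that $\acute{E}t^n_{/K}(\spec(L)) \cong \E G_L$ for a nontrivial extension $L/K$ is false. The relative functor ${\pi_0}_{/K}$ sends a hypercovering $\U_\bullet \to \spec(L)$ to the simplicial $\Gam_K$-set $\pi_0(\U_\bullet \otimes_K \ovl{K})$, and since $\spec(L)\otimes_K\ovl{K}$ already has $[L:K]$ geometric points permuted transitively by $\Gam_K$, every $\X_\U$ has $[L:K]$ connected components. In other words $\acute{E}t^n_{/K}(\spec(L))$ is (weakly) the \emph{discrete} $\Gam_K$-set $\Gam_K/\Gam_L$, not the contractible $\E G_L$; you have confused hypercoverings of $\spec(L)$ with the \v{C}ech nerve of $\spec(L)\to\spec(K)$, which is a hypercovering of $\spec(K)$. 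Consequently $\spec(L)^n(hK) = (\Gam_K/\Gam_L)^{\Gam_K} = \emptyset$ when $L\neq K$, not $\ast$. Fortunately your own Chebotarev step already gives $\spec(L)(\A)=\emptyset$ for $L\neq K$, so the obstruction set is vacuously $\emptyset=\spec(L)(K)$ and the wrong claim is never actually used; but as written the argument asserts something false.

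By contrast, the paper's proof is much shorter and avoids both the component decomposition and any arithmetic input. It takes the single trivial hypercovering $X_\bullet$ (constant with value $X$) and observes that $P_0(\ZZ\X_{X_\bullet})$ is the discrete $\Gam_K$-module $\ZZ\,X(\ovl{K})$, into which $X(\ovl{K})$ embeds. An adelic point $(x_\nu)$ lands, under $h_{\ZZ X_\bullet,0}$, in the subset $X(\ovl{K})\subset \ZZ\,X(\ovl{K})$ at each place; rationality then forces all $x_\nu$ to equal a single $\Gam_K$-fixed element of $X(\ovl{K})$, i.e.\ a point of $X(K)$. Hence $X(\A)^{\ZZ X_\bullet,0}=X(K)$, and the squeeze
\[
X(K)\subseteq X(\A)^{h,n}\subseteq X(\A)^{\ZZ h,n}\subseteq X(\A)^{\ZZ X_\bullet,0}=X(K)
\]
finishes. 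Your approach buys nothing over this: it is longer, uses Chebotarev where none is needed, and relies on a coproduct decomposition of the obstruction sets that the paper in fact deduces \emph{from} this proposition (Corollary~\ref{c:coprod}), so invoking it here is at best delicate.
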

\begin{proof}
Consider the hypercovering $X_\bullet \lrar X$ where $X_\bullet$ is the constant simplicial variety $X_n = X$. Then $P_0(\ZZ\SimpS_{X_\bullet})$ is a discrete simplicial set which is the $\Gam_K$ module freely generated by $X(\ovl{K})$. From the discreteness of $P_0(\ZZ\SimpS_{X_\bullet})$ and the fact that
$$ X(\ovl{K}) \subseteq P_0(\ZZ\SimpS_{X_\bullet}) $$
we see that

$$ X(\A)^{\ZZ X_\bullet, 0} = X(K) $$

Now since
$$ X(K) \subseteq X(\A)^{h,n} \subseteq X(\A)^{\ZZ h,n} \subseteq X(\A)^{\ZZ X_\bullet, 0} = X(K) $$
we get that
$$ X(K) = X(\A)^{\ZZ h,n} = X(\A)^{h,n} $$
as required.
\end{proof}
\begin{cor}\label{c:zero-dim-fin}
Let $X$ be a zero dimensional variety. Then
$$ X(\A)^{fin,h} = X(\A)^{fin,\ZZ h} = X(K) $$
\end{cor}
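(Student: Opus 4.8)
The plan is to deduce this immediately from Proposition~\ref{p:zero-dim} together with the general formal properties of the $X(\A)^{fin,obs}$ construction recorded earlier in \S~\ref{s:obstructions}. Recall that, by definition, $X(\A)^{fin,h}$ is the set $X(\A)^{fin,obs}$ obtained by feeding the functorial obstruction set $X(\A)^{obs} = X(\A)^{h} = X(\A)^{h,\infty}$ into the ``finite torsor'' construction, and likewise $X(\A)^{fin,\ZZ h}$ is obtained from $X(\A)^{obs} = X(\A)^{\ZZ h} = X(\A)^{\ZZ h,\infty}$; both $X(\A)^h$ and $X(\A)^{\ZZ h}$ are functorial since they are manufactured from the functor $\acute{E}t_{/K}$.

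First I would invoke the general chain of inclusions $X(K) \subseteq X(\A)^{fin,obs} \subseteq X(\A)^{obs}$, valid for any functorial obstruction set, which gives
$$ X(K) \subseteq X(\A)^{fin,h} \subseteq X(\A)^{h}, \qquad X(K) \subseteq X(\A)^{fin,\ZZ h} \subseteq X(\A)^{\ZZ h}. $$
Then I would apply Proposition~\ref{p:zero-dim} in the case $n = \infty$ to the zero-dimensional variety $X$, which asserts $X(\A)^{h} = X(\A)^{\ZZ h} = X(K)$. Substituting this into the two chains above squeezes each of $X(\A)^{fin,h}$ and $X(\A)^{fin,\ZZ h}$ between $X(K)$ and $X(K)$, hence both equal $X(K)$.

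There is essentially no obstacle here: all the content lies in Proposition~\ref{p:zero-dim}, and the only additional input is the monotonicity/sandwiching already established for the ``fin'' operation. (Alternatively one could argue torsor by torsor: any torsor $f^{\sigma}\colon Y^{\sigma} \lrar X$ under a finite $K$-group is again zero-dimensional, so $Y^{\sigma}(\A)^{h} = Y^{\sigma}(K)$ by Proposition~\ref{p:zero-dim}, whence $X(K) = \biguplus_{\sigma} f^{\sigma}(Y^{\sigma}(K)) = \bigcup_{\sigma} f^{\sigma}(Y^{\sigma}(\A)^{h}) = X(\A)^{f,h}$, and intersecting over $f$ gives the claim; but the squeezing argument above is shorter and already suffices.)
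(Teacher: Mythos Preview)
Your proof is correct and is essentially the same as the paper's: the paper also squeezes using Proposition~\ref{p:zero-dim} together with the inclusions $X(K) \subseteq X(\A)^{fin,h} \subseteq X(\A)^{fin,\ZZ h} \subseteq X(\A)^{\ZZ h}$. The only cosmetic difference is that the paper writes a single chain rather than two separate squeezes.
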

\begin{proof}
Immediate from prop ~\ref{p:zero-dim} and the fact that
$$ X(\A)^{fin,h} \subseteq X(\A)^{fin,\ZZ h} \subseteq X(\A)^{\ZZ h} $$
\end{proof}

\begin{cor}\label{c:coprod}
If $X = X_1 \coprod  X_2$ then
$$ X(\A)^{h,n} = X_1(\A)^{h,n} \coprod  X_2(\A)^{h,n} $$
$$ X(\A)^{\ZZ h,n} = X_1(\A)^{\ZZ h,n} \coprod  X_2(\A)^{\ZZ h,n} $$
$$ X(\A)^{fin,h} = X_1(\A)^{fin,h} \coprod  X_2(\A)^{fin,h} $$
$$ X(\A)^{fin,\ZZ h} = X_1(\A)^{fin,\ZZ h} \coprod  X_2(\A)^{fin,\ZZ h} $$

\end{cor}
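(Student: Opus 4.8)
The plan is to prove the statements for $X(\A)^{h,n}$ and $X(\A)^{\ZZ h,n}$ for all $0\le n\le\infty$ first, and then deduce the two finite-descent statements from them; write $X=X_1\coprod X_2$ throughout. The first step is to observe that the \'etale homotopy type splits. Every hypercovering $\U_\bullet\lrar X$ decomposes as $\U_\bullet=\U^{(1)}_\bullet\coprod\U^{(2)}_\bullet$ with $\U^{(i)}_\bullet\lrar X_i$ a hypercovering — a fibre product over $X_1\coprod X_2$ has no cross terms, so the coskeleton conditions in the definition of a hypercovering decompose — and this identifies $HC(X)$ with $HC(X_1)\times HC(X_2)$ and gives isomorphisms of simplicial $\Gam_K$-sets $\SimpS_{\U,k}\cong\SimpS_{\U^{(1)},k}\coprod\SimpS_{\U^{(2)},k}$ with both summands $\Gam_K$-stable, hence $\ZZ\SimpS_{\U,k}\cong\ZZ\SimpS_{\U^{(1)},k}\times\ZZ\SimpS_{\U^{(2)},k}$ as simplicial $\Gam_K$-modules. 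Since a $\Gam$-equivariant map out of a connected contractible simplicial set factors through a single $\Gam$-stable summand, formula~\ref{t:goerss-formula} shows that $\pi_0\bigl((-)^{h\Gam}\bigr)$ carries finite coproducts to coproducts and finite products to products; the same then holds for the unramified fixed points $(-)^{h^{ur}\Gam_\nu}$ and, after passing through the defining homotopy colimit and restricted product, for $(-)(h\A)$, with the one caveat that $(\SimpS_1\coprod\SimpS_2)(h\A)$ only \emph{contains} $\SimpS_1(h\A)\coprod\SimpS_2(h\A)$, namely as the locus of ``monochromatic'' adelic homotopy fixed points. Taking limits over the (cofiltered) index categories, whose transition maps preserve the two summands, one gets $X^n(hK)=X_1^n(hK)\coprod X_2^n(hK)$ and the analogous splitting over each $K_\nu$, an inclusion $X^n(h\A)\supseteq X_1^n(h\A)\coprod X_2^n(h\A)$, and for the homology variant $X^{\ZZ,n}(hK)=X_1^{\ZZ,n}(hK)\times X_2^{\ZZ,n}(hK)$ (and likewise over each $K_\nu$ and over $h\A$), all compatibly with the maps $h_n$, $\loc_{h,n}$ and their $\ZZ$-counterparts.

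The heart of the argument is a purity statement. By naturality of $h_n$ along $X_i\hookrightarrow X$, for $(x_\nu)\in X(\A)$ the $\nu$-th coordinate of $h_n((x_\nu))$ lies in the summand $X_i^n(hK_\nu)$ with $i$ determined by which $X_i(K_\nu)$ contains $x_\nu$; as $X_1^n(hK_\nu)$ and $X_2^n(hK_\nu)$ are disjoint, $h_n((x_\nu))$ is monochromatic iff $(x_\nu)$ is, i.e. iff $(x_\nu)\in X_1(\A)$ or $(x_\nu)\in X_2(\A)$. On the other hand the image of $\loc^X_{h,n}$ is monochromatic — a rational homotopy fixed point lies in $X_1^n(hK)$ or in $X_2^n(hK)$ and its localisations stay in the same summand — so the image of $\loc^X_{h,n}$ is the disjoint union of the images of $\loc^{X_1}_{h,n}$ and $\loc^{X_2}_{h,n}$. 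Chasing these facts, an element of $X(\A)^{h,n}$ is forced to be monochromatic, and a monochromatic $(x_\nu)\in X_i(\A)$ lies in $X(\A)^{h,n}$ iff $h_n^{X_i}((x_\nu))$ is in the image of $\loc^{X_i}_{h,n}$, i.e. iff $(x_\nu)\in X_i(\A)^{h,n}$; hence $X(\A)^{h,n}=X_1(\A)^{h,n}\coprod X_2(\A)^{h,n}$. The homology case runs the same way with $\times$ in place of $\coprod$ on the homotopy-fixed-point side, except that to exclude ``mixed'' adelic points one composes with the level-$0$ projection $\ZZ\SimpS_{\U^{(i)}}\to P_0(\ZZ\SimpS_{\U^{(i)}})=\ZZ[\pi_0(\ovl{X_i})]$: the $X_i$-component of $h_{\ZZ}((x_\nu))$ reduces at a place $\nu$ to the nonzero degree-$1$ class of the component of $x_\nu$ if $x_\nu\in X_i$ and to $0$ otherwise, whereas any element of the image of $H^0(\Gam_K,\ZZ[\pi_0(\ovl{X_i})])\to\prod'_{\nu}H^0(\Gam_\nu,\ZZ[\pi_0(\ovl{X_i})])$ localises to one and the same element at every place, hence cannot be $0$ at some places and nonzero at others. (Equivalently, one can run everything a single hypercovering at a time via Corollary~\ref{c:descent-theorem}.)

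For the two finite-descent statements one applies the previous cases to twisted torsors. If $f\colon Y\lrar X$ is a torsor under a finite $K$-group $G$ then $Y=f^{-1}(X_1)\coprod f^{-1}(X_2)$ is a disjoint union of $G$-torsors $f_i\colon Y_i\to X_i$, and this is preserved by twisting: $Y^\sigma=Y_1^\sigma\coprod Y_2^\sigma$ over $X=X_1\coprod X_2$ for every $\sigma\in H^1(K,G)$, with $f^\sigma$ restricting to $f_i^\sigma\colon Y_i^\sigma\to X_i$. By the cases already proved, $Y^\sigma(\A)^h=Y_1^\sigma(\A)^h\coprod Y_2^\sigma(\A)^h$ (and likewise with $\ZZ h$), so $X(\A)^{f,h}=\bigcup_\sigma f^\sigma(Y^\sigma(\A)^h)=X_1(\A)^{f_1,h}\coprod X_2(\A)^{f_2,h}$, the union over $\sigma$ distributing over the partition of $X(\A)$ into the mutually disjoint subsets $X_1(\A)$ and $X_2(\A)$. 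Since $f\mapsto(f_1,f_2)$ is a bijection onto pairs of finite torsors of $X_1$ and of $X_2$ — in particular $f\mapsto f_1$ is onto — and since intersection likewise distributes over this disjoint partition, intersecting over all finite torsors of $X$ yields $X(\A)^{fin,h}=X_1(\A)^{fin,h}\coprod X_2(\A)^{fin,h}$, and the identical argument gives the $\ZZ h$ statement.

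I expect essentially everything here to be formal bookkeeping with coproducts, products and restricted products; the one step that is \emph{not} purely formal is the exclusion of ``mixed'' adelic points from $X(\A)^{\ZZ h,n}$, where one must use that although the zero section is always a (indeed unramified) homotopy fixed point, it is never the localisation of a nonzero zero-dimensional class — the level-$0$ / Hurewicz-degree observation above. For the non-abelian homotopy obstruction the analogous exclusion is immediate, since there each adelic coordinate already pins down its summand.
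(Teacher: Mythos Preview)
Your proof is correct, but it takes a considerably more hands-on route than the paper's. The paper's argument is a two-line reduction: consider the structure map $\pi\colon X\lrar\spec(K)\coprod\spec(K)$ (sending $X_i$ to the $i$-th copy) and invoke functoriality of the obstruction sets together with Proposition~\ref{p:zero-dim} and Corollary~\ref{c:zero-dim-fin}. Since the target is zero-dimensional, those results give $(\spec(K)\coprod\spec(K))(\A)^{obs}=\{p_1,p_2\}$ for each of the four obstructions $obs$, so any $(x_\nu)\in X(\A)^{obs}$ must land in $\pi^{-1}(p_i)=X_i(\A)$ for a single $i$; this is precisely your ``monochromaticity'' step, but obtained uniformly for all four obstructions at once. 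The remaining inclusion $X(\A)^{obs}\cap X_i(\A)\subseteq X_i(\A)^{obs}$ then follows from the splitting of hypercoverings, exactly as you argue.

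What each approach buys: your direct analysis makes transparent \emph{why} mixed adelic points are excluded --- and you correctly identify that the only non-formal step is the level-$0$/degree argument in the $\ZZ h$ case. But notice that this step is exactly the content of Proposition~\ref{p:zero-dim} (the trivial hypercovering of a zero-dimensional variety gives $P_0(\ZZ\SimpS)=\ZZ[X(\ovl K)]$), so you are in effect reproving that proposition inside your argument. The paper's route is more economical because it reuses this already-established fact via the map $\pi$, and it handles the $fin,h$ and $fin,\ZZ h$ cases without any separate torsor bookkeeping (Corollary~\ref{c:zero-dim-fin} is itself immediate from Proposition~\ref{p:zero-dim}). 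Your torsor-splitting argument for the last two statements is fine, just unnecessary once one has the map to the zero-dimensional base.
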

\begin{proof}
Consider the obvious map
$$ X \lrar \spec(K) \coprod \spec(K) $$
and use functoriality together with Proposition ~\ref{p:zero-dim} and Corollary ~\ref{c:zero-dim-fin}.
\end{proof}

\begin{cor}\label{c:catches-pi-0}
If $\ovl{X}$ does not have a $\Gam_K$-invariant connected component then
$$ X(\A)^{fin,h} = X(\A)^{fin,\ZZ h} = X(\A)^{h,n} = X(\A)^{\ZZ h,n} = \emptyset $$
\end{cor}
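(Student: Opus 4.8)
The plan is to derive Corollary~\ref{c:catches-pi-0} from the preceding two corollaries by reducing to a single structural fact: that the relevant obstruction sets only ``see'' the $\pi_0$-level behaviour of $\ovl{X}$ via a map to a zero-dimensional variety. First I would observe that there is a natural map of $K$-varieties $X \lrar \pi_0(X)$, where $\pi_0(X)$ denotes the (zero-dimensional, possibly disconnected) $K$-variety whose $\ovl{K}$-points are the connected components of $\ovl{X}$ with their $\Gam_K$-action; concretely $\pi_0(X) = \Spec(\prod_i L_i)$ for the appropriate finite $K$-algebra. The hypothesis that $\ovl{X}$ has no $\Gam_K$-invariant connected component says precisely that $\pi_0(X)(K) = \emptyset$.

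Next I would invoke functoriality of all the obstruction set constructions (which follows from the functoriality of ${\acute{E}t^{n}_{/K}}$, $(\ZZ{\acute{E}t_{/K}})^{n}$, and the homotopy-fixed-point formalism, as recorded throughout \S\ref{s:obstructions}). The map $X \lrar \pi_0(X)$ induces compatible maps $X(\A)^{h,n} \lrar \pi_0(X)(\A)^{h,n}$, $X(\A)^{\ZZ h,n} \lrar \pi_0(X)(\A)^{\ZZ h,n}$, $X(\A)^{fin,h} \lrar \pi_0(X)(\A)^{fin,h}$, and $X(\A)^{fin,\ZZ h} \lrar \pi_0(X)(\A)^{fin,\ZZ h}$. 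Now $\pi_0(X)$ is a zero-dimensional variety, so by Proposition~\ref{p:zero-dim} and Corollary~\ref{c:zero-dim-fin} every one of these target sets equals $\pi_0(X)(K)$, which is empty by hypothesis. A map into the empty set can only exist if its source is empty, so all four sets on the left are empty, which is exactly the assertion.

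There is one point that needs a little care, and it is the main (mild) obstacle: one must make sure the obstruction sets are genuinely functorial in the sense needed, i.e.\ that a morphism $f:X\to Y$ of $K$-varieties really does send $X(\A)^{h,n}$ into $Y(\A)^{h,n}$ and likewise for the other three flavours. For the plain homotopy and homology obstructions this is immediate from the commuting square relating $\loc$ and $\loc_{h,n}$ (resp.\ $\loc_{\ZZ h,n}$) together with naturality of ${\acute{E}t^{n}_{/K}}$; an adelic point in $X(\A)^{h,n}$ maps to one whose associated adelic homotopy fixed point lies in the image of $\loc_{h,n}$, and pushing forward along $f$ preserves this by commutativity. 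For the $fin$-decorated versions one additionally uses that pullback of a torsor under a finite $K$-group along $f$ is again such a torsor, so the intersection over all finite torsors on $Y$ is controlled by the corresponding intersection on $X$; this is the standard compatibility already implicit in the definition of $X(\A)^{fin,obs}$ and its use in Corollary~\ref{c:zero-dim-fin}. Once this functoriality is in hand the proof is the two-line argument above.

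Concretely I would write:
\begin{proof}
Let $Z = \pi_0(X)$ be the zero-dimensional $K$-variety whose $\ovl{K}$-points are the connected components of $\ovl{X}$, equipped with the natural $\Gam_K$-action; there is a canonical map $f: X \lrar Z$ in $\Var_{/K}$. By hypothesis $\ovl{X}$ has no $\Gam_K$-invariant connected component, so $Z(K) = \emptyset$. By functoriality of the obstruction sets, $f$ induces maps
$$ X(\A)^{h,n} \lrar Z(\A)^{h,n}, \quad X(\A)^{\ZZ h,n} \lrar Z(\A)^{\ZZ h,n}, $$
$$ X(\A)^{fin,h} \lrar Z(\A)^{fin,h}, \quad X(\A)^{fin,\ZZ h} \lrar Z(\A)^{fin,\ZZ h}. $$
Since $Z$ is zero-dimensional, Proposition~\ref{p:zero-dim} gives $Z(\A)^{h,n} = Z(\A)^{\ZZ h,n} = Z(K) = \emptyset$, and Corollary~\ref{c:zero-dim-fin} gives $Z(\A)^{fin,h} = Z(\A)^{fin,\ZZ h} = Z(K) = \emptyset$. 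A set admitting a map to $\emptyset$ is itself empty, so
$$ X(\A)^{fin,h} = X(\A)^{fin,\ZZ h} = X(\A)^{h,n} = X(\A)^{\ZZ h,n} = \emptyset. $$
\end{proof}
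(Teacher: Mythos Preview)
Your proof is correct and follows essentially the same approach as the paper: map $X$ to the zero-dimensional variety $\pi_0(X)$, observe that $\pi_0(X)(K)=\emptyset$ by hypothesis, and conclude by functoriality together with Proposition~\ref{p:zero-dim} (and Corollary~\ref{c:zero-dim-fin} for the $fin$-decorated sets). You are in fact slightly more careful than the paper, which only cites Proposition~\ref{p:zero-dim} and leaves the invocation of Corollary~\ref{c:zero-dim-fin} and the discussion of functoriality implicit.
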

\begin{proof}
Again consider the map $X \lrar \pi_0(X)$ where $\pi_0(X)$ is considered as a zero dimensional $K$-variety. Since $X$ has no $\Gam_K$-invariant connected component $\pi_0(X)$ has no rational points. The result then follows from functoriality and Proposition ~\ref{p:zero-dim}.
\end{proof}

\section{ Connection to Finite Descent}\label{s:finite-descent}

In this section we will consider torsors
$$ f: Y \lrar X $$
under a finite $K$-group $G$. We will connect the classical obstructions \textbf{finite descent} and \textbf{finite abelian descent} to our homotopy and homology obstructions respectively.

We would often think of $G$ just as the finite group $G(\ovl{K})$ with the action of $\Gam_K$ on it. We then consider the standard model for $\B G$ as the nerve of the groupoid ${\mcal{B}G}$ with one object and morphism set $G$. We have a $\Gam_K$ action on this groupoid and so an action of $\Gam_K$ on our model for $\B G$.

In this model $\B G$ has a single vertex giving it a natural base point preserved by $\Gam_K$. Since $\B G$ is also bounded we can use this base point to compute
$$ \B G(hK) =\pi_0\left(\B G^{h\Gam_K}\right) $$
via the spectral sequence. Since $\B G$ is connected and $\pi_1$ is its only non-trivial homotopy group we can identify
$$ \B G(hK) = H^1(K, G) $$

Let us try to make this identification more explicit. Let $\Lam \fns \Gam_K$ and define $N = \Gam_K/\Lam$. Then the  simplicial set $\E N$ can be realized as the nerve of the groupoid $\mcal{E}N$ whose objects are $N$ and whose morphism sets are all singletons. Now suppose that our homotopy fixed point is given by a map
$$ H:E N \lrar \B G $$
which is induced by a map of groupoids
$$ \mcal{H}:\mcal{E}N \lrar {\mcal{B}G} $$
We can identify $N$ with the union
$$ \cup_{\sig \in N} \Hom(1,\sig) $$
Then $\mcal{H}$ maps this set to the morphism set from the single object of ${\mcal{B}G}$ to itself which can be identified with $G$. The obtained map $\alp:N \lrar G$ gives the desired $1$-cocycle in $H^1(K,G)$.

Note that since $\mcal{H}$ is $N$-equivariant it is completely determined by $\alp$. If one uses this determination in order to write the condition that $\mcal{H}$ respects composition in terms of $\alp$ one will find exactly the familiar $1$-cocycle condition.

Now consider the torsor $Y \lrar X$ as an \'{e}tale covering. By the \'{C}ech construction we can construct a hypercovering $\check{Y}$ given by
$$ \check{Y}_n = \overbrace{Y \times_{X} ... \times_{X} Y}^{n+1} $$
Let us now analyze the simplicial $\Gam_K$-set $\SimpS_{\check{Y}}$. We have a natural isomorphism (over $K$)
$$ \overbrace{Y \times_{X} ... \times_{X} Y}^{n+1} \cong
Y \times G^n $$
and so
$$ \left(\SimpS_{\check{Y}}\right)_n = \ovl{\pi}_0(\check{Y}_n) \cong \ovl{\pi}_0(Y) \times G^n $$
We then observe that we can identify $\SimpS_{\check{Y}}$ with the nerve of the groupoid $\mcal{Y}$ whose object set is $\ovl{\pi}_0(Y)$ and whose morphism sets are
$$ \Hom(C_1,C_2) = \{g \in G | gC_1 = C_2 \}\;\;, C_1,C_2 \in \ovl{\pi}_0(Y) $$

Note the group $\Gam_K$ acts naturally on $\mcal{Y}$ and this action is compatible with its action on $\SimpS_{\check{Y}}$. Hence $\mcal{Y}$ is actually a $\Gam_K$-groupoid.

${\mcal{B}G}$ is also a $\Gam_K$-groupoid and we have an equivariant map of groupoids
$$ \mcal{Y} \lrar {\mcal{B}G} $$
induced by the natural inclusion $\Hom(C_1,C_2) \subseteq G$. This groupoid map induces a map
$$ c_Y: \SimpS_{\check{Y}} \lrar \B G $$
and hence a map (which by abuse of notation we denote)
$$ c_Y: \SimpS_{\check{Y}}(hK) \lrar \B G(hK) = H^1(K,G) $$
By abuse of notation we will call the adelic version of this map by the same name:
$$ c_Y: \SimpS_{\check{Y}}(h\A) \lrar \B G(h\A) = H^1(\A,G) $$

Note that we have a map
$$ p_Y:X\left(hK\right) \lrar \SimpS_{\check{Y}}(hK) $$
We would sometimes abuse notation and refer to the composition $c_Y \circ p_Y$ as $c_Y$ as well.

\begin{lem}\label{l:classifies-fiber}
Let $p \in X(K)\;\; (X(\A))$ be a point and
$$ x_p = h(p) \in X\left(hK\right) \;\;\; \left(X(h\A)\right) $$
the corresponding homotopy fixed point. Then
$$ {c_Y}(x_p) \in H^1(K,G) \;\;\; \left(H^1(\A,G)\right) $$
is the element which classifies the $G$-torsor  $Y_p = f^{-1}(p)$.
\end{lem}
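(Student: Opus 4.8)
The plan is to unwind both sides of the claimed equality explicitly in terms of $1$-cocycles and check they agree, reducing everything to the case of a rational point $p \in X(K)$ (the adelic case follows by applying the rational case place-by-place, since all constructions are compatible with the maps $\Gam_\nu \hookrightarrow \Gam_K$). So fix $p \in X(K)$, viewed as a map $p:\spec(K)\lrar X$. Pulling back the hypercovering $\check Y \lrar X$ along $p$ gives the \v{C}ech hypercovering $\check{Y_p} \lrar \spec(K)$ of the $G$-torsor $Y_p = f^{-1}(p)$, and by functoriality of $h$ the homotopy fixed point $x_p = h(p)$ is the image of the canonical point of $\SimpS_{\check{Y_p}}(hK)$ (which is a point since $\SimpS_{\check{Y_p}}$ is contractible, being the nerve of a contractible $\Gam_K$-groupoid with free $G$-action) under the map $p_* : \SimpS_{\check{Y_p}}(hK) \lrar \SimpS_{\check Y}(hK)$.

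Next I would make the groupoid picture explicit. As in the discussion preceding the lemma, $\SimpS_{\check Y}$ is the nerve of the $\Gam_K$-groupoid $\mcal{Y}$ with object set $\ovl{\pi}_0(Y)$ and $\Hom(C_1,C_2) = \{g\in G \mid gC_1 = C_2\}$, and $c_Y$ is induced by the equivariant inclusion $\mcal{Y}\lrar \mcal{B}G$. For the point $p$, the restriction $\mcal{Y}_p$ is the action groupoid of $G$ acting on $\ovl{\pi}_0(Y_p)$, which is a transitive free $G$-set; choosing a base component $C_0 \in \ovl{\pi}_0(Y_p)$ trivializes $\mcal{Y}_p \simeq \mcal{B}G$ as plain groupoids, but not $\Gam_K$-equivariantly — the failure of equivariance is exactly a $1$-cocycle $\sig \mapsto \alp_\sig \in G$, where $\alp_\sig$ is the unique element with $\alp_\sig \cdot \sig(C_0) = C_0$. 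This is, by the classical dictionary (e.g.\ \cite{Sko01}), precisely the cocycle classifying the torsor $Y_p$. On the other hand, tracing through the explicit description of $\B G(hK) = H^1(K,G)$ given just above the lemma: a homotopy fixed point represented by $\mcal{H}:\mcal{E}N \lrar \mcal{B}G$ yields the cocycle $\sig \mapsto \mcal{H}(1 \to \sig)$. Composing $\mcal{E}N \to \mcal{Y}_p \to \mcal{B}G$, where the first map sends the canonical point of $\SimpS_{\check{Y_p}}$ and uses the chosen $C_0$, one computes that the resulting morphism $1 \to \sig$ maps to exactly $\alp_\sig$. Hence $c_Y(x_p)$ is the class of $[\alp] = [Y_p]$.

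The main obstacle I anticipate is purely bookkeeping: making sure the identification $\B G(hK) = H^1(K,G)$ used in the statement (via the spectral sequence of Theorem \ref{t:ss} and the base point of $\B G$) is literally the same, on the nose, as the cocycle extracted from the groupoid map $\mcal{E}N \to \mcal{B}G$, and that the choice of $C_0$ (equivalently, of a lift of the base point) does not affect the cohomology class — this is where one must be careful that changing $C_0$ changes $\alp$ by a coboundary. Once the translation between "homotopy fixed point of $\SimpS_{\check Y}$ over $p$" and "$G$-equivariant trivialization of $\ovl{\pi}_0(Y_p)$ up to homotopy" is pinned down, the equality of classes is immediate. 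I would also remark that the adelic statement requires no new idea: the map $c_Y$ and the point $x_p$ are defined by the same groupoid-level constructions base-changed to each $K_\nu$, and the classical fact that the restriction of $[Y_p] \in H^1(K,G)$ to $H^1(K_\nu,G)$ classifies $(Y_p)_{K_\nu}$ closes the argument.
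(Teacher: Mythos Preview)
Your proposal is correct and follows essentially the same approach as the paper's own proof: reduce by functoriality to the case $X=\spec(K)$ and $Y=Y_p$, use that $\SimpS_{\check{Y_p}}$ is contractible so that $\SimpS_{\check{Y_p}}(hK)$ is a single point, construct an explicit equivariant groupoid map $\mcal{E}N\to\mcal{Y}_p$ using a chosen base object, and then read off the cocycle from the composite $\mcal{E}N\to\mcal{Y}_p\to\mcal{B}G$ on the morphism $1\to\sig$. One small point of bookkeeping to watch (which you correctly anticipate): your cocycle is defined by $\alp_\sig\cdot\sig(C_0)=C_0$, whereas the paper's is defined by $\sig y_0=u_\sig(y_0)$; these are inverse to one another, so make sure your convention for the identification $\B G(hK)\cong H^1(K,G)$ matches the direction in which you extract the cocycle.
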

\begin{proof}
We will prove the lemma for $K$. The proof of the adelic version is completely analogous. From functoriality it is enough to prove the claim for $X = p$ and $Y = Y_p$.

In this case $\mcal{Y}$ is the groupoid whose objects set is a principle homogenous $G$-set classified by an element $u \in H^1(K,G)$ and there is a unique morphism between any two objects.

Let $y_0 \in Y$ be an arbitrary base point and let $\Lam \fns \Gam_K$ be such that both the action of $\Gam_K$ on $Y$ and the element $u$ factor through $N = \Gam_K/\Lam \lrar G$. Then we can represent $u$ by a $1$-cocycle $\sig \lrar u_\sig$ such that
$$ \sig y_0 = u_\sig(y_0) $$
for every $\sig \in N$.

Now since the elements in $\mcal{Y}$ don't have automorphisms we see that $\SimpS_{\check{Y}}$ is contractible and so $\SimpS_{\check{Y}}(hK) = *$. This means that if we construct any concrete element in $\SimpS_{\check{Y}}(hK)$ then it has to coincide with $x_p$.

The homotopy fixed point which we will construct is the one given by the equivariant map of groupoids
$$ \vphi: \mcal{E}N \lrar \mcal{Y} $$
which sends the object $\sig \in N$ to the object $\sig y_0 \in \mcal{Y}$ (since all the morphism sets are singletons there is no problem to construct such a map). We then need to compose the map $\vphi$ with the groupoid map $\mcal{Y} \lrar {\mcal{B}G}$ and look and the obtained map
$$ \psi:\mcal{E}N \lrar {\mcal{B}G} $$
In order to find the corresponding element in $H^1(K,G)$ we need to check to which morphism $\psi$ sends the unique morphism from $1$ to $\sig \in N$. Now $\vphi$ sends this morphism to the unique morphism from $y_0$ to $\sig y_0$. Since
$$ \sig y_0 = u_\sig(y_0) $$
we see that $\psi$ sends this morphism to the morphism $u_\sig \in G$. This finishes the proof of the lemma.
\end{proof}

\begin{cor}\label{c:cor-of-classifies-fiber}
Let $K$ and $X$ be as above. Then an adelic point $(x_\nu) \in X(\A)$ is in $X(\A)^{f}$ (i.e. survives finite descent) if and only if the homotopy fixed point
$$ c_Y(h((x_\nu))) \in \B G(h\A) $$
is rational for all torsors $f:Y \lrar X$ under finite $K$-groups.
\end{cor}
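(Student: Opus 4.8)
The plan is to combine Lemma~\ref{l:classifies-fiber} with the classical descent formalism for torsors under finite groups, together with a matching of the homotopy-theoretic localization map $\loc:\B G(hK)\lrar \B G(h\A)$ with the classical restriction map in Galois cohomology. I will establish the equivalence for a single torsor $f:Y\lrar X$ under a finite $K$-group $G$ — that $(x_\nu)\in X(\A)^f$ if and only if $c_Y(h((x_\nu)))$ is rational — and then intersect over all such $f$ to obtain the displayed statement (and, with it, the characterization of $X(\A)^{fin}$).

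First I would record the identifications, already implicit in the text preceding the lemma, $\B G(hK)=H^1(K,G)$ and $\B G(h\A)=H^1(\A,G)=\prod'_\nu H^1(K_\nu,G)$, the restricted product being taken with respect to the unramified classes, i.e.\ the images of $H^1(\Gam_\nu^{ur},G)$. Using the explicit cocycle description of $\B G^{h\Gam_K}$ given above, together with naturality of the construction in the field, I would check that under these identifications $\loc:\B G(hK)\lrar\B G(h\A)$ is precisely the product of the classical restriction maps $H^1(K,G)\lrar H^1(K_\nu,G)$. The restricted-product (unramified) part matches by construction: $\B G^{h^{ur}\Gam_\nu}=(\B G^{I_\nu})^{h\Gam_\nu^{ur}}=\B G^{h\Gam_\nu^{ur}}$ computes $H^1(\Gam_\nu^{ur},G)$, which is exactly the subgroup of unramified classes. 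This is routine but must be stated with care.

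Next, by the adelic version of Lemma~\ref{l:classifies-fiber} applied to $(x_\nu)\in X(\A)$, the element $c_Y(h((x_\nu)))\in \B G(h\A)=H^1(\A,G)$ is exactly the class of the fibre torsor $f^{-1}((x_\nu))$, i.e.\ the evaluation of $f$ at the adelic point $(x_\nu)$. On the other hand, the classical theory of descent under finite algebraic groups (see~\cite{Sko01}, \cite{HSk02}) shows that $(x_\nu)\in X(\A)^f=\bigcup_{\sigma\in H^1(K,G)} f^\sigma(Y^\sigma(\A))$ if and only if the class of $f^{-1}((x_\nu))$ in $H^1(\A,G)$ lies in the image of $\loc$: indeed $(x_\nu)\in f^\sigma(Y^\sigma(\A))$ precisely when the $\sigma$-twisted fibre $(f^\sigma)^{-1}((x_\nu))$ has an adelic point, which by the twisting formalism for torsors is equivalent to $[f^{-1}((x_\nu))]$ being the image of $\sigma$ under $\loc$.

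Combining the two, $(x_\nu)\in X(\A)^f$ if and only if $c_Y(h((x_\nu)))$ lies in $\im(\loc:\B G(hK)\lrar\B G(h\A))$, i.e.\ if and only if it is rational; intersecting over all torsors $f:Y\lrar X$ under finite $K$-groups yields the corollary. The main obstacle I anticipate is the first step: making precise and verifying that the abstract homotopy-fixed-point localization map coincides with the classical restriction map on $H^1$, and in particular that the restricted-product structure coming from unramified homotopy fixed points is the usual restricted product of local $H^1$'s. This is essentially bookkeeping layered on top of the explicit cocycle description already in place, but it is the place where the argument must be pinned down carefully.
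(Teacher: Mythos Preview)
Your argument is correct and is precisely the unpacking that the paper leaves implicit: in the text the corollary appears immediately after Lemma~\ref{l:classifies-fiber} with no separate proof, because once the adelic version of that lemma identifies $c_Y(h((x_\nu)))$ with the class of the fibre torsor in $H^1(\A,G)$, the standard twisting characterization of $X(\A)^f$ gives the equivalence for each fixed $f$. Your concern about matching the homotopy-theoretic $\loc$ with the classical restriction map (including the unramified structure) is legitimate bookkeeping but poses no real difficulty, and the paper likewise takes these identifications $\B G(hK)=H^1(K,G)$, $\B G(h\A)=H^1(\A,G)$ for granted from the discussion preceding the lemma.
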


The main result of this section is the following
\begin{thm}\label{t:fin}
Let $K$ be field and $X$ a smooth connected $K$-variety. Then
$$ X(\A)^{fin} = X(\A)^{h,1} $$
$$ X(\A)^{fin-ab} = X(\A)^{\ZZ h,1} $$
\end{thm}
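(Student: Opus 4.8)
The plan is to prove the two equalities by a double inclusion, using the identification of $\B G$-homotopy fixed points with Galois cohomology $H^1(-,G)$ established in the preceding discussion, together with the ``single hypercovering'' reformulation and the auxiliary descent Theorem \ref{t:descent-theorem}. First I would set up the translation dictionary: by Corollary \ref{c:descent-theorem} one has $X(\A)^{h,1} = \bigcap_{\U} X(\A)^{\U,1}$ and $X(\A)^{\ZZ h,1} = \bigcap_{\U} X(\A)^{\ZZ\U,1}$, where $\SimpS_{\U,1} = P_1(\SimpS_\U)$; so it suffices to compare, hypercovering by hypercovering, the rationality of the image of an adelic point in $P_1(\SimpS_\U)(h\A)$ (respectively in $P_1(\ZZ\SimpS_\U)(h\A)$) with the condition of surviving a finite (respectively finite abelian) descent obstruction coming from torsors built out of $\U$.

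For the inclusion $X(\A)^{h,1} \subseteq X(\A)^{fin}$: given a torsor $f\colon Y \lrar X$ under a finite $K$-group $G$, the Čech hypercovering $\check Y$ produces, via the groupoid map $\mcal Y \lrar \mcal B G$ constructed above, a map $c_Y\colon \SimpS_{\check Y}(hK) \lrar \B G(hK) = H^1(K,G)$ compatible with localization, and by Lemma \ref{l:classifies-fiber} it sends $h(p)$ to the class of the fiber $Y_p$. Since $\B G$ is $1$-truncated, $c_Y$ factors through $P_1(\SimpS_{\check Y})$, hence through $\SimpS_{\check Y,1}(hK)$; so if $(x_\nu)\in X(\A)^{h,1}$ then its image in $\SimpS_{\check Y,1}(h\A)$ is rational, and pushing forward by $c_Y$ shows the class $c_Y(h((x_\nu))) \in H^1(\A,G)$ comes from $H^1(K,G)$. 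By Corollary \ref{c:cor-of-classifies-fiber} this is exactly the statement that $(x_\nu)$ survives finite descent, so $(x_\nu)\in X(\A)^{fin}$. The reverse inclusion $X(\A)^{fin}\subseteq X(\A)^{h,1}$ is the substantive direction: here I would show that every $P_1(\SimpS_\U)$ is (after strict fibrant replacement) equivalent, as a simplicial $\Gam_K$-set, to a disjoint union of $\B G_i$'s for suitable finite groups $G_i$ with $\Gam_K$-action --- concretely, $\SimpS_\U$ is the nerve of the $\Gam_K$-groupoid with objects $\ovl\pi_0(\U_0)$ and a morphism $C_1\to C_2$ whenever they lie in the same component of $\SimpS_\U$ over $\ovl X$ (using the description $\SimpS_\U = (\xi^*\U)/\Gam_{\ovl K(\ovl X)}$ from \S\ref{ss:homotopy-type-of-X-U} and Lemma \ref{l:calc_pi1} to identify $\pi_1$). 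The component of $P_1(\SimpS_\U)$ through a given basepoint is a $\B\pi_1(\SimpS_\U)$, and a homotopy fixed point picking out a $\Gam_K$-invariant component of $P_0(\SimpS_\U)$ together with a lift to $P_1$ is precisely a section of the induced extension $1\to\pi_1(\SimpS_\U)\to\pi_1((\SimpS_\U)_{h\Gam_K})\to\Gam_K\to1$ (Theorem \ref{t:homotopy-fixed-point-iff-splits}); choosing the component corresponding to $\ovl\pi_0(Y)$ for a classifying torsor recovers exactly the datum of an $H^1(K,G)$-class whose fiber-torsor is that of $Y_p$. Running this locally-globally and invoking Corollary \ref{c:cor-of-classifies-fiber} in reverse gives the inclusion.

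The finite-abelian case is obtained by the same argument applied to $P_1(\ZZ\SimpS_\U)$ in place of $P_1(\SimpS_\U)$: applying the $\ZZ$-functor abelianizes the groupoid, so $\pi_1(\ZZ\SimpS_\U) = H_1(\SimpS_\U) = \pi_1(\SimpS_\U)^{ab}$, and $P_1(\ZZ\B G)$ realizes the abelian group $G^{ab}$ with its $\Gam_K$-action; correspondingly $\ZZ\SimpS_{\check Y,1}(hK) \lrar H^1(K,G^{ab})$, and a torsor under a finite abelian $G$ is the same as a torsor whose structure group is its own abelianization. One checks that ranging over all hypercoverings produces, on the abelianized side, exactly all torsors under finite abelian $K$-groups (every such torsor arises, up to the equivalence used in defining $X(\A)^{fin-ab}$, from a Čech hypercovering), and then Corollary \ref{c:cor-of-classifies-fiber} in its abelian form closes the argument. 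The main obstacle I anticipate is the reverse inclusion $X(\A)^{fin}\subseteq X(\A)^{h,1}$ --- specifically, showing that the homotopy fixed point data on $P_1(\SimpS_\U)$ for \emph{all} $\U$ is no more restrictive than surviving descent under \emph{all} finite torsors; this requires both a cofinality statement (that Čech hypercoverings of torsors $Y\to X$ are cofinal among the relevant $\U$'s for first-Postnikov-stage purposes, so that $P_1(\SimpS_\U)$ never sees more than $H^1$ with finite coefficients) and careful bookkeeping of basepoints/components so that the non-pointed, possibly-disconnected $P_1(\SimpS_\U)$ is handled correctly --- the results of \S\ref{s:dimension-zero} on the $\pi_0$-part and Lemma \ref{l:calc_pi1} on $\pi_1$ are exactly what is needed to dispose of these two points.
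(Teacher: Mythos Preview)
Your inclusion $X(\A)^{h,1}\subseteq X(\A)^{fin}$ is fine and matches the paper, and you correctly isolate the cofinality ingredient for the reverse inclusion (every $\SimpS_{\U,1}$ is, via an equivariant map of hypercoverings, weakly equivalent to some $\SimpS_{\check Y,1}$; this is Lemmas~\ref{l:Upsilon} and~\ref{l:check-is-enough}).

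The gap is in the step where you pass from finite descent to rationality of $h_{\check Y}((x_\nu))\in\SimpS_{\check Y}(h\A)$. Corollary~\ref{c:cor-of-classifies-fiber} ``in reverse'' only gives you rationality of $c_Y(h((x_\nu)))\in\B G(h\A)$. The equivariant map $c_Y\colon\SimpS_{\check Y}\to\B G$ is a weak equivalence of underlying simplicial sets \emph{only when $Y$ is geometrically connected}: by Lemma~\ref{l:calc_pi1} one has $\pi_1(\SimpS_{\check Y})\cong G/\langle\text{stabilizers of points of }\ovl\pi_0(Y)\rangle$, a proper quotient of $G$ otherwise. So rationality in $\B G(h\A)$ does not, in general, lift to rationality in $\SimpS_{\check Y}(h\A)$, and your assertion that $P_1(\SimpS_\U)$ is equivariantly a (union of) $\B G_i$ cannot be justified through $c_Y$ alone. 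Nothing in your outline produces a geometrically connected torsor to which to apply this.

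The paper supplies the missing idea: assuming $X(\A)^{fin}\neq\emptyset$, a lemma of Stoll (Lemma~\ref{l:from-stoll}) yields a twist $Y^\alpha$ with a $\Gam_K$-invariant geometrically connected component $Y_0^\alpha$; for that torsor $c_{Y_0^\alpha}$ \emph{is} a weak equivalence, so rationality in $H^1(\A,G_0^\alpha)$ lifts to $\SimpS_{\check Y_0^\alpha}(h\A)$ and hence to $\SimpS_{\check Y^\alpha}(h\A)$. One then transports rationality back to $\SimpS_{\check Y}(h\A)$ via an explicit equivariant isomorphism $\SimpS_{\check Y_L}\cong\SimpS_{\check Y^\alpha_L}$ over an auxiliary Galois cover $X_L\to X$ (Lemma~\ref{l:twist-relations-1}), together with the weak equivalences $\SimpS_{\check Y_L}\to\SimpS_{\check Y}$ and $\SimpS_{\check Y^\alpha_L}\to\SimpS_{\check Y^\alpha}$. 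This twist-and-transport step is essential and absent from your proposal; the same device (with $P_1(\ZZ\,\bullet)^1$ replacing the identity, and noting $\B G\to P_1(\ZZ\B G)^1$ is a weak equivalence for abelian $G$) handles the finite-abelian case.
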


\begin{proof}
\begin{define}
Let
$$ X(\A)^{h-tor} \subseteq X(\A) $$
be the set of adelic points such that for every torsor $f: Y \lrar X$ as above the adelic homotopy fixed point
$$ h_{\check{Y}}((x_\nu)) \in \SimpS_{\check{Y}}(h\A) $$
is rational.
\end{define}
\begin{define}
Let
$$ X(\A)^{\ZZ h-tor} \subseteq X(\A) $$
be the set of adelic points such that for every $f: Y \lrar X$ as above the adelic homotopy fixed point
$$ h_{\ZZ\check{Y},1}((x_\nu)) \in P_1((\ZZ \SimpS_{\check{Y}}))(h\A) $$
is rational.
\end{define}

We will show that $X(\A)^{h,1} = X(\A)^{fin}$ by showing that both sets are equal to $X(\A)^{h-tor}$. Similarly we will show that $X(\A)^{\ZZ h,1} = X(\A)^{fin-ab}$ by showing that both sets are equal to $X(\A)^{\ZZ h-tor}$.

\begin{prop}\label{p:descent-is-check}
Let $K$ and $X$ be as above. Then
$$ X(\A)^{fin} = X(\A)^{h-tor} $$
$$ X(\A)^{fin-ab} = X(\A)^{\ZZ h-tor} $$
\end{prop}
\begin{proof}



From Corollary ~\ref{c:cor-of-classifies-fiber} we get immediately that
$$ X(\A)^{h-tor} \subseteq X(\A)^{fin} $$
In order to get
$$ X(\A)^{\ZZ h-tor} \subseteq X(\A)^{fin-ab} $$
one should notice that for abelian $G$ the natural map
$$\B G \lrar P_1(\ZZ \B G)^1$$
is a weak equivalence, when the $()^1$ represents restricting to the degree one component.

We now want to show the inverse inclusions. We shall write the proof of the non-abelian case and explain the relevant modifications for the abelian group in the end of this proof.

If $X(\A)^{fin}$ is empty we are done. Otherwise
\begin{lem}\label{l:from-stoll}
Let $X,K$ be as above and assume that $ X(\A)^{fin} \neq \emptyset $.
Let
$$ f:Y \lrar X $$
be a torsor under a finite $K$-group $G$. Then there is a twist
$$ f^\alp:Y^{\alp} \lrar X $$
such that $\ovl{Y}$ has a connected component defined over $K$. If one assumes instead that $X(\A)^{fin-ab} \neq \emptyset$ then the result is true for abelian $G$.
\end{lem}

\begin{proof}
Note that over $\ovl{K}$ there is an inclusion
$$(\ovl{Y}_0,\ovl{G_0}) \lrar (\ovl{Y},\ovl{G})$$
which is a map of torsors over $\ovl{X}$, where $\ovl{Y}_0$ is any connected component of $\ovl{Y}$. Since $X(\A)^{fin} \neq \emptyset$ we have from Lemma 5.7 of ~\cite{Sto07} that there exists a map
$$ (\ovl{Y'},\ovl{G'}) \lrar (\ovl{Y}_0,\ovl{G_0}) $$
where $(Y',G')$ is a geometrically connected torsor over $X$. By composing we get a map
$$ (\ovl{Y'},\ovl{G'}) \lrar (\ovl{Y},\ovl{G}) $$
Now by Lemma 5.6 ~\cite{Sto07} there exists an $\alp \in H^1(K,G)$ for which there is a map
$$ ({Y'},{G'}) \lrar ({Y^{\alp}},{G^{\alp}}) $$
Since $(Y',G')$ is geometrically connected and defined over $K$ the same is true for its image. The abelian version is similar.
\end{proof}

Now assume that $(x_\nu) \in X(\A)^{fin}$ and let $f: Y \lrar X$ be a torsor under a finite $K$-group $G$. Since $X(\A)^{fin} \neq \emptyset$ we get from Lemma ~\ref{l:from-stoll} that there exists a twist
$$ f^{\alp}: Y^{\alp} \lrar X $$
such that $\ovl{Y^\alp}$ has a $\Gam_K$-invariant connected component. Let $Y^{\alp}_0 \subseteq Y^{\alp}$ be that connected component and consider it as torsor over $X$ under its stablizer $G^\alp_0 \subseteq G^\alp$. Since $Y^{\alp}_0$ is geometrically connected the map
$$ \SimpS_{Y^{\alp}_0} \lrar B(G^{\alp}_0) $$
is a weak equivalence. From Corollary ~\ref{c:cor-of-classifies-fiber} we get that $h_{\check{Y}^{\alp}_0}((x_\nu))$ is rational and since $Y^{\alp}_0$ maps to $Y^{\alp}$ we get that $h_{\check{Y}^{\alp}}((x_\nu))$ is rational as well.

Let $L/K$ a finite Galois extension such that $\alp$ can be written as a map $\alp: \Gal(L/K) \lrar G(L)$. Let $f^{\alp}:Y^{\alp} \lrar X$ be the corresponding twist by $\alp$ which is a finite torsor under $G^{\alp}$. Let
$$ X_L \lrar X $$
be as in Definition ~\ref{d:X-L}. Note that this is a $\Gal(L/K)$-torsor.

Now consider the torsors (over $X$)
$$ Y_L = Y\times_X X_L $$
$$ Y^{\alp}_L = Y^{\alp}\times_X X_L $$
and the natural projections
$$ Y_L \lrar Y $$
$$ Y^{\alp}_L \lrar Y^{\alp} $$
Note that these maps induce $\ovl{K}$-isomorphism from each $\ovl{K}$-connected component in their domain to its image. Hence from our groupoid analysis above we see that they induce weak equivalences
$$ \SimpS_{\check{Y}_L} \lrar \SimpS_{\check{Y}} $$
$$ \SimpS_{\check{Y}^{\alp}_L} \lrar \SimpS_{\check{Y}^{\alp}} $$

\begin{lem}\label{l:twist-relations-1}
There exist an isomorphism of \'{e}tale coverings over $X$
$$ T_\alp: Y_L \lrar Y^{\alp}_L $$
\end{lem}
\begin{proof}
After choosing an identification of $\pi_0(\bar{X_L})$ with $\Gal(L/K)$ it is enough to construct an isomorphism
$$ T_\alp: \Gal(L/K) \times \bar{Y} \lrar \Gal(L/K) \times \bar{Y} = \Gal(L/K) \times \bar{Y^{\alp}}$$
which commute with the action of $\Gam_K$ (which is different between both sides).

We shall define
$$ T_\alp(\tau,y) = (\tau,\alp(\tau)^{-1}y) $$
Now note that indeed
$${}^\sig (T_\alp(\tau,y)) = {}^\sig(\tau,\alp(\tau)^{-1}y)= (\sig\tau , {}^\sig(\alp(\tau)^{-1}y)) =$$
$$ (\sig\tau , \alp(\sig)^{-1}{}^\sig\alp(\tau)^{-1}{}^\sig y) =  (\sig\tau , \alp(\sig\tau)^{-1}{}^\sig y) = $$
$$ T_\alp(\sig\tau,{}^\sig y) = T_\alp({}^\sig(\tau,y)) $$
This finishes the proof of the lemma.
\end{proof}

Now from Lemma ~\ref{l:twist-relations-1} it follows that we have an isomorphism
$$ \SimpS_{\check{Y}_L} \lrar \SimpS_{\check{Y}^{\alp}_L} $$

This means that $h_{\check{Y}^{\alp}}((x_\nu))$ is rational if and only if $h_{\check{Y}}((x_\nu))$ is rational and so $(x_\nu) \in X(\A)^{h-tor}$ and we are done.

In the abelian case one should use the same arguments but apply $P^1(\ZZ \bullet )^1$ to the spaces $\SimpS_{\check{Y}}$,$\SimpS_{\check{Y}^{\alp}}$, $\SimpS_{\check{Y}_L}$ and $\SimpS_{\check{Y}^{\alp}_L}$ and again use the fact that for an abelian group $G$ the map
$$\B G \lrar P_1(\ZZ \B G)^1$$
is a weak equivalence.

\begin{prop}~\label{p:h-1-is-check}
Let $K$ and $X$ be as above. Then
$$ X(\A)^{h,1} = X(\A)^{h-tor} $$
$$ X(\A)^{\ZZ h,1} = X(\A)^{\ZZ h-tor} $$
\end{prop}
\begin{proof}
Again we shall state the proof only for the non-abelian case the modification needed to the abelian case is just applying the functor $\ZZ$ to all the $\Gam_K$-Spaces in the proof.

First suppose that $(x_\nu) \in X(\A)^{h,1}$. Then for every torsor $f: Y \lrar X$ under a finite $K$-group $G$ the adelic homotopy fixed point
$$ h_{\check{Y},1}((x_\nu)) \in \SimpS_{\check{Y},1}(h\A) $$
is rational.

We know that $\SimpS_{\check{Y}}$ is the nerve of a groupoid $\mcal{Y}$. Now it is fairly straight forward to verify that if $\mcal{G}$ is a groupoid then the nerve $N(\mcal{G})_\bullet$ satisfies the Kan condition and the map
$$ N(\mcal{G})_\bullet \lrar P_1(N(\mcal{G})_\bullet) $$
is an isomorphism. Hence in our case
$$\SimpS_{\check{Y}} \lrar  \SimpS_{\check{Y},1}$$
is an isomorphism. This means that if $(x_\nu) \in X(\A)^{h,1}$ then
$$ h_{\check{Y}}((x_\nu)) \in \SimpS_{\check{Y}}(h\A) $$
is rational for every torsor $f: Y \lrar X$ under a finite $K$-group $G$ and so
$$ (x_\nu) \in X(\A)^{h-tor} $$

Now assume that $ (x_\nu) \in X(\A)^{h-tor} $. We want to show that $(x_\nu) \in X(\A)^{h,1}$. First of all we have that
$$ h_{\check{Y},1}((x_\nu)) \in \SimpS_{\check{Y},1}(h\A) $$
is rational for all such $f: Y \lrar X$. Now the following two lemmas will finish the proof:

\begin{lem}\label{l:Upsilon}
Let $K$ be a field and $X$ a geometrically connected $K$-variety. Let $\U_\bullet \lrar X$ be a hypercovering and $G = \pi_1(\SimpS_{\U})$. Then there exist a $G$-torsor
$$ f:\Upsilon \lrar \ovl{X} $$
defined over $\ovl{K}$ and a map
$$ \gamma :\U_0 \lrar \Upsilon $$
of \'{e}tale covering over $\ovl{X}$ such that the induced map
$$ \SimpS_{\U,1} \lrar \SimpS_{\check{\Upsilon},1} $$
is a weak equivalence.
\end{lem}
\begin{proof}
By abuse of notation we shall refer to the pullback of $\U$ to the generic point of $X$ by $\U$ as well. Now by the discussion in subsection \S\S ~ref{ss:homotopy-type-of-X-U} $\U_0$ can be considered just as a finite $\Gam_{K(X)}$-set.

We denote by
$$ \Stab_{\Gam_{K(X)}} (\U_0) = \{H_1,...,H_t\} $$
the set of stabilizers in  $\Gam_{K(X)}$ of the points of $\U_0$. Similarly we denote by
$$ \Stab_{\Gam_{\ovl{K}(\ovl{X})}} (\U_0) = \{H_1 \cap {\Gam_{\ovl{K}(\ovl{X})}},...,H_t\cap {\Gam_{\ovl{K}(\ovl{X})}} \} $$
the set of stabilizers in  ${\Gam_{\ovl{K}(\ovl{X})}}$ of the points of $\U_0$.

Now consider the group
$$ E = \left<H_1 \cap {\Gam_{\ovl{K}(\ovl{X})}},...,H_t\cap {\Gam_{\ovl{K}(\ovl{X})}}\right> $$
i.e $E$ is the group generated by all the groups in $\Stab_{\Gam_{\ovl{K}(\ovl{X})}}(\U_0)$. Since the set $\Stab_{\Gam_{K(X)}}(\U_0)$ is invariant under conjugation by elements from $\Gam_{K(X)}$ we have that $E \lhd \Gam_{K(X)}$.

$E$ is also contained in $\Gam_{\ovl{K}(\ovl{X})}$ and is of finite index there. Let $F/\ovl{K}(\ovl{X})$ be the finite field extension that corresponds to $E$.

We claim that the ramification locus of $F$ in $X$ is empty. This is because every divisor $D \subseteq X$ intersects the image of some element in $\U_0$ which has some stabilizer $G_i \in \Stab_{\Gam_{\ovl{K}(\ovl{X})}} (\U_0)$. Then the field extension corresponding to $G_i$ is unramified at $D$ and since $G_i \subseteq E$ we get that $F$ is unramified at $D$ as well.

Thus $E$ contains the kernel of the map
$$ \rho:\Gam_{\ovl{K}(\ovl{X})} \lrar \pi_1(\ovl{X}) $$
Then $\rho(E)$ corresponds to a finite Galois cover
$$ \phi: \Upsilon \lrar \ovl{X} $$
with $\ovl{K}(\Upsilon) = F$. Further, by considering $\ovl{\U_0}$ and $\Upsilon$ as $\Gam_{\ovl{K}(\ovl{K})}$-sets, we see that there is  map
$$\gamma: \ovl{\U_0} \lrar \Upsilon$$
over $\ovl{X}$. Now by using Lemma ~\ref{l:calc_pi1} we see that the induced map
$$\SimpS_{\ovl{\U_0}} \lrar \SimpS_{\check{\Upsilon}} $$
induces an isomorphism on the fundamental groups and so a weak equivalence
$$ \SimpS_{\ovl{\U_0},1} \lrar \SimpS_{\check{\Upsilon},1} $$

\end{proof}

\begin{lem}\label{l:check-is-enough}
Let $K$ and $X$ be as above. Let $\U_\bullet \lrar X$ be a hypercovering. Then there exists a torsor $f:Y \lrar X$ under a finite $K$-group and a map
$$ g:\U_0 \lrar Y $$
of \'{e}tale covering over $X$ such that the induced map
$$ \SimpS_{\U,1} \lrar \SimpS_{\check{Y},1} $$
is a weak equivalence.
\end{lem}
\begin{proof}
By Lemma ~\ref{l:Upsilon} we have a torsor under some finite group
$$ \phi: \Upsilon \lrar \ovl{X} $$
defined over $\ovl{K}$ and map
$$\gamma: \ovl{\U_0} \lrar \Upsilon $$
which induces a weak equivalence
$$ \SimpS_{\U,1} \lrar \SimpS_{\check{\Upsilon},1} $$

Now let
$$\gamma_n: \ovl{\U_0} \lrar \Upsilon_n$$
be a complete set of twists of
$$\gamma: \ovl{\U_0} \lrar \Upsilon $$
Then we have a map

$$ \prod \gamma: \ovl{\U_0} \lrar \Upsilon_1 \times_X \cdots \times_X \Upsilon_nõ$$
We shall choose
$$ Y = \Upsilon_1 \times_X \cdots \times_X \Upsilon_n$$
and
$$ g = \prod \gamma $$

Now by using Lemma ~\ref{l:calc_pi1} we see that the induced map
$$ \SimpS_{\U} \lrar \SimpS_{\check{Y}} $$
induces an isomorphism on $\pi_1$ and so the induced map
$$ \SimpS_{\U,1} \lrar \SimpS_{\check{Y},1} $$
is a weak equivalence.
\end{proof}

\end{proof}
\end{proof}
\end{proof}

\begin{rem}\label{r:non-connected-fin}
One might wonder what is the role of connectivity in the proof of Theorem ~\ref{t:fin}. To see this one should consider the case
$$ X = \spec(K) \coprod \spec(K) $$
By Proposition ~\ref{p:zero-dim} we know that
$$ X(\A)^{\ZZ h,1} = X(\A)^{h,1} =  X(K) $$
On the other hand when $K$ has a complex place then
$$ X(\A)^{fin-ab} = X(\A)^{fin} \neq  X(K) $$
Indeed if $(x_\nu),(x'_\nu) \in X(\A)$ are two adelic points that differ only in a complex coordinates then $(x_\nu) \in  X(\A)^{fin}$ if and only if  $(x'_\nu) \in X(\A)^{fin}$, and similarly for $X(\A)^{fin-ab}$.

This is, however, the only fault in the desired equality of $X(\A)^{fin}$ and $X(K)$. In fact, in  ~\cite{Sto07} Stoll gives a proof of this equality (Lemma 5.10 in ~\cite{Sto07}). However this proof is erroneous and the error becomes a problem exactly in the complex places.

To conclude, when we ignoring the complex places (or if $K$ is totally real) one can write
$$ X(\A)^{h,1} = X(\A)^{fin} $$
and
$$ X(\A)^{\ZZ h,1} = X(\A)^{fin-ab} $$
even when $X$ is not connected.
\end{rem}

The following corollary is originally due to  Harari and Stix  (unpublished work) and is closely related to Lemma $5.7$ in ~\cite{Sto07}:

\begin{cor}\label{c:Harari-Stix}
Let $K$ and $X$ a smooth geometrically connected variety over $K$. Then if
$$ X(\A)^{fin} \neq \emptyset $$
then the short exact sequence
$$ 1 \lrar \pi_1(\ovl{X})\lrar \pi_1(X)\lrar \Gam_K \lrar 1 $$
splits.
\end{cor}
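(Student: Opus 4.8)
The plan is to chain together three facts already in hand: Theorem~\ref{t:fin}, the very definition of $X(\A)^{h,1}$, and Corollary~\ref{l:grothendieck-h-1}. No new homotopy-theoretic input is needed; the whole content has been pre-packaged.

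First I would invoke Theorem~\ref{t:fin}, which for a smooth connected $K$-variety asserts $X(\A)^{fin} = X(\A)^{h,1}$. Hence the hypothesis $X(\A)^{fin} \neq \emptyset$ is equivalent to $X(\A)^{h,1} \neq \emptyset$. Next I would unwind the definition of $X(\A)^{h,1}$: it is the set of adelic points $(x_\nu) \in X(\A)$ whose image under $h_1$ in $X^1(h\A)$ lies in the image of the localization map $\loc_{h,1}\colon X^1(hK) \lrar X^1(h\A)$. In particular, if this set is non-empty then $\img(\loc_{h,1})$ is non-empty, hence its source $X^1(hK)$ is non-empty.

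Finally, since $X$ is assumed smooth and geometrically connected, Corollary~\ref{l:grothendieck-h-1} applies: it says that $X^1(hK) \neq \emptyset$ is equivalent to the existence of a continuous section of
$$ 1 \lrar \pi_1(\ovl{X}) \lrar \pi_1(X) \lrar \Gam_K \lrar 1. $$
Combining the three steps gives the statement.

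I do not expect a real obstacle here. The only point requiring attention is hypothesis bookkeeping: Theorem~\ref{t:fin} is stated for smooth \emph{connected} $X$, while Corollary~\ref{l:grothendieck-h-1} requires smooth \emph{geometrically connected} $X$; since the corollary being proved assumes the stronger hypothesis, both inputs are legitimately available and nothing else is needed. One could also remark in passing that the same argument, read with $X(\A)^{fin,\ZZ h}$ or the homology variant, relates $X(\A)^{fin-ab} \neq \emptyset$ to the abelianized section condition, but that is not required for this corollary.
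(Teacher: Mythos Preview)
Your proposal is correct and follows exactly the paper's own argument: use Theorem~\ref{t:fin} to identify $X(\A)^{fin}$ with $X(\A)^{h,1}$, deduce $X^1(hK)\neq\emptyset$ from the definition, and apply Corollary~\ref{l:grothendieck-h-1}. The paper's proof is a two-line version of what you wrote; your added detail on the localization map and the hypothesis bookkeeping is fine but not strictly needed.
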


\begin{proof}

Since
$$ X(\A)^{h,1} = X(\A)^{fin} \neq \emptyset $$
we know that ${\acute{E}t_{/K}}^1(X)(hK)$ is non-empty thus we get the claim from Lemma ~\ref{l:grothendieck-h-1}.

\end{proof}

\section{ The Equivalence of the Homology Obstruction and the Brauer-Manin Obstruction }\label{s:equivalence-1}
In this section we shall prove to following theorem:
\begin{thm}\label{t:main}
Let $X$ be a smooth algebraic variety over a number field $K$. Then
$$ X(\A)^{\ZZ h} \subseteq X(\A)^{\Br} $$
If $X$ is also connected then
$$ X(\A)^{\ZZ h} = X(\A)^{\Br} $$
\end{thm}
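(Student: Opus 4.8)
The plan is to reduce the statement to the single-hypercovering descent picture and then match the two obstruction sets via the identification of $H^1$ and $H^2$ of the relevant spaces with Galois cohomology. First I would invoke Corollary~\ref{c:descent-theorem} to write $X(\A)^{\ZZ h} = \bigcap_{\U,k}X(\A)^{\ZZ\U,k}$, and then Corollary~\ref{c:cosk2-final} (together with Corollary~\ref{c:cosk2-final-2}) to reduce to the $2$-truncated spaces $P_2(\ZZ\SimpS_\U)$; in fact I expect that the relevant information sits entirely in degree $\leq 2$, so the homology obstruction is governed by $H^1(\Gam,H_1)$ and $H^2(\Gam,H_2)$ of the simplicial abelian groups $\ZZ\SimpS_\U$. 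The key input on the classical side is the description, going back to Colliot-Th\'el\`ene--Sansuc and to Harari's work, of the Brauer--Manin pairing in terms of the (geometric) Picard group and the Brauer group of $\ovl X$, or equivalently in terms of $H^2_{\acute et}$ of the $\ZZ/n$-coefficient pieces, which are exactly what the Postnikov pieces of $\ZZ\SimpS_\U$ compute (the homology of a hypercovering computes the \'etale cohomology of $X$ with finite coefficients, and the Galois action is built in via $\acute{E}t_{/K}$). So the overall strategy is: build an explicit pairing between $X(\A)^{\ZZ\U,2}$-data and Brauer classes, and show that vanishing of the homotopy-fixed-point obstruction is equivalent to vanishing of that pairing.

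For the inclusion $X(\A)^{\ZZ h}\subseteq X(\A)^{\Br}$, which holds for arbitrary smooth $X$, I would argue as follows. Take $(x_\nu)\in X(\A)^{\ZZ h}$ and an Azumaya class $u\in\Br(X)=H^2_{\acute et}(X,\GG_m)$. After replacing $X$ by a sufficiently fine hypercovering and passing to a finite Postnikov stage, $u$ determines a cohomology class that, under the identification of $P_2(\ZZ\SimpS_\U)$-invariants with the relevant \'etale cohomology, records the obstruction to lifting the homology fixed point. Concretely, the element $n\cdot u$ for suitable $n$ comes from $H^2(X,\mu_n)$, and via the spectral sequence of Theorem~\ref{t:ss} applied to $\ZZ\SimpS_{\U,2}$ its pullback $x_\nu^*u$ is read off from the image of $(x_\nu)$ in $X^{\ZZ,2}(h\A)$. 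Rationality of that image (i.e.\ its lying in the image of $\loc_{\ZZ h,2}$) forces the sum $\sum_\nu\inv(x_\nu^*u)$ to vanish by the reciprocity exact sequence $H^2(K,\mu_n)\to\bigoplus_\nu H^2(K_\nu,\mu_n)\to\QQ/\ZZ\to 0$ (class field theory), exactly as in the Hasse--Brauer--Noether argument. This gives $(x_\nu)\in X(\A)^{\Br}$.

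For the reverse inclusion under the connectedness hypothesis I would run the same spectral-sequence bookkeeping backwards. Given $(x_\nu)\in X(\A)^{\Br}$, one must produce, for every truncated hypercovering $\U$, a global homology fixed point in $X^{\ZZ,2}(hK)$ mapping to the image of $(x_\nu)$ in $X^{\ZZ,2}(h\A)$. Here I would use the obstruction theory of Proposition~\ref{p:obstruction-theory} (in its homological/abelian incarnation, and the remark following it giving the adelic analogue) to break the lifting problem into stages governed by $H^{i+1}(K,H_i)$ versus $H^{i+1}(\A,H_i)$. For $i\geq 2$ the finiteness/Poitou--Tate results make these match up to $\Sha$, and the crucial $i=2$ stage is precisely where the Brauer--Manin condition enters: the obstruction class lives in (a subquotient of) $H^2(\Gam_K,H_2)$, its localizations are controlled by $(x_\nu)$, and the pairing with $\Br(X)$ computes exactly the image of this class under $\bigoplus_\nu H^2(K_\nu,-)\to\QQ/\ZZ$; vanishing of the Brauer pairing therefore kills the obstruction and the lift exists. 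Connectedness of $X$ is used to guarantee $\SimpS_\U$ is connected (the Corollary after Lemma~\ref{l:calc_pi1}), so that there is no $\pi_0$-level discrepancy of the kind flagged in Remark~\ref{r:non-connected-fin}.

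The main obstacle I anticipate is the $i=2$ matching step: making the identification between the homotopy-theoretic obstruction class in $H^2$ and the classical Brauer--Manin pairing fully precise, including keeping track of the $H^1(\Gam,H_1)$-contributions (the $d^2$ differentials off $E^2_{1,0}$ and the torsor-twisting ambiguity) and the passage from $\mu_n$-coefficients to $\GG_m$-coefficients uniformly in the hypercovering. I would handle the finite-coefficient vs.\ $\GG_m$ issue by a standard colimit/Kummer-sequence argument (since every Brauer class is torsion and $X$ is smooth), and the $H^1$-contributions by the same cofinality and limit-of-finite-sets arguments (Lemma~\ref{l:lim-fin}, Proposition~\ref{p:finite-preim}) already developed for Theorem~\ref{t:descent-theorem}, so that only the genuinely two-dimensional part of the pairing needs to be analyzed by hand.
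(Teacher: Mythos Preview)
Your overall direction is right — reduce via Corollary~\ref{c:descent-theorem} to single hypercoverings, translate into (hyper)cohomological data, and invoke class field theory — but the implementation you sketch diverges from the paper's and has a genuine gap in the reverse inclusion.

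The paper does \emph{not} argue stage-by-stage through the Postnikov tower of $\ZZ\SimpS_\U$ using the obstruction classes of Proposition~\ref{p:obstruction-theory}. Instead it passes, via the Kan--Dold correspondence, to the complex $C=\uline{P_n(\ZZ\SimpS_\U)}$ and works entirely with hypercohomology and a \emph{duality pairing}
\[
\HH^0(\A,C)\times \HH^2\bigl(K,\widehat{C}\,\bigr)\longrightarrow \QQ/\ZZ,
\]
where $\widehat{C}=\uline{\Hom}(C,\GG_m)$. The two key ingredients are: (i) Theorem~\ref{t:MIT-1}, which identifies $\varinjlim_{\U,n}\HH^2(K,\widehat{\uline{P_n(\ZZ\SimpS_\U)}})$ with $H^2_{\acute et}(X,t^*\GG_m)$, together with surjectivity of $H^2_{\acute et}(X,t^*\GG_m)\to H^2_{\acute et}(X,\GG_m)$ (Theorem~\ref{t:surj_br}); and (ii) an arithmetic duality theorem for complexes (Theorem~\ref{t:arithmetic-duality}), a Poitou--Tate-type statement saying that an element of $\HH^0(\A,C)$ whose image under $\pi:C\to\ZZ$ is rational and which is orthogonal to all of $\HH^2(K,\widehat{C})$ is itself rational. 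This is where connectedness is used: it forces $H_0(C)=\ZZ$, so the degree map sends $h_{\ZZ\U,n}((x_\nu))$ to $(1,1,\dots)$, which is visibly rational, and the hypothesis $H^3(K,\widehat{\ZZ})=H^3(K,\ovl K^*)=0$ applies.

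Your plan to match individual obstruction classes in $H^{i+1}(\Gam_K,H_i)$ with Brauer classes does not work directly. Brauer classes live in $\HH^2$ of the \emph{dual} complex $\widehat{C}$, not in $H^2(\Gam_K,H_2)$ or $H^3(\Gam_K,H_2)$; in particular your sentence ``the obstruction class lives in (a subquotient of) $H^2(\Gam_K,H_2)$'' has the wrong target (the obstruction to lifting $P_1\to P_2$ is in $H^3(\Gam_K,H_2)$), and more importantly there is no clean identification between these successive obstruction groups and pieces of $\Br(X)$. The filtration-by-Postnikov-stages approach would force you to manage all the cross-differentials simultaneously; the paper bypasses this entirely by packaging everything into the single pairing above and proving exactness at $\HH^0(\A,C)$ in one blow via Theorem~\ref{t:arithmetic-duality}. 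Your Kummer-sequence intuition does appear, but only at the comparatively easy step of Theorem~\ref{t:surj_br}.
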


In light of Corollary ~\ref{c:descent-theorem} we see that in order to understand
$$ X(\A)^{\ZZ h}$$
we first need to understand the sets $P_n(\ZZ \SimpS_\U)(h\A)$ and $P_n(\ZZ \SimpS_\U)(hK)$ for each hypercovering $\U$. Note that $P_n(\ZZ \SimpS_\U)$ is not only a bounded simplicial $\Gam_K$-set but also a bounded simplicial $\Gam_K$-module i.e. a simplicial object in the category $\Mod_{\Gam_K}$ of $\Gam_K$ modules.

The category $\Mod_{\Gam_K}$ is abelian. An important tool of homotopy theory is the Kan-Dold correspondence which allows one to reduce the study of the homotopy theory of simplical objects in an abelian category $\mcal{A}$ to the study of homological algebra of complexes in $\mcal{A}$.

\subsection{The Kan-Dold Correspondence }
\begin{define}
Let $\mcal{A}$ be an abelian category. We denote by $\mcal{CA}$ the category of complexes over $A$ with a differential of degree $-1$. We denote by $\mcal{C}^{\geq 0}\mcal{A}$ the category of complexes which are bounded from below by dimension $0$.
\end{define}

\begin{define}
Let $\mcal{A}$ be an abelian category. If $\X$ is a simplicial object in $\mcal{A}$ we denote by $\uline{\X} \in \mcal{C}^{\geq 0}\mcal{A}$ the complex over $\mcal{A}$ given by
$$ \uline{\X}_n = \X_n $$
with the differential $d = \sum_i (-1)^i d_i$.
\end{define}
\begin{prop}(The Kan-Dold Correspondence)\label{p:Kan-dold}
Let $\mcal{A}$ be an abelian category. The category $\mcal{A}^{\Del^{op}}$ of simplicial objects in $\mcal{A}$ is equivalent to the category $\mcal{C}^{\geq 0}\mcal{A}$. Moreover for every two simplicial $\mcal{A}$ objects one has
$$ [\X,\Y] \cong [\uline{\X},\uline{\Y}] $$
where the first denotes simplicial homotopy and the second chain homotopy.
\end{prop}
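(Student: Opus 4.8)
The plan is to prove the Kan--Dold correspondence (Proposition~\ref{p:Kan-dold}) by exhibiting the classical Dold--Kan normalization functor together with its inverse, and then to track what happens to homotopies. I would organize the argument in three stages: first the equivalence of categories, then the statement about homotopy classes of maps, treating the two as logically separate (the second builds on the first but needs an extra idea).

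\emph{Stage 1: the equivalence.} For a simplicial object $\X \in \mcal{A}^{\Del^{op}}$, define the normalized chain complex $N\X$ by $(N\X)_n = \bigcap_{i=0}^{n-1} \ker\left(d_i\colon \X_n \to \X_{n-1}\right)$ with differential induced by $(-1)^n d_n$; one checks $d\circ d = 0$ directly. In the other direction, to a complex $C_\bullet \in \mcal{C}^{\geq 0}\mcal{A}$ one associates the simplicial object $\Gamma C$ with $(\Gamma C)_n = \bigoplus_{[n]\twoheadrightarrow[k]} C_k$, the sum over surjective monotone maps, with the face and degeneracy operators given by the standard Eilenberg--Zilber formulas. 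The main computational points are: (i) every simplicial object $\X$ splits canonically as $\X_n \cong \bigoplus_{[n]\twoheadrightarrow[k]}(N\X)_k$ (this is the Eilenberg--Zilber / Dold--Puppe decomposition, proved by induction on $n$ using that $\mcal{A}$ is abelian so that the split subobject generated by degeneracies has a complement), and (ii) the two functors $N$ and $\Gamma$ are mutually inverse up to natural isomorphism. Since all of this is internal to an abelian category and uses only kernels, cokernels, images and finite direct sums, the classical proof (e.g.\ as in Weibel or Goerss--Jardine) goes through verbatim; I would cite it rather than reproduce it. Note that the complex $\uline{\X}$ appearing in the statement, with unnormalized differential $\sum_i(-1)^id_i$, is chain homotopy equivalent to $N\X$ via the standard inclusion/projection maps, so it serves equally well for computing chain-homotopy classes.

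\emph{Stage 2: homotopies.} Here one must show $[\X,\Y]_{\mathrm{simp}} \cong [\uline{\X},\uline{\Y}]_{\mathrm{chain}}$. The cleanest route is: a simplicial homotopy between $f,g\colon \X\to\Y$ is a map $H\colon \X\otimes\Del^1 \to \Y$ with the appropriate restrictions, and since $\mcal{A}$ is abelian, $\X\otimes\Del^1$ is again a simplicial object in $\mcal{A}$ whose normalization is well understood --- in fact $N(\X\otimes\Del^1)$ is, up to chain homotopy equivalence, the mapping cylinder of $\mathrm{id}_{N\X}$, i.e.\ $N\X \oplus N\X \oplus N\X[1]$ with the cone differential. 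Unwinding what a map out of this cylinder is (compatible with the two end inclusions) reproduces exactly the definition of a chain homotopy between the normalizations of $f$ and $g$. Conversely, given a chain homotopy one reconstructs a simplicial homotopy by applying $\Gamma$ and using that $\Gamma$ takes the algebraic cylinder to (something equivalent to) $\X\otimes\Del^1$. I would phrase this as: $N$ is a functor of simplicial-model-categorical (or at least of ``category with cylinders'') type that preserves the cylinder object up to homotopy, hence induces a bijection on homotopy classes.

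\emph{Main obstacle.} The equivalence of categories itself is entirely standard and I would not expect difficulty there. The genuinely fiddly part is Stage~2: matching up the \emph{simplicial} notion of homotopy (defined via $\X \otimes \Del^1$, i.e.\ via the levelwise coproduct $\coprod$ which in an abelian category is $\bigoplus$) with the \emph{chain} notion (a degree~$+1$ map $s$ with $ds + sd = f - g$), and in particular making sure the signs and the handling of degeneracies are consistent. One has to be careful that the coproduct $\Del^1\otimes\X$ used in the simplicial enrichment of $\mcal{A}^{\Del^{op}}$ agrees with the tensoring that makes the Dold--Kan comparison a simplicial functor. I would handle this by reducing to the universal case $\X = $ a representable (or a disc/sphere) and citing the explicit computation of $N(\Del^n)$, then extending by the fact that both sides commute with colimits in $\X$; alternatively one can invoke that $N$ is the right Quillen equivalence between the projective model structures on $\mcal{A}^{\Del^{op}}$ and on $\mcal{C}^{\geq 0}\mcal{A}$, which immediately gives the bijection on homotopy classes between cofibrant--fibrant objects and hence, since every object here is both (all objects in these abelian model structures are cofibrant and fibrant when $\mcal{A}$ has enough of whatever is needed --- in our case $\mcal{A} = \Mod_{\Gam_K}$ is nice enough), for all objects.
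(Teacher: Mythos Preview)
Your proposal is correct and outlines the standard proof of the Dold--Kan correspondence. The paper, however, does not actually prove this proposition: its entire proof reads ``For the Moore complex it is written in Dold's original paper~\cite{Dol58} and the unnormalized complex is homotopy equivalent to the Moore complex.'' In other words, the paper treats this as a black-box classical result and simply cites Dold, together with the remark that the unnormalized complex $\uline{\X}$ is chain-homotopy equivalent to the normalized one (which you also note in Stage~1). Your sketch is therefore far more detailed than what the paper provides; there is nothing to compare on the level of argument, since the paper gives none.

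One small caution on your Stage~2 model-categorical shortcut: the claim that ``every object is both cofibrant and fibrant'' in the projective model structures on $\mcal{A}^{\Del^{op}}$ and $\mcal{C}^{\geq 0}\mcal{A}$ is not automatic for a general abelian $\mcal{A}$ (cofibrancy in the projective structure on chain complexes typically requires degreewise projectives). Your direct cylinder argument is the safer route if you want a self-contained proof; alternatively, stick with the citation as the paper does.
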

\begin{proof}
For the Moore complex it is written in Dold's original paper ~\cite{Dol58} and the unnormalized complex is homotopy equivalent to the Moore complex.
\end{proof}

\begin{rem}
The functor $\uline{\bullet}$ admits a right adjoint
$$ \ovl{\bullet}:  \mcal{C}^{\geq 0}\mcal{A} \lrar \mcal{A}^{\Del^{op}} $$
which is its "homotopy inverse"  i.e.
$$ \X \x{\simeq}{\lrar} \ovl{(\uline{\X})} \quad \forall \X \in \mcal{A}^{\Del^{op}} $$
$$ \uline{(\overline{\Y})} \x{\simeq}{\lrar} \Y \quad \forall \Y \in  \mcal{C}^{\geq 0}\mcal{A} $$

In particular for every $\X \in \mcal{CA}, \Y \in \mcal{A}^{\Del^{op}}$ one has an isomorphism of sets
$$ [\X,\ovl{\Y}] \cong [\uline{\X},\Y] $$
where the first denotes simplicial homotopy and the second chain homotopy.

\end{rem}

\begin{rem}
In case $\mcal{A}$ is the category of abelian groups then the Kan-Dold correspondence replaces homotopy groups with homology groups, i.e
$$ H_n(\uline{\X}) \cong \pi_n(\X) \quad \forall \X \in \mcal{A}^{\Del^{op}}$$
\end{rem}

Proposition ~\ref{p:Kan-dold} allows us to work in the category of complexes $\mcal{CA}$ instead in the category of simplicial objects and thus do homological algebra instead of homotopy theory.

\subsection{ Postnikov Towers For Complexes }

In order to continue with this line we would like to have for complexes a construction similar to the Postnikov construction which allows us to truncate homology groups. This analogue is actually quite simpler then the homotopical notion and one can choose to truncate homology groups from below as well as above.

Let $C \in \mcal{CA}$ be a complex. We denote by $P^+_n(C)$ the complex
$$ ... \lrar 0 \lrar C_n/d(C_{n+1}) \lrar C_{n-1} \lrar C_{n-2} \lrar ... $$
Note that $H_i(P^+_i(C)) = 0$ for $i > n$ and there is a natural map $C \lrar P^+_n(C)$ which induces isomorphisms
$$ H_i(C) \x{\simeq}{\lrar} H_i(P^+_n(C)) $$
for $i \leq n$. The functor $C \mapsto P^+_n(C)$ from general complexes to complexes bounded from above by dimension $n$ is left adjoint to the inclusion functor.

We similarly define $P^-_n(C)$ to be the complex
$$ ... \lrar C_{n+2} \lrar C_{n+1} \lrar \Ker d \lrar 0 \lrar ... $$
Note that $H_i(P^-_i(C)) = 0$ for $i < n$ and there is a natural map $P^-_n(C) \lrar C$ which induces isomorphisms
$$ H_i(P^-_n(C)) \x{\simeq}{\lrar} H_i(C) $$
for $i \geq n$. The functor $C \mapsto P^-_n(C)$ from general complexes to complexes bounded from below by dimension $n$ is right adjoint to the inclusion functor.

\begin{define}\label{df:typeof_complx}
Analogously to Definition ~\ref{df:typeof_spaces} we shall define

\begin{enumerate}
\item
A complex $C \in \mcal{C}{\Mod_{\Gam}}$ will be called \textbf{finite} if $H_i(C)$ if finite for all $i \in \ZZ$.
\item
A complex $C \in \mcal{C}{\Mod_{\Gam}}$ will be called \textbf{bounded} if there exists an $N >0$ such that
$$ C_i = 0 $$
for $|i| > N$.

\item
A complex $C \in \mcal{C}{\Mod_{\Gam}}$ will be called \textbf{homologically bounded} if there exists an $N >0$ such that
$$ H_i(C) = 0 $$
for $|i| > N$.
\item
A complex $C \in \mcal{C}{\Mod_{\Gam}}$ will be called \textbf{excellent} if the action of $\Gam$ on $C$ factors through a finite quotient.
\item
A complex $C \in \mcal{C}{\Mod_{\Gam}}$ will be called \textbf{nice} if for every $i \in \ZZ$ the action of $\Gam$ on $C_i$ factors through a finite quotient.
\end{enumerate}
\end{define}

The following lemma will be useful later:
\begin{lem}\label{l:assume_bounded}
Let $C \in \mcal{C}{\Mod_{\Gam}}$ be a homologically bounded complex. Then there exists a bounded complex $C^b$ such that $C^b$ is quasi-isomorphic to $C$. Furthermore, if $C$ is nice then $C^b$ can be chosen to be excellent.

\end{lem}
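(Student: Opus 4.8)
The plan is to prove Lemma~\ref{l:assume_bounded} by explicitly truncating the complex $C$ from both ends, using the functors $P^+_n$ and $P^-_n$ introduced above, and checking that finitely many steps suffice because $C$ is homologically bounded. First I would choose $N > 0$ large enough that $H_i(C) = 0$ for $|i| > N$. Set
$$ C^b = P^+_N\left(P^-_{-N}(C)\right). $$
By the adjunction properties recorded above, the natural map $P^-_{-N}(C) \lrar C$ is a quasi-isomorphism (it induces isomorphisms on $H_i$ for $i \geq -N$, and both sides have vanishing homology below $-N$), and likewise the natural map $P^-_{-N}(C) \lrar P^+_N(P^-_{-N}(C))$ is a quasi-isomorphism. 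Composing, $C^b$ is quasi-isomorphic to $C$. The complex $C^b$ is concentrated in degrees $[-N, N]$, hence bounded: explicitly its degree-$N$ term is $\left(P^-_{-N}(C)\right)_N / d\left(\left(P^-_{-N}(C)\right)_{N+1}\right)$, its degree-$(-N)$ term is $\Ker\left(d: C_{-N} \lrar C_{-N-1}\right)$, and in between it agrees with $C$.

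Next I would address the "furthermore" clause. If $C$ is nice, then the action of $\Gam$ on each $C_i$ factors through a finite quotient. The terms of $C^b$ are built out of finitely many of the $C_i$ (namely $C_{-N-1}, C_{-N}, \dots, C_{N}, C_{N+1}$) by taking kernels and cokernels of $\Gam$-equivariant maps; each such sub/quotient module of a module on which $\Gam$ acts through a finite quotient again has $\Gam$ acting through a (possibly smaller, but still) finite quotient. Since there are only finitely many nonzero terms in $C^b$, I can take the intersection of the corresponding finite-index normal subgroups of $\Gam$; this is still open normal in $\Gam$, and the action of $\Gam$ on all of $C^b$ factors through the resulting finite quotient. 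Hence $C^b$ is excellent.

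I do not expect any real obstacle here: the only points requiring a word of care are (i) that $P^-_{-N}$ is applied \emph{first} so that the subsequent application of $P^+_N$ does not disturb the already-truncated bottom (this is why one truncates from below before truncating from above, or one checks the two truncations commute in the relevant range), and (ii) that "factors through a finite quotient" is inherited by kernels and cokernels and closed under finite intersection of the kernels — both are immediate. One could equally well use the normalized (Moore) complex and truncate that; the argument is identical. So the main content is simply the bookkeeping above, and I would present it as such without further elaboration.
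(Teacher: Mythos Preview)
Your proof is correct and follows essentially the same approach as the paper: choose $N$ with $H_i(C)=0$ for $|i|>N$ and truncate on both sides. The only cosmetic difference is the order of truncation---the paper takes $C^b = P^-_{-N}(P^+_N(C))$ with the zigzag $C \lrar P^+_N(C) \llar P^-_{-N}(P^+_N(C))$, while you take $C^b = P^+_N(P^-_{-N}(C))$; both orders work, and you correctly noted this in your remark~(i). Your treatment of the ``nice $\Rightarrow$ excellent'' clause is in fact more detailed than the paper's, which simply asserts it.
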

\begin{proof}
Let $N$ be such that $H_i(C)= 0$ for $|i|>N$. Now it is clear that
$$ C \lrar P^+_N(C) \llar P^-_{-N}(P^+_N(C)) $$
is a zigzag of quasi-isomorphisms and that $P^-_{-N}(P^+_N(C)) \in \mcal{C}^b{\Mod_{\Gam}}$. Thus we can take
$$ C^b = P^-_{-N}(P^+_N(C)) $$
In particular if $C$ is nice then $C^b$ is excellent.
\end{proof}

Now suppose we are given a general complex $C \in \mcal{CA}$ and we want to construct from it a simplicial object in $\mcal{A}$. What we can do is to truncate it at zero to get a complex
$$ P^-_0(C) \in \mcal{C}^{\geq 0}\mcal{A} $$
We can then associate with it a simlicial object $\ovl{P^-_0(C)}$ in $\mcal{A}$.

The following lemma shows that this operation has some merit:
\begin{lem}\label{l:ses-gives-fibration}
Let $\mcal{A}$ be the category of $\Gam$-modules. Let
$$ 0 \lrar A \lrar B \lrar C \lrar 0 $$
be a short exact sequence of complexes in $\mcal{CA}$. Then
$$ \ovl{P^-_0(A)} \lrar \ovl{P^-_0(B)} \lrar \ovl{P^-_0(C)} $$
is a fibration sequence of simplicial abelian groups.
\end{lem}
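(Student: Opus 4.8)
The statement asserts that applying $\ovl{P^-_0(-)}$ to a short exact sequence of complexes of $\Gam$-modules yields a fibration sequence of simplicial abelian groups. The plan is to reduce everything to the Kan--Dold correspondence together with the standard fact that, for simplicial abelian groups, a levelwise short exact sequence is already a fibration sequence (indeed the normalized chain functor sends such a sequence to a short exact sequence of chain complexes, hence the associated long exact sequence of homotopy groups holds, and every map of simplicial abelian groups that is surjective is a Kan fibration). So the real content is to check that $\ovl{P^-_0(-)}$ sends the given short exact sequence to a levelwise short exact sequence of simplicial abelian groups, or at least to a sequence that is a fibration sequence up to weak equivalence.

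First I would analyze the functor $P^-_0$ on a short exact sequence $0 \to A \to B \to C \to 0$ of complexes. The complex $P^-_0(C)$ agrees with $C$ in degrees $\geq 1$, is $\Ker(d\colon C_0 \to C_{-1})$ in degree $0$, and is $0$ in negative degrees. Applying the snake lemma / a direct diagram chase in each degree, I would show that
$$ 0 \lrar P^-_0(A) \lrar P^-_0(B) \lrar P^-_0(C) \lrar 0 $$
\emph{need not} be exact on the nose: exactness can fail at degree $0$ because taking cycles is only left exact. The correct fix is to observe that $P^-_0(B) \to P^-_0(C)$ is \emph{not} surjective in general, so instead I would pass to the fibration replacement. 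Concretely, the fibre of $\ovl{P^-_0(B)} \to \ovl{P^-_0(C)}$ in the category of simplicial abelian groups is computed by the kernel complex, and one checks that the kernel of $P^-_0(B) \to P^-_0(C)$ is quasi-isomorphic (in fact chain-homotopy equivalent after truncation) to $P^-_0(A)$; this uses that $H_i(A) \to H_i(B) \to H_i(C)$ is part of a long exact sequence and that $P^-_0$ only alters $H_0$ by replacing it with a subobject while leaving $H_i$ for $i \geq 1$ untouched. Equivalently, I would invoke the Kan--Dold equivalence (Proposition~\ref{p:Kan-dold} and the adjunction remark following it) to translate the claim into: the triangle $P^-_0(A) \to P^-_0(B) \to P^-_0(C)$ is a homotopy fibre sequence of connective chain complexes of $\Gam$-modules, which follows from the original short exact sequence being a homotopy fibre sequence of (unbounded) complexes together with the fact that $P^-_0$ is a right adjoint (to the inclusion of connective complexes) and hence preserves homotopy limits, in particular homotopy fibres.

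The cleanest route, which I would actually write up, is therefore: (1) recall that $\ovl{\bullet}$ is an equivalence $\mcal{C}^{\geq 0}\Mod_\Gam \simeq \Mod_\Gam^{\Del^{op}}$ sending quasi-isomorphisms to weak equivalences and fibre sequences of connective complexes to fibre sequences of simplicial $\Gam$-modules; (2) note $P^-_0\colon \mcal{C}\Mod_\Gam \to \mcal{C}^{\geq 0}\Mod_\Gam$ is right adjoint to the fully faithful inclusion, hence preserves homotopy fibre sequences; (3) observe the given $0 \to A \to B \to C \to 0$ is in particular a homotopy fibre sequence in $\mcal{C}\Mod_\Gam$; (4) conclude that $\ovl{P^-_0(A)} \to \ovl{P^-_0(B)} \to \ovl{P^-_0(C)}$ is a homotopy fibre sequence of simplicial $\Gam$-modules, hence (forgetting the $\Gam$-action, which is what the statement is about) a fibration sequence of simplicial abelian groups since every short/fibre sequence of simplicial abelian groups is a fibration sequence. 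The main obstacle is step (2)/(3): being careful that the relevant homotopy theory of unbounded complexes is set up so that "$P^-_0$ preserves homotopy fibres" is literally true — the subtlety is exactly the behaviour at homological degree $0$, where $P^-_0$ replaces $C_0$ by $\Ker(d)$, and one must verify this truncation does not destroy the long exact sequence in homology below the truncation degree. I expect this to be the only point requiring genuine care; everything else is formal from Kan--Dold and the elementary theory of simplicial abelian groups.
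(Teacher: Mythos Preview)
Your proposal is correct. In fact, the first approach you sketch---checking that $P^-_0$ applied levelwise yields a short exact sequence in degrees $n>0$ and a left-exact sequence in degree $0$, and then deducing the long exact sequence in homology so that $\ovl{P^-_0(A)}$ is weakly equivalent to the homotopy fibre---is exactly what the paper does, in two lines. The paper does not invoke any model-categorical machinery: it simply records the exactness in each degree, writes down the resulting long exact sequence in $H_*$, and concludes.

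Your preferred ``cleanest route'' via $P^-_0$ being right adjoint and hence preserving homotopy fibre sequences is a genuinely different packaging. It is correct, but note that the bare 1-categorical adjunction is not quite enough: you need that $P^-_0$ is a right Quillen functor (or otherwise derived), which in practice one verifies by exactly the elementary computation the paper carries out. So the abstract argument ultimately rests on the same degree-$0$ check you flagged as the ``only point requiring genuine care.'' The payoff of your formulation is conceptual clarity and reusability; the payoff of the paper's is that it is self-contained and avoids any discussion of model structures on unbounded complexes.
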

\begin{proof}
Observe that that
$$ 0 \lrar P^-_0(A)_n \lrar P^-_0(B)_n \lrar P^-_0(C)_n \lrar 0 $$
is exact for $n > 0$ and that
$$ 0 \lrar P^-_0(A)_0 \lrar P^-_0(B)_0 \lrar P^-_0(C)_0 $$
is exact. This means that we have a long exact sequence
$$ ... \lrar H_n(P^-_0(A)) \lrar H_n(P^-_0(B)) \lrar H_n(P^-_0(C)) \lrar ... $$
$$ \lrar H_0(P^-_0(A)) \lrar H_0(P^-_0(B)) \lrar H_0(P^-_0(C)) $$
and so the natural map from $\ovl{P^-_0(A)}$ to the homotopy fiber of the map $\ovl{P^-_0(B)} \lrar \ovl{P^-_0(C)}$ is a homotopy equivalence.
\end{proof}

\subsection{ Galois Hypercohomology }\label{sbs:homology-pro}
Recall the suspension functor $\Sig: \mcal{C}{\Mod_{\Gam}} \lrar \mcal{C}{\Mod_{\Gam}}$ defined by
$$ (\Sig C)_n = C_{n-1} $$
The Kan-Dold correspondence allows one to use homotopy theory to \textbf{define} the hypercohomology $\HH^n\left(\Gam,C\right)$ for any $n \in \ZZ$ and a completely general complex $C\in \mcal{C}{\Mod_{\Gam}}$ by the formula
$$ \HH^n\left(\Gam, C\right)  = \pi_{0}\left(\ovl{P^{-}_0(\Sigma^{n} C)}^{h\Gam}\right)$$
When $\Gam = \Gam_K$ is the absolute Galois group of a field $K$ we will denote this group also by
$$ \HH^n\left(K, C\right) \x{def}{=} \HH^n\left(\Gam_K, C\right) $$
It is easy to see that this definition is invariant to quasi-isomorphisms and satisfies
$$ \HH^n\left(\Gam,\Sig C\right) \cong \HH^{n+1}\left(\Gam,C\right) $$
It also transforms short exact sequences to long exact sequences:
\begin{lem}\label{l:rational-hyper-ses}
Let
$$ 0 \lrar A \x{i}{\lrar} B \x{p}{\lrar} C \lrar 0 $$
be a short exact sequence of complexes. Then there is a natural long exact sequence

$$... \lrar  \HH^{n-1}(\Gam,C) \lrar \HH^n(\Gam,A) \lrar \HH^n(\Gam,B) \lrar \HH^n(\Gam,C) \lrar \HH^{n+1}(\Gam,A) \lrar ...$$
\end{lem}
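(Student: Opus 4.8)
\textbf{Proof plan for Lemma \ref{l:rational-hyper-ses}.}

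The plan is to reduce the statement to the already-established fact that $(\bullet)^{h\Gam}$ turns fibration sequences of simplicial $\Gam$-sets into fibration sequences of spaces (Lemma \ref{l:preserves fibrations}), together with the long exact sequence of homotopy groups associated to a fibration sequence. The two ingredients needed to perform this reduction are (a) the functor $\ovl{P^-_0(\bullet)}$ converts short exact sequences of complexes into fibration sequences of simplicial $\Gam$-modules, and (b) the suspension functor $\Sigma$ is exact, so it preserves short exact sequences of complexes; both are available: (a) is precisely Lemma \ref{l:ses-gives-fibration}, and (b) is immediate from the definition $(\Sigma C)_n = C_{n-1}$.

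First I would apply $\Sigma^n$ to the given short exact sequence. Since $\Sigma^n$ is exact, we obtain a short exact sequence of complexes
$$ 0 \lrar \Sigma^n A \lrar \Sigma^n B \lrar \Sigma^n C \lrar 0. $$
Next I would invoke Lemma \ref{l:ses-gives-fibration} to conclude that
$$ \ovl{P^-_0(\Sigma^n A)} \lrar \ovl{P^-_0(\Sigma^n B)} \lrar \ovl{P^-_0(\Sigma^n C)} $$
is a fibration sequence of simplicial $\Gam$-modules (in particular of simplicial $\Gam$-sets). Then by Lemma \ref{l:preserves fibrations}, applying $(\bullet)^{h\Gam}$ yields a fibration sequence of spaces
$$ \ovl{P^-_0(\Sigma^n A)}^{h\Gam} \lrar \ovl{P^-_0(\Sigma^n B)}^{h\Gam} \lrar \ovl{P^-_0(\Sigma^n C)}^{h\Gam}. $$
Since these spaces are homotopy fixed point spaces of simplicial $\Gam$-modules, they are infinite loop spaces (or at least simplicial abelian groups up to homotopy), so the fibration sequence extends to the left as well as the right, and the associated long exact sequence of homotopy groups is a long exact sequence of abelian groups in all degrees. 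Using $\pi_0\left(\ovl{P^-_0(\Sigma^n C)}^{h\Gam}\right) = \HH^n(\Gam, C)$ and the shift identity $\pi_k\left(\ovl{P^-_0(\Sigma^n C)}^{h\Gam}\right) \cong \pi_0\left(\ovl{P^-_0(\Sigma^{n-k} C)}^{h\Gam}\right) = \HH^{n-k}(\Gam,C)$, which follows from $\HH^n(\Gam,\Sigma C) \cong \HH^{n+1}(\Gam,C)$ together with the fact that loop spaces shift homotopy groups, the long exact sequence of the fibration reads exactly as
$$ \cdots \lrar \HH^{n-1}(\Gam,C) \lrar \HH^n(\Gam,A) \lrar \HH^n(\Gam,B) \lrar \HH^n(\Gam,C) \lrar \HH^{n+1}(\Gam,A) \lrar \cdots. $$

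The one point requiring a little care — and the step I would flag as the main obstacle — is the bookkeeping identifying $\pi_k$ of the homotopy fixed point space with the appropriate hypercohomology group in the correct degree, and checking that the connecting map in the fibration long exact sequence matches the connecting map one wants, i.e. that the indexing shift $\HH^n(\Gam,C) \to \HH^{n+1}(\Gam,A)$ comes out right rather than off by one or with a sign. This is handled by noting $P^-_0(\Sigma C) = \Sigma P^-_{-1}(C)$ and that $\ovl{\Sigma(\bullet)}$ is a delooping, so taking $h\Gam$ and $\pi_0$ of $\Sigma^n C$ versus $\pi_1$ of $\Sigma^{n+1} C$ agree; naturality of the whole construction in short exact sequences then forces the connecting maps to agree. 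Everything else is a direct citation of Lemmas \ref{l:ses-gives-fibration} and \ref{l:preserves fibrations} plus the standard long exact sequence of a fibration of infinite loop spaces.
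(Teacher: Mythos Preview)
Your proposal is correct and follows essentially the same route as the paper: apply $\Sigma^n$, invoke Lemma~\ref{l:ses-gives-fibration} to obtain a fibration sequence of simplicial $\Gam$-modules, apply Lemma~\ref{l:preserves fibrations} to pass to $(\bullet)^{h\Gam}$, and then read off the long exact sequence of homotopy groups. The only cosmetic difference is in the bookkeeping step you flagged: rather than appealing to the infinite-loop structure to extend the sequence in both directions at a fixed $n$, the paper writes out the five-term tail of the homotopy long exact sequence for each $n$, uses the identity $\Omega\,\ovl{P^-_0(D)} = \ovl{P^-_0(\Sigma^{-1}D)}$ together with $\Omega(X^{h\Gam}) = (\Omega X)^{h\Gam}$ to rewrite the $\pi_1$-terms as $\pi_0$-terms at level $n-1$, and then splices these five-term pieces together as $n$ varies.
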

\begin{proof}
By Lemma ~\ref{l:ses-gives-fibration} we get for each $n$ a fibration sequence
$$\ovl{P^-_0(\Sig^n A)} \lrar \ovl{P^-_0(\Sig^n B)} \lrar \ovl{P^-_0(\Sig^n C)} $$
is a fibration sequence. By lemma ~\ref{l:preserves fibrations} the sequence
$$ 0 \lrar \overline{P^-_0(\Sig^n A)}^{h\Gam} \lrar \overline{P^-_0(\Sig^n B)}^{h\Gam} \lrar \overline{P^-0(\Sig^n C)}^{h\Gam} \lrar 0 $$
is a fibration sequence as well, which gives in particular an exact sequence
$$ \pi_1\left(\overline{P^-_0(\Sig^n A)}^{h\Gam}\right) \lrar
\pi_1\left(\overline{P^-_0(\Sig^n B)}^{h\Gam}\right) \lrar
\pi_1\left(\overline{P^-_0(\Sig^n C)}^{h\Gam}\right) \lrar $$
$$ \pi_0\left(\overline{P^-_0(\Sig^n A)}^{h\Gam}\right) \lrar \pi_0\left(\overline{P^-_0(\Sig^n B)}^{h\Gam}\right) $$

Note that for any complex $D$ we have
$$ \Om \overline{P^-_0(D)} = \overline{P^-_0(\Sig^{-1} D)} $$
Since $\Om(X^{h\Gam}) = (\Om X)^{h\Gam}$ by commutation of homotopy limits the exact sequence above becomes
$$ \pi_0\left(\overline{P^-_0(\Sig^{n-1} A)}^{h\Gam}\right) \lrar
\pi_0\left(\overline{P^-_0(\Sig^{n-1} B)}^{h\Gam}\right) \lrar
\pi_0\left(\overline{P^-0(\Sig^{n-1} C)}^{h\Gam}\right) \lrar $$
$$ \pi_0\left(\overline{P^-_0(\Sig^{n} A)}^{h\Gam}\right) \lrar \pi_0\left(\overline{P^-_0(\Sig^n B)}^{h\Gam}\right) $$
which by definition gives the exact sequence
$$ H^{n-1}(\Gam,A) \x{i_*}{\lrar} H^{n-1}(\Gam,B) \x{p_*}{\lrar} H^{n-1}(\Gam,C) \lrar H^{n}(\Gam,A) \x{i_*}{\lrar} H^{n}(\Gam,B) $$
Putting all this exact sequences together gives the desired long exact sequence.

\end{proof}

In this paper we will sometimes need to use a more concrete formula for the hypercohomology. This is enabled by Theorem ~\ref{t:goerss-formula} for complexes which are bounded from above:
\begin{lem}\label{l:hyper-formula}
Let $C \in \mcal{C}\Mod_{\Gam}$ be a complex bounded from above (i.e. $C_n = 0$ for large enough $n$). Then
$$ \HH^i(\Gam,C) = \lim \limits_{\x{\lrar}{\Lam \fns \Gam}}\left[\uline{\ZZ \E(\Gam/\Lam)},C\right]^i_{\Gam} $$
In particular if $K$ is field and $\Gam = \Gam_K$ then we have
$$ \HH^i(K,C) = \lim \limits_{\x{\lrar}{L/K}}\left[\uline{\ZZ \E G_L},C\right]^i_{\Gam_K} $$
where the colimit is taken over finite Galois extension $L/K$ and $G_L$ is the relative Galois group of $L$ over $K$.
\end{lem}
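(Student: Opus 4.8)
\textbf{Proof proposal for Lemma~\ref{l:hyper-formula}.}

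The plan is to reduce the statement to the concrete colimit formula~$(*)$ of Theorem~\ref{t:goerss-formula}. By definition
$$ \HH^i(\Gam,C) = \pi_0\left(\overline{P^-_0(\Sig^i C)}^{h\Gam}\right), $$
so the first step is to observe that the simplicial $\Gam$-module $\overline{P^-_0(\Sig^i C)}$ is a bounded simplicial $\Gam$-set: since $C$ is bounded from above, $P^-_0(\Sig^i C)$ is a complex concentrated in degrees $0 \le n \le N$ for some $N$, hence under the Kan--Dold correspondence $\overline{P^-_0(\Sig^i C)}$ has homotopy groups concentrated in that range, and its underlying simplicial set is Kan (simplicial abelian groups are always Kan). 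It is therefore a bounded Kan simplicial $\Gam$-set and we may feed it into Theorem~\ref{t:goerss-formula}. Applying formula~$(*)$ gives
$$ \pi_0\left(\overline{P^-_0(\Sig^i C)}^{h\Gam}\right) \simeq \colim_{\Lam \fns \Gam} \left[\E(\Gam/\Lam), \overline{P^-_0(\Sig^i C)}\right]_{\Gam}. $$

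The second, and main, step is to identify the simplicial homotopy classes $\left[\E(\Gam/\Lam), \overline{P^-_0(\Sig^i C)}\right]_{\Gam}$ with the degree-$i$ chain homotopy classes $\left[\uline{\ZZ\E(\Gam/\Lam)}, C\right]^i_\Gam$. Here I would use the free/forgetful adjunction between $\Gam$-sets and $\Gam$-modules applied levelwise (the functor $\ZZ(-)$): a map of simplicial $\Gam$-sets $\E(\Gam/\Lam) \to \overline{P^-_0(\Sig^i C)}$ (where the target is the underlying simplicial $\Gam$-set of a simplicial $\Gam$-module) corresponds to a map of simplicial $\Gam$-modules $\ZZ\E(\Gam/\Lam) \to \overline{P^-_0(\Sig^i C)}$, and this correspondence is compatible with simplicial homotopies (one checks that $\ZZ(\X \times \Del^1) = \ZZ\X \otimes \ZZ\Del^1$ up to the relevant homotopy, or more simply uses that $\overline{P^-_0(\Sig^i C)}$ being a simplicial abelian group, two maps into it are simplicially homotopic iff their difference is null-homotopic, and this is detected on the associated complexes). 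Then by the Kan--Dold correspondence (Proposition~\ref{p:Kan-dold} together with the adjunction remark following it), simplicial homotopy classes of maps $\ZZ\E(\Gam/\Lam) \to \overline{P^-_0(\Sig^i C)}$ in $\Mod_\Gam^{\Del^{op}}$ agree with chain homotopy classes of maps $\uline{\ZZ\E(\Gam/\Lam)} \to P^-_0(\Sig^i C)$ in $\mcal{C}^{\ge 0}\Mod_\Gam$. Finally, since $\uline{\ZZ\E(\Gam/\Lam)}$ is a complex of free (hence projective, after passing to a finite quotient) modules concentrated in nonnegative degrees and $P^-_0$ only modifies $C$ in degrees $\le 0$, the truncation $P^-_0$ and the shift $\Sig^i$ can be absorbed into a degree shift of the mapping complex, yielding
$$ \left[\uline{\ZZ\E(\Gam/\Lam)}, P^-_0(\Sig^i C)\right]_\Gam \cong \left[\uline{\ZZ\E(\Gam/\Lam)}, C\right]^i_\Gam, $$
the $i$-th chain homotopy group of the Hom-complex. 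Assembling these identifications over the colimit gives the first displayed formula; the second is the special case $\Gam = \Gam_K$, where open normal subgroups $\Lam \fns \Gam_K$ correspond to finite Galois extensions $L/K$ and $\Gam_K/\Lam = G_L$.

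I expect the main obstacle to be the bookkeeping in the second step: carefully matching the shift/truncation conventions ($\Sig^i$, $P^-_0$, and the indexing convention in $[\,\cdot\,,\,\cdot\,]^i$) so that no degrees are lost, and verifying that the free/forgetful adjunction genuinely descends to simplicial homotopy classes in the $\Gam$-equivariant setting (the subtlety being that $\E(\Gam/\Lam)$ is not $\Gam$-free but only $\Gam/\Lam$-free, so one must check projectivity of $\uline{\ZZ\E(\Gam/\Lam)}$ as a complex of $\ZZ[\Gam/\Lam]$-modules, which holds since $\E(\Gam/\Lam)$ is a free $\Gam/\Lam$-set in each degree). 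Everything else is a direct invocation of Theorem~\ref{t:goerss-formula} and Proposition~\ref{p:Kan-dold}.
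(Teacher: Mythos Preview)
Your proposal is correct and follows essentially the same route as the paper: apply Theorem~\ref{t:goerss-formula} to the simplicial $\Gam$-module $\overline{P^-_0(\Sig^i C)}$, then use the free/forget adjunction and the Kan--Dold correspondence to pass from simplicial to chain homotopy classes, and finally remove $P^-_0$ and absorb $\Sig^i$ into the degree shift. One small point: formula~$(*)$ in Theorem~\ref{t:goerss-formula} requires the target to be \emph{strictly} bounded, not merely bounded and Kan; you should note that since $(\overline{D})^{\Lam} = \overline{D^{\Lam}}$ for a simplicial $\Gam$-module and $D^{\Lam}$ is again concentrated in degrees $[0,N]$, strict fibrancy and uniform boundedness of all fixed-point spaces are automatic here.
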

\begin{proof}
Let $C$ be a complex bounded from above. Let $\SimpS$  be the underlying simplicial $\Gam_K$-set of the simplicial $\Gam_K$-module
$ \SimpS = \ovl{P^{-}_0(\Sig^i C)} $.
Note that $\SimpS$ is strictly bounded so we can use ~\ref{t:goerss-formula} in order to compute:
$$
\HH^i(K,C) = \pi_0\left(\SimpS^{h\Gam}\right) =
\lim \limits_{\x{\lrar}{\Lam \fns \Gam}} \pi_0\left(\Map_{\Gam}\left(\E(\Gam/\Lam),\SimpS\right)\right) = $$
$$ \lim \limits_{\x{\lrar}{\Lam \fns \Gam}}
\left[\uline{\ZZ \E(\Gam/\Lam)},P^{-}_0(\Sig^i C)\right]^0_{\Gam} =
\left[\uline{\ZZ \E(\Gam/\Lam)},\Sig^i C\right]^0_{\Gam} =
 \lim \limits_{\x{\lrar}{\Lam \fns \Gam}}
\left[\uline{\ZZ \E(\Gam/\Lam)},C\right]^i_{\Gam} $$
\end{proof}

\begin{rem}
In ~\cite{Spa88} Spaltenstein gives a general descriptions of Galois hypercohomology for unbounded complexes by showing that every such complex admits an appropriate notion of injective resolution (which agrees with the classical notion for complexes bounded from above). His definition is equivalent to ours but verifying this might be quite a lengthy computation. The worried reader can note that if $C$ is just a Galois module (i.e. a complex consecrated of degree $0$) this fact follows from the computation of Lemma ~\ref{l:hyper-formula}. From this one can extend the claim by induction to general bounded complexes by using the suspension isomorphism and Lemma ~\ref{l:rational-hyper-ses}. From lemma ~\ref{l:assume_bounded} this will be true for all homologically bounded complexes.

The only place where we will need the coincidence of our definition with the classical one is when we will want to use freely the classical notion of cup product in hypercohomology. We will only use this notion for homologically bounded complexes.

\end{rem}

Now let $K$ be a field and $\mathbf{A}$ a simplicial $\Gam_K$-module. Then $\underline{\mathbf{A}}$ is bounded below by dimension $0$ and so
$$ P^-_0(\underline{\mathbf{A}}) = \underline{\mathbf{A}} $$
which means that
$$ \mathbf{A}(hK) = \pi_0\left(\mathbf{A}^{h\Gam}\right) = \HH^0(\Gam,\underline{\mathbf{A}}) $$


\subsection{ Adelic Hypercohomology }\label{ss:adelic-hypercohomology}


Let $K$ be a number field and $S$ a set of places of $K$. For a complex $C \in \mcal{C}\Mod_{\Gam_K}$ we define the \textbf{n'th $S$-adelic hypercohomology of $C$} by

$$ \HH^n\left(\A_S, C\right)  = \pi_{0}\left(\overline{P^{-}_0(\Sigma^{n} C)}^{h\A_S}\right) $$
\begin{rem}\label{r:adelic-invariant}
Note that by Theorem ~\ref{t:A-weak} the adelic hypercohomology groups are preserved
by quasi-isomorphism between \textbf{nice} complexes.
\end{rem}

\begin{lem}\label{l:adelic-hyper-ses}
Let
$$ 0 \lrar A \lrar B \lrar C \lrar 0 $$
be a short exact sequence of nice complexes in $\mcal{C}\Mod_{\Gam_K}$. Then there is a natural long exact sequence

$$... \lrar  \HH^{i-1}(\A_S,C) \lrar \HH^i(\A_S,A) \lrar \HH^i(\A_S,B) \lrar \HH^i(\A_S,C) \lrar \HH^{i+1}(\A_S,A) \lrar ...$$
\end{lem}
\begin{proof}
The proof is the same as lemma ~\ref{l:rational-hyper-ses} with the usage of Corollary ~\ref{c:preserves fibrations} instead of Lemma ~\ref{l:preserves fibrations}.
\end{proof}

Now let $K$ be a number field and $\mathbf{A}$ a simplicial $\Gam_K$-module. Then since $P^-_0(\underline{\mathbf{A}}) = \underline{\mathbf{A}}$ we have by definition
$$ \mathbf{A}(h\A) = \HH^0\left(\A, \uline{\mathbf{A}}\right) $$

\subsection{Proof of the Main Theorem }

We will now give an outline of the proof of Theorem ~\ref{t:main}. Given an adelic point $(x_\nu) \in X(\A)$ we get that it is homologically rational if and only if for every hypercovering $\mcal{U} \lrar X$ and $n\in N$ its image in
$$ P_n(\ZZ\SimpS_{\mcal{U}})(h\A) = \HH^0(\A,\uline{P_n(\ZZ\SimpS_{\mcal{U}})}) $$
is rational. i.e. lies in the image of the map
$$ \loc:\HH^0(K,\uline{P_n(\ZZ\SimpS_{\mcal{U}})})\lrar \HH^0(\A,\uline{P_n(\ZZ\SimpS_{\mcal{U}})}) $$

On the other hand the Brauer set is defined via pairings with elements in $H_{\acute{e}t}^2(X,\GG_m)$. Now the idea is to show that this pairing actually factors through the map
$$ X(\A) \lrar \lim \limits_{\stackrel{\llar}{\mcal{U},n}}\HH^0(\A,\uline{P_n(\ZZ\SimpS_{\mcal{U}})}) $$

First we will need some terminology:

Let $X/K$ an algebraic variety over $K$ with $t:X\to \spec K$ being the structure map. Given a Galois module $A$ we can consider it as an \'{e}tale sheaf over $\spec(K)$.

We denote by $t^{*}A$ be its pull back to $X$. The sheaf $t^*A$ can be described more concretely as follows: it associates to an \'{e}tale map $V \lrar X$ the group of $\Gam_K$ equivariant maps from the $\Gam_K$-set $\ovl{\pi}_0(V)$ to $A$. We refer to $t^*A$ as the sheaf of locally constant maps to $A$.

We will need the following two definitions:
\begin{define}
Let $C,D \in  \mcal{C}{\Mod_{\Gam}}$ be complexes. We denote by  $\uline{\Hom}(C,D) \in \mcal{C}{\Mod_{\Gam}}$ to be the complex of maps between $C$ and $D$ as complexes of abelian groups (without respecting the $\Gam$-action or the differential). We will refer to it as the mapping complex from $C$ to $D$.
\end{define}

\begin{define}
Let $K$ be a field and $\Gam = \Gam_K$. Consider $\GG_m$ as a complex concentrated in degree zero such that ${\GG_m}_0 = \GG_m(\ovl{K}) = \ovl{K}^*$. We denote by $\widehat{C}$ the mapping complex $\uline{\Hom}(C,\GG_m)$ and refer to it as the \textbf{dual complex}.
\end{define}

We now return to the outline of the proof. For every hypercovering $\mcal{U} \lrar X $ and a natural number $n\geq 0$ we will construct a commutative diagram:
$$ \xymatrix{
X(\A) \ar@{=}[d] & \times & H^2_{\acute{e}t}(X,{\GG_m}) \ar[r] & \QQ / \ZZ \ar@{=}[d]\\
X(\A) \ar[d]^{h} & \times & H^2_{\acute{e}t}(X,t^*{\GG_m})\ar@{->>}[u]^{i_*} \ar[r] & \QQ / \ZZ \ar@{=}[d]\\
\HH^0(\A,\uline{P_n(\ZZ\SimpS_{\mcal{U}})}) & \times &
\HH^2\left(K,\widehat{\uline{P_n(\ZZ \SimpS_{\mcal{U}})}}\right) \ar^{\psi_{\mcal{U},n}}[u]\ar[r]\ar[r] & \QQ / \ZZ \\
}$$
where the map $i_*$ is induced by the natural inclusion of sheafs.

We will then prove the following:
\begin{enumerate}
\item
$i_*$ is a \textbf{surjective}.
\item
The maps $\Psi_{\mcal{U},n}$ induce together a single \textbf{isomorphism}:
$$ \lim \limits_{\stackrel{\lrar}{\mcal{U},n}} \HH^2\left(\Gam_K,\widehat{\uline{P_n(\ZZ \SimpS_{\mcal{U}})}}\right) \lrar H^2_{\acute{e}t}(X,t^*{\GG_m}) $$
\end{enumerate}

These two facts together will imply that an adelic point is in the Brauer set if and only if its image in $\HH^0(\A,\uline{P_n(\ZZ\SimpS_{\mcal{U}})})$ is orthogonal to all the elements in
$\HH^2\left(\Gam_K,\widehat{\uline{P_n(\ZZ \SimpS_{\mcal{U}})}}\right)$.

From the Hasse-Brauer-Noether theorem in class field theory it follows that if an element in
$\HH^0(\A,\uline{P_n(\ZZ \SimpS_{\mcal{U}})})$ is rational then it is orthogonal to every element in
$\HH^2(K,\what{\uline{P_n(\ZZ \SimpS_{\mcal{U}})}})$ .

This gives us the first part of the theorem, i.e. that
$$ X(\A)^{\ZZ h}\subseteq X(\A)^{\Br} $$

Now assume that our variety $X$ is connected. Then $C = \uline{P_n(\ZZ \SimpS_{\U})}$ satisfies the following properties
\begin{enumerate}
\item
It is excellent.
\item
It is homologically bounded.
\item
It is bounded below by dimension $0$.
\item
$ H_0(C) = \ZZ $
\item
$H_i(C)$ is finite for $i > 0$.
\end{enumerate}

From property $3$ we have a natural map $C \lrar \ZZ$ which induces a map
$$ \pi: \HH^0(\A,C) \lrar \HH^0(\A,\ZZ)  $$

Then for every $(x_\nu) \in X(\A)$ we have that
$$ h_{\ZZ \U,n}((x_\nu))\in \HH^0(\A,C) $$
is an element which is mapped by $\pi$ to the element
$$ (1,1,...,1) \in \prod  \limits_{\nu} \ZZ \cong \HH^0(\A,\ZZ) $$
which is clearly rational. Since $H^3(K,\what{Z}) = H^3(K,\ovl{K}^*) = 0$ we get from Theorem ~\ref{t:arithmetic-duality} that $h_{\ZZ \U,n}((x_\nu))$ is rational if and only if it is orthogonal to all the elements in
$\HH^2\left(\Gam_K,\what{P_n(\uline{\ZZ \SimpS_{\mcal{U}}}}\right)$.

\begin{rem}\label{r:non-connected-br}
One might wonder what is the role of the connectivity of $X$ in the proof of theorem ~\ref{t:main}. The situation here is very similar to that of Remark ~\ref{r:non-connected-fin}. Again by ~\ref{p:zero-dim} we recall that
if $X = \spec(K) \coprod \spec(K)$ then
$$ X(\A)^{\ZZ h} = X(K) $$
On the other hand if $K$ is not totaly real then
$$ X(\A)^{\Br} \neq X(K) $$
As in the case of Remark ~\ref{r:non-connected-fin} the fault lies in the behavior of the complex places. Indeed since $\Br \CC = 0$ the pairing
$$ X(\A)\times \Br X\lrar \QQ / \ZZ $$
is not effected by the complex coordinate. Similarly to the case in Remark ~\ref{r:non-connected-fin}  this is the only fault, i.e. given Theorem ~\ref{t:main}  it is easy to see that even if $X$ is not geometrically connected, if one ignores the complex places (or if $K$ is totally real) then
$$ X(\A)^{\ZZ h} = X(\A)^{\Br} $$
\end{rem}


We now proceed to prove all the claims made in the above sketch of proof.
First we need to construct the maps
$$ \Psi_{\mcal{U},n}: \HH^2\left(K,\widehat{\uline{P_n(\ZZ \SimpS_{\mcal{U}})}}\right) \lrar H^2_{\acute{e}t}(X,t^*{\GG_m}) $$
We shall do so by constructing for every hypercovering $\mcal{U}$ a map
$$ \Psi_{\mcal{U}}:\mathbb{H}^i\left(K,\widehat{\uline{\ZZ \SimpS_{\mcal{U}}}}\right) \lrar H^i_{\acute{e}t}(X,{t^*\mathbb{G}_m}) $$
and defining for every hypercovering $\mcal{U}$ and $n\geq 0$
$$ \Psi_{\mcal{U}, n} = \Psi_\mcal{U}\circ \widehat{\rho_{\mcal{U},n}}_*$$
where
$$ \widehat{\rho_{\mcal{U},n}}_*:
\HH^2\left(K, \widehat{\uline{P_n(\ZZ \SimpS_{\mcal{U}})}}\right) \lrar
\HH^2\left(K, \widehat{\uline{\ZZ \SimpS_{\mcal{U}}}}\right) $$
is the natural map induced form the map:
$$\rho_{\mcal{U},n}: \ZZ \SimpS_{\mcal{U}} \lrar P_n(\ZZ \SimpS_{\mcal{U}})$$

We will show that for every $\mcal{U}$ the maps $\what{\rho_{\mcal{U},n}}_*$ induce together an isomorphism
$$ \lim \limits_{\stackrel{\lrar}{n}} \HH^2\left(\Gam_K,\widehat{\uline{P_n(\ZZ \SimpS_{\mcal{U}})}}\right) \lrar\HH^2\left(\Gam_K,\widehat{\uline{\ZZ \SimpS_{\mcal{U}}}}\right)
$$
Note that the map $\rho_{\mcal{U},n}$ induces an isomorphism on $\pi_i$ for $i=0,...,n$ (i.e. is $n$-connected). Hence the claim above will follow immediately from the following lemma, which implies that $\what{\rho_{\mcal{U},n}}_*$ is actually an isomorphism for $n \geq 2$:

\begin{lem}
Let $f: \mathbf{A} \lrar \mathbf{B}$ be a map of simplicial $\Gam_K$-modules which is $n$-connected. Then the induced map
$$ \widehat{f}^*: \HH^k\left(K,\widehat{\uline{\mathbf{B}}}\right) \lrar \HH^i\left(K,\widehat{\uline{\mathbf{A}}}\right) $$
is an isomorphism for $k=0,...,n$.
\end{lem}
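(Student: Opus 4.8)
The plan is to reduce the statement to an assertion about the Galois hypercohomology of the dual complex and then exploit the behaviour of the internal Hom functor $\uline{\Hom}(-,\GG_m)$ under connectivity hypotheses. First I would recall that, by the Kan--Dold correspondence (Proposition~\ref{p:Kan-dold}), the map $f:\mathbf{A}\lrar\mathbf{B}$ of simplicial $\Gam_K$-modules corresponds to a map $\uline{f}:\uline{\mathbf{A}}\lrar\uline{\mathbf{B}}$ of complexes bounded below by dimension $0$, and the statement that $f$ is $n$-connected translates into the statement that $\uline{f}$ induces isomorphisms on homology $H_i$ for $i\leq n-1$ and a surjection on $H_n$; equivalently, the mapping cone $C_{\uline f}$ has homology concentrated in degrees $\geq n+1$ (up to the usual off-by-one bookkeeping, which I would pin down carefully). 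The key point is that applying $\widehat{(-)} = \uline{\Hom}(-,\GG_m)$ to a complex whose homology is concentrated in high degrees produces a complex whose \emph{co}homology (i.e. homology after the degree-reversing convention in $\mcal{C}\Mod_\Gam$) is concentrated in low — here meaning very negative — degrees, since $\GG_m$ is an injective-enough object of $\Mod_{\Gam_K}$ in the relevant sense; more precisely I would use that $\uline{\Hom}(-,\GG_m)$ sends a quasi-isomorphism of bounded-above complexes of (locally constant, levelwise-finitely-generated) $\Gam_K$-modules to a quasi-isomorphism, and sends an $N$-connected cofiber to an $(-N)$-truncated object.

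The key steps, in order, would be: (1) replace $\mathbf{A},\mathbf{B}$ by their $n$-Postnikov truncations using that $\HH^k(K,-)$ for $k\leq n$ and the map $f$ through degree $n$ are unchanged (this is legitimate by the same argument used elsewhere in the paper, e.g. in the proof of Theorem~\ref{t:goerss-formula}, since the relevant homotopy/homology groups only see degrees $\leq n$); thus one may assume $\mathbf{A},\mathbf{B}$ are bounded; (2) form the cofiber sequence $\mathbf{A}\lrar\mathbf{B}\lrar \mathbf{C}$ with $\mathbf{C}$ being $n$-connected, i.e. $\pi_i(\mathbf{C})=0$ for $i\leq n$; (3) dualize to get a fiber sequence (using Lemma~\ref{l:ses-gives-fibration} together with the exactness of $\uline{\Hom}(-,\GG_m)$ on the levelwise-injective presentations, or more directly the long exact sequence of Lemma~\ref{l:rational-hyper-ses} applied to the short exact sequence of dual complexes) and obtain a long exact sequence relating $\HH^k(K,\widehat{\uline{\mathbf{C}}})$, $\HH^k(K,\widehat{\uline{\mathbf{B}}})$, $\HH^k(K,\widehat{\uline{\mathbf{A}}})$; (4) show $\HH^k(K,\widehat{\uline{\mathbf{C}}})=0$ for $k\leq n+1$, which together with the long exact sequence forces $\widehat f^*$ to be an isomorphism in the range $k=0,\dots,n$ (an isomorphism for $k<n$ and, using vanishing in degree $n+1$ as well, also for $k=n$). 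Step (4) is the heart of the matter: since $\mathbf{C}$ is $n$-connected, $\uline{\mathbf{C}}$ is quasi-isomorphic to a complex concentrated in homological degrees $\geq n+1$; its $\GG_m$-dual $\widehat{\uline{\mathbf{C}}}$ is then (after resolving appropriately) concentrated in homological degrees $\leq -(n+1)$, so its suspensions $\Sig^k\widehat{\uline{\mathbf{C}}}$ for $k\leq n+1$ are still bounded above by a negative degree, whence $P^-_0(\Sig^k\widehat{\uline{\mathbf{C}}})$ is acyclic and $\HH^k(K,\widehat{\uline{\mathbf{C}}})=\pi_0(\overline{P^-_0(\Sig^k\widehat{\uline{\mathbf{C}}})}^{h\Gam_K})=0$.

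The main obstacle I anticipate is making step (4) precise, because $\widehat{\uline{\mathbf{C}}}$ is a priori defined as a naive mapping complex $\uline{\Hom}(\uline{\mathbf{C}},\GG_m)$ which need \emph{not} be quasi-isomorphism-invariant unless one knows the terms of $\uline{\mathbf{C}}$ are sufficiently projective as abelian groups (or one replaces $\GG_m$ by an injective resolution, which $\GG_m$ is not). The correct fix is that in our situation the simplicial $\Gam_K$-modules arising are $\ZZ\SimpS$ for $\SimpS$ a simplicial $\Gam_K$-set, so each term $\uline{\mathbf{C}}_i$ is a \emph{free} abelian group; hence $\uline{\Hom}(\uline{\mathbf{C}},\GG_m)$ does preserve quasi-isomorphisms between such complexes, and one can safely replace $\uline{\mathbf{C}}$ by its $(n+1)$-connective truncation term-by-term. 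I would therefore begin the proof by recording that it suffices to treat $f$ of the form $\ZZ\SimpS_1\lrar\ZZ\SimpS_2$ (which is all that is used in the application), or alternatively add "levelwise free" as a running hypothesis; with that in hand the dualization argument and the long exact sequence chase are routine. A secondary, minor obstacle is the bookkeeping of the suspension/degree conventions — whether "$n$-connected'' on $\mathbf{C}$ gives vanishing of $\HH^k(K,\widehat{\uline{\mathbf{C}}})$ for $k\leq n$ or $k\leq n+1$ — but since we only need the range $k=0,\dots,n$ this slack is harmless, and I would simply verify the boundary degree $k=n$ directly from the long exact sequence using the extra vanishing in degree $n+1$.
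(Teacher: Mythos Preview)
Your approach is correct in spirit but takes a longer route than the paper, and the ``main obstacle'' you flag is not actually an obstacle.

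The point you are circling around but not quite landing on is that $\GG_m(\ovl{K}) = \ovl{K}^*$ is a \emph{divisible} abelian group, hence injective in the category of abelian groups. Consequently the functor $\uline{\Hom}(-,\GG_m)$ is exact on complexes of abelian groups (with whatever $\Gam_K$-action), and in particular $\widehat{(\bullet)}$ preserves quasi-isomorphisms with no hypothesis whatsoever on the terms of the complex. Your proposed workaround of restricting to levelwise-free complexes (coming from $\ZZ\SimpS$) is unnecessary.

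Once exactness of $\widehat{(\bullet)}$ is in hand, the paper's argument is considerably shorter than yours: since $f$ is $n$-connected (which in the paper's convention means $\pi_i(f)$ is an isomorphism for $0 \leq i \leq n$; note this differs slightly from the convention you wrote down), exactness gives that $\widehat{f}:\widehat{\uline{\mathbf{B}}} \lrar \widehat{\uline{\mathbf{A}}}$ induces isomorphisms on $H_i$ for $-n \leq i \leq 0$. Both dual complexes are bounded above by degree $0$, so for $0 \leq k \leq n$ the map $\Sig^k\widehat{f}$ is an isomorphism on $H_i$ for all $i \geq 0$, and hence $P^-_0(\Sig^k\widehat{f})$ is a quasi-isomorphism. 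By definition of $\HH^k$ this gives the desired isomorphism directly. No cofiber sequence, no long exact sequence, no vanishing computation for $\widehat{\uline{\mathbf{C}}}$ is needed.

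Your cofiber/long-exact-sequence route would also succeed (again, once you use exactness of $\widehat{(\bullet)}$ rather than levelwise freeness), but it is a detour: you first prove a vanishing result for the dual of the cofiber and then feed it into the long exact sequence, whereas the paper simply observes that the dual map itself is already a quasi-isomorphism in the relevant truncated range.
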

\begin{proof}
Note that the complexes $\what{\uline{\mathbf{A}}}$ and $\what{\uline{\mathbf{B}}}$ are bounded from above by dimension $0$ and that the map
$$ \what{f}:\what{\uline{\mathbf{B}}} \lrar \what{\uline{\mathbf{A}}} $$
induces an isomorphism on the $i$'th homology group for $-n \leq i \leq 0$ (recall that the functor $\widehat{\bullet}$ is exact). Now let $k = 0,...,n$. Then $\widehat{f}$ induces a homotopy equivalence
$$ \ovl{P^{-}_0\left(\Sig^k\what{\uline{\mathbf{B}}}\right)}
\lrar \ovl{P^{-}_0\left(\Sig^k\what{\uline{\mathbf{A}}}\right)} $$
and hence by definition an isomorphism
$$ \what{f}^*:\HH^k\left(\Gam, \what{\uline{\mathbf{B}}}\right) \lrar \HH^k\left(\what{\uline{\mathbf{A}}}\right) $$
\end{proof}

We shall now proceed to construct the maps $\Psi_{\mcal{U}}$ and show that they fit together to form an isomorphism
$$ \lim \limits_{\stackrel{\lrar}{\mcal{U}}} \HH^2\left(\Gam_K,\widehat{\uline{\ZZ \SimpS_{\mcal{U}}}}\right) \lrar H^2_{\acute{e}t}(X,t^*{\GG_m}) $$
We will then show that the maps $\Psi_{\mcal{U},n}$ respect the pairing.

\begin{thm}\label{t:MIT-1}
Let $K$ be an arbitrary field with absolute Galois group $\Gam_K$ and $X$ and algebraic variety over $K$. Let $A$ be a $\Gam_K$-module and consider the \'{e}tale sheaf $t^*A$ on $X$ of locally constant maps into $A$ (see definition above). Then there exist natural maps
$$ \Psi_{\mcal{U}}:\mathbb{H}^i(K,\uline{\Hom}(\ZZ \SimpS_{\mcal{U}},A)) \lrar H^i_{\acute{e}t}(X,{t^*A}) $$
which induce an isomorphism
$$ \lim \limits_{\stackrel{\lrar}{\mcal{U}}}\mathbb{H}^i(K,\uline{\Hom}(\uline{\ZZ \SimpS_{\mcal{U}}},A)) \stackrel{\simeq}{\lrar} H^i_{\acute{e}t}(X,{t^*A}) $$
where $A$ is considered as a complex concentrated at degree $0$.
\end{thm}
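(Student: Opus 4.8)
The plan is to compute both sides by \v{C}ech cohomology along hypercoverings, and to recognise that, after passing to a cofinal family of hypercoverings, the $\Gam_K$-equivariant \v{C}ech complex attached to $\acute{E}t_{/K}(X)$ is precisely a complex whose hypercohomology is $\HH^i(K,\uline{\Hom}(\uline{\ZZ\SimpS_{\mcal{U}}},A))$. Throughout I will freely replace $\SimpS_{\mcal{U}}=\Ex^{\infty}\pi_{0/K}(\mcal{U}_\bullet)$ by the levelwise finite simplicial $\Gam_K$-set $\pi_{0/K}(\mcal{U}_\bullet)$, which only alters the relevant complexes up to $\Gam_K$-equivariant quasi-isomorphism and hence does not affect $\HH^i$.

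First I would invoke Verdier's hypercovering theorem: for any abelian sheaf $\mathcal{F}$ on the \'etale site of $X$ there is a natural isomorphism $H^i_{\acute{e}t}(X,\mathcal{F})\cong\colim_{\mcal{U}\in HC(X)}H^i(\mathcal{F}(\mcal{U}_\bullet))$, where $\mathcal{F}(\mcal{U}_\bullet)$ is the cochain complex of the cosimplicial abelian group $[n]\mapsto\mathcal{F}(\mcal{U}_n)$ (see \cite{AMa69}, \S 8). Since $t^*A$ is the sheaf of locally constant $A$-valued functions, $t^*A(\mcal{U}_n)=\Hom_{\Gam_K}(\ovl{\pi}_0(\mcal{U}_n),A)=\Hom_{\Gam_K}((\ZZ\SimpS_{\mcal{U}})_n,A)$, so that $t^*A(\mcal{U}_\bullet)$ is canonically the complex $\Hom_{\Gam_K}(\uline{\ZZ\SimpS_{\mcal{U}}},A)$ of $\Gam_K$-equivariant cochains on $\SimpS_{\mcal{U}}$. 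Hence $H^i_{\acute{e}t}(X,t^*A)\cong\colim_{\mcal{U}\in HC(X)}H^i\bigl(\Hom_{\Gam_K}(\uline{\ZZ\SimpS_{\mcal{U}}},A)\bigr)$.

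Next I would construct $\Psi_{\mcal{U}}$ by refining $\mcal{U}$ along the hypercoverings $\mcal{U}_L=\mcal{U}\times_X\check{X}_L$ of Definition~\ref{d:X-L}. The computation in the proof of Proposition~\ref{p:etale-is-quotient} gives an isomorphism of simplicial $\Gam_K$-sets $\SimpS_{\mcal{U}_L}\cong\SimpS_{\mcal{U}}\times\E G_L$, hence (via Eilenberg--Zilber, a natural, so $\Gam_K$-equivariant, chain homotopy equivalence) $\uline{\ZZ\SimpS_{\mcal{U}_L}}\simeq\uline{\ZZ\SimpS_{\mcal{U}}}\otimes\uline{\ZZ\E G_L}$. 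Combining the tensor--hom adjunction for complexes of $\Gam_K$-modules, the fact that $\Hom_{\Gam_K}(\uline{\ZZ\SimpS_{\mcal{U}}},-)$ commutes with filtered colimits because $\SimpS_{\mcal{U}}$ is levelwise finite, and Lemma~\ref{l:hyper-formula} applied to $C=\uline{\Hom}(\uline{\ZZ\SimpS_{\mcal{U}}},A)$ (which is bounded above by degree $0$), one obtains
$$\HH^i\!\left(K,\uline{\Hom}(\uline{\ZZ\SimpS_{\mcal{U}}},A)\right)=\colim_{L/K}\left[\uline{\ZZ\E G_L},\uline{\Hom}(\uline{\ZZ\SimpS_{\mcal{U}}},A)\right]^i_{\Gam_K}=\colim_{L/K}\left[\uline{\ZZ\SimpS_{\mcal{U}_L}},A\right]^i_{\Gam_K}=\colim_{L/K}H^i\!\left(t^*A\bigl((\mcal{U}_L)_\bullet\bigr)\right).$$
One then defines $\Psi_{\mcal{U}}$ as the composite of this identification with the canonical map $\colim_{L/K}H^i(t^*A((\mcal{U}_L)_\bullet))\lrar\colim_{\mcal{V}\in HC(X)}H^i(t^*A(\mcal{V}_\bullet))=H^i_{\acute{e}t}(X,t^*A)$; naturality in $\mcal{U}$ is immediate from the construction. (Equivalently, and more conceptually, one could argue that $Rt_*(t^*A)\cong\hocolim_{\mcal{U}}\uline{\Hom}(\uline{\ZZ\SimpS_{\mcal{U}}},A)$ in the derived category of $\Gam_K$-modules and then apply $R\Gamma(\Gam_K,-)$; but the route above uses only machinery already developed in the paper.)

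Finally, since every hypercovering $\mcal{V}$ admits the refinement $\mcal{V}_L\to\mcal{V}$, the family $\{\mcal{U}_L\}_{\mcal{U}\in HC(X),\,L/K}$ is cofinal in $HC(X)$, so passing to the colimit over $\mcal{U}$ yields
$$\colim_{\mcal{U}\in HC(X)}\HH^i\!\left(K,\uline{\Hom}(\uline{\ZZ\SimpS_{\mcal{U}}},A)\right)=\colim_{\mcal{U},L}H^i\!\left(t^*A\bigl((\mcal{U}_L)_\bullet\bigr)\right)=\colim_{\mcal{V}\in HC(X)}H^i\!\left(t^*A(\mcal{V}_\bullet)\right)=H^i_{\acute{e}t}(X,t^*A),$$
which is the desired isomorphism. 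I expect the main obstacle to be the middle step: one must check carefully that the Eilenberg--Zilber equivalence and the tensor--hom adjunction remain valid after restricting to $\Gam_K$-equivariant cochains and are compatible with the colimit formula of Lemma~\ref{l:hyper-formula}, and that all complexes occurring (in particular $\uline{\ZZ\SimpS_{\mcal{U}}}$) are levelwise free and bounded below so that the relevant $\Hom$-complexes compute the intended derived functors. The application of Verdier's theorem is standard but requires the precise identification of $t^*A$ on the terms of a hypercovering, and the cofinality assertion in the last step is purely formal.
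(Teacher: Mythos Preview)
Your proposal is correct and follows essentially the same approach as the paper: both invoke Verdier's hypercovering theorem, Lemma~\ref{l:hyper-formula}, the identification $\SimpS_{\mcal{U}_L}\cong\SimpS_{\mcal{U}}\times\E G_L$, and the K\"unneth/Eilenberg--Zilber equivalence to reduce to comparing the two colimits. The only presentational difference is in the final comparison step: the paper constructs an explicit isomorphism of pro-objects $\{\uline{\ZZ\SimpS_{\mcal{U}}}\}_{\mcal{U}}\cong\{\uline{\ZZ\E G_L}\otimes\uline{\ZZ\SimpS_{\mcal{U}}}\}_{\mcal{U},L}$ in $\Pro\Ho(\mcal{C}\Mod_{\Gam_K})$ and verifies both compositions (the nontrivial direction using that the two projections $\E G_L\times\E G_L\to\E G_L$ are $G_L$-equivariantly homotopic), whereas you argue by cofinality of $\{\mcal{U}_L\}_{\mcal{U},L}$ in $I(X)$---an equivalent packaging of the same fact.
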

\begin{proof}
First of all we note that
$$ H^i_{\acute{e}t}(X,{t^*A}) = \Ext^i_{X,\acute{e}t}(t^*\ZZ,{t^*A}) $$
We start by computing the left hand side. The hypercovering $\mcal{U}$ can be used to construct the sheaves
$$ \mcal{P}_n(\mcal{V}) = t^*\ZZ\left(\mcal{V} \times_X \mcal{U}_n\right) $$
which fit in a resolution
$$ ... \lrar \mcal{P}_2 \lrar \mcal{P}_1 \lrar \mcal{P}_0 \lrar t^*\ZZ $$
of $t^*\ZZ$. This is unfortunately not a projective resolution and so the resulting cohomology groups
$$ H^i(\Gam(\mcal{P}_\bullet)) = H^i\left(\Hom_{\Gam_K}(\uline{\ZZ \SimpS_{\mcal{U}}}_\bullet, A)\right) =
\left[\uline{\ZZ \SimpS_{\mcal{U}}},A\right]^i_{\Gam_K} $$

are not equal to the \'{e}tale cohomology groups. We do, however, get a map
$$ \Phi_{\mcal{U}}:\left[\uline{\ZZ \SimpS_{\mcal{U}}},A\right]^i_{\Gam_K} = H^i(\Gam(\mcal{P}_\bullet)) \lrar H^i_{\acute{e}t}(X,{t^*A}) $$ 

and we know by Verdier's hypercovering theorem  for \'{e}tale cohomology ([~\cite{SGA4},
Expos\'e V, 7.4.1(4)]) that by taking the direct limit over all \'{e}tale hypercoverings $\mcal{U}$ we get an isomorphism
$$ \Phi: \lim \limits_{\stackrel{\lrar}{\mcal{U}}} [\uline{\ZZ \SimpS_{\mcal{U}}},A]^i_{\Gam_K} \stackrel{\simeq}{\lrar} H^i_{\acute{e}t}(X,{t^*A}) $$

Since $\uline{\Hom}(\uline{\ZZ \SimpS_{\mcal{U}}},A)$ is bounded from above we can compute the right hand side by ~\ref{l:hyper-formula} and get

$$ \HH^i(K,\uline{\Hom}(\uline{\ZZ \SimpS_{\mcal{U}}},A))  =
\lim \limits_{\stackrel{\lrar}{L/K}}
\left[
\uline{\ZZ \E G_L}, \uline{\Hom}(\uline{\ZZ\SimpS_{\mcal{U}}},A)\right]^i_{\Gam_K} = $$
$$
\lim \limits_{\stackrel{\lrar}{ L/K }}
\left[\uline{\ZZ \E G_L} \otimes \uline{\ZZ\SimpS_{\mcal{U}}}, A\right]^i_{\Gam_K}
$$

We will prove the theorem by constructing a natural equivalence between the functors
$$ X \mapsto \left\{\uline{\ZZ \SimpS_{\mcal{U}}}\right\}_{\mcal{U}} $$
and
$$ X \mapsto \left\{\uline{\ZZ \E G_L} \otimes \uline{\ZZ\SimpS_{\mcal{U}}}\right\}_{\mcal{U}, L/K} $$
as functors from algebraic varieties over $K$ to the pro-category of $\Ho(\mcal{C}\Mod_{\Gam_K})$.

In order to construct a transformation
$$ F: \left\{\uline{\ZZ \E G_L} \otimes \uline{\ZZ\SimpS_{\mcal{U}}}\right\}_{\mcal{U}, L/K} \lrar
      \left\{\uline{\ZZ \SimpS_{\mcal{U}}}\right\}_{\mcal{U}} $$
we need to pick (compatibly) for each \'{e}tale hypercovering $\mcal{U}$ an \'{e}tale hypercovering $\U'$, a finite Galois extension $L/K$ and a map
$$ F_{\mcal{U}} :
\uline{\ZZ \E G_L} \otimes \uline{\ZZ\SimpS_{\U'}} \lrar
\uline{\ZZ \SimpS_{\mcal{U}}}  $$
Our choice here is simple. Take $\U' = \mcal{U}$ and $\Gam_L = \Gam_K$. Then choose $F_{\mcal{U}}$ to be the natural map
$$ F_{\mcal{U}} :
\uline{\ZZ \E(\Gam_K/\Gam_K)} \otimes \uline{\ZZ\SimpS_{\mcal{U}}} \lrar
\uline{\ZZ \SimpS_{\mcal{U}}}  $$

The other direction is more tricky. In order to construct a transformation
$$ G: \left\{\uline{\ZZ \SimpS_{\mcal{U}}}\right\}_{\mcal{U}} \lrar
      \left\{\uline{\ZZ \E G_L} \otimes \uline{\ZZ\SimpS_{\mcal{U}}}\right\}_{\mcal{U}, L/K} $$
we need to pick (compatibly) for each \'{e}tale hypercovering $\mcal{U}$ and a finite Galois extension $L/K$ an \'{e}tale hypercovering $\U'$ and a map
$$ G_{\mcal{U}, \Gam_L} : \uline{\ZZ \SimpS_{\mcal{U}'}} \lrar
      \uline{\ZZ \E G_L} \otimes \uline{\ZZ\SimpS_{\mcal{U}}} $$

Let
$$ \check{X}_L \lrar X $$
be as in definition ~\ref{d:X-L}. We now define
$$ \U' = \U_L = \check{X}_L \times_X \mcal{U}  $$
There is a natural map
$$ \SimpS_{\mcal{U}_L} \lrar \SimpS_{\check{X}_L} \times \SimpS_{\mcal{U}}  = \E G_L \times \SimpS_{\mcal{U}} $$
which gives a map
$$ \uline{\ZZ\SimpS_{\mcal{U}_L}} \lrar \uline{\ZZ(\E G_L \times \SimpS_{\mcal{U}})} $$

By composing with the Kunneth map (which is a homotopy equivalence of complexes)
$$ \uline{\ZZ( \E G_L \times \SimpS_{\mcal{U}})} \lrar
   \uline{\ZZ \E G_L} \otimes \uline{\ZZ \SimpS_{\mcal{U}}} $$
we construct our map
$$ G_{\mcal{U}, \Gam_L} : \uline{\ZZ \SimpS_{\U_L}} \lrar
      \uline{\ZZ \E G_L} \otimes \uline{\ZZ\SimpS_{\mcal{U}}} $$

The map $F \circ G$ is clearly the identity (in the category $\Pro\Ho(\mcal{C}\Mod_{\Gam_K})$).
Now consider
$$ G \circ F:
\left\{\uline{\ZZ \E G_L} \otimes \uline{\ZZ\SimpS_{\mcal{U}}}\right\}_{\mcal{U}, L/K} \lrar
\left\{\uline{\ZZ \E G_L} \otimes \uline{\ZZ\SimpS_{\mcal{U}}}\right\}_{\mcal{U}, L/K}
$$
This is the pro-map that for each $\U,\Gam_L$ chooses $\U' = \U_L$, $\Gam_L' = \Gam_K$ and the map
$$ \uline{\ZZ \E(\Gam_K/\Gam_K)} \otimes \uline{\ZZ \SimpS_{\U_L}} \lrar
      \uline{\ZZ \E G_L} \otimes \uline{\ZZ\SimpS_{\mcal{U}}} $$
obtained as above. In order to show that this pro-map represents the identity in $\Pro \Ho(\mcal{C}\Mod_{\Gam_K})$ we will show that the following diagram commutes in $\Ho(\mcal{C}\Mod_{\Gam_K})$:
$$ \xymatrix{
& \uline{\ZZ \E G_L} \otimes \uline{\ZZ \SimpS_{\U_L}} \ar[d]^{r_2} \ar[dl]^{r_1} \\
\uline{\ZZ \E(\Gam_K/\Gam_K)} \otimes \uline{\ZZ \SimpS_{\U_L}}  \ar^{G \circ F}[r] & \uline{\ZZ \E G_L} \otimes \uline{\ZZ \SimpS_{\mcal{U}}} \\
}
$$
where the $r_1,r_2$ are refinement maps which are the structure maps of our pro-object. The two maps $G \circ F \circ r_1$ and $r_2$ both factor through $\uline{\ZZ \E G_L} \otimes \uline{\ZZ \E G_L} \otimes \uline{\ZZ \SimpS_{\mcal{U}}}$ as
$$ G \circ F \circ r_1 = f_1 \circ (Id \otimes G) $$
and
$$ r_2 = f_2 \circ (Id \otimes G) $$
where
$$ f_1 = p \otimes Id \otimes Id : \uline{\ZZ \E G_L} \otimes \uline{\ZZ \E G_L} \otimes \uline{\ZZ \SimpS_{\mcal{U}}} \lrar \uline{\ZZ \E G_L} \otimes \uline{\ZZ \SimpS_{\mcal{U}}} $$
$$ f_2 = Id \otimes p \otimes Id : \uline{\ZZ \E G_L} \otimes \uline{\ZZ \E G_L} \otimes \uline{\ZZ \SimpS_{\mcal{U}}} \lrar \uline{\ZZ \E G_L} \otimes \uline{\ZZ \SimpS_{\mcal{U}}} $$
and $p:\uline{\ZZ \E G_L} \lrar \uline{\ZZ}$ is the natural projection.

Hence it is enough to prove that the square
$$ \xymatrix{
\uline{\ZZ \E G_L} \otimes \uline{\ZZ \SimpS_{\U_L}} \ar[r]^{Id \otimes G} \ar^{Id \otimes G}[d] &
\uline{\ZZ \E G_L} \otimes \uline{\ZZ \E G_L} \otimes \uline{\ZZ \SimpS_{\mcal{U}}}  \ar^{f_2}[d] \\
\uline{\ZZ \E G_L} \otimes \uline{\ZZ \E G_L} \otimes \uline{\ZZ \SimpS_{\mcal{U}}} \ar^{f_1}[r]& \uline{\ZZ \E G_L} \otimes \uline{\ZZ \SimpS_{\mcal{U}}} \\
}
$$
commutes up to equivariant homotopy, or simply that $f_1$ and $f_2$ are equivariantly homotopic as chain maps.

Let $H = \Gam_K/\Gam_L$. Since the action of $\Gam_K$ on $\ZZ \E G_L$ factors through $H$ it is enough to show that
$$ p \otimes Id,Id \otimes p :
\uline{\ZZ \E H} \otimes \uline{\ZZ \E H} \lrar
\uline{\ZZ \E H} $$
are $H$-equivariantly homotopic as chain maps. Recall that $\uline{\ZZ \E H} \otimes \uline{\ZZ \E H}$ is equivariantly homotopy equivalent to $\uline{\ZZ(\E H \times \E H)}$. Hence it is enough to show that the two projections

$$ p_1,p_2: \E H \times \E H \lrar \E H $$
Are equivariantly homotopic. Note that both $\E H$ and $\E H \times \E H$ are contractible free $H$-spaces and so the equivariant mapping space from $\E H \times \E H$ to $\E H$ is homotopy equivalent to $\E H^{hH}$ which is contractible. This means that $p_1$ and $p_2$ are $H$-equivariantly homotopic and we are done (the fact that this homotopy can be done simplicially can be seen using the projective model structure on simplicial $H$-sets).

Now the maps
$$ \Psi_{\mcal{U}}:\HH^2\left(\Gam_K,\widehat{\uline{\ZZ \SimpS_{\mcal{U}}}}\right) \lrar H^2_{\acute{e}t}(X,t^*{\GG_m}) $$
are obtained as the composition
$$ (*)\;\; \HH^2\left(\Gam_K,\widehat{\uline{\ZZ \SimpS_{\mcal{U}}}}\right) =
\lim \limits_{\stackrel{\lrar}{ L/K  }}
\left[\uline{\ZZ \E G_L} \otimes \uline{\ZZ\SimpS_{\mcal{U}}}, \GG_m\right]^2_{\Gam_K} \stackrel{G_{\mcal{U}}^*}{\lrar} $$
$$ \lim \limits_{\x{\lrar}{\U_L}}\left[\uline{\ZZ \SimpS_{\mcal{U}_L}},\GG_m\right]^2_{\Gam_K} \stackrel{\Phi_{\mcal{U'}}}{\lrar} H^2_{\acute{e}t}(X,t^*{\GG_m}) $$

\begin{rem}\label{r:trivial-case}
Note that for $X = \spec(K)$ and every hypercovering $\U \lrar \spec(K)$ we have a quasi-isomorphism
$$ \uline{\Hom}\left(\uline{\ZZ \SimpS_\mcal{U}},A\right) \simeq \uline{\Hom}\left(\uline{\ZZ \SimpS_X},A\right) = \uline{\Hom}\left(\uline{\ZZ},\uline{A}\right) = \uline{A} $$
where $X \lrar X$ is the trivial hypercovering. Substituting $\U = X$ in $(*)$ one sees that $G_{\mcal{U}}^*$ becomes the identity and $\Psi_{\U}$ becomes the standard identification
$$ \HH^i(K,A) = H^i(K,A) \x{\simeq}{\lrar} H^i_{\acute{e}t}(\spec(K),A) $$
\end{rem}

\end{proof}
Now we shall now show that the maps $\Psi_{\mcal{U},n}$ fit into the pairing diagram
$$ \xymatrix{
X(\A) \ar@{=}[d] & \times & H^2_{\acute{e}t}(X,{\GG_m}) \ar[r] & \QQ / \ZZ \ar@{=}[d]\\
X(\A) \ar[d] & \times & H^2_{\acute{e}t}(X,t^*{\GG_m})\ar@{->>}[u]^{i_*} \ar[r] & \QQ / \ZZ \ar@{=}[d]\\
\HH^0(\A,\uline{\ZZ\SimpS_{\mcal{U},n}}) & \times &
\HH^2\left(K_\nu,\widehat{\uline{\ZZ \SimpS_{\mcal{U},n}}}\right)\ar^{\Psi_{\mcal{U},n}}[u]\ar[r]\ar[r] & \QQ / \ZZ \\
}$$

The pairings
$$ \xymatrix{
X(\A) \ar@{=}[d] & \times & H^2_{\acute{e}t}(X,{\GG_m}) \ar[r] & \QQ / \ZZ \ar@{=}[d]\\
X(\A) & \times & H^2_{\acute{e}t}(X,t^*{\GG_m})\ar@{->>}[u]^{i_*} \ar[r] & \QQ / \ZZ \\
} $$
are defined via pullbacks. To be precise, a point $(x_\nu) \in X(\A)$ can be considered as a sequence of maps $x_\nu: \Spec(K_\nu) \lrar X$ and we can pull back the element $u \in H^2_{\acute{e}t}(X,{\GG_m})$ to an element
$$ x_\nu^*u \in H^2_{\acute{e}t}(\Spec(K_\nu)) \stackrel{\inv}{\cong} \QQ / \ZZ $$
The pairing of $(x_\nu)$ and $u$ is then defined to be
$$ \sum_\nu \inv(x_\nu^*u) \in \QQ / \ZZ $$

Clearly we can do the same with $H^2_{\acute{e}t}(X,t^*_{\GG_m})$ instead of $H^2_{\acute{e}t}(X,{\GG_m})$ and we would get a commutative diagram of pairings.

The pairing
$$ \xymatrix{
\HH^0(\A,\uline{\ZZ\SimpS_{\mcal{U},n}})  & \times & \HH^2\left(K,\widehat{\uline{\ZZ \SimpS_{\mcal{U},n}}}\right) \ar[r] & \QQ / \ZZ \\
}$$
is defined as follows. Given
$$ x \in \HH^0(\A,\uline{\ZZ\SimpS_{\mcal{U},n}}), y \in \HH^2\left(K,\widehat{\uline{\ZZ \SimpS_{\mcal{U},n}}}\right) $$
we go over all places $\nu$ of $K$, project $x$ to $\HH^0(K_\nu,\uline{\ZZ\SimpS_{\mcal{U},n}})$ and restrict $y$ to $\HH^2\left(K_{\nu},\widehat{\uline{\ZZ \SimpS_{\mcal{U},n}}}\right)$. We then pair using the cup product (note that all complexes here are homologically bounded) to get an element in
$$ \HH^2(K_{\nu}, \GG_m) = H^2(K_{\nu},\GG_m) \stackrel{\inv}{\cong} \QQ / \ZZ $$
then as before we sum up the invariants in all the places to get the pairing $(x,y) \in \QQ / \ZZ$.

This means that in order to show the compatibility of the parings it is enough to show the compatibility of
$$ \xymatrix{
X(K_\nu) \ar[d] & \times & H^2_{\acute{e}t}(X_\nu,t^*{\GG_m}) \ar[r] & \QQ / \ZZ \ar@{=}[d]\\
\HH^0\left(K_\nu,\uline{\ZZ\SimpS_{\U_\nu,n}}\right) & \times &
\HH^2\left(K_\nu,\what{\uline{\ZZ\SimpS_{\U_\nu,n}}}\right)\ar^{\Psi_{\U_\nu,n}}[u]\ar[r]\ar[r] & \QQ / \ZZ \\
}
$$
where $X_\nu,\U_\nu$ are the base changes of $X$ and $\U$ respectively from $K$ to $K_\nu$ (note that this base change doesn't change geometric connected components so $\SimpS_{\U_\nu, n}$ is actually the same simplicial set as $\SimpS_{\U,n}$).

Since this diagram is functorial in $X_\nu$ is is enough to prove it for $X_\nu = \spec(K_\nu)$. In that case $\uline{\ZZ\SimpS_{\U_\nu,n}}$ is quasi-isomorphic to $\ZZ$ considered as a complex concentrated at degree $0$ (for all hypercoverings $\U_\nu \lrar \spec(K_\nu)$ and all $n$) and we get the diagram
$$ \xymatrix{
\{\bullet\} \ar[d] & \times & H^2_{\acute{e}t}(\spec(K_\nu),t^*{\GG_m}) \ar[r] & \QQ / \ZZ \ar@{=}[d]\\
\ZZ & \times &
\HH^2\left(K_\nu,\GG_m\right)\ar^{\Psi_{\mcal{U}}}[u]\ar[r] & \QQ / \ZZ \\
}
$$
where the point $\bullet$ is mapped to $1 \in \ZZ$. But this pairing diagram compatible in view of remark ~\ref{r:trivial-case} so we are done.

It is now only remains to prove that $i_*$ is a surjective.

\begin{thm}\label{t:surj_br}
Let $X$ be a smooth variety. Then the map
$$ i_{*}: H^2_{\acute{e}t}(X,t^*{\GG_m})  \lrar H^2_{\acute{e}t}(X,\GG_m),  $$
is surjective.
\end{thm}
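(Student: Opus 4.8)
The plan is to use the Leray (or Hochschild–Serre) spectral sequence for the structure map $t: X \to \spec(K)$ together with the fact that $t^*\GG_m$ fits in a short exact sequence of \'etale sheaves whose quotient has vanishing $H^1$ over a field. Concretely, write $\mu = t^*\GG_m$ for the sheaf of locally constant functions into $\ovl{K}^*$, and consider the natural inclusion of \'etale sheaves on $X$
$$ 0 \lrar t^*\GG_m \lrar \GG_m \lrar \mcal{Q} \lrar 0 $$
where $\mcal{Q}$ is the quotient sheaf. The long exact sequence in \'etale cohomology then reduces the surjectivity of $i_*: H^2_{\acute{e}t}(X, t^*\GG_m) \to H^2_{\acute{e}t}(X,\GG_m)$ to the vanishing (or at least the injectivity on the relevant piece) of $H^2_{\acute{e}t}(X,\mcal{Q})$, or more precisely it suffices to show the connecting map $H^2_{\acute{e}t}(X,\GG_m) \to H^2_{\acute{e}t}(X,\mcal{Q})$ is zero. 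The key geometric input is that $\mcal{Q}$ is, roughly, the sheaf of "functions modulo locally constant functions", and on a smooth variety this is controlled by the divisor sequence.

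The cleaner route I would actually take: identify $t^*\GG_m$ with the subsheaf of $\GG_m$ and recognize that the quotient $\GG_m / t^*\GG_m$ is a sheaf supported on the "geometric" part, i.e. it is $t^*$ of nothing but is built from $\ovl{K}(X)^*/\ovl{K}^*$ and divisors. For $\ovl{X}$ smooth, the sheaf $\GG_m$ sits in the exact sequence $0 \to \GG_m \to j_*\GG_{m,\eta} \to \bigoplus_{Z} i_{Z*}\ZZ \to 0$ (divisor sequence), and the analogous statement relates $t^*\GG_m$ to the constant sheaf $\ovl{K}^*$ and the generic point. First I would reduce to $\ovl{X}$ connected (the general case follows by the product/coproduct formalism of Section~\ref{s:dimension-zero}, or by working component-wise), pick a rational or at least a $0$-cycle of degree handled via a finite extension, and then use the Hochschild–Serre spectral sequence
$$ H^p(\Gam_K, H^q_{\acute{e}t}(\ovl{X},\GG_m)) \Rightarrow H^{p+q}_{\acute{e}t}(X,\GG_m) $$
together with the analogous one for $t^*\GG_m$, whose geometric stalk is $H^q_{\acute{e}t}(\ovl{X}, \ovl{K}^*) = H^q(\ovl{X},\ZZ)\otimes \ovl{K}^*$-type terms that vanish in the right range. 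Comparing the two spectral sequences via $i_*$, the claim becomes a statement about low-degree terms: $H^0(\ovl{X},\GG_m)/\ovl{K}^* = \ovl{K}[\ovl X]^*/\ovl K^*$ and $H^1(\ovl X,\GG_m) = \Pic(\ovl X)$, i.e. one must show that the contributions of $\Pic(\ovl X)$ and of the units modulo constants do not obstruct the lift.

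The main obstacle I expect is exactly controlling these geometric invariants $\ovl{K}[\ovl X]^*/\ovl{K}^*$ and $\Pic(\ovl{X})$: a priori $H^2_{\acute{e}t}(X,\GG_m)$ receives contributions from $H^1(\Gam_K,\Pic(\ovl X))$ and $H^2(\Gam_K, \ovl{K}[\ovl X]^*)$, and one needs to check that each of these already comes from $t^*\GG_m$-cohomology, i.e. that the map on each graded piece of the Hochschild–Serre filtration is surjective. For the piece $H^2(\Gam_K,\ovl K^*) = \Br(K)$ this is automatic since $t^*\GG_m$ literally has $\ovl K^*$ sitting inside it; the delicate pieces are the $H^1(\Gam_K,\Pic\ovl X)$ term and the $\ovl K[\ovl X]^*/\ovl K^*$ term, where I would argue that the relevant differentials and the comparison map force surjectivity — possibly invoking that $H^1_{\acute{e}t}(\ovl X, t^*\GG_m) \to H^1_{\acute{e}t}(\ovl X,\GG_m) = \Pic(\ovl X)$ together with $H^0$ is enough, using smoothness (normality) of $X$ so that $\GG_m/t^*\GG_m$ is torsion-free at the generic point and its $H^1$ vanishes. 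If a fully self-contained argument is awkward, the fallback is to cite the relevant comparison of Leray spectral sequences directly and reduce to the assertion that the sheaf quotient $\GG_m/t^*\GG_m$ has $H^1_{\acute{e}t}(X,-)$-image killing the obstruction, which on a smooth variety follows from the divisor/uniformization exact sequence.
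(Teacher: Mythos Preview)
Your approach via Hochschild--Serre and the quotient sheaf $\mcal{Q}=\GG_m/t^*\GG_m$ is significantly more complicated than what the paper does, and in the form you have written it, it has a genuine gap. You correctly identify the obstacle yourself: the graded piece $H^1(\Gam_K,\Pic(\ovl{X}))$ of the target is not obviously hit, because the comparison map on $E_2^{1,1}$-terms is
$$ H^1\big(\Gam_K,\, H^1_{\acute{e}t}(\ovl{X},\ovl{K}^*)\big) \lrar H^1\big(\Gam_K,\Pic(\ovl{X})\big), $$
and $H^1_{\acute{e}t}(\ovl{X},\ovl{K}^*)\to \Pic(\ovl{X})$ is typically far from surjective (take any simply connected $\ovl{X}$ with nontrivial Picard group). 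So surjectivity on graded pieces fails, and you do not explain how to compensate via differentials or via the long exact sequence in $\mcal{Q}$. The ``fallback'' you sketch, that $H^1_{\acute{e}t}(X,\mcal{Q})$ kills the obstruction via the divisor sequence, is not substantiated.

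The paper's argument bypasses all of this with a one-line trick: use the Kummer sequence. For each $n$ one has a map of short exact sequences of \'etale sheaves on $X$
$$ \xymatrix{
0 \ar[r] & t^*\mu_n \ar@{=}[d]\ar[r] & t^*\GG_m \ar[d]^{i}\ar[r]^{\times n} & t^*\GG_m \ar[d]^{i}\ar[r] & 0 \\
0 \ar[r] & \mu_n \ar[r] & \GG_m \ar[r]^{\times n} & \GG_m \ar[r] & 0
}$$
the point being that $\mu_n$ is already locally constant, so $t^*\mu_n=\mu_n$. Comparing the two long exact sequences and chasing shows that $i_*$ surjects onto $H^2_{\acute{e}t}(X,\GG_m)[n]$ for every $n$. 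Since $X$ is smooth, $H^2_{\acute{e}t}(X,\GG_m)$ is torsion (the paper cites~\cite{Lie08}), and surjectivity follows. This replaces your attempt to compare $t^*\GG_m$ and $\GG_m$ directly by comparing each of them to the common subsheaf $\mu_n$, which is exactly what makes the argument clean.
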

\begin{proof}
For $n\in \NN$ Consider the map of short Kummer sequences

$$
\xymatrix{
0 \ar[r] & t^*{\mu_n} \ar@{=}[d] \ar[r] & t^*{\GG_m} \ar[d] \ar[r]^{\times n} &  t^*{\GG_m} \ar[d] \ar[r] & 0 \\
0 \ar[r] & t^*{\mu_n} \ar[r] & \GG_m \ar[r]^{\times n} &  \GG_m \ar[r] & 0 \\
}
$$

This give rise to a map of long Kummer sequences
$$
\xymatrix{
  ... \ar[r] & H^2_{\acute{e}t}(X,t^*{\mu_n}) \ar@{=}[d] \ar[r] & H^2_{\acute{e}t}(X,t^*{\GG_m}) \ar[d] \ar[r]^{\times n} & H^2_{\acute{e}t}(X,t^*{\GG_m}) \ar[d] \ar[r] & ...  \\
  ... \ar[r] & H^2_{\acute{e}t}(X,t^*{\mu_n}) \ar[r] & H^2_{\acute{e}t}(X,\GG_m) \ar[r]^{\times n} &  H^2_{\acute{e}t}(X,\GG_m) \ar[r] & ... \\
}
$$

Now from an easy diagram chase we get that
$$ i_{*}: H^2_{\acute{e}t}(X,t^*{\GG_m})[n]  \lrar H^2_{\acute{e}t}(X,\GG_m)[n] $$
is surjective. Since $X$ is a smooth variety $H^2_{\acute{e}t}(X,\GG_m)$ is torsion group (e.g. by ~\cite{Lie08} Corollary 3.1.3.4).

\end{proof}

\subsection{Proof of Arithmetic Duality Results }\label{s:arithmetic-duality}
In this section we will proof the main auxiliary result (Theorem ~\ref{t:arithmetic-duality}) we need in order to prove the equivalence of the homological and the Brauer-Manin obstruction (Theorem ~\ref{t:main}). We will need various generalizations of results from the theory of arithmetic duality of Galois modules (See ~\cite{Mil06}) to Galois complexes. Similar and related results appear in ~\cite{HSz05}, ~\cite{De09b} and in ~\cite{Jos09}.

Let $K$ be a field with absolute Galois group $\Gam$. Consider the module
$$ \mathfrak{J} = \lim \limits_{\stackrel{\lrar}{F}} \lim \limits_{\stackrel{\lrar}{T}} \lim \limits_{\stackrel{\lrar}{F/K}} \prod \limits_{\om \in T} F_\om \times
\prod \limits_{\om \notin T} O_\om  $$
as a complex concentrated at degree $0$ (where $F$ runs over finite extensions of $K$ and $\om \in T$ means that $\om$ is a place of $F$ which lies over a place in $T$).

Note that there is an inclusion
$$\ovl{K}^* \subseteq \mathfrak{J}$$

We shall denote
$$\mfrak{C} = \mathfrak{J}/\ovl{K}^*$$

Then $\mfrak{C}$ is a \textbf{class formation}. This means that using the Yoneda product one obtains for every $\Gam$-module $M$ a pairing
$$ H^{2-r}(\Gam,M) \times \Ext^r_{\Gam}(M,\mfrak{C}) \lrar H^2(\Gam,\mfrak{C}) \stackrel{\simeq}{\lrar} \QQ/\ZZ $$
which is the basis for all arithmetic duality results.

We would like to generalize these notions from $\Gam$-modules to $\Gam$-complexes. We replace $\Ext$ with the appropriate notion in $\cal{C}{\Mod_{\Gam}}$ we denote by $\EExt$.

Since the Yoneda product still exists we get an analogous pairing
$$ \HH^{2-r}(\Gam,M) \times \EExt_{\Gam}^r(M,\mfrak{C}) \lrar \HH^2(\Gam,\mfrak{C}) \stackrel{\simeq}{\lrar} \QQ/\ZZ $$
for every $\Gam$-complex $M$. We think of $\mfrak{C}$ as a complex concentrated in degree $0$.
Note that this  pairing induces maps:
$$ \alp^r_M: \EExt_{\Gam}^r(M,\mfrak{C}) \lrar \left(\HH^{2-r}(\Gam,M))\right)^* = \left(\HH^{2-r}(M))\right)^*$$

\begin{define}
Let  $C \in \mcal{C}{\Mod_{\Gam}}$ be an excellent complex. We shall say that $C$ is \textbf{a finite reduced complex}
if it satisfies the following conditions
\begin{enumerate}
\item $C$ is homologically bounded.
\item $H_i(C)= 0 ,\quad i\leq 0$.
\item $H_i(C)$ is finite for all $i \in \ZZ$.
\end{enumerate}
\end{define}

\begin{lem}\label{l:new-class}
Let $K$ be a number field and Let $C \in \mcal{C}\Mod_{\Gam_K}$ be a finite reduced complex. Then the map
$$\alpha_{\what{C}}^r: \EExt^r\left(\what{C},\mathfrak{C}\right)\lrar \HH^{2-r}\left(K,\what{C}\right)^*$$
is an isomorphism for every $r \geq 0$ and is surjective for $r = -1$.
\end{lem}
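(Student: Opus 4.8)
The strategy is to reduce the statement about a general finite reduced complex $C$ to the classical duality statement for a single finite Galois module, which is the Tate--Nakayama/Poitou--Tate duality underlying the class formation structure on $\mfrak{C}$ (see ~\cite{Mil06}, Theorem I.4.10 and the surrounding material). The induction will be on the homological length of $C$. The base case is when $C$ is quasi-isomorphic to a single finite module $M$ placed in a positive degree $d$; then $\what{C} \simeq \what{M}[-d]$, so by the suspension isomorphism $\HH^{2-r}(K,\what{C}) \cong \HH^{2-r+d}(K,\what{M})$ and $\EExt^r(\what{C},\mfrak{C}) \cong \Ext^{r+d}(\what{M},\mfrak{C})$, and the claim becomes exactly the statement that $\alpha^{r+d}_{\what{M}}\colon \Ext^{r+d}(\what{M},\mfrak{C}) \to H^{2-r-d}(K,\what{M})^*$ is an isomorphism for $r+d \geq 1$ and surjective for $r+d = 0$. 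Since $M$ is finite and $C$ is reduced we have $d \geq 1$, so this is the classical class-formation duality result, and $r = -1$ corresponds to the edge case $r+d = 0$ which is only surjectivity — exactly as in the classical statement.

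For the inductive step, I would use the good truncation of $C$. Writing $d$ for the smallest degree with $H_d(C) \neq 0$, there is a distinguished triangle (equivalently a short exact sequence of complexes, after replacing $C$ by a quasi-isomorphic bounded complex via Lemma ~\ref{l:assume_bounded})
$$ H_d(C)[d] \lrar C \lrar C' $$
where $C'$ is again a finite reduced complex but with strictly shorter homological length (its lowest nonzero homology sits in degree $> d$). Applying the exact functor $\what{(\bullet)} = \uline{\Hom}(\bullet,\GG_m)$ gives a short exact sequence $\what{C'} \to \what{C} \to \what{H_d(C)[d]}$. Both $\EExt^*(\bullet,\mfrak{C})$ and $\HH^{2-*}(K,\bullet)^*$ carry long exact sequences attached to this (the first by general homological algebra in $\mcal{C}\Mod_\Gam$, using that $\mfrak{C}$ is viewed as a complex in degree $0$; the second from Lemma ~\ref{l:rational-hyper-ses} applied to the dual sequence, then dualized over $\QQ/\ZZ$, which is exact). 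The pairing with $\mfrak{C}$ is natural, so the maps $\alpha^r$ assemble into a morphism of long exact sequences. By the induction hypothesis $\alpha^r$ is an isomorphism for $C'$ and for $H_d(C)[d]$ in the relevant range (and surjective at the bottom), so the five lemma gives that $\alpha^r_{\what{C}}$ is an isomorphism for $r \geq 0$; a small diagram chase at the bottom of the ladder handles the $r = -1$ surjectivity claim, using that the neighbouring term $\EExt^{0}$ for the two outer complexes is already known to surject.

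The main obstacle I anticipate is \emph{not} the five-lemma bookkeeping but making sure the exact sequences are genuinely available in the generality needed: one must know that $\EExt^*_\Gam(\bullet,\mfrak{C})$ is a well-behaved $\delta$-functor on $\mcal{C}\Mod_\Gam$ with long exact sequences in both variables, and that the hyper-$\Ext$ really computes what one expects when one of the entries is the unbounded class-formation module $\mfrak{C}$. This is where one needs the Spaltenstein-type machinery mentioned in the remark after Lemma ~\ref{l:hyper-formula} (injective resolutions of unbounded complexes), together with the observation that all the complexes $\what{C}$ occurring here are bounded from above, so the hypercohomology formula of Lemma ~\ref{l:hyper-formula} applies and identifies everything with honest colimits of $\Ext$-groups over $\E(\Gam/\Lam)$. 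A secondary point of care is the bottom of the induction and the precise indexing: one must track that "finite reduced" forces the lowest homology into degree $\geq 1$, which is exactly what keeps the duality statement inside the range where the classical theorem is an isomorphism (rather than merely surjective) and pins down that $r = -1$ is the unique degree where only surjectivity holds.
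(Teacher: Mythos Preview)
Your proposal is correct and follows essentially the same approach as the paper: induct on the homological length of $C$, peel off a single homology group via a short exact sequence of complexes, and use the five lemma on the resulting ladder to reduce to the classical class-formation duality for a single finite module (the paper cites Theorem~4.6 in~\cite{Mil06}). The only cosmetic differences are that the paper peels off $P^+_1(C)$ and shifts the remainder by $\Sigma^{-1}$ rather than peeling off $H_d(C)[d]$ directly, and that you have a sign slip in the suspension formula (it should read $\HH^{2-r}(K,\what{C}) \cong \HH^{2-r-d}(K,\what{M})$, as you in fact use on the next line when writing $H^{2-(r+d)}$).
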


\begin{proof}
We will say that $C$ is $n$-bounded if $H_i(C) =0 $ for $i > 0$. We will prove the lemma for $n$-bounded complexes by induction on $n$.

For $n=1$ we get that $\what{C}$ is quasi-isomorphic to $\Sigma^{-1}M$ for some finite Galois module $M$. The claim then is just Theorem 4.6 in ~\cite{Mil06} after the suitable dimension shift (note that both sides are invariant to quasi-isomorphisms).

Now assume that the lemma is true for $n$-bounded complexes ($n \geq 1$) and let $C$ be an $(n+1)$-bounded finite reduced complex. Consider the short exact sequence
$$ 0 \lrar C \langle 1 \rangle  \lrar C \lrar P^+_1(C) \lrar 0 $$
Since
$$ \HH^{r}\left(K,\what{C\langle 1 \rangle }\right) =  \HH^{r-1}\left(K,\what{\Sigma^{-1}C\langle 1 \rangle }\right)$$
$$\EExt^{r}\left(\what{C\langle 1 \rangle },\mathfrak{C}\right)  = \EExt^{r+1}\left(\what{\Sigma^{-1}C\langle 1 \rangle },\mathfrak{C}\right) $$
and since $\Sigma^{-1}C\langle 1 \rangle$ is $n$-bounded we get that the lemma is true for $\alp^{r}_{C \langle 1\rangle }$. Since $P^+_1(C)$ is $1$-bounded the lemma is true for $\alp^{r}_{C \langle 1\rangle }$ as well. We then observe the map of long exact sequences:

%

$$ \xymatrix@-1.3pc{
\ar[r] & \EExt^{r-1}\left(\what{P^+_1(C)},\mfrak{C}\right) \ar[r]\ar^{\alp^{r-1}_{P^+_1(C)}}[d] &
            \EExt^{r}\left(\what{C\langle 1 \rangle },\mfrak{C}\right) \ar[r]\ar^{\alp^{r}_{C\langle 1 \rangle }}[d] &
            \EExt^r\left(\what{C},\mfrak{C}\right) \ar[r]\ar^{\alp^r_{C}}[d] &
            \EExt^r\left(\what{P^+_1(C)},\mfrak{C}\right) \ar[r]\ar^{\alp^r_{P^+_1(C)}}[d] &
            \EExt^{r+1}\left(\what{C\langle 1 \rangle },\mfrak{C}\right) \ar^{\alp^{r+1}_{C\langle 1 \rangle }}[d] \ar[r] &\\
\ar[r]& \left(\HH^{3-r}(\what{P^+_1(C)}))\right)^* \ar[r] &
            \left(\HH^{2-r}\left(\what{C\langle 1 \rangle }\right)\right)^* \ar[r] &
            \left(\HH^{2-r}\left(\what{C}\right)\right)^* \ar[r] &
            \left(\HH^{2-r}\left(\what{P^+_1(C)}\right)\right)^* \ar[r] &
            \left(\HH^{1-r}\left(\what{C\langle 1 \rangle }\right)\right)^* \ar[r]& \\
}$$
For $r \geq 0$ we get from the five lemma that $\alp^r_{C}$ is an isomorphism (recall that in the five lemma it is enough to assume the left most map is surjective). If $r = -1$ then an analogous diagram chase shows that $\alp^r_{C}$ is surjective.

\end{proof}

\begin{lem}\label{l:ext-to-h}
Let $K$ be a field of characteristic zero and let  $C$ be an excellent homologically bounded complex such that $H_n(C)$ is finitely generated for every $n$. Then there are natural isomorphisms
$$ \HH^i(K,C) \cong \EExt^i\left(\widehat{C}, \ovl{K}^*\right) $$
$$ \HH^i(\A,C) \cong \EExt^i\left(\widehat{C}, \mathfrak{J}\right) $$
\end{lem}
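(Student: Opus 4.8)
The plan is to deduce both isomorphisms from the general machinery already set up. First recall that by definition $\HH^i(K,C) = \pi_0\bigl(\ovl{P^-_0(\Sig^i C)}^{h\Gam_K}\bigr)$, and likewise $\HH^i(\A,C) = \pi_0\bigl(\ovl{P^-_0(\Sig^i C)}^{h\A}\bigr)$; both are invariant under quasi-isomorphism of nice complexes by Remark~\ref{r:adelic-invariant} and the paragraph after Lemma~\ref{l:hyper-formula}. By Lemma~\ref{l:assume_bounded} we may replace $C$ by a quasi-isomorphic bounded excellent complex, so without loss of generality $C$ is bounded. The strategy is then a d\'evissage on the homological length of $C$, exactly parallel to the proof of Lemma~\ref{l:new-class}: the base case is $C$ a single finitely generated Galois module placed in one degree, and the inductive step uses the short exact sequence $0 \to C\langle 1\rangle \to C \to P^+_1(C) \to 0$ together with the suspension isomorphism $\HH^i(K,\Sig C)\cong \HH^{i+1}(K,C)$, the analogous shift for $\EExt$, and the two long exact sequences (Lemma~\ref{l:rational-hyper-ses} for hypercohomology; the long exact sequence for $\EExt$ coming from the Yoneda formalism) glued by the five lemma. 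One checks that $\widehat{(-)}$ is exact, so it carries the short exact sequence of complexes to a short exact sequence of dual complexes, and that all complexes appearing remain excellent, homologically bounded, and with finitely generated homology, so the hypotheses propagate through the induction.

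For the base case I would argue as follows. If $C$ is concentrated in a single degree $n$ and equal there to a finitely generated $\Gam_K$-module $M$, then $\widehat{C}$ is quasi-isomorphic to $\Sig^{-n}\Hom(M,\GG_m)$ in degree $-n$; after the suspension shift it suffices to treat $n=0$. Then $\EExt^i(\widehat{C},\ovl{K}^*) = \Ext^i_{\Gam_K}(\Hom(M,\ovl{K}^*),\ovl{K}^*)$, and the canonical biduality map $M \to \Hom(\Hom(M,\ovl{K}^*),\ovl{K}^*)$ is an isomorphism of $\Gam_K$-modules for $M$ finitely generated (this is standard: $\ovl{K}^*$ is a divisible abelian group, and one checks biduality separately on the torsion-free and torsion parts, using that $\Hom(\ZZ/m,\ovl{K}^*)=\mu_m$ and $\Hom(\mu_m,\ovl{K}^*)=\ZZ/m$). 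Hence $\EExt^i(\widehat{C},\ovl{K}^*)\cong \Ext^i_{\Gam_K}(M^{\vee\vee},\ovl{K}^*)$... wait, more directly: $\Ext^i_{\Gam_K}(\Hom(M,\ovl{K}^*),\ovl{K}^*)$ identifies with the Galois hypercohomology of the complex $R\Hom(\Hom(M,\ovl{K}^*),\ovl{K}^*)\cong M$, i.e. with $H^i(K,M) = \HH^i(K,C)$. The adelic statement is identical with $\ovl{K}^*$ replaced by $\mathfrak{J}$: one uses that $\mathfrak{J}$ is also divisible and that, by the definition of $\SimpS^{h\A}$ as a restricted-product homotopy colimit of local homotopy fixed points, $\EExt^i(\widehat{C},\mathfrak{J})$ computes the restricted product of the local $\Ext$-groups, which is precisely $\HH^i(\A,C)$; here I would invoke the structure of $\mathfrak{J}$ as the colimit over $F/K$ of $\prod'_\om F_\om$ and commute $\EExt$ with the relevant colimits (legitimate since $\widehat{C}$ has finitely generated homology and only finitely many degrees).

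The main obstacle I anticipate is bookkeeping the adelic side: one must make the identification $\EExt^i_{\Gam_K}(\widehat{C},\mathfrak{J}) \cong \HH^i(\A,C)$ genuinely functorial and compatible with the restricted-product topology, which means being careful that (a) $\EExt$ with coefficients in the directed colimit $\mathfrak{J}$ commutes with that colimit, and (b) for each finite $F/K$ and each place the local $\Ext$ computation matches $\SimpS^{h\Gam_\nu}$ resp. $\SimpS^{h^{ur}\Gam_\nu}$ on the nose, including the unramified subobjects that define the restricted product. I would isolate this as a lemma: for $C$ excellent, homologically bounded with finitely generated homology, and for a local field $K_\nu$, $\EExt^i_{\Gam_\nu}(\widehat{C},\ovl{K_\nu}^*)\cong \HH^i(K_\nu,C)$ and the unramified variant holds with $O_\om^{ur}$, proved again by d\'evissage from the module case (Tate local duality, or simply local biduality since we only need the identification, not a perfect pairing). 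Granting that lemma, assembling the restricted product and passing to the colimit over $F/K$ is formal. The characteristic zero hypothesis is used exactly to guarantee $\ovl{K}^*$ and $\ovl{K_\nu}^*$ are divisible (hence injective as abelian groups) so that biduality for finitely generated modules is clean and $\widehat{(-)}$ has the expected exactness.
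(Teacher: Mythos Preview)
Your overall architecture is exactly the paper's: reduce to a bounded excellent complex via Lemma~\ref{l:assume_bounded}, then d\'evissage on the length using the long exact sequences (Lemmas~\ref{l:rational-hyper-ses} and~\ref{l:adelic-hyper-ses}) and the five lemma. The paper does precisely this, and for the base case (a single finitely generated $\Gam_K$-module $M$ in degree $0$) it simply \emph{cites} Milne~\cite{Mil06}, Lemmas~4.12 and~14.3, rather than reproving it.

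The gap is in your direct attack on that base case. Your biduality claim
\[
M \;\cong\; \Hom_{\ZZ}\bigl(\Hom_{\ZZ}(M,\ovl{K}^*),\,\ovl{K}^*\bigr)
\]
for finitely generated $M$ is false as soon as $M$ has a nonzero free part. Take $M=\ZZ$: then $\widehat{M}=\ovl{K}^*$, and $\Hom_{\ZZ}(\ovl{K}^*,\ovl{K}^*)=\End_{\ZZ}(\ovl{K}^*)$ is enormous (already $\End_{\ZZ}(\mu_\infty)\cong\widehat{\ZZ}$, and the uniquely divisible part contributes a huge $\QQ$-linear endomorphism algebra), certainly not $\ZZ$. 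Your parenthetical check only handles the torsion piece $\ZZ/m$; it says nothing about $\ZZ$. Since in the intended application one has $H_0(C)=\ZZ$ (the complex $\uline{P_n(\ZZ\SimpS_{\mcal U})}$), the free case cannot be avoided.

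Your fallback ``more directly, $\Ext^i_{\Gam_K}(\widehat{M},\ovl{K}^*)$ is Galois hypercohomology of $R\Hom_{\ZZ}(\widehat{M},\ovl{K}^*)\cong M$'' has the same problem: the last $\cong M$ is again the failed biduality, and moreover the spectral sequence $H^p(\Gam,\Ext^q_{\ZZ}(N,\ovl{K}^*))\Rightarrow\Ext^{p+q}_{\Gam}(N,\ovl{K}^*)$ is only available under finiteness hypotheses on $N$ (so that the inner $\Hom$'s stay in the category of discrete $\Gam$-modules), which $\widehat{M}=\ovl{K}^*$ does not satisfy. The adelic paragraph inherits the same defect at each place.

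The actual argument in Milne for the module case is not naive double duality; it is a more careful computation specific to the modules $\ovl{K}^*$ and $\mathfrak{J}$. So your d\'evissage skeleton is correct and matches the paper, but the base case should be quoted from Milne rather than ``proved'' via biduality.
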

\begin{proof}
By lemma ~\ref{l:assume_bounded} $C$ is quasi-isomorphic to an excellent bounded complex $D$. Since both $C$ and $D$ are excellent we can identify the rational and adelic hypercohomology of $C$ and $D$ (see remark ~\ref{r:adelic-invariant}). Since $\widehat{\bullet}$ is exact we get a quasi-isomorphism from $\widehat{D}$ to $\widehat{C}$ so we can identify the RHS for $C$ and $D$. Hence it is enough to prove the claim for bounded complexes.

For $C$ which is concentrated at degree $0$ (i.e. is actually a $\Gam$-module) this claim is proved in the ~\cite{Mil06} Lemmas $4.12$ and $14.3$. One can then proceed by induction on the length of $C$ using the fact that for bounded complexes both
$\HH^i(\Gam,\bullet)$ and $\HH^i(\A,\bullet)$ transform short exact sequences to long exact sequences (Lemmas ~\ref{l:rational-hyper-ses} and ~\ref{l:adelic-hyper-ses}) and applying the $5$-lemma.

\end{proof}

Now let $C$ be an excellent homologically bounded $\Gam_K$-complex. Let
$$ p: \HH^0(\A,C) \lrar \HH^2\left(K,\what{C}\right)^* $$
be the composition

$$ \HH^0(\A,C) \x{\simeq}{\lrar} \EExt^0\left(\widehat{C}, \mathfrak{J}\right) \lrar \EExt^0\left(\what{C},\mathfrak{C}\right) \x{\alpha_{\what{C}}^0}{\lrar} \HH^{2}\left(K,\what{C}\right)^* $$
where the first isomorphism is the one given in Lemma ~\ref{l:ext-to-h}. Then $p$ induces a pairing
$$ \HH^0(\A,C) \times \HH^2\left(K,\what{C}\right) \lrar \QQ/\ZZ $$
Unwinding the definitions one can see that this pairing is given by using the cup product
$$ \HH^0(K_\nu,C) \times \HH^2\left(K_\nu,\what{C}\right) \lrar \HH^2(K_\nu,\GG_m) \cong \QQ/\ZZ $$
for each $\nu$ and summing the results.

We will need to work with complexes whose zero'th homology group is not finite but a finitely generated permutation module, hence we can't assume that $\alp_{\what{C}}^0$ is an isomorphism. We can, however, make do with the following Theorem:

\begin{thm}\label{t:arithmetic-duality}

Let $C$ be an excellent homologically bounded $\Gam_K$-complex such that:
\begin{enumerate}
\item
$C_n = 0$ for $n < 0$
\item
$H_n(C)$ is finite for all $n > 0$.
\item
The Galois module $M = H_0(C)$ satisfies the property that $H^3(K,\widehat{M}) = 0$.
\end{enumerate}

Denote by $\pi:C \lrar M$ the natural map. Consider the (non-exact) sequence

$$ \HH^0(K,C) \x{\loc}{\lrar} \HH^0(\A,C) \x{p}{\lrar} \HH^2\left(K,\what{C}\right)^* $$

Let $(x_\nu) \in {\HH}^0(\A,C)$ be such that $\pi_*((x_\nu))$ is rational and $p((x_\nu)) = 0$. Then $(x_\nu)$ is rational, i.e. is in the image of $\loc$.

\end{thm}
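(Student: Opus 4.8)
The plan is to reduce the statement to the known arithmetic duality for finite Galois modules (Lemma~\ref{l:new-class}) together with the long exact sequences for hypercohomology. First I would consider the truncation triangle relating $C$ to $M = H_0(C)$ and to the ``positive part'' $C\langle 1\rangle$ (the part carrying $H_n(C)$ for $n\geq 1$). Concretely, there is a short exact sequence of complexes
$$ 0 \lrar C\langle 1\rangle \lrar C \lrar M \lrar 0 $$
(up to quasi-isomorphism), where $C\langle 1\rangle$ is an excellent homologically bounded complex with $H_0(C\langle 1\rangle) = 0$ and finite homology in all degrees, i.e. (after a shift) a finite reduced complex. Dualizing and applying Lemma~\ref{l:rational-hyper-ses} and Lemma~\ref{l:adelic-hyper-ses} produces compatible long exact sequences of rational and adelic hypercohomology groups, which I would then paste into a big commutative ladder together with the duality pairings $p$.

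The key steps, in order, would be: (1) Establish the ladder with rows the long exact sequences in $\HH^*(K,-)$, $\HH^*(\A,-)$ and the $\QQ/\ZZ$-duals $\HH^{2-*}(K,\widehat{(-)})^*$, for the three complexes $C\langle 1\rangle$, $C$, $M$; the vertical maps are the $p$'s, which commute with the connecting homomorphisms because they come from the Yoneda/cup product pairing with the class formation $\mfrak{C}$. (2) For $C\langle 1\rangle$ and for $M$ separately, identify exactly which duality statements hold: for the finite reduced complex $\Sigma^{-1}C\langle 1\rangle$ the map $\alpha$ is an isomorphism in the relevant degrees by Lemma~\ref{l:new-class}, giving a genuine (Poitou--Tate style) exact sequence $\HH^0(K,C\langle 1\rangle) \to \HH^0(\A,C\langle 1\rangle) \to \HH^2(K,\widehat{C\langle 1\rangle})^*$; for the permutation-type module $M$ we only know $H^3(K,\widehat{M}) = 0$, and I would invoke the classical Poitou--Tate sequence for $M$ (which gives exactness at $\HH^0(\A,M)$ precisely because $H^3(K,\widehat M)$ — equivalently $\Sha^1$ of the dual — behaves well) to conclude that an adelic class in $\HH^0(\A,M)$ which is everywhere-locally trivial obstruction-wise is rational. (3) Diagram chase: given $(x_\nu) \in \HH^0(\A,C)$ with $p((x_\nu)) = 0$ and $\pi_*((x_\nu))$ rational, use surjectivity of $\alpha^{-1}_{\widehat{C\langle 1\rangle}}$ (the $r = -1$ case of Lemma~\ref{l:new-class}) to lift, then chase around the ladder: the image of $(x_\nu)$ in $\HH^0(\A, C\langle 1\rangle)$-land is controlled by $p$ being zero there, so it comes from $\HH^0(K, C\langle 1\rangle)$; combine this rational lift with the given rational lift of $\pi_*((x_\nu))$ in $\HH^0(K,M)$ and exactness of the top row to produce the desired element of $\HH^0(K,C)$ mapping to $(x_\nu)$.

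The main obstacle I expect is handling the failure of $\alpha^0_{\widehat C}$ to be an isomorphism because $H_0(C) = M$ is not finite. This is exactly why the hypothesis $H^3(K,\widehat M) = 0$ is imposed: it is what guarantees that the relevant portion of the Poitou--Tate sequence for $M$ stays exact at the $\HH^0(\A,-)$ spot (the obstruction to exactness there is a $\Sha$-type group built from $H^3$, cf. the nine-term Poitou--Tate sequence in Milne~\cite{Mil06}). So the delicate bookkeeping is to isolate precisely which arrows in the ladder are isomorphisms, which are merely surjections (the $r=-1$ case), and which are only known to be exact as a sequence of pointed sets, and then verify the diagram chase goes through with only those inputs. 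A secondary technical point is checking compatibility of the three pairings with the connecting maps of the long exact sequences — this is formal from the associativity of the Yoneda product but must be stated carefully. I would also need the identifications $\HH^i(K,C) \cong \EExt^i(\widehat C, \ovl K^*)$ and $\HH^i(\A,C) \cong \EExt^i(\widehat C,\mfrak J)$ from Lemma~\ref{l:ext-to-h} to even write $p$ as the composite through $\EExt^0(\widehat C,\mfrak C)$, so those should be quoted at the outset.
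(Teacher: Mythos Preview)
Your overall strategy---decompose $C$ via the triangle $C\langle 1\rangle \to C \to M$, invoke Lemma~\ref{l:new-class} for the finite reduced piece, and use $H^3(K,\widehat{M})=0$ for the remaining piece---matches the paper's. The gap is in your step~(3). You write ``the image of $(x_\nu)$ in $\HH^0(\A, C\langle 1\rangle)$-land'', but the map goes the wrong way: $\HH^0(\A,C\langle 1\rangle)\to\HH^0(\A,C)$. The natural attempt is instead to lift the rational element $m\in\HH^0(K,M)$ to some $c\in\HH^0(K,C)$, subtract $\loc(c)$ from $(x_\nu)$, and then lift the difference to $\HH^0(\A,C\langle 1\rangle)$. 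But the obstruction to lifting $m$ is $\delta(m)\in\HH^1(K,C\langle 1\rangle)$, and all you know is that it lies in $\Sha^1(C\langle 1\rangle)$, which need not vanish. Your ladder of three rows $\HH^*(K,-)$, $\HH^*(\A,-)$, $\HH^{2-*}(K,\widehat{(-)})^*$ does not give enough control here without additional input. (You also invoke the $r=-1$ surjectivity from Lemma~\ref{l:new-class}, but it is not needed; only $r=0$ is used.)

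The paper avoids this problem by making the $\EExt(\widehat{-},\mfrak{C})$ column the central object rather than a device for defining $p$. Via Lemma~\ref{l:ext-to-h} and the sequence $0\to\ovl{K}^*\to\J\to\mfrak{C}\to 0$, rationality of $(x_\nu)$ is equivalent to vanishing of its image $\beta\in\EExt^0(\widehat{C},\mfrak{C})$. The hypothesis that $\pi_*((x_\nu))$ is rational translates directly into $\widehat{\pi}_*(\beta)=0$ in $\EExt^0(\widehat{M},\mfrak{C})$, so $\beta$ lifts to $\gamma\in\EExt^0(\widehat{C\langle 1\rangle},\mfrak{C})$---no obstruction, because this is exactness of a single $\EExt$ long exact sequence. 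Then $p((x_\nu))=0$ says $\alpha^0_C(\beta)=0$; since $H^3(K,\widehat{M})=0$ makes $\HH^2(K,\widehat{C\langle 1\rangle})^*\to\HH^2(K,\widehat{C})^*$ injective (dualise the long exact sequence for $0\to\widehat{M}\to\widehat{C}\to\widehat{C\langle 1\rangle}\to 0$), one gets $\alpha^0_{C\langle 1\rangle}(\gamma)=0$, and Lemma~\ref{l:new-class} (only $r=0$) gives $\gamma=0$, hence $\beta=0$. The role of $H^3(K,\widehat{M})=0$ is thus purely this injectivity on the dual side, not any Poitou--Tate statement for $M$ itself. Reworking your chase through $\EExt^0(\widehat{-},\mfrak{C})$ rather than $\HH^0(\A,-)$ will make it go through.
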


\begin{proof}

Consider the following diagram with exact rows
$$ \xymatrix{
\EExt^0\left(\what{C}, \ovl{K}^*\right) \ar[r]\ar[d] & \EExt^0\left(\what{C}, \J\right) \ar[d]\ar[r] & \EExt^0\left(\what{C}, \C\right) \ar^{\what{\pi}_*}[d] \\
\EExt^0\left(\what{M}, \ovl{K}^*\right) \ar[r] & \EExt^0\left(\what{M}, \J\right) \ar[r] & \EExt^0\left(\what{M}, \C\right)  & \\
}$$

Using the isomorphisms of Lemma ~\ref{l:ext-to-h} we can consider $(x_\nu)$ as an element $\bet \in \EExt^0\left(\what{C},\J\right)$ whose image in  $\EExt^0\left(\what{M},\J\right)$ comes from $\EExt^0\left(\what{M},\ovl{K}^*\right)$. Let $\bet$ be the image of $(x_\nu)$ in $\EExt^0\left(\what{C},\C\right)$. Then from commutativity we get that
$$ \what{\pi}_*(\bet) = 0 \in \EExt^0\left(\what{M},\C\right) $$

We will show that $\bet = 0$. This would imply that $(x_\nu)$ comes from $\EExt^0\left(\what{C},\ovl{K}^*\right) \cong \HH^0(K,C)$ as desired.

Let $\wtl{C} = \ker(\pi)$ and consider the exact sequence of $\Gam_K$-complexes
$$ 0 \lrar \what{M} \lrar \what{C} \lrar \what{\wtl{C}} \lrar 0 $$
Consider the following commutative diagram with exact columns
$$ \xymatrix{
& & \HH^3\left(K,\what{M}\right) \ar[d] \\
\EExt^0\left(\what{\wtl{C}}, \J\right) \ar[d]\ar[r] & \EExt^0\left(\what{\wtl{C}}, \C\right) \ar^{\alp^{0}_{\wtl{C}}}[r]\ar[d] & \HH^2\left(K,\what{\wtl{C}}\right)^* \ar[d]\\
\EExt^0\left(\what{C}, \J\right) \ar[d]\ar[r] & \EExt^0\left(\what{C}, \C\right) \ar^{\what{\pi}_*}[d]\ar^{\alp^{0}_{C}}[r] & \HH^2\left(K,\what{C}\right)^*  \\
\EExt^0\left(\what{M}, \J\right) \ar[r] & \EExt^0\left(\what{M}, \C\right)  & \\
}$$
Since $\what{\pi}_*(\bet) = 0 $ we get that there is a $\gam \in \EExt^0\left(\what{\wtl{C}}, \C\right)$ mapping to $\bet$. The fact that $p(x_\nu) = 0$ means that $\alp^{0}_{C}(\bet) = 0$. Since we assume that $\HH^3\left(K,\what{M}\right) = 0$ we get that $\alp^{0}_{\wtl{C}}(\gam) = 0$. But by Lemma ~\ref{l:new-class} the map $\alp^{0}_{\wtl{C}}$ is an isomorphism and so $\gam = 0$. This means that $\bet = 0$ as well and we are done.
\end{proof}

\section{The Equivalence of the Homotopy Obstruction and the \'{E}tale Brauer Obstruction }\label{s:equivalence-2}

The main result of this section is the following theorem:

\begin{thm}\label{t:h-is-fin-br}
Let $K$ be a number field and $X$ a smooth geometrically connected variety over $K$. Then
$$X(\A)^{fin, \Br} = X(\A)^{h}$$
\end{thm}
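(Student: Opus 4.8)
The plan is to prove the two inclusions $X(\A)^{h} \subseteq X(\A)^{fin,\Br}$ and $X(\A)^{fin,\Br} \subseteq X(\A)^{h}$ separately, both reduced — via Theorem~\ref{t:descent-theorem} and Corollary~\ref{c:cosk2-final-2} — to statements about individual hypercoverings and their $2$-truncations. For the first inclusion, note that $X(\A)^{fin,\Br} = \bigcap_f X(\A)^{f,\Br}$ and recall from $\S\ref{s:finite-descent}$ (Lemma~\ref{l:classifies-fiber}, Corollary~\ref{c:cor-of-classifies-fiber}) that for a torsor $f: Y \to X$ under a finite group $G$ the map $c_Y$ identifies the fiber $Y_p$ over an adelic point with an element of $H^1(\A,G) = \B G(h\A)$. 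Given $(x_\nu) \in X(\A)^h$, its homotopy fixed point comes from $X(hK)$, hence so does its image under $c_Y$ in $\B G(h\A)$; then one must check that the resulting $K$-rational $G$-torsor $Y^\sigma$ is such that $(x_\nu)$ lifts to $Y^\sigma(\A)$, and moreover that the lifted adelic point is itself in $Y^\sigma(\A)^\Br$ — this uses that the Brauer obstruction for $Y^\sigma$ is captured by $X(\A)^{\ZZ h}$ restricted to $Y^\sigma$, which by Theorem~\ref{t:main} equals the Brauer set, together with functoriality of $\acute{E}t_{/K}$ along $f^\sigma$.

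\textbf{The reverse inclusion and the main obstacle.} For $X(\A)^{fin,\Br} \subseteq X(\A)^h$, fix $(x_\nu) \in X(\A)^{fin,\Br}$ and a hypercovering $\U \to X$; by Corollary~\ref{c:cosk2-final-2} it suffices to lift the image of $(x_\nu)$ in $\SimpS_{\U,2}(h\A)$ to $\SimpS_{\U,2}(hK)$. The idea is to first handle the fundamental group: by Proposition~\ref{p:h-1-is-check} and Theorem~\ref{t:fin}, $X(\A)^{fin,\Br} \subseteq X(\A)^{fin} = X(\A)^{h,1}$, so the image in $\SimpS_{\U,1}(h\A)$ is rational. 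The hard part will be lifting from the $1$-truncation to the $2$-truncation: this is where $\pi_2$ enters and where the full Brauer group (not just finite descent) must be used. The plan is to choose, using Lemma~\ref{l:from-stoll} and the geometric-connectedness reduction as in $\S\ref{s:finite-descent}$, a torsor $Y \to X$ under a finite group realizing $\pi_1(\SimpS_\U)$ with a geometrically connected $K$-twist $Y_0$, replace $X$ by $Y_0$ and $\U$ by its pullback, so that $\SimpS_{\U}$ becomes simply connected; then by Corollary~\ref{c:cosk2-zzh-h} the homotopy and homology obstructions for the pulled-back hypercovering coincide, and Theorem~\ref{t:main} identifies the homology obstruction on $Y_0$ with the Brauer obstruction on $Y_0$. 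Since $(x_\nu) \in X(\A)^{fin,\Br}$, it lifts to an adelic point of $Y_0$ lying in the Brauer set of $Y_0$, hence (by Theorem~\ref{t:main} applied to the simply connected situation) to a point of $\SimpS_{\U,2}(hK)$ after base change; descending this back along $Y_0 \to X$ and twisting as in Lemmas~\ref{l:twist-relations-1} and~\ref{l:check-is-enough} gives rationality of the image of $(x_\nu)$ in $\SimpS_{\U,2}(hK)$.

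\textbf{Assembly.} Combining the two inclusions over all hypercoverings $\U$ and using Theorem~\ref{t:descent-theorem} to reassemble $X(\A)^h = \bigcap_{\U} X(\A)^{\U,\infty}$ (and its $2$-truncated version via Corollary~\ref{c:cosk2-final-2}) yields the equality. The main technical obstacle I anticipate is the bookkeeping in the reduction to the simply connected case: one must verify that the twisting construction of $\S\ref{s:finite-descent}$ interacts correctly with the $2$-truncated \'etale homotopy type of the pulled-back hypercovering (so that rationality of a homotopy fixed point on $\ovl{X}$-level twists descends compatibly to $X$), and that the appeal to Theorem~\ref{t:main} — which requires the variety to be connected and the complex at hand to have $H_0 = \ZZ$ and finite higher homology — is legitimate for $\SimpS_{\U,2}$ after the base change to $Y_0$. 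The archimedean places will, as in Remarks~\ref{r:non-connected-fin} and~\ref{r:non-connected-br}, need a small separate argument or be absorbed by the geometric-connectedness hypothesis on $X$.
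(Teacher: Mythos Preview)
Your argument for the inclusion $X(\A)^{fin,\Br} \subseteq X(\A)^{h}$ is essentially correct and matches the paper's Proposition~\ref{p:Et-Zh-subset-h}: pass to a torsor that kills $\pi_1(\SimpS_\U)$ (this is Lemma~\ref{l:pull-to-sc}), use Corollary~\ref{c:cosk2-zzh-h} on the simply connected pullback, and invoke Theorem~\ref{t:main} on the torsor. So that half is fine.

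The genuine gap is in the other direction, which you treat as the easy one. Given $(x_\nu)\in X(\A)^h$ and a torsor $f:Y\to X$, you correctly observe that the rational homotopy fixed point $r\in X(hK)$ yields a class $\sigma=c_Y(r)\in H^1(K,G)$ and that $(x_\nu)$ lifts to some $(y_\nu)\in Y^\sigma(\A)$. But the sentence ``the lifted adelic point is itself in $Y^\sigma(\A)^{\Br}$ --- this uses that the Brauer obstruction for $Y^\sigma$ is captured by $X(\A)^{\ZZ h}$ restricted to $Y^\sigma$ \ldots\ together with functoriality of $\acute{E}t_{/K}$ along $f^\sigma$'' does not work: functoriality goes the wrong way. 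The map $\acute{E}t_{/K}(Y^\sigma)\to\acute{E}t_{/K}(X)$ tells you that a homotopically rational point on $Y^\sigma$ pushes down to one on $X$, not conversely. Knowing $(x_\nu)\in X(\A)^h$ gives you nothing a priori about where $(y_\nu)$ sits in $Y^\sigma(\A)$ relative to hypercoverings of $Y^\sigma$ that are \emph{not} pulled back from $X$.

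The paper closes this gap in Proposition~\ref{p:h-subset-fin-h} (more precisely Proposition~\ref{p:main part 2}) by a compactness argument that you do not mention. First one reduces (Lemma~\ref{l:enough V}) every hypercovering of $Y^\sigma$ to one of the form $f^{\sigma*}\mathcal{V}$ for a hypercovering $\mathcal{V}\to X$ admitting a map to $\check{Y}^\sigma$. For such $\mathcal{V}$ the map $\SimpS_{f^{\sigma*}\mathcal{V},n}\to\SimpS_{\mathcal{V},n}$ is a principal $G$-covering (Lemma~\ref{l:prin_G}), giving a fibration sequence on homotopy fixed points whose long exact sequence lets one lift $r_{\mathcal{V},n}$ to $\SimpS_{f^{\sigma*}\mathcal{V},n}(hK)$. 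One then shows, using finiteness of $G(K)$ and of the fibers of $\loc$ (Proposition~\ref{p:finite-preim}), that the set of lifts $(y_\nu)\in f^{-1}((x_\nu))$ which are $(f^{\sigma*}\mathcal{V},n)$-rational is nonempty and \emph{compact}; a standard inverse-limit argument then produces a lift lying in $Y^\sigma(\A)^h$. This is the heart of the proof, and it is absent from your sketch. In short, you have the two halves swapped in difficulty: the half you flag as ``the main obstacle'' is routine, and the half you dismiss in two lines requires the most work.
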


\begin{rem}\label{r:non-connected-fin-br}
If one wants to omit the condition that $X$ is geometrically connected one again face a small problem in the complex places of $K$ (the situation here is completely analogous to the one in Remarks ~\ref{r:non-connected-fin} and ~\ref{r:non-connected-br}).
\end{rem}


The proof will be based on two propositions:
\begin{enumerate}
\item
Proposition ~\ref{p:Et-Zh-subset-h} will show that for any smooth variety over $K$ we have
$$ X(\A)^{fin,\ZZ h} \subseteq X(\A)^{h} $$
\item
Proposition ~\ref{p:h-subset-fin-h} will show that for any smooth variety over $K$ we have
$$X(\A)^{h} \subseteq X(\A)^{fin,h}$$
\end{enumerate}

We then get

$$ X(\A)^{fin,\ZZ h} \subseteq X(\A)^{h}  \subseteq X(\A)^{fin,h} \subseteq X(\A)^{fin,\ZZ h} $$
which means that

$$ X(\A)^{fin,\ZZ h} = X(\A)^{h} $$ 

The theorem will then follow form the lemma

\begin{lem}
Let $K$ be a number field and $X$ a geometrically connected smooth variety over $K$. Then
$$ X(\A)^{fin,\Br} = X(\A)^{fin,\ZZ h} $$
\end{lem}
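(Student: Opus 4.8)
The statement to prove is that for a geometrically connected smooth $K$-variety $X$ over a number field $K$ one has $X(\A)^{fin,\Br} = X(\A)^{fin,\ZZ h}$. The plan is to reduce this equality to the main equivalence Theorem~\ref{t:main}, applied not to $X$ itself but to all the finite torsors over $X$ and their twists. Recall that $X(\A)^{fin,\Br}$ is by definition $\bigcap_f \bigcup_{\sigma \in H^1(K,G)} f^\sigma(Y^\sigma(\A)^{\Br})$, where $f : Y \lrar X$ ranges over torsors under finite $K$-groups $G$, and similarly $X(\A)^{fin,\ZZ h} = \bigcap_f \bigcup_\sigma f^\sigma(Y^\sigma(\A)^{\ZZ h})$. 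So it suffices to show that for \emph{each} such torsor $f : Y \lrar X$ and each twist $\sigma$, the sets $f^\sigma(Y^\sigma(\A)^{\Br})$ and $f^\sigma(Y^\sigma(\A)^{\ZZ h})$ are equal as subsets of $X(\A)$; indeed, it is enough to know $Y^\sigma(\A)^{\Br} = Y^\sigma(\A)^{\ZZ h}$ for every twist, since $f^\sigma$ is then applied to the same set on both sides.

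The key point is that each twist $Y^\sigma$ of a torsor under a \emph{finite} $K$-group is again a smooth $K$-variety, and — crucially — it is geometrically connected precisely when $\ovl{Y^\sigma}$ is connected, which need not hold in general. This is exactly the subtlety flagged in Remark~\ref{r:non-connected-br}: the equality $Z(\A)^{\ZZ h} = Z(\A)^{\Br}$ from Theorem~\ref{t:main} is stated for connected $Z$, but the remark explains that the \emph{only} failure for non-connected $Z$ occurs at the complex places, and is therefore invisible to both the Brauer pairing (since $\Br \CC = 0$) and the homology obstruction (since $\spec(\CC)^{\ZZ h} = \spec(\CC)$ by Proposition~\ref{p:zero-dim}, applied componentwise). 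Concretely: write $\ovl{Y^\sigma} = \coprod_i Z_i$ as a disjoint union of its $\Gam_K$-orbits of connected components; by Corollary~\ref{c:coprod} and its analogue for the Brauer set, both $Y^\sigma(\A)^{\ZZ h}$ and $Y^\sigma(\A)^{\Br}$ decompose as disjoint unions over the orbits, each orbit being a geometrically connected smooth variety over a finite extension $K_i/K$ (via restriction of scalars, or directly). Applying Theorem~\ref{t:main} to each such geometrically connected piece gives $Y^\sigma(\A)^{\ZZ h} = Y^\sigma(\A)^{\Br}$, modulo the behaviour at complex places; and there one invokes that on a complex place $\nu$ both membership conditions are vacuous, so they agree there too.

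First I would set up the torsor-indexed intersection/union formulas for both obstruction sets, noting that the union over $H^1(K,G)$ is the same index set on both sides, so it reduces to proving $Y^\sigma(\A)^{\ZZ h} = Y^\sigma(\A)^{\Br}$ for a fixed twist. Next I would handle the connectivity issue: if $Y^\sigma$ is geometrically connected, quote Theorem~\ref{t:main} directly; otherwise, decompose via $\pi_0$, use Corollary~\ref{c:coprod} together with the corresponding decomposition of the Brauer set under disjoint unions, and reduce each geometrically connected summand to Theorem~\ref{t:main}. Finally I would deal with the complex places as in Remark~\ref{r:non-connected-br}, observing that the two conditions defining membership in $X(\A)^{fin,\Br}$ versus $X(\A)^{fin,\ZZ h}$ impose no constraint on the complex coordinates, so any discrepancy there is immaterial.

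The main obstacle I anticipate is precisely the careful bookkeeping at the complex (and more generally archimedean) places, together with making the decomposition-over-$\pi_0$ step fully rigorous: one needs that forming twists, taking geometric connected components, and the restriction-of-scalars identification all interact correctly with both obstruction sets, and that the "disjoint union" formula for the Brauer set (not proved in the excerpt but classical) is legitimately available. Everything else is a formal consequence of Theorem~\ref{t:main}, Proposition~\ref{p:zero-dim}, and Corollary~\ref{c:coprod}. If one is willing to restrict to totally real $K$ (or simply ignore complex places, as the paper does elsewhere), the argument becomes a clean one-line reduction to Theorem~\ref{t:main} applied torsor-by-torsor.
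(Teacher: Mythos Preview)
Your proposal is correct and follows essentially the same approach as the paper's own proof. The paper's argument is just a terse version of what you wrote: it invokes Theorem~\ref{t:main} torsor-by-torsor, flags that the twists $Y^\sigma$ need not be geometrically connected even though $X$ is, and then points to Remarks~\ref{r:non-connected-fin}, \ref{r:non-connected-br}, and \ref{r:non-connected-fin-br} to say that the only discrepancy is at the complex places and is easily handled.
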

\begin{proof}
By Theorem ~\ref{t:main} this claim is almost immediate. The only point one would verify  is the geometric connectivity issue (note that even though $X$ is geometrically connected this in not true for every $Y$ which is a torsor over $X$). By Remarks ~\ref{r:non-connected-fin}, ~\ref{r:non-connected-br} and ~\ref{r:non-connected-fin-br} the only problem lies in the complex places and this can be easily treated.
\end{proof}

\begin{prop}\label{p:Et-Zh-subset-h}
Let $K$ be a number field and $X$ a smooth variety over $K$. Then
$$ X(\A)^{fin,\ZZ h} \subseteq X(\A)^{h} $$
\end{prop}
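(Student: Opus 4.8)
The plan is to translate the finite-descent obstruction $X(\A)^{fin,\ZZ h}$ into a statement about individual finite torsors and then relate each such torsor to the homotopy obstruction of its total space. Recall that by definition $X(\A)^{fin,\ZZ h} = \bigcap_f X(\A)^{f,\ZZ h}$, where $f:Y\lrar X$ ranges over torsors under finite $K$-groups $G$, and $X(\A)^{f,\ZZ h} = \bigcup_{\sigma\in H^1(K,G)} f^\sigma(Y^\sigma(\A)^{\ZZ h})$. So let $(x_\nu)\in X(\A)^{fin,\ZZ h}$; I must produce, for a \emph{second} finite torsor $g:Z\lrar X$ under a finite $K$-group $H$, a cohomology class $\tau\in H^1(K,H)$ and a point of $Z^\tau(\A)^h$ mapping to $(x_\nu)$ (after which a further intersection over all $g$ yields membership in $X(\A)^h$). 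The first step is to apply $(x_\nu)\in X(\A)^{f,\ZZ h}$ to the torsor $f=g$ itself: there is $\sigma\in H^1(K,H)$ and $(z_\nu)\in Z^\sigma(\A)^{\ZZ h}$ with $g^\sigma((z_\nu)) = (x_\nu)$.

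The key point is now to compare $Z^\sigma(\A)^{\ZZ h}$ with $Z^\sigma(\A)^h$, i.e.\ to upgrade from the homological to the homotopical obstruction on the twisted cover. I would argue this one hypercovering at a time, using Corollary~\ref{c:descent-theorem} (which reduces $X(\A)^{h,n}$ and $X(\A)^{\ZZ h,n}$ to intersections over single hypercoverings) together with the single-hypercovering comparison results. The hope is that for a variety like $Z^\sigma$ one can, via a further finite torsor pulled back over $Z^\sigma$, arrange that the relevant $\X_{\U}$ become simply connected, in which case Corollary~\ref{c:cosk2-zzh-h} gives $X(\A)^{\U,n} = X(\A)^{\ZZ\U,n}$; more precisely, one climbs the tower of finite (in fact finite abelian, in the nilpotent/universal-cover sense) covers of $\ovl{Z^\sigma}$. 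Concretely: the homotopy obstruction on $Z$ is controlled by $\acute{E}t^{2}_{/K}(Z)$ (Corollary~\ref{c:cosk2-final-2}), whose relevant piece after passing to a suitable finite cover $W\lrar Z$ kills $\pi_1$, and then on $W$ the Hurewicz map $\pi_2 \to H_2$ is an isomorphism, so the $\ZZ$-obstruction sees everything the homotopy obstruction sees.

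The order of steps I would carry out: (1) unwind the definition of $X(\A)^{fin,\ZZ h}$ and reduce, by Theorem~\ref{t:descent-theorem}/Corollary~\ref{c:descent-theorem}, to showing that for each finite torsor $g:Z\lrar X$ there is a twist $Z^\sigma$ and a lift of $(x_\nu)$ through $Z^\sigma(\A)^h$; (2) given such a $g$, use the hypothesis applied to the torsor $g$ (and to further finite torsors over $Z$) to land in the $\ZZ h$-set of a twisted cover; (3) on that cover, exploit that after a finite base change $\pi_1$ becomes trivial, so Corollary~\ref{c:cosk2-zzh-h} converts the $\ZZ h$-statement into an $h$-statement; (4) descend back down through the torsors using the twisting/functoriality bookkeeping already set up in \S\ref{s:finite-descent} (Lemmas~\ref{l:twist-relations-1}, \ref{l:from-stoll} and the $\check{X}_L$ construction of Definition~\ref{d:X-L}) to conclude $(x_\nu)\in g^\sigma(Z^\sigma(\A)^h)$, and intersect over all $g$.

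The main obstacle I anticipate is step (3)–(4): controlling the fundamental group along the tower of finite covers while keeping track of which twist $\sigma$ one is on, and making the reduction ``$\X_{\U}$ simply connected'' rigorous at the level of hypercoverings rather than just the pro-homotopy type. In other words, the heart of the argument is an induction on the (finite, since everything is bounded) Postnikov length, where at each stage one trades a $\pi_1$-layer for a finite torsor and invokes the finite-descent hypothesis again; the bookkeeping that the collection of torsors needed stays within the class ``finite $K$-groups'' — so that the hypothesis $(x_\nu)\in X(\A)^{fin,\ZZ h}$ keeps applying — is where the real care is required. Everything else (the passage between $h\A$ and $hK$, exactness of homotopy fixed points along fibrations) is supplied by Lemma~\ref{l:preserves fibrations}, Corollary~\ref{c:preserves fibrations} and Proposition~\ref{p:2-is-enough}.
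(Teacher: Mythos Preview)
Your proposal contains the right key idea --- pass to a finite cover on which $\pi_1$ dies, so that Corollary~\ref{c:cosk2-zzh-h} upgrades the $\ZZ h$-obstruction to the $h$-obstruction --- but the logical skeleton is inverted, and as written the argument proves the wrong thing.

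Concretely: you claim that producing, for each finite torsor $g:Z\to X$, a twist $Z^\tau$ and a lift of $(x_\nu)$ in $Z^\tau(\A)^h$, and then intersecting over all $g$, yields $(x_\nu)\in X(\A)^h$. It does not. That intersection is by definition $X(\A)^{fin,h}$, not $X(\A)^h$; the containment $X(\A)^{fin,h}\subseteq X(\A)^h$ is not available to you here (indeed the paper proves the \emph{reverse} inclusion separately as Proposition~\ref{p:h-subset-fin-h}, and the equality of all these sets is the punchline of the whole section). Your step~(1) is thus a misreading of Corollary~\ref{c:descent-theorem}: that corollary reduces $X(\A)^h$ to an intersection over \emph{hypercoverings} $\U\to X$, not over torsors.

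The paper's structure fixes a hypercovering $\U\to X$ first, and then manufactures a single finite torsor $f:Y\to X$ adapted to $\U$, namely one for which $\SimpS_{f^*\U}$ is simply connected (Lemma~\ref{l:pull-to-sc}). One then applies the hypothesis $(x_\nu)\in X(\A)^{fin,\ZZ h}$ to \emph{that particular} $f$: some twist $Y^\alpha$ carries a lift $(y_\nu)\in Y^\alpha(\A)^{\ZZ h}\subseteq Y^\alpha(\A)^{\ZZ (f^\alpha)^*\U,n}$; simple connectedness of $\SimpS_{(f^\alpha)^*\U}$ together with Corollary~\ref{c:cosk2-zzh-h} upgrades this to $(y_\nu)\in Y^\alpha(\A)^{(f^\alpha)^*\U,n}$; and the functorial map $\SimpS_{(f^\alpha)^*\U}\to\SimpS_\U$ pushes rationality down to give $(x_\nu)\in X(\A)^{\U,n}$. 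No induction on Postnikov length and no tower of covers is needed: one well-chosen torsor per hypercovering suffices.

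There is a second point you do not address: the torsor $Y\to X$ with $\SimpS_{f^*\U}$ simply connected exists over $\ovl K$ for free, but descending it to $K$ requires a continuous section of $1\to\pi_1(\ovl X)\to\pi_1(X)\to\Gam_K\to 1$. This is supplied by Corollary~\ref{c:Harari-Stix}, using that $(x_\nu)\in X(\A)^{fin,\ZZ h}\subseteq X(\A)^{fin}\neq\emptyset$; without this arithmetic input Lemma~\ref{l:pull-to-sc} is not available and your step~(3) has no torsor to work with.
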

\begin{proof}

From Corollary ~\ref{c:coprod} we can assume without loss of generality that $X$ is connected over $K$. But in this case both sets are empty unless $X$ is geometrically connected (Corollary ~\ref{c:catches-pi-0}). Hence we can assume that $X$ is geometrically connected.

We shall prove the proposition by using the following lemma
\begin{lem}\label{l:pull-to-sc}
Let $K$ be a field and $X$ a smooth geometrically connected variety over $K$ such that the short exact sequence
$$ 1\lrar \pi_1(\ovl{X})\lrar \pi_1(X)\lrar \Gam_K \lrar 1 $$
admits a continuous section. Then for every hypercovering $\U \lrar X$ there exists a geometrically connected torsor $ f: Y \lrar X $ under a finite $K$-group $G$ such that $\SimpS_{f^*(\U)}$ is simply connected.

\end{lem}
Before proving Lemma ~\ref{l:pull-to-sc} we shall explain why it implies the proposition. Let
$$ (x_\nu) \in X(\A)^{fin,\ZZ h} $$
By Corollary ~\ref{c:descent-theorem} it is enough to prove that for every hypercovering $\U \lrar X$ and every $n$ we have
$$ (x_\nu) \in X(\A)^{\U, n} $$
Now since
$$(x_\nu) \in X(\A)^{fin,\ZZ h} \subseteq X(\A)^{fin} $$
we have that
$$ X(\A)^{fin} \neq \emptyset $$
and by Corollary ~\ref{c:Harari-Stix} the sequence

$$ 1 \lrar \pi_1(\ovl{X})\lrar \pi_1(X)\lrar \Gam_K \lrar 1 $$

admits a continuous section. Now by Lemma ~\ref{l:pull-to-sc} there exists a geometrically connected torsor $ f: Y \lrar X $ under a finite $K$-group $G$ such that $\SimpS_{f^*(\U)}$ is simply connected. Since
$$ (x_\nu) \in X(\A)^{fin,\ZZ h} $$
there exists some twist $f^{\alp}:Y^\alp \lrar X^\alp$ of $f$ and a point $(y_\nu) \in Y^\alp(\A)^{\ZZ h}$ such that
$$ (x_\nu) = f^\alp((y_\nu)) $$
Now we have
$$ (y_\nu) \in Y^\alp(\A)^{\ZZ h} \subseteq Y^\alp(\A)^{\ZZ {f^\alp}^*(\U), n} $$
But since $\SimpS_{{f^\alp}^*(\U)}$ (which is isomorphic to $\SimpS_{{f}^*(\U)}$ as a simplicial set) is simply connected we get from Corollary ~\ref{c:cosk2-zzh-h} that
$$ (y_\nu) \in Y^\alp(\A)^{ {f^\alp}^*(\U),n}$$
as well.

Now by an easy diagram chase on the diagram
$$
\xymatrix{
Y^\alp(\A)\ar[d]^{f^\alp}\ar[r]^-{h} & P_n(\SimpS_{{f^\alp}^*(\U)})(h\A) \ar[d]  & \ar[l]^{\loc}\ar[d] P_n(\SimpS_{{f^\alp}^*(\U)})(hK)\\
X(\A)\ar[r]^-{h} & P_n(\SimpS_{\U})(h\A)   & \ar[l]^{\loc} P_n(\SimpS_{\mcal{U}})(hK)
}
$$

we get that
$$ (x_\nu) = f^\alp((y_\nu)) \in X(\A)^{\U,n} $$
as needed.

We shall now prove Lemma ~\ref{l:pull-to-sc}.
\begin{proof}
Let $\U \lrar X$ be a hypercovering. By abuse of notation we shall refer to the pullback of $\U$ to the generic point of $X$ by $\mcal{U}$ as well. Now $\U_0$ can be considered just as a finite $\Gam_{K(X)}$ set.

We denote by
$$ \Stab_{\Gam_{K(X)}} (\U_0) = \{H_1,...,H_t\} $$
the set of stabilizers in  $\Gam_{K(X)}$ of the points of $\U_0$.
Consider the group
$$ E = \left<H_1 \cap {\Gam_{\ovl{K}(\ovl{X})}},...,H_t\cap {\Gam_{\ovl{K}(\ovl{X})}}\right> $$
Let $F/\ovl{K}(\ovl{X})$ be the finite field extension that corresponds to $E$.

Observing Lemma ~\ref{l:Upsilon} and its proof we see that there exists a $G$-torsor
$$ f:\Upsilon\lrar \ovl{X} $$
such that $\ovl{K}(\Upsilon) = F$. Consider now $f^*\U_0$ as a $\Gam_{\ovl{K}(\Upsilon)} = E$ set. This is just the restriction of the $\Gam_{\ovl{K}(\ovl{X})}$-action on $\U_0$ to the subgroup $E$. Hence the set of stabilizers of $f^*\U_0$ is
$$ \{H_1 \cap E,...,H_t\cap E\} $$

Since
$$ E = \left<H_1 \cap {\Gam_{\ovl{K}(\ovl{X})}},...,H_t\cap {\Gam_{\ovl{K}(\ovl{X})}}\right> =
\left<H_1 \cap E,...,H_t\cap E\right> $$

we get from Lemma ~\ref{l:calc_pi1} that $\SimpS_{f^*\mcal{U}}$ is simply connected. Now it is enough to show that we can find a $K$-form of $\Upsilon$. Since $E$ is normal in $\Gam_{K(X)}$ we get that $\rho(E)$ is normal in $\pi_1(X)$. Hence the covering
$$ f:\Upsilon\lrar \ovl{X} $$
is isomorphic over $\ovl{K}$ to all its twists. By standard arguments we get that there exists a $K$-form for $f$ if and only if the short exact sequence
$$ 1 \lrar \pi_1(\ovl{X})/\rho(E) \lrar \pi_1(X)/\rho(E) \lrar \Gam_K \lrar 1 $$
has a continuous section. But this is true because
$$ 1 \lrar \pi_1(\ovl{X})\lrar \pi_1(X)\lrar \Gam_K \lrar 1 $$
has a continuous section.
\end{proof}
\end{proof}

\begin{prop}\label{p:h-subset-fin-h}
Let $K$ be a number field and $X$ a smooth $K$-variety over then
$$ X(\A)^h \subseteq X(\A)^{fin, h} $$
\end{prop}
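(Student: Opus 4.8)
The plan is to transfer the classical finite-descent identity $X(K)=\biguplus_{\sigma\in H^1(K,G)}f^{\sigma}(Y^{\sigma}(K))$ to the level of homotopy fixed points. Arguing as in the proof of Proposition~\ref{p:Et-Zh-subset-h} (via Corollary~\ref{c:coprod} and Corollary~\ref{c:catches-pi-0}) I would first reduce to the case where $X$ is geometrically connected. Then I fix $(x_\nu)\in X(\A)^h$ together with a lift $\tilde x\in X(hK)$ satisfying $\loc_h(\tilde x)=h((x_\nu))$, and I fix a torsor $f\colon Y\lrar X$ under a finite $K$-group $G$. The goal is to produce a twist $f^{\sigma}\colon Y^{\sigma}\lrar X$ and an adelic point $(y_\nu)\in Y^{\sigma}(\A)^h$ with $f^{\sigma}((y_\nu))=(x_\nu)$: this shows $(x_\nu)\in X(\A)^{f,h}$, and as $f$ ranges over all such torsors it shows $(x_\nu)\in\bigcap_f X(\A)^{f,h}=X(\A)^{fin,h}$.

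The backbone is a fibration sequence attached to the torsor. Using the \v{C}ech hypercovering $\check Y\lrar X$, the classifying map $c_Y\colon\SimpS_{\check Y}\lrar\B G$ of \S\ref{s:finite-descent}, and the behaviour of $\SimpS_{f^{*}\U}\lrar\SimpS_{\U}$ for a hypercovering $\U$ refining $\check Y$, I would build (lifting to pro-simplicial $\Gam_K$-sets as in Remark~\ref{r:etale-is-ok}) a fibration sequence of nice bounded pro-objects
$$\acute{E}t^{\natural}_{/K}(Y)\lrar\acute{E}t^{\natural}_{/K}(X)\lrar\B G$$
whose total space maps to $\B G$ by a map $g$ and whose homotopy fibre over the canonical vertex of $\B G$ is $\acute{E}t^{\natural}_{/K}(Y)$. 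Set $\sigma:=g_{*}(\tilde x)\in\B G(hK)=H^1(K,G)$, and run the same construction for the twisted torsor $f^{\sigma}\colon Y^{\sigma}\lrar X$ (a torsor under the inner form $G^{\sigma}$): this gives a fibration sequence $\acute{E}t^{\natural}_{/K}(Y^{\sigma})\lrar\acute{E}t^{\natural}_{/K}(X)\lrar\B(G^{\sigma})$ with classifying map $g'$. The homotopy-theoretic incarnation of the formula $[(Y^{\sigma})_p]=[Y_p]-\sigma$ — which for honest points is exactly Lemma~\ref{l:classifies-fiber} — should give $g'_{*}(\tilde x)=0$ in $H^1(K,G^{\sigma})$; and then $g'_{*}(h((x_\nu)))=\loc(g'_{*}(\tilde x))=0$, so Lemma~\ref{l:classifies-fiber} (adelic version) shows that each local fibre $(f^{\sigma})^{-1}(x_\nu)$ is a trivial $G^{\sigma}$-torsor, hence has a $K_\nu$-point; collecting these yields $(y^{0}_\nu)\in Y^{\sigma}(\A)$ lying over $(x_\nu)$.

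The rest is a diagram chase. Applying $(-)(hK)$ and $(-)(h\A)$ to the twisted fibration sequence and invoking Lemma~\ref{l:preserves fibrations} and Corollary~\ref{c:preserves fibrations} (levelwise, then in the limit), the relation $g'_{*}(\tilde x)=0$ together with exactness of $Y^{\sigma}(hK)\lrar X(hK)\lrar H^1(K,G^{\sigma})$ produces $\tilde y\in Y^{\sigma}(hK)$ with $(f^{\sigma})_{*}(\tilde y)=\tilde x$. Now $\loc_h(\tilde y)$ and $h((y^{0}_\nu))$ are two elements of $Y^{\sigma}(h\A)$ with common image $h((x_\nu))$ in $X(h\A)$; using the long exact homotopy sequence of the fibration of pro-spaces underlying the above, together with the identification $\pi_1\big((\B(G^{\sigma}))^{h\A}\big)\cong\prod'_{\nu}G^{\sigma}(K_\nu)$ extracted from the spectral sequence of Theorem~\ref{t:ss}, these two elements differ by the action of some $(g_\nu)\in\prod'_{\nu}G^{\sigma}(K_\nu)$. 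I would then verify that this action on the fibre of $(f^{\sigma})_{*}$ agrees, through the map $h$, with the ordinary action of $\prod'_{\nu}G^{\sigma}(K_\nu)$ on the fibres of $Y^{\sigma}(\A)\lrar X(\A)$; granting this, the adelic point $(y_\nu):=(g_\nu\cdot y^{0}_\nu)$ satisfies $f^{\sigma}((y_\nu))=(x_\nu)$ and $h((y_\nu))=\loc_h(\tilde y)$, so $h((y_\nu))$ is rational and $(y_\nu)\in Y^{\sigma}(\A)^h$, as required.

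The step I expect to be the main obstacle is the structural input of the second paragraph: constructing the twisted fibration sequence $\acute{E}t^{\natural}_{/K}(Y^{\sigma})\lrar\acute{E}t^{\natural}_{/K}(X)\lrar\B(G^{\sigma})$ of pro-simplicial $\Gam_K$-sets, and checking that twisting by $\sigma=g_{*}(\tilde x)$ trivialises $g'_{*}(\tilde x)$ — that is, that twisting interacts with homotopy fixed points precisely as it does with genuine rational points. The closely related compatibility of $h\colon Y^{\sigma}(\A)\lrar Y^{\sigma}(h\A)$ with the two descriptions of the $\prod'_{\nu}G^{\sigma}(K_\nu)$-action on fibres will also need care. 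Everything downstream is formal, using only the exactness of $(-)^{h\Gam_K}$ and $(-)^{h\A}$ on fibration sequences established in \S\ref{s:homotopy-fixed-points}.
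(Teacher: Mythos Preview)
Your overall strategy matches the paper's: reduce to geometrically connected $X$, pick a rational homotopy fixed point $\tilde x$ over $h((x_\nu))$, twist by $\sigma:=c_Y(\tilde x)$ so that the image of $\tilde x$ in $H^1(K,G^\sigma)$ becomes trivial, and then lift. The twisting formula you flag as the main obstacle is exactly the content of the paper's Proposition~\ref{p:main part 1}, proved there by an explicit equivariant groupoid isomorphism (Lemma~\ref{l:twist-relations}) giving $c_{Y^\sigma}=\tau_\sigma\circ c_Y$ on $X(hK)$; this is elementary once set up and is not where the difficulty lies.

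The genuine gap is in your final diagram chase. You invoke a ``long exact homotopy sequence of the fibration of pro-spaces'' to conclude that $\loc_h(\tilde y)$ and $h((y^0_\nu))$ differ by a single $(g_\nu)\in G^\sigma(\A)$, but in this paper's framework $Y^\sigma(h\A)$ is an \emph{inverse limit of sets} $\lim_{\mathcal{W},m}\SimpS_{\mathcal{W},m}(h\A)$, not $\pi_0$ of a homotopy fixed point space of a pro-object (cf.\ the remark following Lemma~\ref{l:point-is-point}). Lemma~\ref{l:preserves fibrations} and Corollary~\ref{c:preserves fibrations} give you only levelwise fibration sequences, hence a transitive $G^\sigma(\A)$-action on each levelwise fiber; but an inverse limit of transitive $G^\sigma(\A)$-sets need not be transitive, so the existence of your global $(g_\nu)$ does not follow formally. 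The same issue arises one step earlier, when you lift $\tilde x$ to a global $\tilde y\in Y^\sigma(hK)$. The paper sidesteps this entirely in Proposition~\ref{p:main part 2}: it never constructs a global $\tilde y$, but instead shows, for each hypercovering $\mathcal{V}\lrar X$ refining $\check{Y}^\sigma$ and each $n$, that the subset of the \emph{compact} fiber $(f^\sigma)^{-1}((x_\nu))\subseteq Y^\sigma(\A)$ consisting of points whose image in $\SimpS_{(f^\sigma)^*\mathcal{V},n}(h\A)$ is rational is non-empty and closed --- using finiteness of preimages of $\loc$ (Proposition~\ref{p:finite-preim}), compactness of $G^\sigma(\A)$, and continuity of $h$ (Lemma~\ref{l:adelic-is-adelic}). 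A standard compactness argument then gives a point of $(f^\sigma)^{-1}((x_\nu))\cap Y^\sigma(\A)^h$. Lemma~\ref{l:enough V} is what reduces hypercoverings of $Y^\sigma$ to pullbacks from $X$, so that the levelwise fibration sequences are available. Your argument can be repaired along these lines (the missing transitivity does hold, by a compactness argument in $G^\sigma(\A)$), but as written the appeal to a pro-level long exact sequence is not justified.
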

\begin{proof}
Like in the proof of Proposition ~\ref{p:Et-Zh-subset-h} we can assume without loss of generality that $X$ is geometrically connected.

Let $(x_\nu) \in X(\A)^h$ be a point and $f:Y \lrar X$ a $K$-connected torsor under a finite $K$-group $G$. We need to show that there exists an $\alp \in H^1(K,G)$ such that the corresponding twist
$$ f^\alp:Y^\alp \lrar X $$
satisfies
$$ (f^{\alp})^{-1}((x_\nu)) \cap Y^{\alp}(\A)^h \neq \emptyset$$

Choose a rational homotopy fixed point
$$ r \in X\left(hK\right) $$
such that
$$ \loc(r) = h((x_\nu)) $$

Recall the map
$$ {c_Y}: \SimpS_{\check{Y}}(hK)  \lrar H^1(K,G) $$

defined in the beginning of section \S ~\ref{s:finite-descent}. For each $\alp \in H^1(K,G^\alp)$ we will denote by
$$ c_{Y^\alp}: \SimpS_{\check{Y}^\alp}(hK)  \lrar H^1(K,G^\alp) $$
the corresponding map for the twist $Y^\alp$. As before we will sometimes abuse notation and call the composition
$$ X\left(hK\right) \lrar \SimpS_{\check{Y}^\alp}(hK)  \lrar H^1(K,G^\alp) $$
$c_{Y^\alp}$ as well.

The strategy of proof will be as following:
\begin{enumerate}
\item
We will show that for each $r \in X\left(hK\right)$ there exists an $\alp$ such
$$ c_{Y^\alp}(r) \in H^1(K,G^\alp) $$
is the neutral element.
\item
We will show that if $c_{Y^\alp}(r)$ is neutral and $\loc(r) = h((x_\nu))$ then
$$ (f^{\alp})^{-1}((x_\nu)) \cap Y^{\alp}(\A)^h \neq \emptyset$$
\end{enumerate}

\begin{prop}\label{p:main part 1}
Let $X,K$ and $f:Y \lrar X$ be as above. Let $r \in X(hK)$ be a rational homotopy fixed point. Then there exists an $\alp \in H^1(K,G)$ such that
$${c_Y}_{\alp}(r) \in H^1(K,G^\alp)$$
is the neutral element.
\end{prop}
\begin{proof}

Let
$$\tau_\alpha : H^1(K,G)\lrar  H^1(K,G^\alp) $$
be the inverse of twisting by $\alp$ (hence sending $[\alp] \in H^1(K,G)$ to the neutral element in $H^1(K,G^\alp)$).
We will prove the theorem by showing that
$$ (*)\;\; c_{Y^\alp} = \tau_\alp \circ {c_Y} $$
Then if we simply take $\alp = {c_Y}(r)$ we will get that $c_{Y^\alp}(r)$ is the neutral element. We will now proceed to prove $(*)$.

Let $\alp \in Z^1(K,G)$ be a 1-cocycle and $L/K$ a finite Galois extension such that $\alp$ can be written as a map $\alp: \Gal(L/K) \lrar G(L)$.

Let $f^{\alp}:Y^{\alp} \lrar X$ be the corresponding twist by $\alp$ which is a finite torsor under $G^{\alp}$.
Recall the \'{e}tale hypercovering $$\check{X}_L \lrar X$$
from Definition ~\ref{d:X-L}.

We shall use in the proof the following \'{e}tale hypercoverings (all considered over $X$)

$$ Y_L = Y\times_X X_L $$
$$ Y^{\alp}_L = Y^{\alp}\times_X X_L $$

Now the equality $(*)$ will follow from the following lemma
\begin{lem}\label{l:twist-relations}
There exists a $\Gam_K$-equivariant isomorphism of groupoids

$$\mcal{T}_\alp : \mcal{E}(\Gal(L/K))\times {\mcal{B}G} \lrar \mcal{E}(\Gal(L/K))\times \mcal{B}G^{\alp} $$

Such that
\begin{enumerate}
\item\label{i:1:p:big_diag_grpds}

The following diagram in the category simplicial  $\Gam_K$-set commutes:
$$\xymatrix{
\empty & \SimpS_{\check{Y}} \ar[r]^{{c_Y}}  & \B G\\
\SimpS_{\check{Y}_L}\ar[r] \ar[d]^{T_\alp} \ar[ur]  &  \SimpS_{\check{Y}}\times \SimpS_{\check{X}_L} \ar[u] \ar[r]^-{{c_Y} \times d} &  \B G \times \E(\Gal(L/K)) \ar[d]^{N(\mcal{T}_\alp)}\ar[u]\\
\SimpS_{\check{Y}^{\alp}_L}\ar[r] \ar[dr] & \SimpS_{\check{Y}^\alp}\times \SimpS_{\check{X}_L} \ar[d] \ar[r]^-{c_{Y^\alp} \times d }&  \B G^{\alp} \times \E(\Gal(L/K)) \ar[d]\\
\empty  &\SimpS_{\check{Y}^{\alp}} \ar[r]^{c_{Y^\alp}} & \B G^{\alp}\\
}$$
Where all the unmarked maps are induced from projections and the isomorphism $T_\alp$ is the one defined in Lemma ~\ref{l:twist-relations-1}.
\item\label{i:2:p:big_diag_grpds}

All arrows in the right column of the diagram above are weak equivalences and thus induce a bijection
$$\tau_\alpha :\B G(hK)= H^1(K,G)\lrar  H^1(K,G^\alp) =  \B G^{\alp}(hK)$$
This bijection is the inverse of twisting by $\alp$ and thus sends the $[\alp] \in H^1(K,G)$ to the neutral element in $H^1(K,G^\alp)$.
\end{enumerate}
\end{lem}
\begin{proof}
We begin by defining
$$\mcal{T}_\alp : \mcal{E}(\Gal(L/K))\times {\mcal{B}G} \lrar \mcal{E}(\Gal(L/K))\times \mcal{B}G^{\alp} $$

We define $\mcal{T}_\alp$ to be the identity on the objects. Now Let
$$(\tau,*),(\sig,*)\in Ob(\mcal{E}(\Gal(L/K))\times {\mcal{B}G})  = \mcal{E}(\Gal(L/K))\times \mcal{B}G^{\alp} $$

One can naturally identify

$$ \Hom_{\mcal{E}(\Gal(L/K))\times {\mcal{B}G}}((\tau,*),(\sig,*)) \cong G$$
$$ \Hom_{\mcal{E}(\Gal(L/K))\times \mcal{B}G^{\alp}}((\tau,*),(\sig,*)) \cong G^{\alp}$$
Hence we need to define a map
$${\mcal{T}_\alp}^{(\sig,\tau)}:G\lrar G^\alp$$
We shall take

$${\mcal{T}_\alp}^{(\sig,\tau)}(g) = \alp(\tau)^{-1}g\alp(\sig) $$
We leave it to the reader to verify that $\mcal{T}_\alp$ is indeed a $\Gam_K$-equivariant isomorphism of groupoids.

Now one can easily verify the commutativity of diagram in  (~\ref{i:1:p:big_diag_grpds}).
For proving (~\ref{i:2:p:big_diag_grpds}) note that since $\mcal{T}_\alp$ is an isomorphism so is
$N(\mcal{T}_\alp)$. Thus since $\E(\Gal(L/K))$ is contractible all the maps in right column are weak-equivalences. We leave to the reader to verify that
the map
$$ \tau_\alpha :\B G(hK)= H^1(K,G)\lrar  H^1(K,G^\alp) =  \B G^{\alp}(hK) $$
is indeed the inverse of twisting by $\alp$

\end{proof}

\end{proof}

\begin{prop}\label{p:main part 2}
Let $\alp \in H^1(K,G)$ be such that $c_{Y^\alp}(r)$ is the neutral element. Then
$$ (f^{\alp})^{-1}((x_\nu)) \cap Y^{\alp}(\A)^h \neq \emptyset$$

\end{prop}
\begin{proof}

Note that without the loss of generality we can replace $Y$ by $Y^\alp$ and $\alp$ by the neutral element.


We want to show that there is a point in $f^{-1}((x_\nu))$ which is homotopy rational. For that it will be enough to show that every hypercovering $\U$ of $Y$ and every $n\geq 1$  the subset of $f^{-1}((x_\nu))$ of points which are $(\U,n)$-homotopy rational is \textbf{non-empty} and \textbf{compact}. Then the standard compactness argument shows that
$$ f^{-1}((x_\nu)) \cap Y(\A)^h \neq \emptyset $$

First we will show that it is enough to prove this only for hypercoverings of a certain form.

\begin{lem}\label{l:enough V}
Let $K,X,G$ and $f:Y \lrar X$ be as above. Let $\U \lrar Y$ be a hypercovering. Then there exist a hypercovering $\mcal{V} \lrar X$ which admits a map
$$ \mcal{V} \lrar \check{Y} $$
of hypercoverings over $X$ and a map
$$ f^*(\mcal{V}) \lrar \U $$
of hypercoverings over $Y$.
\end{lem}

\begin{proof}
For a $g \in G$ we denote by $\U^g$ the hypercovering obtained from $\mcal{U}$ by replacing each \'{e}tale map
$$ \U_n \lrar Y $$
with the composition
$$ \U_n \lrar Y \x{g}{\lrar} Y $$

Define $\U^G$ to be the fiber product
$$ \U^G = \prod_{g \in G} \U^g $$
We have a natural action of $G$ on $\mcal{U}^G$ which is free and so we get a hypercovering
$$ \U^G/G \lrar X $$
of $X$. It is not hard to check that
$$ \U^G = f^*\left(\U^G/G\right) $$
Now take the hypercovering
$$ \mcal{V} = \U^G/G \times_X \check{Y} $$
of $X$. Then one has a natural map over $X$
$$ \mcal{V} \lrar \check{Y} $$
obtained by projection and since
$$ f^*\mcal{V} = \mcal{U}^G \otimes_{Y} f^*\check{Y} $$
we have the composition of maps over $Y$
$$ f^*\mcal{V} \lrar \U^G \lrar \mcal{U} $$
where the last one is obtained by projecting on the coordinate which corresponds to $1 \in G$.

\end{proof}


In view of Lemma ~\ref{l:enough V} it is enough to show that the subset of $f^{-1}((x_\nu))$ which is $(f^*\mcal{V},n)$-homotopy rational is non-empty and compact for $n \geq 1$ and every hypercovering $\mcal{V} \lrar X$ which admits a map
$$ \phi:\mcal{V} \lrar \check{Y} $$
over $X$. Such hypercoverings have the following useful property

\begin{lem}\label{l:prin_G}
Let $\mcal{V} \lrar X$ be a hypercovering and $\vphi:\mcal{V} \lrar \check{Y}$ a map of hypercoverings. Then $\SimpS_{\mcal{V}}$ is a free $G$-quotient by the natural map $\SimpS_{f^*\mcal{V}} \lrar \SimpS_{\mcal{V}}$. In other words $\SimpS_{f^*\mcal{V}} \lrar \SimpS_{\mcal{V}}$ is a principle $G$-coverings. Further, the composition
$$ \SimpS_{\mcal{V}} \lrar \SimpS_{\check{Y}} \x{{c_Y}}{\lrar} \B G $$
is the classifying map of $\SimpS_{f^*\mcal{V}} \lrar \SimpS_{\mcal{V}}$ as a principle $G$-cover.
\end{lem}
\begin{proof}
We have a natural action of $G$ on $\SimpS_{f^*\mcal{V}}$ such that $\SimpS_{\mcal{V}}$ is the quotient. Hence by Lemma ~\ref{l:EGBG} it will be enough if we will show that $\SimpS_{f^*\mcal{V}}$ fits in a commutative diagram
$$ \xymatrix{
\SimpS_{f^*\mcal{V}} \ar^{d}[r]\ar[d] & \E G \ar[d] \\
\SimpS_{\mcal{V}} \ar[r] & \B G \\
} $$
such that $d$ is $G$-equivariant. Since we have a map $\vphi:\mcal{V} \lrar \check{Y}$ it is enough to show this for $\mcal{V} = \check{Y}$. This is done by explicit computation.

Consider the augmented functor $F \lrar Id$ from groupoids to groupoids defined as follows. If $C$ is a category then the objects of $F(C)$ are pairs $(X,f)$ where $X \in Ob(C)$ and $f$ is a morphism starting at $X$. The morphisms from $(X,f)$ to $(Y,g)$ are morphisms $h:X \lrar Y$ in $C$ such that $g \circ h = f$. The natural transformation from $F$ to $Id$ sends $(X,f)$ to $X$.

It is then an easy verification to check that $\SimpS_{f^*\check{Y}}$ is the nerve of the groupoid $F(\mcal{Y})$, where $\mcal{Y}$ is the groupoid defined in \S ~\ref{s:finite-descent}, and that $\E G$ is the nerve of $F({\mcal{B}G})$. Then the commutative diagram
$$ \xymatrix{
F(\mcal{Y}) \ar[r]\ar[d] & F({\mcal{B}G}) \ar[d] \\
\mcal{Y} \ar[r] & {\mcal{B}G} \\
}$$
Gives the desired commutative diagram
$$ \xymatrix{
\SimpS_{f^*\check{Y}} \ar[d]\ar[r] & \E G \ar[d] \\
\SimpS_{\check{Y}} \ar[r] & \B G \\
}$$
of simplicial sets. It is left to show that the map $\SimpS_{f^*\check{Y}} \lrar \E G$ is $G$ equivariant. Note that in both $\mcal{Y}$ and $\B G$ we have natural identification
$$ \cup_Y \Hom(X,Y) = G $$
and the action of $G$ on $F(\mcal{Y})$ and $F({\mcal{B}G})$ is given by multiplication
$$ g(X,f) = (X,gf) $$
Hence it is clear that the map $F(\mcal{Y}) \lrar F({\mcal{B}G})$ is $G$-equivariant and we are done.
\end{proof}

Now let $\mcal{V} \lrar X$ be a hypercovering which admits a map of hypercoverings to $\check{Y}$. We have a commutative diagram
$$ \xymatrix{
Y(\A) \ar^{f}[d]\ar^-{h_{f^*\mcal{V},n}} [r] & \SimpS_{f^*\mcal{V},n}(h\A) \ar^{f_h}[d] \\
X(\A) \ar^-{h_{\mcal{V},n}} [r] & \SimpS_{\mcal{V},n}(h\A) \\
}$$

By Lemma ~\ref{l:prin_G} the sequence
$$ \SimpS_{f^*\mcal{V},n} \lrar \SimpS_{\mcal{V},n} \lrar P_n(\B G) = \B G $$
is a principle $G$-fibration sequence. In particular the first map is a normal covering with Galois group $G(\bar{K})$. Hence by Lemma  ~\ref{l:preserves fibrations} the sequence
$$ (*)\; \SimpS_{f^*\mcal{V},n}^{h\Gam_K} \lrar \SimpS_{\mcal{V},n}^{h\Gam_K} \lrar \B G^{h\Gam_K}$$
is a fibration sequence.

From the spectral sequence that computes the homotopy groups of $\B G^{h\Gam_K}$ we see that the connected component of the base point has vanishing homotopy groups in dimension greater then $1$ and that its fundamental group is
$$ H^0(\Gam_K,G) = G^{\Gam_K} $$
which we also denote by $G(K)$.  Hence the trivial connected component of $\B G^{h\Gam_K}$ is a model for $\B G(K)$.

Now from the fibration $(*)$ we get that following sequence of sets/groups (obtained as the tail of the long exact sequence in homotopy groups):
$$G(K) \x{\delta}{\lrar} \SimpS_{f^*\mcal{V},n}(hK) \x{f_H}{\lrar} \SimpS_{\mcal{V},n}(hK) \x{({c_Y}\circ {\phi})_h}{\lrar} H^1(K,G) $$
which is exact is the following sense:
\begin{enumerate}
\item
A point $v \in \SimpS_{\mcal{V},n}(hK)$ can be lifted to $\SimpS_{f^*\mcal{V},n}(hK)$
if and only if $({c_Y}\circ {\phi})^h(v)$ is the neutral element.
\item
If $v$ does lift to $\SimpS_{f^*\mcal{V},n}(hK)$ then $G(K)$ acts transitively (although not necessarily faithfully )  on $f_h^{-1}(v)$.
\end{enumerate}
Note that we can do the same for $\A$ instead of $K$ and get the following diagram:
$$\xymatrix{
G(K) \ar[r]^-{\delta^K} \ar[d]^{loc}  &\SimpS_{f^*\mcal{V},n}(hK) \ar[r]^-{f^{K}_h} \ar[d]^{loc}&\SimpS_{\mcal{V},n}(hK) \ar[r]^-{({c_Y}\circ {\phi})_h}\ar[d]^{loc} &H^1(K,G)\ar[d]^{loc}\\
G(\A) \ar[r]^-{\delta^\A}  &\SimpS_{f^*\mcal{V},n}(h\A) \ar[r]^-{f^{\A}_h} &\SimpS_{\mcal{V},n}(h\A) \ar[r]^-{({c_Y}\circ {\phi})_h} &H^1(\A,G) \\
G(\A)\ar@{=}[u] \ar[r]  &Y(\A)\ar[u]^{h_{f^*\mcal{V},n}} \ar[r]^-{f} & X(\A)\ar[u]^{h_{\mcal{V},n}} \ar[r] & H^1(\A,G) \ar@{=}[u]\\
}$$
Note that Lemma ~\ref{l:classifies-fiber} guaranties that the last row fits commutatively into the diagram. We now proceed to finish the proof of Proposition ~\ref{p:main part 2}.

Let
$$ A = \{v \in \SimpS_{f^*\mcal{V},n}(hK) \left|  \loc(f^{K}_h(v)) = h_{\mcal{V},n}(x_\nu)\right. \} $$
We wish to show that it is non-empty and finite. Let
$$ r_{\mcal{V},n} \in \SimpS_{\mcal{V},n}(hK) $$
be the image of $r$ in $\SimpS_{\mcal{V},n}(hK)$. The map $\phi_h$ sends $r_{\mcal{V},n}$ to the image of $r$ in $\SimpS_{\check{Y},n}(hK)$ and thus
$$ ({c_Y}\circ \phi)_h(r_{\mcal{V},n}) $$
is the neutral element. This means that there exists a
$$ v \in \SimpS_{f^*\mcal{V},n}(hK) $$
such that
$$ f^K_h(v)  = r_{\mcal{V},n} $$
and so we have that
$$ \loc(f^K_h(v))  = \loc(r_{\mcal{V},n}) = h_{\mcal{V},n}(x_\nu) $$
implying that $ A \neq \emptyset $.

On the other hand by Lemma ~\ref{p:finite pre-image} the set of all elements
$s \in  \SimpS_{\mcal{V},n}(hK)$ such that $\loc(s) = h_{\mcal{V},n}(x_\nu)$ is finite and since $G(K)$ is finite so is $A$.

From the compactness of $G(\A)$ and the commutativity of the diagram we see that the subspaces
$$ B = \left(f^{\A}_h\right)^{-1}(h_{\mcal{V},n}(x_\nu)) \subseteq \SimpS_{f^*\mcal{V},n}(h\A) $$
and
$$ f^{-1}(x_\nu) \subseteq Y(\A)$$
are compact and that the continuous map (see Lemma ~\ref{l:adelic-is-adelic})
$$ h_{f^*\mcal{V},n}: Y(\A) \lrar \SimpS_{f^*\mcal{V},n}(h\A) $$
restricts to a continuous map

$$ h_B : f^{-1}(x_\nu) \lrar B $$
From the exactness of the rows we get that $h_B$ is also \textbf{surjective}.

Now from commutativity we see that
$$ \loc(A) \subseteq B $$
and furthermore if $(y_\nu)$ is in $f^{-1}(x_\nu)$ then $h_B((y_\nu)) \in B$ is rational if and only if $ h_B((y_\nu)) \in \loc(A) $.  Hence we get that
$$ f^{-1}((x_\nu)) \cap Y(\A)^{f^*\mcal{V},n} = h^{-1}_B(\loc(A)) $$

Now since $\loc(A)$ is non-empty and finite and since $h_B$ is continuous and surjective we have that $h^{-1}_B(\loc(A))$ is non-empty and closed in the compact space $f^{-1}((x_\nu))$.
Hence
$$ f^{-1}((x_\nu)) \cap Y(\A)^{f^*\mcal{V},n} $$ 
is non-empty and compact.
\end{proof}
\end{proof}

\section{Applications}\label{s:applications}

In this section we shall present some applications of the theory developed in the paper.

\begin{thm}\label{t:product-fin-Br}
Let $K$ be number field and $X,Y$ two smooth geometrically connected $K$-varieties. Then

$$(X\times Y)(\A)^{fin,\Br}  = X(\A)^{fin,\Br} \times Y(\A)^{fin,\Br}$$

\end{thm}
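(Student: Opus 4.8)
\textbf{Proof plan for Theorem~\ref{t:product-fin-Br}.} The strategy is to reduce the statement about finite-Brauer sets to the homotopical description $X(\A)^{fin,\Br} = X(\A)^h$ supplied by Theorem~\ref{t:h-is-fin-br}, and then exploit the behaviour of the relative \'etale homotopy type under products. Thus the first step is to observe that both $X$, $Y$ and $X \times Y$ are smooth geometrically connected, so Theorem~\ref{t:h-is-fin-br} applies to all three, and it suffices to prove
$$ (X \times Y)(\A)^h = X(\A)^h \times Y(\A)^h. $$
Since $(X\times Y)(\A) = X(\A) \times Y(\A)$ compatibly with the maps to the adelic points, and since by Corollary~\ref{c:cosk2-final-2} (and Corollary~\ref{c:descent-theorem}) the obstruction $(\bullet)(\A)^h$ is computed hypercovering-by-hypercovering through the $2$-truncations $\SimpS_{\U,2}$, it is enough to work with cofinal families of hypercoverings: every hypercovering of $X \times Y$ is dominated by one of the form $\U \times \V$ with $\U \to X$, $\V \to Y$ hypercoverings, and $\SimpS_{\U \times \V} \cong \SimpS_{\U} \times \SimpS_{\V}$ as simplicial $\Gam_K$-sets (the geometric connected components of a product are the products of the components). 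So the key computation is that homotopy fixed points, both rationally and adelically, commute with this finite product of $\Gam_K$-spaces.

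The second step is therefore to record the product formula for homotopy fixed point sets. For two bounded (or $2$-truncated excellent) simplicial $\Gam_K$-sets $\SimpS_1, \SimpS_2$ one has a natural isomorphism $(\SimpS_1 \times \SimpS_2)^{h\Gam_K} \simeq \SimpS_1^{h\Gam_K} \times \SimpS_2^{h\Gam_K}$, since the fibrant replacement functor $(\bullet)^{fib}$ in Goerss' model structure is a right adjoint and hence preserves products, and $\Gam$-fixed points preserve products; passing to $\pi_0$ gives $(\SimpS_1\times\SimpS_2)(hK) \simeq \SimpS_1(hK)\times\SimpS_2(hK)$. The same argument applied levelwise to the local groups $\Gam_\nu$ and $\Gam_\nu^{ur}$, together with the fact that the restricted product defining $(\bullet)^{h\A}$ is a filtered homotopy colimit of finite products and hence also commutes with the (finite) product in the $\SimpS_i$ variable, yields $(\SimpS_1 \times \SimpS_2)(h\A) \simeq \SimpS_1(h\A) \times \SimpS_2(h\A)$, compatibly with the localization maps $\loc_h$. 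Here one should be a little careful that the ramification sets for $\SimpS_1 \times \SimpS_2$ are the unions of those for $\SimpS_1$ and $\SimpS_2$, so the restricted-product topologies match up; this is routine but is the one place bookkeeping is needed.

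The third step assembles these: taking the limit over the cofinal family $\{(\U,\V)\}$ and using $X^2(hK) = \lim_{\U}\pi_0(\SimpS_{\U,2}^{h\Gam_K})$ etc., one gets a commutative diagram
$$
\xymatrix{
(X\times Y)^2(hK) \ar[r]^-{\sim}\ar[d] & X^2(hK) \times Y^2(hK) \ar[d] \\
(X\times Y)^2(h\A) \ar[r]^-{\sim} & X^2(h\A) \times Y^2(h\A)
}
$$
in which the horizontal maps are bijections. An adelic point $((x_\nu),(y_\nu)) \in X(\A) \times Y(\A)$ lies in $(X\times Y)(\A)^h$ iff its image in $(X\times Y)^2(h\A)$ is rational iff (by the diagram) the images of $(x_\nu)$ in $X^2(h\A)$ and of $(y_\nu)$ in $Y^2(h\A)$ are each rational, i.e. iff $(x_\nu) \in X(\A)^h$ and $(y_\nu) \in Y(\A)^h$; note that rationality of a pair in a product of sets is equivalent to rationality of each coordinate precisely because the horizontal maps are honest bijections of sets and the left vertical factors as the product of the two right verticals. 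Finally invoke Theorem~\ref{t:h-is-fin-br} three times to translate $(\bullet)(\A)^h$ back into $(\bullet)(\A)^{fin,\Br}$, giving the claimed equality. The main obstacle I expect is the second step — specifically, verifying that the adelic homotopy fixed point construction $(\bullet)^{h\A}$, with its restricted-product/hocolim structure and unramified local pieces, genuinely commutes with finite products of the coefficient $\Gam_K$-space, including the matching of the restricted-product topologies; everything else is formal once Theorem~\ref{t:h-is-fin-br} and the descent Corollary~\ref{c:descent-theorem} are in hand.
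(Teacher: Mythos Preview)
Your overall strategy --- reduce to $(\bullet)(\A)^h$ via Theorem~\ref{t:h-is-fin-br}, then exploit product behaviour of the relative \'etale homotopy type --- is the right one, and your second step (the product formula for $(\bullet)^{h\Gam_K}$ and $(\bullet)^{h\A}$) is fine. The genuine gap is in your first step: you assert that ``every hypercovering of $X\times Y$ is dominated by one of the form $\U\times\V$,'' i.e.\ that product hypercoverings are cofinal in $I(X\times Y)$. This is not justified and is far from formal; already at the level of $\pi_1$ the isomorphism $\pi_1^{\acute{e}t}(\ovl{X}\times\ovl{Y}) \cong \pi_1^{\acute{e}t}(\ovl{X}) \times \pi_1^{\acute{e}t}(\ovl{Y})$ is a real theorem, and a general \'etale cover of $X\times Y$ has no reason to be refined by a product of \'etale covers of the factors. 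What you actually need is not literal cofinality but the statement that the natural map
\[
\acute{E}t^{\natural}_{/K}(X\times Y)\;\lrar\;\acute{E}t^{\natural}_{/K}(X)\times\acute{E}t^{\natural}_{/K}(Y)
\]
induces isomorphisms on rational and adelic homotopy fixed point sets --- and that is precisely the content you have not supplied. You flagged the adelic bookkeeping as the main obstacle; in fact that part is routine, and it is this product statement that carries all the weight.

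The paper fills this gap by a different mechanism. It invokes the Artin comparison theorem to identify $\acute{E}t(\ovl{Z})$ with the profinite completion of $Z(\CC)$; since profinite completion commutes with finite products, the map $\acute{E}t(\ovl{X\times Y}) \to \acute{E}t(\ovl{X})\times\acute{E}t(\ovl{Y})$ is an isomorphism in $\Pro\Ho\left(\Set^{\Del^{op}}\right)$, and Proposition~\ref{p:bar-is-bar} identifies this with the map obtained from the relative version by forgetting the $\Gam_K$-action. The paper then appeals to Theorem~\ref{t:obstruction-iso}, a substantial result which says that if a map of $\Gam_K$-pro-spaces under $\acute{E}t_{/K}(Z)$ becomes an isomorphism after forgetting the Galois action, then the associated obstruction sets coincide. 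So the real inputs are the comparison theorem and Theorem~\ref{t:obstruction-iso}; your cofinality claim would be a substitute for both, and you have not established it.
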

\begin{proof}
Let $\tau: K \hrar \CC$ be an embedding. Then it is known that the pro-object $ \acute{E}t(X) $ is isomorphic to the pro-finite completion of the topological space $(X \otimes_{\tau} \CC)(\CC)$. Since pro-finite completion commutes with products this means that the natural map
$$\acute{E}t(\ovl{X}\times \ovl{Y}) \lrar \acute{E}t(\ovl{X})\times \acute{E}t(\ovl{Y}) $$
is an isomorphism. Recalling Proposition ~\ref{p:bar-is-bar} the result then follows from and Theorem ~\ref{t:obstruction-iso}.
\end{proof}

\begin{thm}\label{t:trivial-pi-2}
Let $K$ be a number field and $X$ a smooth geometrically connected variety over $K$. Assume further that
$$ \pi^{\acute{e}t}_2(\ovl{X}) = 0 $$
Then

$$ X(\A)^{fin} =X(\A)^{fin,\Br} $$

\end{thm}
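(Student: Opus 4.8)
The plan is to use the homotopical description of the two obstructions developed in the paper, together with the vanishing of $\pi_2^{\acute{e}t}(\ovl{X})$, to show that the relevant truncated \'etale homotopy types agree. Recall from Theorem~\ref{t:fin} that $X(\A)^{fin} = X(\A)^{h,1}$ and from Theorem~\ref{t:h-is-fin-br} that $X(\A)^{fin,\Br} = X(\A)^{h}$. By Corollary~\ref{c:cosk2-final-2} the set $X(\A)^{h}$ coincides with $X(\A)^{h,2}$, so it suffices to prove that
$$ X(\A)^{h,1} = X(\A)^{h,2}. $$
Since we always have $X(\A)^{h,2} \subseteq X(\A)^{h,1}$, the content is the reverse inclusion, and by Corollary~\ref{c:descent-theorem} it is enough to show that for every hypercovering $\U \lrar X$ one has $X(\A)^{\U,1} = X(\U)^{\U,2}$; equivalently, that the natural map $\SimpS_{\U,2} \lrar \SimpS_{\U,1}$ induces a bijection on adelic homotopy fixed point sets compatibly with the rational ones.

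First I would show that the hypothesis $\pi_2^{\acute{e}t}(\ovl{X}) = 0$ forces $\pi_2(\SimpS_{\U}) = 0$ for every hypercovering $\U \lrar X$. Using Proposition~\ref{p:bar-is-bar}, the forgetful image of $\acute{E}t^{n}_{/K}(X)$ is $\acute{E}t^{n}(\ovl{X})$, whose second homotopy pro-group is $\pi_2^{\acute{e}t}(\ovl{X})$; since the \'etale fundamental pro-group is already the inverse limit of a cofinal system and $\pi_2 = 0$ as a pro-group, a cofinal family of hypercoverings $\U$ has $\SimpS_{\U}$ with trivial $\pi_2$ (after passing to $P_2$, which does not change $\pi_{\leq 2}$). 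For such $\U$ the map $P_2(\SimpS_{\U}) \lrar P_1(\SimpS_{\U})$ is a weak equivalence of simplicial $\Gam_K$-sets: it is $2$-connected and both sides are $2$-truncated, so the only possible obstruction, sitting in $\pi_2$, vanishes. Then Theorem~\ref{t:A-weak} (applied to the nice bounded simplicial $\Gam_K$-sets $\SimpS_{\U,2}$ and $\SimpS_{\U,1}$) shows that $\SimpS_{\U,2}^{h\A} \lrar \SimpS_{\U,1}^{h\A}$ is a weak equivalence, and likewise $\SimpS_{\U,2}^{h\Gam_K} \lrar \SimpS_{\U,1}^{h\Gam_K}$ is one, so the square
$$ \xymatrix{
\SimpS_{\U,2}(hK) \ar[r]\ar[d] & \SimpS_{\U,1}(hK) \ar[d] \\
\SimpS_{\U,2}(h\A) \ar[r] & \SimpS_{\U,1}(h\A) \\
} $$
has horizontal isomorphisms. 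Consequently an adelic point is $(\U,2)$-rational iff it is $(\U,1)$-rational, and intersecting over the cofinal family of such $\U$ (using Corollary~\ref{c:descent-theorem}) gives $X(\A)^{h,2} = X(\A)^{h,1}$, hence $X(\A)^{fin} = X(\A)^{fin,\Br}$.

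The main obstacle I anticipate is the first step: being careful about the passage between the pro-group statement "$\pi_2^{\acute{e}t}(\ovl{X}) = 0$" and the pointwise statement "$\pi_2(\SimpS_{\U}) = 0$ for a cofinal family of $\U$." A priori $\pi_2$ of each individual $\SimpS_{\U}$ need not vanish even if the pro-group is zero; what is true is that for each $\U$ there is a refinement $\U' \lrar \U$ killing $\pi_2$ in the image, which is exactly what cofinality in Corollary~\ref{c:descent-theorem} allows one to exploit. One must also check that the base-point issues in defining $\pi_2$ do not cause trouble — but since $\SimpS_{\U}$ is connected (Corollary in \S\S~\ref{ss:homotopy-type-of-X-U}) and we are only comparing $P_1$ and $P_2$, the relevant obstruction-theoretic input is Proposition~\ref{p:obstruction-theory} with $i = 2$, whose obstruction group $H^3(\Gam_K, \pi_2(\SimpS_{\U}))$ vanishes once $\pi_2(\SimpS_{\U}) = 0$. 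Assembling these pieces is routine given the machinery already in place; the only real care needed is bookkeeping the cofinal subsystems.
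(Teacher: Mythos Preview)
Your global strategy is exactly the paper's: identify $X(\A)^{fin}=X(\A)^{h,1}$ and $X(\A)^{fin,\Br}=X(\A)^{h}=X(\A)^{h,2}$ via Theorem~\ref{t:fin}, Theorem~\ref{t:h-is-fin-br} and Corollary~\ref{c:cosk2-final-2}, and then prove $X(\A)^{h,1}=X(\A)^{h,2}$. The divergence is in this last step, and there your argument has a genuine gap.

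You try to work hypercovering by hypercovering: find (a cofinal family of) $\U$ with $\pi_2(\SimpS_{\U})=0$, so that $\SimpS_{\U,2}\to\SimpS_{\U,1}$ is a weak equivalence and Theorem~\ref{t:A-weak} applies. But ``$\pi_2^{\acute{e}t}(\ovl X)=0$'' is a statement about a \emph{pro}-group: it only says that for each $\U$ there is a refinement $\U'\to\U$ with $\pi_2(\SimpS_{\U'})\to\pi_2(\SimpS_\U)$ the zero map. It does \emph{not} produce a cofinal family of $\U$ with $\pi_2(\SimpS_\U)=0$, and in general no such family exists. You acknowledge this, but the patch you suggest (``cofinality in Corollary~\ref{c:descent-theorem}'') does not rescue the argument: Theorem~\ref{t:A-weak} and Proposition~\ref{p:obstruction-theory} both need control of $\pi_2(\SimpS_\U)$ for a \emph{fixed} $\U$, and knowing only that some refinement kills $\pi_2$ in the image gives you no handle on the obstruction in $H^3(\Gam_K,\pi_2(\SimpS_\U))$, nor on the comparison $X(\A)^{\U,2}$ versus $X(\A)^{\U',1}$.

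The paper sidesteps this entirely by invoking Theorem~\ref{t:obstruction-iso}. The hypothesis $\pi_2^{\acute{e}t}(\ovl X)=0$ means precisely that the natural map $\acute{E}t^{2}(\ovl X)\to\acute{E}t^{1}(\ovl X)$ is an isomorphism in $\Pro\Ho(\Set^{\Del^{op}})$; by Proposition~\ref{p:bar-is-bar} this is the map $\acute{E}t^{2}_{/K}(X)\to\acute{E}t^{1}_{/K}(X)$ after forgetting the Galois action. Theorem~\ref{t:obstruction-iso} then gives $X(\A)^{h,2}=X(\A)^{h,1}$ directly, with all the pro-level bookkeeping (passing to fibration towers, comparing homotopy fixed points via Proposition~\ref{p:pro-homotopy-fixed-points-iso}) already packaged inside. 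That theorem is exactly the tool designed to turn an isomorphism of underlying pro-homotopy types into an equality of obstruction sets, and you should cite it rather than attempt the termwise argument.
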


\begin{proof}
In this case the map $\acute{E}t^2(\ovl{X}) \lrar \acute{E}t^1(\ovl{X})$ is an isomorphism in $\Pro\Ho\left(\Set^{\Del^{op}}\right)$. By Proposition ~\ref{p:bar-is-bar} and Theorem ~\ref{t:obstruction-iso} this means that
$$ X(\A)^{h,1}= X(\A)^{h,2} $$
By Theorems ~\ref{t:fin}, Corollary ~\ref{c:cosk2-final-2} and ~\ref{t:h-is-fin-br} we then have
$$ X(\A)^{fin} = X(\A)^{h,1}= X(\A)^{h,2} = X(\A)^{h} = X(\A)^{fin,\Br} $$
\end{proof}

\begin{cor}\label{c:desc-is-fin}
Let $K$ be a number field and $X$ a smooth geometrically connected proper variety over $K$ such that $\pi^{\acute{e}t}_2(\ovl{X})=0$. Then
$$ X(\A)^{desc} = X(\A)^{fin} $$
\end{cor}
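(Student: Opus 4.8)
The plan is to deduce Corollary~\ref{c:desc-is-fin} from Theorem~\ref{t:trivial-pi-2} by comparing the descent obstruction with the \'etale-Brauer obstruction in the proper case. First I would recall the chain of inclusions that holds for any smooth variety:
$$ X(\A)^{desc} \subseteq X(\A)^{fin,\Br} \subseteq X(\A)^{fin}, $$
where the second inclusion is definitional (taking $obs = \Br$ and intersecting over all finite torsors, while $fin$ is the same intersection with the trivial obstruction) and in fact $X(\A)^{fin,\Br} \subseteq X(\A)^{\Br}$ as well. So it suffices to prove the reverse inclusion $X(\A)^{fin} \subseteq X(\A)^{desc}$ under the stated hypotheses.

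The key input for the reverse inclusion is the theorem of Skorobogatov and Demarche (cited in the introduction as \cite{Sko09}, \cite{De09a}), which asserts that for $X$ smooth proper geometrically connected over a number field one has
$$ X(\A)^{fin,\Br} = X(\A)^{desc}. $$
Granting this, the corollary is immediate: by Theorem~\ref{t:trivial-pi-2}, the hypothesis $\pi_2^{\acute{e}t}(\ovl{X}) = 0$ gives $X(\A)^{fin} = X(\A)^{fin,\Br}$, and combining with the Skorobogatov--Demarche equality yields $X(\A)^{fin} = X(\A)^{desc}$, which is exactly the assertion. Thus the proof is really just a two-line chain:
$$ X(\A)^{desc} = X(\A)^{fin,\Br} = X(\A)^{fin}, $$
the first equality being \cite{Sko09}/\cite{De09a} (valid since $X$ is proper) and the second being Theorem~\ref{t:trivial-pi-2}.

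The main obstacle, such as it is, is purely bookkeeping: one must make sure the hypotheses line up — in particular that "proper" is exactly the condition needed to invoke $X(\A)^{fin,\Br} = X(\A)^{desc}$, and that geometric connectedness (needed for Theorem~\ref{t:trivial-pi-2}) is in force. There is no genuine homotopy-theoretic content beyond what Theorem~\ref{t:trivial-pi-2} already supplies; the corollary simply records that, once $fin$ and $fin$-$\Br$ agree, properness forces $desc$ into the picture too. I would therefore present the argument as a short corollary proof citing Theorem~\ref{t:trivial-pi-2} and the descent-equals-\'etale-Brauer result, with a remark that without properness one would only get $X(\A)^{desc} \subseteq X(\A)^{fin} = X(\A)^{fin,\Br}$.
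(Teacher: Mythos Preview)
Your proposal is correct and matches the paper's intended argument: the paper states Corollary~\ref{c:desc-is-fin} without proof, as an immediate consequence of Theorem~\ref{t:trivial-pi-2} combined with the Skorobogatov--Demarche equality $X(\A)^{fin,\Br} = X(\A)^{desc}$ for smooth proper varieties cited in the introduction (\cite{Sko09}, \cite{De09a}). Your two-line chain $X(\A)^{desc} = X(\A)^{fin,\Br} = X(\A)^{fin}$ is exactly the reasoning the paper is leaving implicit.
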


\begin{cor}
Let $C$ be a smooth curve over $K$. Then
$$ C(\A)^{fin} = C(\A)^{fin,\Br} $$
If $C$ is also projective then
$$C(\A)^{fin} = C(\A)^{desc}$$
\end{cor}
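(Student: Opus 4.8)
The final statement is a corollary about curves, so the plan is to derive both equalities directly from the theorems already established, checking only that the hypotheses apply.

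\medskip

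First I would handle $C(\A)^{fin} = C(\A)^{fin,\Br}$. By Theorem~\ref{t:trivial-pi-2} it suffices to verify that $\pi_2^{\acute{e}t}(\ovl{C}) = 0$ for any smooth curve $C$ over $K$. I would split into cases according to the geometry of $\ovl{C}$. If $\ovl{C} \cong \PP^1$ (equivalently $C$ is a form of $\PP^1$), then $\acute{E}t(\ovl{C})$ is the profinite completion of $S^2$, whose only nonvanishing homotopy in low degrees is $\pi_2$, so this case must be excluded or treated separately --- but note that when $\ovl{C} = \PP^1$ one has $C(K) \neq \emptyset$ automatically once $C(\A) \neq \emptyset$ by Hasse--Minkowski (a conic with points everywhere locally has a rational point), so all the obstruction sets collapse to $C(\A)$ and the equality is trivial. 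In every other case $\ovl{C}$ is a $K(\pi,1)$: for $\ovl{C}$ affine the universal cover is contractible (an open Riemann surface is homotopy equivalent to a wedge of circles, or one argues via the fact that affine curves have cohomological dimension $1$), and for $\ovl{C}$ projective of genus $\geq 1$ the universal cover of the associated Riemann surface is $\CC$ or the upper half plane, hence contractible, so $\pi_n^{\acute{e}t}(\ovl{C}) = 0$ for $n \geq 2$. Thus $\pi_2^{\acute{e}t}(\ovl{C}) = 0$ whenever $\ovl{C} \neq \PP^1$, and Theorem~\ref{t:trivial-pi-2} applies directly; the $\PP^1$ case is disposed of by the trivial collapse just noted. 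This gives the first equality.

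\medskip

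For the second equality, assume in addition that $C$ is projective. Then $C$ is smooth, geometrically connected (we may reduce to this case, or invoke it as part of "curve"), proper over $K$, and still satisfies $\pi_2^{\acute{e}t}(\ovl{C}) = 0$ by the discussion above (with the $\PP^1 = \ovl{C}$ case again being the trivial one, as $\PP^1_K$ has a rational point). Therefore Corollary~\ref{c:desc-is-fin} applies verbatim and yields $C(\A)^{desc} = C(\A)^{fin}$. Combining with the first equality gives the chain $C(\A)^{fin} = C(\A)^{fin,\Br}$ and $C(\A)^{fin} = C(\A)^{desc}$, which is exactly the statement.

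\medskip

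The only genuine obstacle is the bookkeeping around $\ovl{C} = \PP^1$, since strictly speaking $\pi_2^{\acute{e}t}(\PP^1) \neq 0$ and the hypotheses of Theorems~\ref{t:trivial-pi-2} and~\ref{c:desc-is-fin} fail there; I expect to resolve it by the observation that a form of $\PP^1$ with adelic points has a rational point (a smooth conic satisfying the local--global principle), so every intermediate set in sight equals $C(\A)$ and all asserted equalities hold trivially in that case. Everything else is a direct citation of already-proven results, so no further calculation is needed.
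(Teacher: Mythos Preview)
Your overall strategy matches the paper's: split off $\ovl{C} \cong \PP^1$, and otherwise invoke Theorem~\ref{t:trivial-pi-2} and Corollary~\ref{c:desc-is-fin} after checking $\pi_2^{\acute{e}t}(\ovl{C}) = 0$. But there is a genuine gap in that check. You argue that for $\ovl{C} \neq \PP^1$ the space $C(\CC)$ is a topological $K(\pi,1)$ (contractible universal cover) and conclude $\pi_n^{\acute{e}t}(\ovl{C}) = 0$ for $n \geq 2$. This inference is not automatic: the \'etale homotopy type is the \emph{profinite completion} of $C(\CC)$, and the profinite completion of a $K(G,1)$ is not in general a $K(\widehat{G},1)$ --- its higher pro-homotopy groups can be nonzero. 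What is missing is that $G = \pi_1(C(\CC))$ is \emph{good} in the sense of \S6 of~\cite{AMa69} (equivalently Serre's sense): $H^n(\widehat{G},M) \to H^n(G,M)$ is an isomorphism for every finite $G$-module $M$. Free groups (the affine case) and surface groups (projective genus $\geq 1$) are indeed good, and the paper invokes exactly this; you should too.

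A smaller point on the $\PP^1$ case: the Hasse--Minkowski detour is unnecessary, and the phrase ``all the obstruction sets collapse to $C(\A)$'' does not follow merely from $C(K) \neq \emptyset$. The paper argues directly: since $\pi_1^{\acute{e}t}(\ovl{C}) = 1$ finite descent is vacuous, and since $\Br(C)/\Br_0(C) = 0$ (Hochschild--Serre plus $H^1(K,\ZZ)=0$) the Brauer obstruction is vacuous, so $C(\A)^{fin} = C(\A) = C(\A)^{\Br} = C(\A)^{fin,\Br}$ for any form of $\PP^1$, rational point or not.
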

\begin{proof}
If $C \cong \PP^1$ over $\ovl{K}$ then
$$ C(\A)^{fin} = C(\A) = C(\A)^{\Br} = C(\A)^{fin,\Br} $$
Otherwise $C$ is geometrically a $K(\pi,1)$ and one can verify that $\pi_1(C(\CC))$ is good in the sense of section $6$ of ~\cite{AMa69}. Hence
$$ \pi^{\acute{e}t}_2(\ovl{C}) = 0 $$
and then the results follows from Theorem ~\ref{t:trivial-pi-2} and Corollary ~\ref{c:desc-is-fin}.
\end{proof}

\begin{thm}
Let $X$ and $K$ be as above. If $H^{\acute{e}t}_2(\ovl{X}) = 0$ then
$$ X(\A)^{\Br} = X(\A)^{fin-ab} $$
\end{thm}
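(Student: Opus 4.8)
The plan is to mimic the proof of Theorem~\ref{t:trivial-pi-2} on the homological side, using the homology tower $(\ZZ\acute{E}t_{/K})^{\natural}(X)$ in place of the homotopy tower. Recall that Theorem~\ref{t:main} gives $X(\A)^{\ZZ h} = X(\A)^{\Br}$ and Theorem~\ref{t:fin} gives $X(\A)^{\ZZ h,1} = X(\A)^{fin-ab}$, while Corollary~\ref{c:cosk2-final-2} tells us $X(\A)^{\ZZ h} = X(\A)^{\ZZ h,2}$. So the whole statement reduces to showing
$$ X(\A)^{\ZZ h,1} = X(\A)^{\ZZ h,2} $$
under the hypothesis $H^{\acute{e}t}_2(\ovl{X}) = 0$, i.e.\ that passing from the $1$-truncation to the $2$-truncation of the homology pro-object changes nothing.

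First I would identify the relevant pro-object. For a hypercovering $\U \lrar X$ the complex $\uline{P_k(\ZZ\SimpS_\U)}$ has homology groups $H_i = H_i(\SimpS_\U)$ for $i \leq k$, and by taking the cofiltered limit over $\U$ these compute the \'etale homology $H^{\acute{e}t}_i(\ovl{X})$ (with the natural $\Gam_K$-action) in the appropriate pro-sense; indeed $H^{\acute{e}t}_0(\ovl{X}) = \ZZ$ since $X$ is geometrically connected, $H^{\acute{e}t}_1(\ovl{X}) = \pi_1^{\acute{e}t}(\ovl{X})^{ab}$, and $H^{\acute{e}t}_2(\ovl{X}) = 0$ by hypothesis. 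Hence the natural map of pro-objects $(\ZZ\acute{E}t_{/K})^{2}(X) \lrar (\ZZ\acute{E}t_{/K})^{1}(X)$ is a $2$-connected map whose cofiber has trivial $\pi_2$, so it is in fact a pro-isomorphism (at the level of the relevant homology groups, after truncation). More precisely, for each $\U$ the map $P_2(\ZZ\SimpS_\U) \lrar P_1(\ZZ\SimpS_\U)$ becomes, in the cofiltered limit, an isomorphism in $\Pro\Ho$ because the obstruction to it being an equivalence lives in $H^{\acute{e}t}_2$.

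Next I would invoke the machinery of Section~\ref{s:intersection} and Section~7. By Corollary~\ref{c:descent-theorem},
$$ X(\A)^{\ZZ h,n} = \bigcap_{\U,\,k\leq n} X(\A)^{\ZZ\U,k}, $$
so it suffices to compare the single-hypercovering obstructions $X(\A)^{\ZZ\U,2}$ and $X(\A)^{\ZZ\U,1}$ for each $\U$. These are governed by the homotopy-fixed-point sets of $P_2(\ZZ\SimpS_\U)$ and $P_1(\ZZ\SimpS_\U)$ respectively, together with their adelic counterparts. Here I would apply Theorem~\ref{t:obstruction-iso} (or rather its homological analogue, which is proved by exactly the same argument via Proposition~\ref{p:pro-homotopy-fixed-points-iso} applied to simplicial $\Gam_K$-modules — the module structure only makes the argument easier, as everything is computed by Galois hypercohomology in the sense of \S\ref{sbs:homology-pro}): a map of pro-objects which becomes an isomorphism after forgetting the Galois action induces a bijection on rational and on adelic homotopy fixed point sets, hence the same obstruction set. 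Since $P_2(\ZZ\SimpS_\U) \lrar P_1(\ZZ\SimpS_\U)$ is such a map (its underlying map of pro-complexes is a quasi-isomorphism, because $H_2$ vanishes geometrically), we conclude $X(\A)^{\ZZ\U,2} = X(\A)^{\ZZ\U,1}$, and intersecting over $\U$ gives $X(\A)^{\ZZ h,2} = X(\A)^{\ZZ h,1}$.

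The main obstacle I anticipate is making precise the statement that vanishing of $H^{\acute{e}t}_2(\ovl{X})$ implies the pro-map on $2$-truncated homology towers is a genuine isomorphism in $\Pro\Ho\left(\Set^{\Del^{op}}_{\Gam_K}\right)$ — one has to be careful that the \'etale homology pro-group, not just each $H_2(\SimpS_\U)$, is the right object, and that Proposition~\ref{p:bar-is-bar} (comparing $\acute{E}t^n_{/K}(X)$ with $\acute{E}t^n(\ovl{X})$ after forgetting the action) can be fed into the homological version of Theorem~\ref{t:obstruction-iso}. Once that bookkeeping is in place — and it is essentially parallel to the argument for $\pi_2^{\acute{e}t}$ in Theorem~\ref{t:trivial-pi-2} — the chain of equalities
$$ X(\A)^{fin-ab} = X(\A)^{\ZZ h,1} = X(\A)^{\ZZ h,2} = X(\A)^{\ZZ h} = X(\A)^{\Br} $$
follows from Theorems~\ref{t:fin} and~\ref{t:main} and Corollary~\ref{c:cosk2-final-2}.
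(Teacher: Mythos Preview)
Your proposal is correct and follows the same route as the paper: invoke Theorem~\ref{t:obstruction-iso} on the homology pro-objects to obtain $X(\A)^{\ZZ h,2} = X(\A)^{\ZZ h,1}$, then chain together Theorem~\ref{t:fin}, Corollary~\ref{c:cosk2-final-2}, and Theorem~\ref{t:main}. The paper's proof is a one-liner citing exactly these results; your detour through single-hypercovering obstructions via Corollary~\ref{c:descent-theorem} is unnecessary (and, as you rightly flag, a bit delicate since only the pro-$H_2$ vanishes, not each individual $H_2(\SimpS_\U)$), but you end up in the same place.
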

\begin{proof}
In this case we get from ~\ref{t:obstruction-iso} that
$$ X(\A)^{\ZZ h,2} = X(\A)^{\ZZ h,1} $$
and so the result follows from Theorem ~\ref{t:main}, Corollary ~\ref{c:cosk2-final-2} and Theorem ~\ref{t:fin}.
\end{proof}

\begin{thm}
Let $X$ and $K$ be as above and assume that $\pi_1^{\acute{e}t}(\ovl{X})$ is abelian and the Hurewicz map
$$\pi^{\acute{e}t}(\ovl{X})_2 \lrar H^{\acute{e}t}_2(\ovl{X})$$
is an isomorphism. Then

$$ X(\A)^{fin, Br} = X(\A)^{\Br} $$
\end{thm}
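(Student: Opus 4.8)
The strategy is to deduce the identity from the equivalences already proved, by showing that under the stated hypotheses the homotopy and homology obstructions coincide at the second Postnikov level. By Theorem~\ref{t:h-is-fin-br} we have $X(\A)^{fin,\Br}=X(\A)^{h}$, and by Theorem~\ref{t:main} (using that $X$ is geometrically connected) $X(\A)^{\Br}=X(\A)^{\ZZ h}$; moreover $X(\A)^{h}=X(\A)^{h,2}$ and $X(\A)^{\ZZ h}=X(\A)^{\ZZ h,2}$ by Corollary~\ref{c:cosk2-final-2}. Hence it suffices to prove $X(\A)^{h,2}=X(\A)^{\ZZ h,2}$; the inclusion $X(\A)^{h,2}\subseteq X(\A)^{\ZZ h,2}$ is automatic, and I will obtain the equality directly from Theorem~\ref{t:obstruction-iso}.

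First I would put the homology obstruction into a form to which Theorem~\ref{t:obstruction-iso} applies. For each hypercovering $\U\lrar X$ and each $k$, the degree map $\ZZ\SimpS_{\U}\lrar\ZZ$ (with trivial $\Gam_K$-action on $\ZZ$) identifies $\pi_0\bigl(P_k(\ZZ\SimpS_{\U})\bigr)$ with $\ZZ$, because $\overline{\pi}_0(\SimpS_{\U})=\ast$; let $P_k(\ZZ\SimpS_{\U})^{1}$ denote the degree-one connected component, a $\Gam_K$-stable (and, for each place, $\Gam_\nu$-stable) clopen subobject. Since $\E\Gam_K$ and each $\E\Gam_\nu$ are connected, $\Gam$-homotopy fixed points are compatible with the decomposition of $P_k(\ZZ\SimpS_{\U})$ into its degree components, so that $P_k(\ZZ\SimpS_{\U})^{1}(hK)\hookrightarrow P_k(\ZZ\SimpS_{\U})(hK)$ and $P_k(\ZZ\SimpS_{\U})^{1}(h\A)\hookrightarrow P_k(\ZZ\SimpS_{\U})(h\A)$ are injective, compatibly with $\loc$. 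Now the Hurewicz map $\SimpS_{\U}\hookrightarrow\ZZ\SimpS_{\U}$ has image in degree one, so $h_{\ZZ\U,k}$ factors through $P_k(\ZZ\SimpS_{\U})^{1}(h\A)$, and its composite with the degree map is the element $(1,1,\dots)$, which is rational (it is $\loc(1)$ for $1\in\ZZ=\ZZ(hK)$, and $\loc\colon\ZZ\to\ZZ(h\A)$ is injective). Chasing degrees, $h_{\ZZ\U,k}((x_\nu))$ is rational in $P_k(\ZZ\SimpS_{\U})(h\A)$ if and only if its factored version is rational in $P_k(\ZZ\SimpS_{\U})^{1}(h\A)$. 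Therefore
$$ X(\A)^{\ZZ h,2}=X(\A)^{\Y_I},\qquad \Y_I:=\bigl\{P_k(\ZZ\SimpS_{\U})^{1}\bigr\}_{\U\in I(X),\ k\le 2}, $$
the obstruction on the right being taken with respect to the composite $\acute{E}t_{/K}(X)\lrar\acute{E}t^{2}_{/K}(X)\xrightarrow{\ \mathrm{Hurewicz}\ }\Y_I$. Each $P_k(\ZZ\SimpS_{\U})^{1}$ is finite (its homotopy groups are $H_i(\SimpS_{\U})$ for $1\le i\le k$, which are finite because $\SimpS_{\U}$ is finite), excellent (the action factors through a finite quotient of $\Gam_K$) and connected; and by Remark~\ref{r:etale-is-ok} the index category may be taken countable.

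Next I would apply Theorem~\ref{t:obstruction-iso} to the triangle over $\acute{E}t_{/K}(X)$ with $\X_I=\acute{E}t^{2}_{/K}(X)$, $\Y_I$ as above and $f$ the Hurewicz map (so $X(\A)^{\X_I}=X(\A)^{h,2}$). What must be checked is that $\overline{f}\colon\overline{\X}^{\natural}_I\lrar\overline{\Y}^{\natural}_I$ is an isomorphism in $\Pro\Ho(\Set^{\Del^{op}})$. Since $\X_I$ and $\Y_I$ are already $2$-truncated, $\overline{\X}^{\natural}_I\cong\overline{\X}_I$ and $\overline{\Y}^{\natural}_I\cong\overline{\Y}_I$; by Proposition~\ref{p:bar-is-bar}, $\overline{\X}_I\cong\acute{E}t^{2}(\ovl X)$, while $\overline{\Y}_I\cong\bigl\{P_k(\ZZ\SimpS_{\ovl\U})^{1}\bigr\}_{k\le 2}$, whose pro-$\pi_1$ is $\pi^{\acute{e}t}_1(\ovl X)^{\mathrm{ab}}$ and whose pro-$\pi_2$ is $H^{\acute{e}t}_2(\ovl X)$. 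A map of such (finite, connected, $2$-truncated) pro-objects is an isomorphism in $\Pro\Ho(\Set^{\Del^{op}})$ as soon as it induces isomorphisms on pro-$\pi_1$ and pro-$\pi_2$: on pro-$\pi_1$ the map $\overline{f}$ is the abelianization $\pi^{\acute{e}t}_1(\ovl X)\lrar\pi^{\acute{e}t}_1(\ovl X)^{\mathrm{ab}}$, an isomorphism because $\pi^{\acute{e}t}_1(\ovl X)$ is abelian; on pro-$\pi_2$ it is the Hurewicz map $\pi^{\acute{e}t}_2(\ovl X)\lrar H^{\acute{e}t}_2(\ovl X)$, an isomorphism by hypothesis. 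Hence Theorem~\ref{t:obstruction-iso} gives $X(\A)^{h,2}=X(\A)^{\X_I}=X(\A)^{\Y_I}=X(\A)^{\ZZ h,2}$, and stringing the identifications together,
$$ X(\A)^{fin,\Br}=X(\A)^{h}=X(\A)^{h,2}=X(\A)^{\ZZ h,2}=X(\A)^{\ZZ h}=X(\A)^{\Br}. $$

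The step I expect to be delicate is the one in the second paragraph: $(\ZZ\acute{E}t_{/K})^{2}(X)$ is not assembled from \emph{connected} simplicial sets ($\pi_0$ is $\ZZ$, not a point), so Theorem~\ref{t:obstruction-iso} cannot be invoked for it on the nose, and one has to pass to the degree-one components and verify that this substitution leaves the obstruction set $X(\A)^{\ZZ h,2}$ unchanged. This is exactly the ``degree-one component'' device already used in Sections~\ref{s:finite-descent} and~\ref{s:equivalence-1}; the verification rests on homotopy fixed points being compatible with the clopen decomposition by degree, together with the fact that the degree coordinate of every adelic point is the manifestly rational element $(1,1,\dots)$. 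Everything else is a formal consequence of results already established.
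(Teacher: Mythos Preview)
Your proof is correct and follows exactly the paper's approach: reduce to showing $X(\A)^{h,2}=X(\A)^{\ZZ h,2}$ via Theorem~\ref{t:obstruction-iso}, then invoke Theorems~\ref{t:h-is-fin-br}, \ref{t:main} and Corollary~\ref{c:cosk2-final-2}. You supply more detail than the paper does---in particular, you correctly flag and resolve the issue that $P_k(\ZZ\SimpS_{\U})$ is not connected (so Theorem~\ref{t:obstruction-iso} does not apply on the nose) by passing to the degree-one component, a point the paper's one-line proof leaves implicit.
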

\begin{proof}
In this case we get from ~\ref{t:obstruction-iso} that
$$ X(\A)^{h,2} = X(\A)^{\ZZ h,2} $$
and so the result follows from Theorem ~\ref{t:h-is-fin-br}, Corollary ~\ref{c:cosk2-final-2} and Theorem ~\ref{t:main}.

\end{proof}

\end{document}